\newtheorem{theorem}{Theorem} [section]
\newtheorem{corollary}[theorem]{Corollary}
\newtheorem{example}[theorem]{Example}
\newtheorem{lemma}[theorem]{Lemma}
\newtheorem{remark}[theorem]{Remark}
\numberwithin{equation}{section}
\newenvironment{proof}[1][Proof]{\noindent\textbf{#1.} }{\ \rule{0.5em}{0.5em}}
\begin{document}

\title{On elliptic factors in real endoscopic transfer I}
\author{D. Shelstad}
\maketitle

\begin{abstract}
This paper is concerned with the structure of packets of representations and
some refinements that are helpful in endoscopic transfer for real groups. It
includes results on the structure and transfer of packets of limits of
discrete series representations. It also reinterprets the Adams-Johnson
transfer of certain nontempered representations via spectral analogues of
the Langlands-Shelstad factors, thereby providing structure and transfer
compatible with the associated transfer of orbital integrals. The results
come from two simple tools introduced here. The first concerns a family of
splittings of the algebraic group $G$ under consideration; such a splitting
is based on a fundamental maximal torus of $G$ rather than a maximally split
maximal torus. The second concerns a family of Levi groups attached to the
dual data of a Langlands or an Arthur parameter for the group $G$. The
introduced splittings provide explicit realizations of these Levi groups.
The tools also apply to maps on stable conjugacy classes associated with the
transfer of orbital integrals. In particular, they allow for a simpler
version of the definitions of Kottwitz-Shelstad for twisted endoscopic
transfer in certain critical cases. The paper prepares for spectral factors
in twisted endoscopic transfer that are compatible in a certain sense with
the standard factors discussed here. This compatibility is needed for
Arthur's global theory. The twisted factors themselves will be defined in a
separate paper.
\end{abstract}

\tableofcontents

\section{\textbf{Introduction}}

Our main purpose is to continue a study of the coefficients appearing in the
spectral identities of endoscopic transfer for real groups. The coefficients
carry information about the structure of packets of irreducible
representations, and in the global theory of endoscopy this structure plays
a central role in determining if certain irreducible representations are
automorphic or not; see \cite{Ar13}.

Here we will consider both the standard and the more general twisted
versions of endoscopic transfer. We focus on the \textit{fundamental case}
where the endoscopic group and the ambient group share, in a certain precise
sense,\textit{\ }fundamental maximal tori; see Section 3.3. It includes the
case where the ambient group $G$ is cuspidal and the endoscopic group $H_{1}$
is elliptic. We call this the cuspidal-elliptic setting; see Section 3.4.
Then $G(\mathbb{R})$ and $H_{1}(\mathbb{R})$ share fundamental Cartan
subgroups that are elliptic, \textit{i.e.}, compact modulo the centers of $G(%
\mathbb{R})$ and $H_{1}(\mathbb{R})$ respectively. Thus there is a discrete
series of representations for each of $G(\mathbb{R})$ and $H_{1}(\mathbb{R})$
\cite{HC75}, along with limits of discrete series representations (see \cite%
{KZ82}).

Endoscopic transfer begins with the matching of orbital integrals, the
so-called geometric side. In the standard version we use the transfer
factors of Langlands-Shelstad (\cite{LS87}, see also \cite{Sh14}) for the
geometric side. Factors with a parallel definition appear in the tempered
dual spectral transfer, \textit{i.e.}, as coefficients in the dual spectral
identities for tempered irreducible representations \cite{Sh10, Sh08b}.
Properties of these spectral factors simplify the related harmonic analysis;
for example, inversion of the identities becomes a short exercise (see \cite%
{Sh08b}).

In preparation for generalizing (in \cite{ShII}) the definition of spectral
factors to the twisted setting of Kottwitz-Shelstad \cite{KS99} we will
establish three refinements. First, we make use of an alternative simpler
description of limits of discrete series packets in terms of elliptic data, 
\textit{i.e.}, data attached to an elliptic Cartan subgroup (see Remark
5.6), to simplify transfer and structure in that setting.

Second, introducing the nontempered spectrum to our picture, we reinterpret
the transfer of Adams-Johnson in terms of data attached directly to the
associated Arthur parameters. Here we will consider only parameters that are
elliptic in the sense of Arthur. Our new factors are related very simply to
the tempered factors already defined, and we check that they do provide the
transfer that is precisely dual (\textit{i.e.}, there are no extraneous
constants) to that of orbital integrals with the Langlands-Shelstad factors.
The inversion properties of our spectral transfer are more delicate than for
the tempered case (\cite{AJ87}, \cite{Ar89} explain why this must be the
case) and will be described in \cite{ShII}.

For the third refinement we turn to twisted transfer and the underlying
definitions of \cite{KS99}. The transfer of orbital integrals is based on an
abstract norm correspondence $(\gamma _{1},\delta )$ for suitably regular
points $\gamma _{1}$ in endoscopic $H_{1}(\mathbb{R})$ and $\delta $ in $G(%
\mathbb{R}).$ For the fundamental part of the correspondence that concerns
us here we will see that we may limit the twisting automorphism to a family
for which the norm correspondence is well-behaved. The standard spectral
factors generalize readily for this family \cite{ShII} and we have the
standard-twisted compatibility needed in Arthur's global theory \cite{Ar13}.

To obtain these refinements we introduce two simple tools. The first
involves fundamental splittings. These are particular splittings based on
fundamental maximal tori and exist for any $G$. They work well with both
elliptic data and Whittaker data; here Vogan's characterization of generic
representations plays a critical role. See Sections 2.3, 6.1. The second
tool involves a family of Levi groups in $G$. First we attach to a Langlands
or Arthur parameter a family of $L$-groups and then we use fundamental
splittings to identify their real duals as a family of (nonstandard) Levi
groups in $G$ and its inner forms; see Sections 5.2, 6.1.

We begin the paper with fundamental splittings and their properties. The
main result is Lemma 2.5. In Part 3 we review the norm correspondence and
see, in particular, that the fundamental part of geometric transfer is
nonempty if and only if the twisting automorphism preserves a fundamental
splitting up to a further twist by an element of $G(\mathbb{R})$. This may
be expressed precisely in terms of the norm correspondence itself or in
terms of nonvanishing of the geometric transfer factors of \cite{KS99}; see
Theorem 3.12, Corollary 3.13. We prove a spectral analogue, but not until
Part 9 where we also finish the discussion of Part 4 on certain properties
of endoscopic data that will be used in the definition of twisted spectral
factors.

The rest of the paper concerns standard transfer and the first two
refinements. In Part 5 we turn to the Langlands and Arthur parameters
attached to the representations of interest to us here. In the
cuspidal-elliptic setting these are the $s$-elliptic Langlands parameters
and the elliptic $u$-regular Arthur parameters of Sections 5.5 - 5.7. Given
a parameter, we generate data for the various attached packets of
representations by means of pairs $(G,\eta ),$ where $\eta $ is an inner
twist of $G$ to a given quasi-split form $G^{\ast }$.

For the limits of discrete series representations attached to an $s$%
-elliptic Langlands parameter we reformulate some well-known properties in
terms of our attached Levi groups. For example, the critical Lemma 6.1
characterizes the pairs $(G,\eta )$ for which we obtain a well-defined (%
\textit{i.e.,} nonzero) representation. Then Lemma 6.2 gives a description
of the packet that allows us to attach an elliptic invariant to each member;
see Section 6.4. Lemmas 6.4 and 6.5\ describe the application to endoscopic
transfer.

Part 6 has further results on limits of discrete series representations that
we will apply in various places. For example, as in Section 6.7, every $s$%
-elliptic parameter factors through a totally degenerate parameter for an
attached Levi group. We will check in \cite{ShII} that this gives a simple
characterization of those pairs $(G,\eta )$ for which the distribution
character of the attached representation is elliptic.

The representations attached to elliptic $u$-regular Arthur parameters are
the derived functor modules of Vogan and Zuckerman \cite{Vo84} from the main
setting in \cite{AJ87}; we allow without harm a nontrivial split component
in the center of $G$. In the case of regular infinitesimal character they
are discussed in \cite{Ar89, Ko90}; see the last paragraph of Section 7.1.
Generalizing some familiar $L$-group constructions we attach directly to the
Arthur parameter the following: elliptic data, a family of Levi groups, and
an $s$-elliptic Langlands parameter with same infinitesimal character and
central behavior. We will use these again in \cite{ShII} in the twisted
setting. In the present paper we pursue only the case of regular
infinitesimal character so that the attached Langlands parameter is
elliptic. Lemma 7.5 describes the Arthur packet by means of pairs $(G,\eta ).
$ In Section 8 we introduce spectral transfer factors for the Arthur packet
by tethering the packet to the elliptic Langlands packet via \textit{relative%
} factors with good transitivity properties \cite{Sh10}. We then verify in
Section 8.3 that the corresponding absolute factors are correct, in the
sense already mentioned, for endoscopic transfer with Langlands-Shelstad
factors on the geometric side.

To finish this brief sketch we refer to Sections 3.5, 4.1, 4.2, 6.4 and 6.5
where there are further remarks on the properties of transfer factors that
are crucial for our approach to work.

\textbf{Note:} This paper is an expanded version of part of the preprint "On
spectral transfer factors in real twisted endoscopy" posted on the author's
website, May 2011.

\section{\textbf{Automorphisms and inner forms}}

This section introduces notation we will use throughout the paper, along
with definitions and properties related to fundamental splittings. We finish
with an application to the \textit{inner forms of a quasi-split pair}.

\subsection{Quasi-split pairs and inner forms}

By a quasi-split pair we mean a pair $(G^{\ast },\theta ^{\ast }),$ where $%
G^{\ast }$ is a connected, reductive algebraic group defined and quasi-split
over $\mathbb{R}$, and $\theta ^{\ast }$ is an $\mathbb{R}$-automorphism of $%
G^{\ast }$ that preserves an $\mathbb{R}$-splitting $spl^{\ast }=(B^{\ast
},T^{\ast },\{X_{\alpha }\})$ of $G^{\ast }$. We assume that the restriction
of $\theta ^{\ast }$ to the identity component of the center of $G^{\ast }$
is semisimple or, equivalently, that $\theta ^{\ast }$ has finite order.

Recall from \cite[Appendix B]{KS99}\ that $(G,\theta ,\eta )$ is defined to
be an inner form of $(G^{\ast },\theta ^{\ast })$ if $G$ is connected,
reductive and defined over $\mathbb{R}$, $\eta $ is an isomorphism from $G$
to $G^{\ast }$ that is an inner twist, $\theta $ is an $\mathbb{R}$%
-automorphism of $G$, and $\theta $ coincides with the transport of $\theta
^{\ast }$ to $G$ via $\eta $ up to an inner automorphism. Notice that if $%
\theta ^{\ast }$ is the identity then $\theta $ must be an inner
automorphism of $G$ defined over $\mathbb{R}$, \textit{i.e.,} $\theta $ must
act on $G$ as an element of $G_{ad}(\mathbb{R}).$

Let $(G,\theta ,\eta )$ be an inner form of $(G^{\ast },\theta ^{\ast })$.
By the \textit{inner class} of $(\theta ,\eta )$ we will mean the set of all
pairs $(\theta ^{\prime },\eta ^{\prime })$ where $(G,\theta ^{\prime },\eta
^{\prime })$ is an inner form of $(G^{\ast },\theta ^{\ast })$ such that (i)%
\textit{\ }$\eta ^{\prime }\circ \eta ^{-1}$ is inner and (ii) the
automorphism $\theta ^{\prime }\circ \theta ^{-1}$ of $G,$ which is inner
and acts as an element of $G_{ad}(\mathbb{R})$ by (i), is induced by an
element of $G(\mathbb{R})$, \textit{i.e.,} acts as an element of the image
of $G(\mathbb{R})$ in $G_{ad}(\mathbb{R})$ under the natural projection. We
will see that replacing $(\theta ,\eta )$ by a member of its inner class has
no effect on our final results.

Let $(G,\theta ,\eta )$ be an inner form of $(G^{\ast },\theta ^{\ast })$.
Then we choose $u(\sigma )\in G_{sc}^{\ast }$ such that%
\begin{equation}
\eta \circ \sigma (\eta )^{-1}=Int(u(\sigma )).
\end{equation}%
Here and throughout the paper we use $\sigma $ to denote the nontrivial
element of $\Gamma =Gal(\mathbb{C}/\mathbb{R}).$ The action of $\sigma $ on
a $\Gamma $-set $X$ will be denoted by $\sigma _{X}$ or by $\sigma $ itself
when $X$ is evident.

\subsection{Fundamental splittings}

While $\mathbb{R}$-splittings exist only for quasi-split groups, \textit{%
fundamental splittings} may be constructed for any connected, reductive $G$
defined over $\mathbb{R}$. We recall the definition (see \cite{Sh12}).

Consider a pair $(B,T)$, where $T$ is a maximal torus in $G$ defined over $%
\mathbb{R}$ and $B$ is a Borel subgroup of $G$ containing $T$. We call $%
(B,T) $ a \textit{fundamental pair} if (i) $T$ is fundamental, \textit{i.e.,}
$T$ is minimally $\mathbb{R}$-split or, equivalently, $T$ has no roots fixed
by $\sigma _{T}$ and (ii) the set of (simple) roots of $T$ in $B$ is
preserved by $-\sigma _{T}.$ The existence of fundamental pairs is noted in 
\cite[Section 10.4]{Ko86}.

\begin{lemma}
The set of all fundamental pairs for $G$ forms a single stable conjugacy
class in the sense that another fundamental pair $(B^{\prime },T^{\prime })$
is conjugate to $(B,T)$ by an element $g$ of $G$ for which $%
Int(g):T\rightarrow T^{\prime }$ is defined over $\mathbb{R}.$
\end{lemma}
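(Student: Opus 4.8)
The plan is to extract from condition (ii) the reformulation that a pair $(B,T)$ with $T$ defined over $\mathbb{R}$ is fundamental if and only if $\sigma(B)$ is the Borel $\bar{B}$ opposite to $B$ with respect to $T$, and then to run a direct conjugacy argument with this reformulation in hand. Splitting the problem into ``match the tori, then match the Borels'' also works, but the uniform argument below avoids any separate discussion of conjugacy classes of fundamental tori.

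First I would record the reformulation. If $\alpha$ is a root of $T$ in $B$ with root vector $X_{\alpha}$, then for $t\in T$ one has $\mathrm{Ad}(t)\,\sigma(X_{\alpha})=\sigma\big(\mathrm{Ad}(\sigma(t))X_{\alpha}\big)=\overline{\alpha(\sigma(t))}\,\sigma(X_{\alpha})$, so $\sigma$ carries the root space for $\alpha$ to the root space for $\sigma_{T}\alpha$. Hence the roots of $T$ in $\sigma(B)$ are precisely $\sigma_{T}$ applied to the roots of $T$ in $B$, and likewise for simple roots. Condition (ii) says $-\sigma_{T}$ preserves the set of simple roots of $B$, i.e. $\sigma_{T}$ sends this set to its negative; since the negatives of the simple roots of $B$ are exactly the simple roots of $\bar{B}$, and a Borel containing $T$ is determined by its set of simple roots, condition (ii) is equivalent to $\sigma(B)=\bar{B}$. (This incidentally forces $T$ to have no $\sigma_{T}$-fixed root, so condition (i) is automatic, though I only need the stated equivalence.) In particular $\sigma(B)=\bar{B}$ and $\sigma(B^{\prime})=\bar{B^{\prime}}$, the latter being the Borel opposite to $B^{\prime}$ with respect to $T^{\prime}$.

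Next, since any two pairs consisting of a maximal torus inside a Borel subgroup are conjugate over $\mathbb{C}$, choose $g\in G$ with $Int(g)(B,T)=(B^{\prime},T^{\prime})$. Because conjugation carries a pair of opposite Borels to a pair of opposite Borels, $Int(g)$ also sends $(\bar{B},T)$ to $(\bar{B^{\prime}},T^{\prime})$. Applying $\sigma$ to $Int(g)(B,T)=(B^{\prime},T^{\prime})$ and using $\sigma(B)=\bar{B}$, $\sigma(B^{\prime})=\bar{B^{\prime}}$, $\sigma(T)=T$, $\sigma(T^{\prime})=T^{\prime}$, we get $Int(\sigma(g))(\bar{B},T)=(\bar{B^{\prime}},T^{\prime})$ as well. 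Therefore $Int(g^{-1}\sigma(g))$ fixes the pair $(\bar{B},T)$; since the stabilizer in $G$ of such a pair is $\bar{B}\cap N_{G}(T)=T$ (no nontrivial unipotent element of $\bar{B}$ normalizes $T$), this gives $g^{-1}\sigma(g)\in T$.

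Finally I would check that $Int(g)\colon T\rightarrow T^{\prime}$ is defined over $\mathbb{R}$: for $x\in T$ the elements $\sigma\big(Int(g)(x)\big)=\sigma(g)\,\sigma(x)\,\sigma(g)^{-1}$ and $Int(g)\big(\sigma(x)\big)=g\,\sigma(x)\,g^{-1}$ agree exactly when $g^{-1}\sigma(g)$ commutes with $\sigma(x)$; as $g^{-1}\sigma(g)\in T$ and $\sigma(x)\in\sigma(T)=T$, this holds for every $x\in T$. (Replacing $g$ by $gt$ with $t\in T$ changes neither $Int(g)|_{T}$ nor the membership $g^{-1}\sigma(g)\in T$, consistent with $Int(g)|_{T}$ being canonical.) This completes the proof. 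The only real content is the reformulation in the second paragraph — the fact, characteristic of fundamental tori, that complex conjugation interchanges a suitable Borel with its opposite; once this is available the remainder is a formal manipulation with stabilizers, and no cohomological input is required.
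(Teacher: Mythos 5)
Your proof is correct and takes essentially the same route as the paper: pick $g$ with $Int(g)(B,T)=(B',T')$ and show $g^{-1}\sigma(g)$ lies in $T$ by exploiting condition (ii) of fundamentality. The paper's one-line argument ("$g^{-1}\sigma(g)$ acts as a Weyl element preserving the roots of $B$, i.e.\ as the identity") is exactly your reformulation $\sigma(B)=\bar B$ together with the observation that only the trivial Weyl element preserves a system of positive roots; you have merely spelled out the intermediate steps.
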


\begin{proof}
Observe that $(B^{\prime },T^{\prime })$ is conjugate to $(B,T)$ under some
element $g$ of $G,$ and then $g^{-1}\sigma (g)$ acts as an element of the
Weyl group of $T$ preserving the roots of $B,$ \textit{i.e.,} as the
identity element.
\end{proof}

To prescribe a fundamental splitting we start with a fundamental pair $(B,T)$
and pick an $\mathfrak{sl}_{2}$-triple $\{X_{\alpha },H_{\alpha },X_{-\alpha
}\}$ for each simple root $\alpha $ of $T$ in $B.$ Here we identify the Lie
algebra of $T$ with $X_{\ast }(T)\otimes \mathbb{C}$ and require $H_{\alpha
} $ be the element identified with the coroot $\alpha ^{\vee }$ of $\alpha ;$
$X_{\alpha },X_{-\alpha }$ are to be root vectors for $\alpha ,-\alpha $
respectively. There is an attached splitting $spl=(B,T,\{X_{\alpha }\})$ for 
$G.$ Conversely, each splitting for $G$ determines uniquely a collection of $%
\mathfrak{sl}_{2}$-triples of the above form. We call $spl$ \textit{%
fundamental} if the Galois action satisfies: $\sigma X_{\alpha }=X_{\sigma
_{T}\alpha }$ in the case $\sigma _{T}\alpha \neq -\alpha ,$ and $\sigma
X_{\alpha }=\varepsilon _{\alpha }X_{-\alpha }\ $in the case $\sigma
_{T}\alpha =-\alpha $, where $\varepsilon _{\alpha }=\pm 1$.

If $\sigma _{T}\alpha =-\alpha $ then such a triple $\{H_{\alpha },X_{\alpha
},X_{-\alpha }\}$ determines an $\mathbb{R}$-homomor-phism from a real form
of $SL(2)$ into $G;$ examples are written in \cite{Sh79a}. The isomorphism
class of that real form, split or anisotropic, is uniquely determined by $%
\alpha .$ If the real form is split then $\varepsilon _{\alpha }=1$ and $%
\alpha $ is called \textit{noncompact}. If the real form is anisotropic then 
$\varepsilon _{\alpha }=-1$ and $\alpha $ is \textit{compact}.

\begin{lemma}
Each fundamental pair $(B,T)$ extends to a fundamental splitting $%
spl=(B,T,\{X_{\alpha }\})$. Moreover, two fundamental splittings extending $%
(B,T)$ are conjugate under $T_{sc}(\mathbb{R})$.
\end{lemma}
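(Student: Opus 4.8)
The plan is to prove the two assertions separately: existence of a fundamental splitting extending a given fundamental pair $(B,T)$, and uniqueness up to $T_{sc}(\mathbb{R})$-conjugacy. For existence, I would start from an arbitrary splitting $spl_0 = (B,T,\{X_\alpha^0\})$ extending $(B,T)$ — such splittings exist by the usual theory since we may freely choose root vectors for the simple roots of $T$ in $B$ — and then adjust the root vectors to achieve the prescribed Galois behavior. The key observation is that $\sigma$ permutes the simple roots of $T$ in $B$ according to the involution $-\sigma_T$ (this is condition (ii) in the definition of a fundamental pair), so $\sigma X_\alpha^0$ is a root vector for $-\sigma_T\alpha$. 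In the orbits of $-\sigma_T$ on simple roots we distinguish two cases: two-element orbits $\{\alpha,\beta\}$ with $\beta = -\sigma_T\alpha \neq \alpha$, and fixed points $\alpha = -\sigma_T\alpha$. In the first case I would redefine $X_\alpha$ freely on one representative of the orbit and force $X_\beta := \sigma X_\alpha$; since $\sigma^2 = 1$ this is consistent and gives $\sigma X_\alpha = X_\beta$, $\sigma X_\beta = X_\alpha$ as required. In the fixed-point case $\sigma_T\alpha = -\alpha$, I have $\sigma X_\alpha^0 = c\, X_{-\alpha}^0$ for some $c \in \mathbb{C}^\times$, and I need to rescale $X_\alpha$ (hence correspondingly $X_{-\alpha}$, to keep the $\mathfrak{sl}_2$-triple normalization $[X_\alpha,X_{-\alpha}] = H_\alpha$) so that the constant becomes $\varepsilon_\alpha = \pm 1$. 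Applying $\sigma$ twice to $\sigma X_\alpha = c X_{-\alpha}$ and using $\sigma H_\alpha = -H_\alpha$ (which follows from $\sigma_T\alpha^\vee = -\alpha^\vee$) one finds $\bar c\, c$ is a positive real, so after scaling $X_\alpha \mapsto t X_\alpha$ the constant $c$ transforms to $t^{-1}\bar t \, c$, and by choosing $t$ appropriately (a ratio-of-absolute-value computation) we can arrange $|c| = 1$; then $c = \pm 1$ is forced because $\bar c c = |c|^2 = 1$ already held and the sign is intrinsic. This sign $\varepsilon_\alpha$ is then determined by $\alpha$ as the discussion preceding the lemma explains.

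For uniqueness, suppose $spl = (B,T,\{X_\alpha\})$ and $spl' = (B,T,\{X_\alpha'\})$ are two fundamental splittings extending the same $(B,T)$. For each simple root $\alpha$ there is $t_\alpha \in \mathbb{C}^\times$ with $X_\alpha' = t_\alpha X_\alpha$ (and correspondingly $X_{-\alpha}' = t_\alpha^{-1} X_{-\alpha}$ to preserve the triple). Since the simple coroots $\{\alpha^\vee\}$ are linearly independent in $X_\ast(T_{sc})$, there is a unique element $s \in T_{sc}$ with $\alpha(s) = t_\alpha$ for all simple $\alpha$; conjugation by $s$ then sends $spl$ to $spl'$. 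It remains to check $s \in T_{sc}(\mathbb{R})$, i.e. $\sigma(s) = s$, equivalently $\alpha(\sigma(s)) = t_\alpha$ for all simple $\alpha$. Here I would apply $\sigma$ to the relation $X_\alpha' = t_\alpha X_\alpha$ and use that both splittings satisfy the same fundamental Galois conditions. In the two-element-orbit case, $\sigma X_\alpha = X_\beta$ and $\sigma X_\alpha' = X_\beta'$ give $X_\beta' = \bar t_\alpha X_\beta$, so $t_\beta = \bar t_\alpha$; combined with $(\sigma_T\alpha)(\sigma(s)) = \overline{\alpha(s)}$ this yields the required equality. In the fixed-point case, $\sigma X_\alpha = \varepsilon_\alpha X_{-\alpha}$ for both splittings forces $\bar t_\alpha = t_\alpha$, i.e. $t_\alpha \in \mathbb{R}^\times$, and then $(\sigma_T\alpha)(\sigma(s)) = (-\alpha)(\sigma(s)) = \overline{\alpha(s)}^{-1}\cdot$ — one chases through that $\sigma(s)$ and $s$ have the same image under every simple root, hence $s \in T_{sc}(\mathbb{R})$.

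The main obstacle, and the step deserving the most care, is the fixed-point case $\sigma_T\alpha = -\alpha$ in the existence argument: one must verify that the obstruction constant $c$ can be normalized to exactly $\pm 1$ (not merely to a norm-one complex number) by a real scaling that is moreover compatible with the $\mathfrak{sl}_2$-triple normalization, and then identify the resulting sign with the compact/noncompact dichotomy intrinsic to $\alpha$. This is where the real form of $SL(2)$ attached to the triple enters, and the positivity of $\bar c c$ — forced by $\sigma^2 = \mathrm{id}$ together with $\sigma H_\alpha = -H_\alpha$ — is the crucial input. The two-element-orbit bookkeeping is routine, and the uniqueness direction is essentially the standard argument that a splitting is rigid up to the torus, decorated with the Galois-compatibility check.
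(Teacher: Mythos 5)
Your overall strategy matches the paper's (orbit analysis of the simple roots under $-\sigma_T$, scaling in the fixed-point case, computing the discrepancy torus element in the uniqueness step), but the uniqueness half has a real gap and the existence half has computational slips.

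The serious problem is in the uniqueness argument. Your element $s$ is prescribed only by the values $\alpha(s)=t_\alpha$ of the simple roots $\alpha$, and the simple roots do not separate points of $T_{sc}$: their common kernel is $Z_{G_{sc}}$, so $s$ is determined only up to the (finite, typically nontrivial) center. Correspondingly, your Galois-compatibility computation yields only $\alpha(\sigma(s)s^{-1})=1$ for all simple $\alpha$, which says $\sigma(s)s^{-1}\in Z_{G_{sc}}$, i.e. that the image of $s$ in $T_{ad}$ lies in $T_{ad}(\mathbb{R})$ — not that $s\in T_{sc}(\mathbb{R})$. To pass from a $T_{ad}(\mathbb{R})$-conjugation to a $T_{sc}(\mathbb{R})$-conjugation one needs the surjectivity of $T_{sc}(\mathbb{R})\rightarrow T_{ad}(\mathbb{R})$, and this is where the hypothesis that $T$ is \emph{fundamental} actually does work: for a fundamental maximal torus both $T_{sc}(\mathbb{R})$ and $T_{ad}(\mathbb{R})$ are connected (as the paper cites from Kottwitz and from \cite{Sh12}), which forces the surjectivity. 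Without this, the obstruction $[\sigma(s)s^{-1}]\in H^1(\Gamma,Z_{G_{sc}})$ has no reason to vanish. This is not a cosmetic point; it is the content of the "fundamental" hypothesis in the lemma.

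There are also computational errors that you should repair even though they don't destroy the architecture. In the two-element-orbit case, $\sigma X_\alpha$ is a root vector for $\sigma_T\alpha$, not for $-\sigma_T\alpha$; since $\beta:=-\sigma_T\alpha$ is simple, the required relation is $\sigma X_\alpha = X_{-\beta}$, so what you should force is $X_{-\beta}:=\sigma X_\alpha$ (with $X_\beta$ then pinned down by the $\mathfrak{sl}_2$-triple condition), not $X_\beta:=\sigma X_\alpha$. In the fixed-point case, the computation from $\sigma^2=\mathrm{id}$ and $\sigma H_\alpha=-H_\alpha$ gives $\bar c = c$ (that is, $c$ \emph{real}), not merely "$\bar c c$ positive," which is vacuous; and the scaling $X_\alpha\mapsto tX_\alpha$, $X_{-\alpha}\mapsto t^{-1}X_{-\alpha}$ transforms $c$ to $t\bar t\,c = |t|^2 c$, not to $t^{-1}\bar t\,c$ (your formula has modulus one and so could never rescale $|c|$, contradicting what you assert in the next clause). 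With $c$ real and scalable by any positive real, $c$ can be brought to $\pm 1$. Finally, in the uniqueness fixed-point computation, matching $\sigma X_\alpha' = \varepsilon_\alpha X_{-\alpha}'$ gives $|t_\alpha|^2=1$, i.e. $\bar t_\alpha = t_\alpha^{-1}$, not $\bar t_\alpha=t_\alpha$; the resulting identity $\alpha(\sigma(s))=\alpha(s)$ still holds, so only your intermediate claim is wrong here, not the conclusion of that sub-step.
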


\begin{proof}
For each simple root $\alpha $ of $T$ in $B$, pick an $\mathfrak{sl}_{2}$%
-triple $\{X_{\alpha },H_{\alpha },X_{-\alpha }\}$. If $\sigma _{T}\alpha
\neq -\alpha $ then we may arrange that $\sigma X_{\alpha }=X_{\sigma
_{T}\alpha }$ and $\sigma X_{-\alpha }=X_{-\sigma _{T}\alpha }$ since $%
\alpha ,\sigma _{T}\alpha $ are distinct. If $\sigma _{T}\alpha =-\alpha $
then calculation shows that $\sigma X_{\alpha }=\lambda X_{-\alpha }\ $and $%
\sigma X_{-\alpha }=\lambda ^{-1}X_{\alpha }$, where $\lambda $ is real.
Then we can adjust the choice of $X_{\alpha }$ and $X_{-\alpha }$ to arrange
that $\sigma X_{\alpha }=\varepsilon _{\alpha }X_{-\alpha }\ $and $\sigma
X_{-\alpha }=\varepsilon _{\alpha }X_{\alpha }$, where $\varepsilon _{\alpha
}=\pm 1.$

Suppose we have two such splittings. Then they are conjugate under $T_{ad}(%
\mathbb{R})$ \ since if $X_{\alpha }$ is replaced by $Int(t)X_{\alpha }$ for
each $B$-simple root $\alpha ,$ where $t\in T_{sc}$, then our requirements
on the action of $\sigma $ imply that $\alpha (\sigma (t)t^{-1})=1.$ Because 
$T_{sc},T_{ad}$ are fundamental, $T_{sc}(\mathbb{R})$ and $T_{ad}(\mathbb{R}%
) $ are connected; see \cite[Section 10]{Ko86} and \cite[Section 6]{Sh12}.
Then the projection $T_{sc}(\mathbb{R})\rightarrow T_{ad}(\mathbb{R})$ is
surjective, and the desired conjugation exists.
\end{proof}

\begin{corollary}
Each fundamental splitting $spl^{\prime }=(B^{\prime },T^{\prime
},\{X_{\alpha }^{\prime }\})$ is conjugate to $spl$ by an element $g$ of $G$
for which $Int(g):T\rightarrow T^{\prime }$ is defined over $\mathbb{R}$.
\end{corollary}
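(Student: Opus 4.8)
The plan is to produce $g$ by composing the element furnished by Lemma 2.1 with a correction inside the torus, in the same spirit as the $T_{sc}$-correction used in the proof of Lemma 2.2 to match up root vectors. One should not expect $g$ itself to be defined over $\mathbb{R}$, only its restriction to $T$: if $Int(g_{0})$ carries $(B,T)$ to $(B',T')$ with $Int(g_{0})\colon T\to T'$ defined over $\mathbb{R}$, then $g_{0}^{-1}\sigma(g_{0})$ lies in $T$ but need not be trivial, nor even central, so transporting $spl$ by $Int(g_{0})$ yields a splitting extending $(B',T')$ that in general is \emph{not} fundamental. Hence one cannot simply transport and quote Lemma 2.2; an explicit correction by an element of $T'(\mathbb{C})$ is needed.

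First I would apply Lemma 2.1 to the fundamental pairs $(B,T)$ and $(B',T')$, obtaining $g_{0}\in G$ with $Int(g_{0})\colon(B,T)\to(B',T')$ and $Int(g_{0})\colon T\to T'$ defined over $\mathbb{R}$. Since $Int(g_{0})$ carries $B$ onto $B'$ it induces a bijection $\alpha\mapsto\alpha'$ from the $B$-simple roots of $T$ to the $B'$-simple roots of $T'$, and $Int(g_{0})X_{\alpha}$ lies in the one-dimensional $\alpha'$-root space, as does the root vector $X'_{\alpha'}$ of $spl'$. Thus $X'_{\alpha'}=c_{\alpha'}\,Int(g_{0})X_{\alpha}$ for scalars $c_{\alpha'}\in\mathbb{C}^{\times}$.

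It remains to absorb the $c_{\alpha'}$ into a conjugation coming from $G$ that does not disturb the restriction to $T$. The $B'$-simple roots define an isogeny $T'_{sc}\to\prod_{\alpha'}\mathbb{G}_{m}$, surjective on $\mathbb{C}$-points, so there is $t'\in T'_{sc}(\mathbb{C})$ with $\alpha'(t')=c_{\alpha'}$ for all $\alpha'$; let $\iota(t')\in T'\subseteq G$ be its image under $G_{sc}\to G$ and put $g=\iota(t')\,g_{0}$. Then $Int(g)(X_{\alpha})=\alpha'(t')\,Int(g_{0})X_{\alpha}=X'_{\alpha'}$, so $Int(g)(spl)=spl'$, while $Int(g)|_{T}=Int(g_{0})|_{T}$ because $\iota(t')$ centralizes $T'$; the latter is defined over $\mathbb{R}$ by Lemma 2.1. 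The one point requiring care is precisely this last manoeuvre — realizing an arbitrary rescaling of the root vectors by an inner automorphism of $G$ while keeping $Int(g)|_{T}$ untouched — which is exactly why the correction is routed through $T'_{sc}$; everything else is routine bookkeeping.
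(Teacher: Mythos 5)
Your proof is correct. The paper states this corollary without proof, presenting it as an immediate consequence of Lemmas 2.1 and 2.2, so there is no proof in the source to compare against literally. Your observation that the naive chain --- apply Lemma 2.1 to move $(B,T)$ to $(B',T')$, then quote the second half of Lemma 2.2 to relate the two splittings of $(B',T')$ --- does not go through directly is a fair one: the cochain $t=g_{0}^{-1}\sigma(g_{0})\in T$ (a $1$-cocycle, but not necessarily central) rescales the transported root vectors by $\alpha$-dependent factors, so $Int(g_{0})(spl)$ need not be fundamental and Lemma 2.2's hypothesis would not be met. Your fix is the right one and works in full generality: since $Int(g_{0})X_{\alpha}$ and $X'_{\alpha'}$ lie in the same one-dimensional root space, a single element of $T'$ (you route through $T'_{sc}(\mathbb{C})$, exploiting surjectivity of the isogeny onto $\prod_{\alpha'}\mathbb{G}_{m}$; one could equally well use $T'$ itself via $T'\twoheadrightarrow T'_{ad}\cong\prod_{\alpha'}\mathbb{G}_{m}$) absorbs all the scalars at once, and because this element centralizes $T'$ it leaves $Int(g)|_{T}=Int(g_{0})|_{T}$ untouched, which is what keeps the restriction to $T$ defined over $\mathbb{R}$. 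Note that the argument never needs the transported splitting to be fundamental, nor in fact the fundamental-ness of $spl$ and $spl'$ beyond its use in Lemma 2.1 to produce $g_{0}$ in the first place; this is a modest simplification over what one might have expected.
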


\subsection{Fundamental splittings of Whittaker type}

We return to a quasi-split pair $(G^{\ast },\theta ^{\ast }).$ Recall that $%
\theta ^{\ast }$ preserves the $\mathbb{R}$-splitting $spl^{\ast }=(B^{\ast
},T^{\ast },\{X_{\alpha }\})$ of $G^{\ast }$. \textit{From now on we will
typically use the same notation }$\{X_{\alpha }\}$\textit{\ for the root
vectors in any splitting.} We will say a fundamental pair\textit{\ }$(B,T),$%
\textit{\ }or\textit{\ }a fundamental splitting\textit{\ }$%
spl_{f}=(B,T,\{X_{\alpha }\})$\textit{\ of }$G^{\ast },$\textit{\ }is\textit{%
\ of Whittaker type} if all imaginary simple roots of $(B,T)$ are
noncompact. We use this terminology because of Vogan's classification
theorem \cite[Corollary 5.8, Theorem 6.2]{Vo78} for representations with
Whittaker model, \textit{i.e.}, for \textit{generic} representations. It is
not difficult to check directly that a group $G$ has a fundamental pair of
Whittaker type if and only if $G$ is quasi-split over $\mathbb{R}$, although
this characterization is naturally part of Vogan's classification.

\begin{lemma}
(i) There exists a fundamental pair of Whittaker type preserved by $\theta
^{\ast }$ and (ii) each fundamental pair of Whittaker type preserved by $%
\theta ^{\ast }$ has an extension to a fundamental splitting $spl_{Wh}$ of $%
G^{\ast }$ preserved by $\theta ^{\ast }$.
\end{lemma}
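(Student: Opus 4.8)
The plan is to prove (i) by a direct $\theta^{\ast}$-equivariant construction and (ii) by a torsor argument built on the preceding lemma on extending fundamental pairs.

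For (i), I would start from the $\mathbb{R}$-splitting $spl^{\ast}=(B^{\ast},T^{\ast},\{X_{\alpha}\})$, which $\theta^{\ast}$ preserves. Here $T^{\ast}$ is maximally split, so its roots fixed by $\sigma_{T^{\ast}}$ are exactly those fixed by the diagram automorphism $\tau$ that $\sigma$ induces on $(B^{\ast},T^{\ast})$; since $\theta^{\ast}$ preserves $spl^{\ast}$ it acts on $(B^{\ast},T^{\ast})$ by a diagram automorphism $\vartheta$ commuting with $\tau$. The key step is to choose a maximal family $S$ of strongly orthogonal real roots of $T^{\ast}$ that is stable under $\vartheta$ — handling in the usual way the point that a $\vartheta$-orbit need not itself be orthogonal, by passing inside a rank-two subsystem to a $\vartheta$-fixed real root — and then to perform the corresponding Cayley transforms simultaneously, using the root vectors supplied by $spl^{\ast}$. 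Because $S$ is $\vartheta$-stable and the Cayley elements for strongly orthogonal roots commute, the conjugating element $c$ satisfies $\theta^{\ast}(c)=c$; hence the resulting fundamental pair $(B_{1},T_{1})$, together with the transported splitting data, is again preserved by $\theta^{\ast}$. The imaginary roots of $T_{1}$ produced by transforming real roots are noncompact because the relevant copies of $SL(2)$ are $\mathbb{R}$-split; and, since $G^{\ast}$ is quasi-split, $B_{1}$ may be replaced by a $\theta^{\ast}$-stable Borel containing $T_{1}$ for which \emph{all} imaginary simple roots are noncompact — this last point is the construction underlying Vogan's characterization of generic representations, made $\theta^{\ast}$-equivariant. (Alternatively, a $\theta^{\ast}$-stable fundamental torus can be found by descending to $(G^{\ast})^{\theta^{\ast},\circ}$, which is quasi-split over $\mathbb{R}$, and lifting a fundamental torus from there; but the Cayley construction is more transparent and already hands over the splitting of (ii) for this particular pair.)

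For (ii), let $(B,T)$ be any fundamental pair of Whittaker type preserved by $\theta^{\ast}$. By the preceding lemma it extends to a fundamental splitting, the Whittaker hypothesis forcing $\varepsilon_{\alpha}=+1$ for every imaginary simple root, and the set $\mathcal{S}$ of all such extensions is a principal homogeneous space under $T_{ad}(\mathbb{R})$, which is connected because $T$, hence $T_{ad}$, is fundamental. Since $\theta^{\ast}$ preserves $(B,T)$ it acts on $\mathcal{S}$ compatibly with its finite-order action on $T_{ad}(\mathbb{R})$, and a $\theta^{\ast}$-fixed point of $\mathcal{S}$ is precisely the desired $spl_{Wh}$. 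Choosing $spl_{f}\in\mathcal{S}$ and writing $\theta^{\ast}(spl_{f})=Int(t_{0})\,spl_{f}$ — with $t_{0}\in T_{ad}(\mathbb{R})$, indeed liftable to $T_{sc}(\mathbb{R})$ by the proof of that lemma — one sees that $Int(s)\,spl_{f}$ is $\theta^{\ast}$-fixed exactly when $t_{0}=s\,\theta^{\ast}(s)^{-1}$ modulo the center, so the obstruction is the class of $t_{0}$ in $H^{1}(\langle\theta^{\ast}\rangle,T_{ad}(\mathbb{R}))$.

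The hard part will be showing that this obstruction class vanishes: the cohomology group itself need not be trivial (already for a product of copies of $SL(2)$ permuted by $\theta^{\ast}$ one finds torsion), so the argument must use the specific situation. I would exploit that $\theta^{\ast}$ is a pinned automorphism and that $T$ is fundamental — so that, modulo a uniquely divisible factor, only the anisotropic part of $T_{ad}$ contributes — to reduce the vanishing to a statement about the cocharacter lattice of that anisotropic torus as a $\langle\theta^{\ast}\rangle$-module, combined with the normalization $\varepsilon_{\alpha}=+1$ coming from Whittaker type. Equivalently, one can transport the $\theta^{\ast}$-stable splitting produced in (i) to $(B,T)$ along a conjugation furnished by the stable-conjugacy lemma of §2.2 and check that it may be adjusted by an element of $T$ so as to commute with $\theta^{\ast}$ modulo the center; this is the same computation in different clothes. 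Everything outside this vanishing is either the standard Cayley-transform machinery rendered $\theta^{\ast}$-equivariant or a formal torsor argument, so this is where I expect to have to work.
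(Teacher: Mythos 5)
Your (i) is correct but takes a different route from the paper: you perform $\vartheta$-equivariant Cayley transforms from $spl^{\ast}$ along a $\vartheta$-stable maximal strongly orthogonal family of real roots, whereas the paper invokes Steinberg's theorems on the fixed group $(G_{sc}^{\ast})^{\theta_{sc}^{\ast}}$, chooses a fundamental pair of Whittaker type there, and shows its centralizer/normalizer in $G^{\ast}$ gives a $\theta^{\ast}$-stable fundamental Whittaker pair. Both work; you even mention the fixed-group route as an alternative. The Cayley route does leave you a nontrivial detail to verify (that the $\vartheta$-orbit issue in choosing $S$ can always be resolved and that the resulting Borel can be made simultaneously $\theta^{\ast}$-stable and Whittaker), but that is plausible.

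For (ii), you have not proved the lemma: you correctly set up the torsor and the obstruction class in $H^{1}(\langle\theta^{\ast}\rangle, T_{ad}(\mathbb{R}))$, but you explicitly leave the vanishing as ``where I expect to have to work,'' and the sketch that follows (reduce to the cocharacter lattice of the anisotropic part of $T_{ad}$ as a $\langle\theta^{\ast}\rangle$-module) is not an argument. The paper's key idea, which you graze but do not use, is that the Whittaker normalization $\varepsilon_{\alpha}=+1$ gives $\sigma X_{\alpha}=X_{-\alpha}$ for every imaginary simple root vector of the extension $spl_{f}$, and hence the automorphism $\theta_{f}=Int(th)\circ\theta^{\ast}\circ Int(th)^{-1}$ that preserves $spl_{f}$ is defined over $\mathbb{R}$. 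This turns the problem into \emph{Galois} cohomology: one reduces to showing the kernel of $H^{1}(\Gamma,(T_{sc})^{\theta_{sc}^{\ast}})\to H^{1}(\Gamma,T_{sc})$ is trivial, which follows from Tate--Nakayama because $-\sigma_{T}$ and $\theta_{sc}^{\ast}$ both preserve a base of $X_{\ast}(T_{sc})$, and this triviality yields connectedness of $V(\mathbb{R})$ with $V=[1-\theta_{sc}^{\ast}](T_{sc})$, letting one take $t\in T_{sc}(\mathbb{R})$. So the cohomology you actually need to control is $\Gamma$-cohomology of a fundamental torus (where the vanishing is clean), not $\langle\theta^{\ast}\rangle$-cohomology of $T_{ad}(\mathbb{R})$ (where, as you note yourself, there is torsion in general). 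Without this reduction, your argument has a genuine gap.
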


\begin{proof}
(i) We use Steinberg's structure theorems as described in \cite[Section 3]%
{KS12} and \cite[Section 1.3]{KS99}. First attach to $spl^{\ast }$ an $%
\mathbb{R}$-splitting for $(G_{sc}^{\ast })^{\theta _{sc}^{\ast }}.$ We may
then find $h$ in $(G_{sc}^{\ast })^{\theta _{sc}^{\ast }}$ conjugating the
pair determined by this $\mathbb{R}$-splitting to a fundamental pair in $%
(G_{sc}^{\ast })^{\theta _{sc}^{\ast }}$ of Whittaker type; such a pair
exists since $(G_{sc}^{\ast })^{\theta _{sc}^{\ast }}$ is quasi-split. This
pair determines uniquely a pair $(B,T)$ for $G^{\ast \text{ }}$preserved by $%
\theta ^{\ast }.$ Then $(B,T)$ is fundamental because $T$ can have no real
roots; see the proof of Lemma 3.4.1 below. An examination of root vectors
shows further that $(B,T)$ is of Whittaker type.

(ii) Now attach to any fundamental $(B,T)$ of Whittaker type a fundamental
pair in $(G_{sc}^{\ast })^{\theta _{sc}^{\ast }}$ also of Whittaker type,
and define $h$ in $(G_{sc}^{\ast })^{\theta _{sc}^{\ast }}$ as in (i).
Extend $(B,T)\ $to a fundamental splitting $spl_{f}=(B,T,\{X_{\alpha }\})$
for $G^{\ast }$. There is $t\in T_{sc}^{\ast }$ such that $th$ transports $%
spl^{\ast }$ to $spl_{f}.$ Then 
\begin{equation}
\theta _{f}=Int(th)\circ \theta ^{\ast }\circ Int(th)^{-1}=Int(t\theta
_{sc}^{\ast }(t)^{-1})\circ \theta ^{\ast }
\end{equation}%
preserves $spl_{f}$ and coincides with $\theta ^{\ast }$ on $T.$ A
calculation on root vectors shows that $\sigma (\theta _{f})=\theta _{f}$.
For this, note that the Whittaker property of $(B,T)$ implies that $\sigma
X_{\alpha }=X_{-\alpha },$ for each imaginary root vector $X_{\alpha }$ in $%
spl_{f}$. Thus $\theta _{f}$ is defined over $\mathbb{R}$. Then $Int(t\theta
_{sc}^{\ast }(t)^{-1})$ lies in $T_{ad}(\mathbb{R}).$ Since $T_{sc}(\mathbb{R%
})\rightarrow T_{ad}(\mathbb{R})$ is surjective, we may take $t\theta
_{sc}^{\ast }(t)^{-1}$ in $V(\mathbb{R})=T_{sc}(\mathbb{R})\cap V$, where $%
V=[1-\theta _{sc}^{\ast }](T_{sc})$. Now we claim that for fundamental $T,$
the kernel of $H^{1}(\Gamma ,(T_{sc})^{\theta _{sc}^{\ast }})\rightarrow
H^{1}(\Gamma ,T_{sc})$ is trivial. From the Tate-Nakayama isomorphisms it is
enough to show the kernel of $H^{-1}(\Gamma ,[X_{\ast }(T_{sc})]^{\theta
_{sc}^{\ast }})\rightarrow H^{-1}(\Gamma ,X_{\ast }(T_{sc}))$ is trivial.
This is immediate since both $-\sigma _{T}$ and $\theta _{sc}^{\ast }$
preserve a base for the coroot lattice $X_{\ast }(T_{sc})$. Triviality of
the kernel implies that $V(\mathbb{R})$ is connected. Thus we may assume $%
t\in T_{sc}(\mathbb{R}).$ Then $\theta ^{\ast }=Int(t^{-1})\circ \theta
_{f}\circ Int(t)$ preserves the splitting $Int(t^{-1})(spl_{f})$ which is
fundamental and of Whittaker type.
\end{proof}

\subsection{An application}

We continue with an inner form $(G,\theta ,\eta )$ of the quasi-split pair $%
(G^{\ast },\theta ^{\ast }).$ Following \cite[Chapter 3]{KS99} we say an
element $\delta $ of $G(\mathbb{R})$ is $\theta $-semisimple\textit{\ }if $%
Int(\delta )\circ \theta $ preserves a pair $(B,T).$ We will say that the $%
\theta $-semisimple element $\delta $ of $G(\mathbb{R})$ is $\theta $\textit{%
-fundamental }if $Int(\delta )\circ \theta $ preserves a fundamental pair $%
(B,T).$

Recall that $G$ is \textit{cuspidal} if and only if a fundamental maximal
torus $T$ is \textit{elliptic}, \textit{i.e.,} $T$ is anisotropic modulo the
center $Z_{G}$ of $G.$ In a setting where $G$ is assumed cuspidal we will
use the term $\theta $\textit{-elliptic} interchangeably with $\theta $%
\textit{-fundamental}. For strongly $\theta $\textit{-}regular $\theta $%
\textit{-}semisimple elements there is another definition of $\theta $%
\textit{-}ellipticity (which does not require $G$ to be cuspidal) in \cite[%
Introduction]{KS99}. We observe that a strongly $\theta $\textit{-}regular $%
\theta $\textit{-}semisimple element $\delta $ of cuspidal $G(\mathbb{R})$
is $\theta $\textit{-}elliptic in our present sense if and only if it is $%
\theta $-elliptic in the sense of \cite{KS99}; see Lemma 3.8(i). \textit{In
the general setting we will use exclusively the term }$\theta $\textit{%
-fundamental}. The strongly $\theta $\textit{-}regular $\theta $\textit{-}%
semisimple elements of $G(\mathbb{R})$ that are $\theta $\textit{-}elliptic
in the sense of \cite{KS99} are $\theta $-fundamental; this is another
consequence of the observation about real roots in the proof of Lemma 3.8(i).

Following Lemma 2.4 we choose a fundamental splitting $spl_{Wh}$ of $G^{\ast
}$ of Whittaker type preserved by $\theta ^{\ast }$.

\begin{lemma}
(i) There exists a $\theta $-fundamental element in $G(\mathbb{R})\ $if and
only if there is $(\theta _{f},\eta _{f})$ in the inner class of $(\theta
,\eta )$ such that $\theta _{f}$ preserves a fundamental splitting for $G$.

(ii) If such $(\theta _{f},\eta _{f})$ exists and $\theta _{f}$ preserves
the fundamental splitting $spl_{G}$ then we may further assume $\eta _{f}$
transports $spl_{G}$ to $spl_{Wh}$ and $\theta _{f}$ to $\theta ^{\ast }$.
\end{lemma}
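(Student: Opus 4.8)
The plan is to prove (i) by a direct translation between the two conditions, and (ii) by a straightening argument that uses the connectedness results already established for fundamental tori.

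For (i), the forward direction starts with a $\theta$-fundamental $\delta \in G(\mathbb{R})$, so that $\mathrm{Int}(\delta)\circ\theta$ preserves a fundamental pair $(B,T)$ of $G$. By Lemma 2.3, extend $(B,T)$ to a fundamental splitting $spl_G = (B,T,\{X_\alpha\})$ of $G$. The automorphism $\mathrm{Int}(\delta)\circ\theta$ now preserves $(B,T)$ but need not preserve the chosen root vectors; however, since it acts on $T$ and fixes the set of $B$-simple roots, it sends $spl_G$ to another fundamental splitting extending $(B,T)$, which by Lemma 2.3 is conjugate to $spl_G$ by an element of $T_{sc}(\mathbb{R})$. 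Absorbing that element, we obtain $t \in T_{sc}(\mathbb{R})$ so that $\theta_f := \mathrm{Int}(t\delta)\circ\theta$ preserves $spl_G$. Setting $\eta_f := \eta$ (or adjusting $\eta$ correspondingly so $(G,\theta_f,\eta_f)$ is still an inner form of $(G^\ast,\theta^\ast)$), the pair $(\theta_f,\eta_f)$ lies in the inner class of $(\theta,\eta)$: condition (i) of the inner-class definition is immediate, and condition (ii) holds because $\theta_f\circ\theta^{-1}$ acts as $\mathrm{Int}(t\delta)$ with $t\delta \in G(\mathbb{R})$. For the converse, if $\theta_f$ preserves a fundamental splitting $spl_G$ then in particular $\theta_f$ preserves the underlying fundamental pair, so $\theta_f$ is realized as $\mathrm{Int}(\delta_0)\circ\theta$ for some $\delta_0$ representing the $G(\mathbb{R})$-part of $\theta_f\circ\theta^{-1}$; one checks $\delta_0$ may be taken in $G(\mathbb{R})$ using that $\theta_f$ is defined over $\mathbb{R}$ and the pair is fundamental, exhibiting a $\theta$-fundamental element.

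For (ii), suppose $\theta_f$ preserves the fundamental splitting $spl_G$ of $G$. Transporting via $\eta_f$ gives a fundamental splitting $\eta_f(spl_G)$ of $G^\ast$ preserved by the transport of $\theta_f$, which differs from $\theta^\ast$ by an inner automorphism. Lemma 2.4 supplies the fixed fundamental splitting $spl_{Wh}$ of Whittaker type preserved by $\theta^\ast$, and the Corollary to Lemma 2.3 says $\eta_f(spl_G)$ and $spl_{Wh}$ are conjugate by some $g \in G^\ast$ with $\mathrm{Int}(g)$ real on the torus. The remaining task is to arrange that, after modifying $\eta_f$ within its inner-twist class and $\theta_f$ within the inner class, the transport carries $spl_G$ exactly to $spl_{Wh}$ and $\theta_f$ to $\theta^\ast$. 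This is where the argument is most delicate: composing $\eta_f$ with $\mathrm{Int}(g)$ moves the splitting correctly but may spoil the normalization $\eta_f\circ\sigma(\eta_f)^{-1} = \mathrm{Int}(u(\sigma))$ or move $\theta_f$ to an automorphism that merely preserves $spl_{Wh}$ up to a torus element rather than equalling $\theta^\ast$. I expect to handle this exactly as in the proof of Lemma 2.4(ii): the discrepancy is an element of $T^\ast_{ad}(\mathbb{R})$, and the surjectivity of $T^\ast_{sc}(\mathbb{R}) \to T^\ast_{ad}(\mathbb{R})$ together with the triviality of the kernel of $H^1(\Gamma,(T^\ast_{sc})^{\theta^\ast_{sc}}) \to H^1(\Gamma, T^\ast_{sc})$ — both valid because $T^\ast$ is fundamental — lets us absorb it into a further $T^\ast_{sc}(\mathbb{R})$-conjugation without disturbing the inner-twist normalization. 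The Whittaker condition on $spl_{Wh}$ is what makes the relevant automorphism defined over $\mathbb{R}$, so it is available again here.

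The main obstacle is thus the bookkeeping in (ii): keeping simultaneous control of three things — the splitting of $G$, the splitting of $G^\ast$, and the cocycle $u(\sigma)$ defining the inner twist — while conjugating. I would organize this by first achieving $\eta_f(spl_G) = spl_{Wh}$ up to $T^\ast_{sc}(\mathbb{R})$ via the Corollary, then correcting the residual torus ambiguity using the connectedness/cohomology input from Lemma 2.4's proof, and only then verifying that the resulting $\theta_f$ equals $\theta^\ast$ on the nose by comparing their actions on $T^\ast$ and on the (now matched) root vectors, invoking the Whittaker normalization $\sigma X_\alpha = X_{-\alpha}$ for imaginary roots to get the Galois-equivariance $\sigma(\theta_f) = \theta_f$ for free.
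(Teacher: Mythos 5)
Your proof of (i) is correct and follows the paper's route: twist $\theta$ by $\delta$ to get $\theta'$ preserving a fundamental pair, extend the pair to a fundamental splitting $spl_G$ via Lemma 2.2, then absorb the $T_{sc}(\mathbb{R})$ ambiguity (again from Lemma 2.2) to get $\theta_f$ preserving $spl_G$ exactly, with the converse direction immediate from the definition of $\theta$-fundamental and of the inner class.

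For (ii), however, you overcomplicate and miss the key observation, which makes the paper's argument essentially a one-liner. Once you choose $\eta_f$ in the inner class of $\eta$ carrying $spl_G$ to $spl_{Wh}$ (which exists by Corollary 2.3 composed with $\eta$ — there is no ``normalization of $u(\sigma)$'' to preserve here, since $u(\sigma)$ is simply whatever $\eta_f\circ\sigma(\eta_f)^{-1}$ turns out to be), you have two automorphisms of $G^*$, namely $\eta_f\circ\theta_f\circ\eta_f^{-1}$ and $\theta^*$, that both preserve the splitting $spl_{Wh}$ and differ by an inner automorphism (the latter is part of the definition of inner form). But an inner automorphism of a connected reductive group that preserves a splitting must be the identity: it fixes the pair $(B,T)$, hence is $\mathrm{Int}(t)$ with $t\in T$, and it fixes each $X_\alpha$, so $\alpha(t)=1$ for all simple $\alpha$, forcing $t$ central. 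Therefore $\eta_f\circ\theta_f\circ\eta_f^{-1}=\theta^*$ on the nose — no ``residual torus ambiguity'', no appeal to the triviality of the kernel of $H^1(\Gamma,(T^*_{sc})^{\theta^*_{sc}})\to H^1(\Gamma,T^*_{sc})$, no second use of the Whittaker condition. Those cohomological and connectedness inputs belong to the proof of Lemma 2.4 (where one must actually build an $\mathbb{R}$-defined $\theta_f$ preserving a fundamental splitting from scratch) and to the analysis of $u_f(\sigma)$ after the lemma; they are not needed to establish statement (ii) itself. As written, your plan for (ii) does not obviously close — you would still need to explain why the correction you propose terminates in exact equality rather than equality up to a central element — whereas the direct observation above does.
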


\begin{proof}
Assume that there exists a $\theta $-fundamental element in $G(\mathbb{R}).$
Then we may multiply $\theta $ by an element of $Int(G(\mathbb{R}))$ to
obtain an $\mathbb{R}$-automorphism $\theta ^{\prime }$ preserving a
fundamental pair. Now apply Lemma 2.2.2 to extend this pair to a fundamental
splitting $spl_{G}.$ Since $\theta ^{\prime }$ carries $spl_{G}$ to another
fundamental splitting, the lemma also shows that a further multiplication by
an element of $Int(G(\mathbb{R}))$ provides an $\mathbb{R}$-automorphism $%
\theta _{f}$ which preserves $spl_{G}.$ We choose $\eta _{f}:G\rightarrow
G^{\ast }$ in the inner class of $\eta $ carrying $spl_{G}$ to $spl_{Wh}.$
Then $\eta _{f}\circ \theta _{f}\circ \eta _{f}^{-1}$ and $\theta ^{\ast }$
are automorphisms of $G^{\ast }$ which preserve $spl_{Wh}$ and differ by an
inner automorphism. Hence they coincide. The converse assertion in (i) is
immediate, and so the lemma is proved.
\end{proof}

For $\theta _{f}$ as in (ii) of the lemma, write $\eta _{f}\circ \sigma
(\eta _{f})^{-1}$ as $Int(u_{f}(\sigma )).$ Then, applying $\sigma $ to the
equation%
\begin{equation}
\eta _{f}\circ \theta _{f}\circ \eta _{f}^{-1}=\theta ^{\ast },
\end{equation}%
we see that $Int(u_{f}(\sigma ))$ lies in the torus $(T_{ad})^{\theta
_{ad}^{\ast }}.$ Since $(T_{sc})^{\theta _{sc}^{\ast }}\rightarrow
(T_{ad})^{\theta _{ad}^{\ast }}$ is surjective (both are connected; see \cite%
[Section 1.1]{KS99}) we may now assume 
\begin{equation}
u_{f}(\sigma )\in (T_{sc})^{\theta _{sc}^{\ast }}.
\end{equation}

\section{\textbf{Norms and the fundamental case}}

Here we include notation and review, and show that the norm correspondence
is well-behaved in the fundamental case.

\subsection{Endoscopic data}

We now consider as quasi-split data, a triple $(G^{\ast },\theta ^{\ast
},a), $ where $(G^{\ast },\theta ^{\ast })$ is a quasi-split pair as above,
and $a$ is a $1$-cocycle of the Weil group $W_{\mathbb{R}}$ of $\mathbb{C}/%
\mathbb{R} $ in the center of the connected Langlands dual group $G^{\vee }.$
Then $\varpi $ will denote the character on $G^{\ast }(\mathbb{R}),$ or on
the real points of an inner form of $G^{\ast }$, attached to $a.$ As always,
and without harm, we provide an explicit transition of data between $G^{\ast
}$ and its Langlands dual $^{L}G=G^{\vee }\rtimes W_{\mathbb{R}}$ by the
choice of $\mathbb{R}$-splitting $spl^{\ast }=(B^{\ast },T^{\ast
},\{X_{\alpha }\})$ of $G^{\ast }$ preserved by $\theta ^{\ast }$ and dual $%
\Gamma $-splitting $spl^{\vee }=(\mathcal{B},\mathcal{T},\{X_{\alpha ^{\vee
}}\})$ for $G^{\vee }.$ The action of $W_{\mathbb{R}}$ on $G^{\vee }$
factors through $W_{\mathbb{R}}\rightarrow \Gamma .$ Then $\theta ^{\vee }$
is the $\Gamma $-automorphism of $G^{\vee }$ that preserves $spl^{\vee }$
and is dual to $\theta ^{\ast }$ as automorphism of the dual based root
data. We write $^{L}\theta _{a}$ for the extension%
\begin{equation*}
g\times w\rightarrow a(w).\theta ^{\vee }(g)\times w
\end{equation*}%
of $\theta ^{\vee }$ to an automorphism of $^{L}G.$

We assume $\mathfrak{e}_{z}$ is a supplemented set of endoscopic data (SED)
for $(G^{\ast },\theta ^{\ast },a)$ and its inner forms. The SED consists of
a set $\mathfrak{e}=(H,\mathcal{H},s)$ of endoscopic data for $(G^{\ast
},\theta ^{\ast },a)$ and a $z$-pair $(H_{1},\xi _{1})$ for $\mathfrak{e}$
in the sense of \cite{KS99}, although we avoid the additional choice $%
a^{\prime }$ from Section 2.1 of \cite{KS99} by adjusting $s.$ There is no
harm in assuming $\mathfrak{e}_{z}$ is bounded in the sense of \cite[Section
2]{Sh14}. Recall that $H_{1}$ is what we call the endoscopic group defined
by the SED; $Z_{1}$ will denote the kernel of the $z$-extension $%
H_{1}\rightarrow H.$ We remark that SEDs exist for $(G^{\ast },\theta ^{\ast
},a)$ precisely when there are Langlands parameters preserved by $^{L}\theta
_{a};$ see Section 9.1.

As noted in the introduction, we will be concerned mainly with the \textit{%
fundamental case} for which we make an ad hoc definition in Section 3.3, and
more particularly with the \textit{cuspidal-elliptic setting} of Section 3.4.

\subsection{Norm correspondence}

A norm correspondence for $G(\mathbb{R})$ and an endoscopic group $H_{1}(%
\mathbb{R})$ is defined via maps on (twisted) conjugacy classes \cite[%
Chapter 3, Section 5.4]{KS99}. In general, the correspondence is not
uniquely determined by $(\theta ,\eta )$ and there are examples where it is
empty on all or much of the \textit{very regular set} defined in the
paragraph following (3.1) below. In preparation for the fundamental case to
be introduced in Section 3.3, we review two simpler settings indicated as I,
II.

(I) Assume that $\theta $ preserves a fundamental splitting or, more
precisely, that $(\theta ,\eta )$ is of the form $(\theta _{f},\eta _{f})$
from (ii) in Lemma 2.5. The equation (2.3) allows us to attach a unique norm
correspondence to $(\theta ,\eta )$. To begin, there is no need to choose
the datum $g_{\theta }$ of \cite[Chapter 3]{KS99}; in the formulas there,
set $g_{\theta }=1.$ To compute the cochain $z_{\sigma }$ of Lemma 3.1.A of 
\cite{KS99}, write $u(\sigma )$ from (2.1) above as $u_{1}(\sigma ).z(\sigma
)$, where $u_{1}(\sigma )\in (T_{sc})^{\theta _{sc}^{\ast }}$ as in (2.4)
and $z(\sigma )$ is central in $G_{sc}^{\ast }$. Thus $z_{\sigma }=(1-\theta
_{sc}^{\ast })$ $z(\sigma ).$ Then, by (2) of Lemma 3.1.A in \cite{KS99}, $%
\eta $ determines uniquely a $\Gamma $-equivariant bijective map from the
set $Cl_{\theta \text{-}ss}(G,\theta )$ of $\theta $-twisted conjugacy
classes of $\theta $-semisimple elements in $G(\mathbb{C})$ to the
corresponding set $Cl_{\theta ^{\ast }\text{-}ss}(G^{\ast },\theta ^{\ast })$
for $(G^{\ast },\theta ^{\ast }).$ This map provides the first step in
defining the norm correspondence. By restriction, we obtain a $\Gamma $%
-equivariant bijective map from the set $Cl_{str\text{ }\theta \text{-}%
reg}(G,\theta )$ of $\theta $-twisted conjugacy classes of strongly $\theta $%
-regular elements in $G(\mathbb{C})$ to the corresponding set $Cl_{str\text{ 
}\theta ^{\ast }\text{-}reg}(G^{\ast },\theta ^{\ast })$ for $(G^{\ast
},\theta ^{\ast }).$

For the second step, the endoscopic datum $\mathfrak{e}$ provides a unique $%
\Gamma $-equivariant surjective map from the set $Cl_{ss}(H)$ of semisimple
conjugacy classes in $H(\mathbb{C})$ to $Cl_{\theta ^{\ast }\text{-}%
ss}(G^{\ast },\theta ^{\ast }).\ $The inverse image of $Cl_{str\text{ }%
\theta ^{\ast }\text{-}reg}(G^{\ast },\theta ^{\ast })$ is, by definition,
the set $Cl_{str\text{ }G\text{-}reg}(H)$ of strongly $G$-regular conjugacy
classes in $H(\mathbb{C});$ see \cite[Lemma 3.3.C]{KS99}.

Third, the $z$-extension $H_{1}\rightarrow H$ provides a $\Gamma $%
-equivariant surjective map from $Cl_{ss}(H_{1})$ to $Cl_{ss}(H),$ and then
by restriction, a $\Gamma $-equivariant surjective map from $Cl_{str\text{ }G%
\text{-}reg}(H_{1})$ to $Cl_{str\text{ }G\text{-}reg}(H).$

In summary, we have established the following diagram with all arrows $%
\Gamma $-equivariant.

\begin{equation}
\begin{array}{ccccc}
Cl_{str\text{-}\theta \text{-}reg}(G,\theta ) &  &  &  &  \\ 
& \searrow &  &  &  \\ 
&  & Cl_{str\text{-}\theta ^{\ast }\text{-}reg}(G^{\ast },\theta ^{\ast }) & 
&  \\ 
&  &  & \nwarrow &  \\ 
&  & \uparrow &  & Cl_{str\text{ }G\text{-}reg}(H_{1}) \\ 
&  &  & \swarrow &  \\ 
&  & Cl_{str\text{ }G\text{-}reg}(H) &  & 
\end{array}%
\end{equation}

Turning now to real points, by the \textit{very regular set} in $H_{1}(%
\mathbb{R})\times G(\mathbb{R})$ we mean the set of all pairs $(\gamma
_{1},\delta )$ where $\gamma _{1}\in H_{1}(\mathbb{R})$ is strongly $G$%
-regular and $\delta \in G(\mathbb{R})$ is strongly $\theta $-regular.
Restricting to the real points of the classes in (3.2.1) we obtain maps from
stable $\theta $-twisted conjugacy classes of strongly $\theta $-regular
elements in $G(\mathbb{R})$ to stable $\theta ^{\ast }$-twisted conjugacy
classes of strongly $\theta ^{\ast }$-regular elements in $G^{\ast }(\mathbb{%
R})$, from stable conjugacy classes of strongly $G$-regular elements in $H(%
\mathbb{R})$ to stable $\theta ^{\ast }$-twisted conjugacy classes of
strongly $\theta ^{\ast }$-regular elements in $G^{\ast }(\mathbb{R})$, and
from the set of stable classes of strongly regular elements in $H_{1}(%
\mathbb{R})$ to the stable conjugacy classes of strongly regular elements in 
$H(\mathbb{R}).$ Because $H_{1}\rightarrow H$ is a $z$-extension, the last
map is surjective and remains surjective when we replace "strongly regular"
by "strongly $G$-regular". As in \cite[Section 3.3]{KS99}, we now define a 
\textit{norm correspondence on the very regular set}: $\delta \in G(\mathbb{R%
})$ has norm $\gamma _{1}$ in $H_{1}(\mathbb{R}),$ \textit{i.e.,} $(\gamma
_{1},\delta )$ lies in the norm correspondence, if and only if the images of
the respective stable classes of $\gamma _{1},\delta $ have the same image
among the stable $\theta ^{\ast }$-twisted conjugacy classes of strongly $%
\theta ^{\ast }$-regular elements in $G^{\ast }(\mathbb{R}).$

To attach data to the norm correspondence as in \cite[Section 4.4]{KS99},
consider strongly $\theta $-regular $\delta \in G(\mathbb{R})$. Then
unraveling the definition of the last paragraph shows that $\delta $ has
norm $\gamma _{1}$ in $H_{1}(\mathbb{R})$ if and only if there exist a $%
\theta ^{\ast }$-stable pair $(B,T)$ in $G^{\ast }$ with $T$ defined over $%
\mathbb{R}$ and elements $g$ in $G_{sc}^{\ast },$ $\delta ^{\ast }$ in $T$
such that%
\begin{equation}
\delta ^{\ast }=g.\eta (\delta ).\theta ^{\ast }(g)^{-1}
\end{equation}%
and the image $\gamma $ of $\delta ^{\ast }$ under some admissible $%
T\rightarrow T_{\theta ^{\ast }}\rightarrow T_{H}$ coincides with the image
of $\gamma _{1}$ under $H_{1}\rightarrow H.$ See \cite[Section 3.3]{KS99}.
This is summarized in the following diagram, where $N$ denotes the
projection $T\rightarrow T_{\theta ^{\ast }}$ to coinvariants.%
\begin{equation*}
\begin{array}{ccccccc}
G(\mathbb{R})\ni \delta & \longrightarrow & \delta ^{\ast }\in T &  &  &  & 
\\ 
&  &  &  &  &  &  \\ 
&  & \downarrow &  &  &  & \gamma _{1}\in H_{1}(\mathbb{R}) \\ 
&  &  &  &  & \swarrow &  \\ 
&  & N\delta ^{\ast }\in T_{\theta ^{\ast }}(\mathbb{R}) & \longrightarrow & 
\gamma \in T_{H}(\mathbb{R}) &  & 
\end{array}%
\end{equation*}

As in \cite[Section 3.3]{KS99}, we say a maximal torus $T$ in $G^{\ast }$ is 
$\theta ^{\ast }$\textit{-admissible} if there exists a $\theta ^{\ast }$%
-stable pair $(B,T)$ in $G^{\ast }.$ Also, $T_{H}$ and its inverse image $%
T_{1}$ in $H_{1}$ are $\theta ^{\ast }$-\textit{norm groups for\ }$T$ if
there exists an admissible $T\rightarrow T_{\theta ^{\ast }}\rightarrow
T_{H} $. Then write $T_{1}$ for the inverse image of $T_{H}$ in $H_{1}.$
Every maximal torus over $\mathbb{R}$ in $H_{1}$ is a $\theta ^{\ast }$-norm
group for some $\theta ^{\ast }$-admissible maximal torus $T$ in $G^{\ast }$ 
\cite[Lemma 3.3.B]{KS99}.

In regard to (3.2) we note the following for use in calculations.

\begin{remark}
Suppose $(B_{\delta },T_{\delta })$ is preserved by $Int(\delta )\circ
\theta $, where $\delta $ is a strongly $\theta $-regular element of $G(%
\mathbb{R})$ as above. Then $T_{\delta }$ is the centralizer in $G$ of the
abelian reductive subgroup $Cent_{\theta }(\delta ,G)$ of $G.$ We may
arrange that $Int(g)\circ \eta $ carries $(B_{\delta },T_{\delta })$ to $%
(B,T)\ $and $Cent_{\theta }(\delta ,G)$ to $T^{\theta ^{\ast }}$ with the
restriction of $Int(g)\circ \eta $ to $T_{\delta }$ defined over $\mathbb{R}$%
.
\end{remark}

(II) Here we consider the effect of replacing $\theta $ in (I) by $\theta
^{\prime }$ of the form $Int(g_{\mathbb{R}})\circ \theta ,$ where $g_{%
\mathbb{R}}\in G(\mathbb{R}).$ The norm correspondence is no longer
canonical but there is a quick and transparent definition in this case.
Namely, the map $\delta \rightarrow \delta .g_{\mathbb{R}}$ carries strongly 
$\theta ^{\prime }$-regular elements in $G(\mathbb{R})$ to strongly $\theta $%
-regular elements in $G(\mathbb{R}),$ providing a bijection between the
stable classes of strongly $\theta ^{\prime }$-regular elements and the
stable classes of strongly $\theta $-regular elements. We then extend the
definition of the norm correspondence on stable classes to this case in the
obvious way. This norm for $\theta ^{\prime }$ depends on our choice of $g_{%
\mathbb{R}}$ and so we use the terminology $g_{\mathbb{R}}$-norm. The
dependence is that $g_{\mathbb{R}}$ may be replaced by $z_{\mathbb{R}}g_{%
\mathbb{R}},$ with $z_{\mathbb{R}}\in Z_{G}(\mathbb{R}).$ Then strongly $G$%
-regular $\gamma _{1}$ in endoscopic group $H_{1}(\mathbb{R})$ is a $g_{%
\mathbb{R}}$-norm of $\delta $ if and only if $\gamma _{1}$ is a $z_{\mathbb{%
R}}g_{\mathbb{R}}$-norm of $\delta z_{\mathbb{R}}.$ See Section 4.2 for the
role of $z_{\mathbb{R}}$ in transfer statements.

\subsection{Fundamental case}

Let $(G^{\ast },\theta ^{\ast })$ be a quasi-split pair. A fundamental
maximal torus $T_{1}$ in $H_{1}$ is a $\theta ^{\ast }$-norm group for some $%
\theta ^{\ast }$-admissible maximal torus $T$ in $G^{\ast }$; see Section
3.2. It will be convenient to call $(G^{\ast },\theta ^{\ast },\mathfrak{e}%
_{z}\mathfrak{)}$ \textit{fundamental} if we may choose $T$ to be
fundamental; see Remark 3.4 below.

In general, write $StrReg(G^{\ast },\theta ^{\ast })$ for the set of all
strongly $\theta ^{\ast }$-regular elements in $G^{\ast }(\mathbb{R})$ and $%
StrReg(G^{\ast },\theta ^{\ast })_{f}$ for the subset of $\theta ^{\ast }$%
-fundamental elements as defined in Section 2.4.

\begin{lemma}
$StrReg(G^{\ast },\theta ^{\ast })_{f}$ is nonempty and a union of stable $%
\theta ^{\ast }$-twisted conjugacy classes.
\end{lemma}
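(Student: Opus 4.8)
The plan is to prove nonemptiness first and then the stability statement, since the latter follows quickly once the former is in hand.

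For nonemptiness, I would start from a fundamental pair $(B,T)$ of $G^{\ast}$ that is preserved by $\theta^{\ast}$. Such a pair exists: by Lemma 2.4(i) there is a fundamental pair of Whittaker type preserved by $\theta^{\ast}$, and in particular a $\theta^{\ast}$-stable fundamental pair. Fix this pair and pick a strongly regular element $t_{0}$ of $T(\mathbb{R})$; since $T$ is fundamental, $T(\mathbb{R})$ is Zariski-dense and strongly regular elements are plentiful, so we can arrange $t_{0}$ to be strongly $\theta^{\ast}$-regular (i.e., $Cent_{\theta^{\ast}}(t_{0}, G^{\ast})$ is as small as possible and abelian, and $Int(t_{0})\circ\theta^{\ast}$ acts without nontrivial fixed roots). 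Then $Int(t_{0})\circ\theta^{\ast}$ preserves $(B,T)$, so $t_{0}$ is $\theta^{\ast}$-semisimple with a fundamental associated torus; by the definition in Section 2.4 this says precisely that $t_{0}\in StrReg(G^{\ast},\theta^{\ast})_{f}$. One small point to verify is that a strongly $\theta^{\ast}$-regular $t_{0}$ can be chosen inside $T(\mathbb{R})$ for a fundamental $T$: the locus of non-strongly-$\theta^{\ast}$-regular elements is a proper closed subvariety cut out by root conditions, and $T(\mathbb{R})$ is Zariski-dense in $T$ (using connectedness/density facts for fundamental tori, cf. the references to \cite{Ko86} and \cite{Sh12} already invoked in Section 2.2), so the complement meets $T(\mathbb{R})$.

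For the stability statement, I would argue that $\theta^{\ast}$-fundamentality is constant on stable $\theta^{\ast}$-twisted conjugacy classes. Suppose $\delta\in StrReg(G^{\ast},\theta^{\ast})_{f}$, so $Int(\delta)\circ\theta^{\ast}$ preserves a fundamental pair $(B_{\delta},T_{\delta})$, and let $\delta'$ be stably $\theta^{\ast}$-twisted conjugate to $\delta$, say $\delta'=g\,\delta\,\theta^{\ast}(g)^{-1}$ with $g\in G^{\ast}$ and $Int(g)$ restricting to an $\mathbb{R}$-isomorphism between the relevant tori (this is the content of stable twisted conjugacy). Then $Int(\delta')\circ\theta^{\ast}=Int(g)\circ(Int(\delta)\circ\theta^{\ast})\circ Int(g)^{-1}$ preserves $(Int(g)B_{\delta}, Int(g)T_{\delta})$. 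Since $Int(g)\colon T_{\delta}\to Int(g)T_{\delta}$ is defined over $\mathbb{R}$, the torus $Int(g)T_{\delta}$ is again fundamental (fundamentality is an $\mathbb{R}$-isomorphism invariant, being characterized by having no $\sigma$-fixed roots, or equivalently minimal $\mathbb{R}$-split rank). Hence $(Int(g)B_{\delta}, Int(g)T_{\delta})$ is a fundamental pair preserved by $Int(\delta')\circ\theta^{\ast}$, so $\delta'$ is $\theta^{\ast}$-fundamental. This shows $StrReg(G^{\ast},\theta^{\ast})_{f}$ is a union of stable $\theta^{\ast}$-twisted conjugacy classes.

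I expect the main obstacle, such as it is, to be the first step: producing a $\theta^{\ast}$-stable \emph{fundamental} pair and a strongly $\theta^{\ast}$-regular real point inside it. The existence of a $\theta^{\ast}$-stable fundamental pair can be extracted from Lemma 2.4(i) (or from Steinberg's structure theory for the fixed-point group $(G_{sc}^{\ast})^{\theta_{sc}^{\ast}}$, as used in that proof), and the density argument for the real point is routine once one knows $T(\mathbb{R})$ is Zariski-dense; so I anticipate the whole argument is short. The stability half is essentially formal, relying only on the observation that an $\mathbb{R}$-rational isomorphism of tori preserves the property of being fundamental.
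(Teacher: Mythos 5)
Your proof is correct and takes essentially the same approach as the paper: produce a $\theta^{\ast}$-stable fundamental pair $(B,T)$, observe that $T(\mathbb{R})$ contains strongly $\theta^{\ast}$-regular elements (which are then $\theta^{\ast}$-fundamental by definition), and note that $\theta^{\ast}$-fundamentality is invariant under stable $\theta^{\ast}$-twisted conjugacy because the centralizer tori are carried to each other by $\mathbb{R}$-isomorphisms. The only cosmetic difference is that you cite Lemma 2.4(i) for the existence of the $\theta^{\ast}$-stable fundamental pair, while the paper points forward to the proof of Lemma 3.8; both are valid sources for the same fact.
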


\begin{proof}
There is a fundamental pair $(B,T)$ in $G^{\ast }$ preserved by $\theta
^{\ast }$; see the proof of Lemma 3.8 below. Then $T(\mathbb{R})$ contains
(many) elements in $StrReg(G^{\ast },\theta ^{\ast })_{f}$. The rest is
immediate from definitions.
\end{proof}

Write $StrReg_{G^{\ast }}(H_{1})$ for the set of all strongly $G^{\ast }$%
-regular elements in $H_{1}(\mathbb{R})$ and $StrReg_{G^{\ast }}(H_{1})_{f}$
for the subset of elements $\gamma _{1}$ such that the maximal torus $%
Cent(\gamma _{1},H_{1})$ is fundamental in $H_{1}.$ We call $(\gamma
_{1},\delta )$ in the very regular set, \textit{i.e.}, in $StrReg_{G^{\ast
}}(H_{1})\times StrReg(G^{\ast },\theta ^{\ast }),$ a \textit{related pair}
if it lies in the uniquely defined norm correspondence for $(G^{\ast
},\theta ^{\ast }),$ \textit{i.e.,} if $\gamma _{1}$ is a norm of $\delta .$

\begin{lemma}
(i) $(G^{\ast },\theta ^{\ast },\mathfrak{e}_{z}\mathfrak{)}$ is \textit{%
fundamental if and only if}%
\begin{equation*}
StrReg_{G^{\ast }}(H_{1})_{f}\text{ }\times \text{ }StrReg(G^{\ast },\theta
^{\ast })_{f}
\end{equation*}%
contains a related pair.

Now assume $(G^{\ast },\theta ^{\ast },\mathfrak{e}_{z}\mathfrak{)}$ is 
\textit{fundamental}. Then (ii) each $\delta $ in $StrReg(G^{\ast },\theta
^{\ast })_{f}$ has a norm $\gamma _{1}$ in $H_{1}(\mathbb{R})$ and $\gamma
_{1}\in $ $StrReg_{G^{\ast }}(H_{1})_{f},$ (iii) if $\gamma _{1}\in $ $%
StrReg_{G^{\ast }}(H_{1})_{f}$ is a norm of strongly $\theta $-regular $%
\delta $ in $G^{\ast }(\mathbb{R})$ then $\delta \in StrReg(G^{\ast },\theta
^{\ast })_{f}.$
\end{lemma}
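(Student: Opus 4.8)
The plan is to prove the three assertions by repeatedly exploiting the fact that the norm correspondence on stable classes is built, via the diagram (3.2.1), from the admissible maps $T \to T_{\theta^\ast} \to T_H$ together with the identification of strongly $\theta^\ast$-regular stable twisted classes with $\theta^\ast$-admissible tori carrying a regular representative. The underlying principle is that whether a related pair $(\gamma_1,\delta)$ has both members fundamental is controlled entirely by the tori involved: $\delta$ is $\theta^\ast$-fundamental iff it lies in $T(\mathbb{R})$ for some $\theta^\ast$-stable fundamental pair $(B,T)$, and $\gamma_1$ lies in $StrReg_{G^\ast}(H_1)_f$ iff $\mathrm{Cent}(\gamma_1,H_1)$ is fundamental. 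So the real content is a statement comparing "fundamental for $H_1$" with "fundamental-and-admissible for $G^\ast$" under the norm map, i.e. a statement about the admissible correspondence $T \leftrightarrow T_1$ of tori.

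First I would recall (from Section 3.2, \cite[Lemma 3.3.B]{KS99}) that every maximal torus over $\mathbb{R}$ in $H_1$ is a $\theta^\ast$-norm group for some $\theta^\ast$-admissible $T$ in $G^\ast$, and conversely. Under such an admissible correspondence, $T_1$ (equivalently $T_H$) and $T_{\theta^\ast}$ have "the same" cocharacter data up to the norm map $N\colon T \to T_{\theta^\ast}$; in particular $T_1$ is fundamental in $H_1$ iff $T_{\theta^\ast}$ is fundamental in $H$ iff — this is the key dimension/rank bookkeeping — $T$ is as $\mathbb{R}$-split as possible among $\theta^\ast$-admissible tori in $G^\ast$, which is exactly the condition that $T$ be a fundamental torus of $G^\ast$ (using that $G^\ast$ is quasi-split, so a fundamental torus is $\theta^\ast$-admissible — this is Lemma 3.4.1, whose proof is referenced as establishing the existence of a $\theta^\ast$-stable fundamental pair via the "no real roots" observation). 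Granting this equivalence, part (i) follows: if $(G^\ast,\theta^\ast,\mathfrak{e}_z)$ is fundamental we may by definition choose the admissible $T$ to be fundamental, pick $\gamma_1 \in T_1(\mathbb{R})$ strongly $G^\ast$-regular (possible by Lemma 3.5 applied to $H_1$, since $T_1$ fundamental gives plenty of such elements) and $\delta \in T(\mathbb{R})$ in $StrReg(G^\ast,\theta^\ast)_f$ matching it — this is a related pair in the required product. Conversely a related pair in $StrReg_{G^\ast}(H_1)_f \times StrReg(G^\ast,\theta^\ast)_f$ produces, via the diagram following (3.2) and Remark 3.3, a $\theta^\ast$-stable pair $(B,T)$ with $T$ fundamental (since $\delta$ is $\theta^\ast$-fundamental) and an admissible $T \to T_{\theta^\ast} \to T_H$ with $T_1 = \mathrm{Cent}(\gamma_1,H_1)$; so $T$ is the required fundamental admissible torus.

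For (ii), assume $(G^\ast,\theta^\ast,\mathfrak{e}_z)$ is fundamental and take any $\delta \in StrReg(G^\ast,\theta^\ast)_f$. By Lemma 3.5 its stable twisted class meets $T(\mathbb{R})$ for a $\theta^\ast$-stable fundamental pair $(B,T)$; since $T$ is fundamental it is $\theta^\ast$-admissible (Lemma 3.4.1), so it has a $\theta^\ast$-norm group $T_1$ in $H_1$, necessarily fundamental by the equivalence above. Then $\delta$ has a norm $\gamma_1$ which we may take in $T_1(\mathbb{R})$, and $\mathrm{Cent}(\gamma_1,H_1) = T_1$ is fundamental, i.e. $\gamma_1 \in StrReg_{G^\ast}(H_1)_f$. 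For (iii), suppose $\gamma_1 \in StrReg_{G^\ast}(H_1)_f$ is a norm of strongly $\theta$-regular $\delta \in G^\ast(\mathbb{R})$. Then $T_1 := \mathrm{Cent}(\gamma_1,H_1)$ is fundamental, and the norm relation supplies a $\theta^\ast$-stable pair $(B,T)$ and an admissible $T \to T_{\theta^\ast} \to T_H$ with $T_1$ a $\theta^\ast$-norm group for $T$ and with the stable twisted class of $\delta$ meeting $T(\mathbb{R})$; the equivalence forces $T$ to be a fundamental torus, hence $(B,T)$ a fundamental pair preserved by (a conjugate of) $Int(\delta)\circ\theta^\ast$, so $\delta \in StrReg(G^\ast,\theta^\ast)_f$.

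The main obstacle is the italicized equivalence in the second paragraph: precisely matching "fundamental torus in $H_1$" with "fundamental torus in $G^\ast$" across the admissible correspondence $T \leftrightarrow T_1$, given that this correspondence only identifies $T_1$ with the coinvariant quotient $T_{\theta^\ast}$ (twisted by the norm map) and given the presence of the $z$-extension $H_1 \to H$ and of $\theta^\ast$. The careful point is that passing to $\theta^\ast$-coinvariants of a fundamental $\theta^\ast$-admissible torus, and then to the norm group in $H$, does not change the $\mathbb{R}$-split rank in a way that would spoil fundamentality — this rests on the "no real roots" phenomenon for $\theta^\ast$-stable fundamental pairs (the observation cited from the proof of Lemma 3.8(i)) and on the compatibility of the admissible embedding with Galois actions; the $z$-extension is harmless since $Z_1$ is central and induced, so it affects neither the anisotropic-mod-center condition nor the root structure. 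I would isolate this as a short sublemma about admissible tori before assembling (i)–(iii).
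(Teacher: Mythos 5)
The argument for parts (ii) and (iii) rests on the italicized ``equivalence'' --- that for an admissible correspondence $T \leftrightarrow T_1$, the torus $T_1$ is fundamental in $H_1$ if and only if $T$ is fundamental in $G^*$ --- and this equivalence is false. You recognize it as the main obstacle, but the sublemma you sketch cannot close the gap. If the equivalence held, then together with the fact (Lemma 3.3.B of Kottwitz--Shelstad) that every fundamental $T_1$ in $H_1$ is a $\theta^*$-norm group for some admissible $T$, it would force \emph{every} triple $(G^*,\theta^*,\mathfrak{e}_z)$ to be fundamental, making vacuous both part (i) of the present lemma and the characterization in Theorem 3.12(i). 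Concretely, take $\theta^* = \mathrm{id}$, $G^* = GL_4$, $H_1 = H = GL_1 \times GL_3$: the fundamental Cartan of $H(\mathbb{R})$ is $\mathbb{R}^\times \times (\mathbb{R}^\times \times \mathbb{C}^\times)$, of $\mathbb{R}$-split rank $3$, while the fundamental Cartan of $GL_4(\mathbb{R})$ is $(\mathbb{C}^\times)^2$, of split rank $2$; the admissible embedding of the former into $G^*$ is therefore not fundamental. Your heuristic in the closing paragraph --- that passing to $\theta^*$-coinvariants and then to $H$ cannot spoil fundamentality because a fundamental torus in quasi-split $G^*$ is $\theta^*$-admissible --- conflates two distinct conditions: a fundamental torus always lies in a $\theta^*$-stable pair, but it need not be admissible for the specific endoscopic datum $\mathfrak{e}_z$, i.e.\ it need not have a $\theta^*$-norm group in $H$. (Also, ``as $\mathbb{R}$-split as possible'' should read ``as little $\mathbb{R}$-split as possible''; fundamental means minimally split, and the slip suggests the rank bookkeeping is running in the wrong direction.)

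What the paper uses in place of this equivalence is Remark 3.1: for a related pair $(\gamma_1,\delta)$, the stable class of the $\theta^*$-admissible torus $T$ in the norm relation is uniquely determined by $\gamma_1$, since $T = Cent(T^{\theta^*},G^*)$ and $T^{\theta^*}$ is pinned down by the image of $\gamma_1$. That is the crucial extra input. For (iii): fundamentality of the triple supplies a fundamental admissible $T_0$ having the fundamental $T_1$ as norm group; by the uniqueness just cited, the $T$ attached to any related pair $(\gamma_1,\delta)$ with $Cent(\gamma_1,H_1)$ fundamental is stably conjugate to $T_0$, hence fundamental, so $\delta$ is $\theta^*$-fundamental. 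For (ii) the paper does not assert that an arbitrary fundamental torus in $G^*$ has a norm group; instead it replaces $\delta$ by a twisted $G^*(\mathbb{R})$-conjugate so that $T_\delta$ equals the \emph{specific} fundamental admissible $T$ furnished by the hypothesis (legitimate since the fundamental pairs form a single stable class), after which the norm lands in the fundamental $T_1$. Your argument for part (i) is essentially the paper's and is correct; the gap is confined to (ii) and (iii), where the uniqueness furnished by Remark 3.1 must replace the false symmetric equivalence between fundamental tori on the two sides.
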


\begin{proof}
(i) Assume that $(G^{\ast },\theta ^{\ast },\mathfrak{e}_{z}\mathfrak{)}$ is
fundamental and choose an admissible $T\rightarrow T_{\theta ^{\ast
}}\rightarrow T_{H}$ with both $T,T_{H}$ fundamental. This provides related
pairs in $StrReg_{G^{\ast }}(H_{1})_{f}$ $\times $ $StrReg(G^{\ast },\theta
^{\ast })_{f}.$ Conversely, a related pair in $StrReg_{G^{\ast }}(H_{1})_{f}$
$\times $ $StrReg(G^{\ast },\theta ^{\ast })_{f}$ provides an admissible $%
T\rightarrow T_{\theta ^{\ast }}\rightarrow T_{H}$ with both $T,T_{H}$
fundamental, and so $(G^{\ast },\theta ^{\ast },\mathfrak{e}_{z})$ is
fundamental. To check (ii), we may replace $\delta $ with a twisted
conjugate by an element of $G^{\ast }(\mathbb{R})$ and assume that $%
T_{\delta }=T$. The result then follows easily; see \cite[Lemma 4.4.A]{KS99}%
. For (iii), suppose $(\gamma _{1},\delta )$ is a related pair with attached 
$T_{H}$ fundamental. Then Remark 3.1 implies that the stable class of
attached $\theta ^{\ast }$-admissible $T$ is uniquely determined by $\gamma
_{1}$, and (iii) follows.
\end{proof}

\begin{remark}
The argument for (iii) shows that $(G^{\ast },\theta ^{\ast },\mathfrak{e}%
_{z}\mathfrak{)}$ is \textit{fundamental if and only if }every $\theta
^{\ast }$-admissible maximal torus $T$ in $G^{\ast }$ with a fundamental
maximal torus in $H_{1}$ as $\theta ^{\ast }$-norm group is fundamental.
\end{remark}

Now consider an inner form $(G,\theta ,\eta ),$ and define $StrReg(G,\theta
)_{f}$ in the same way as $StrReg(G^{\ast },\theta ^{\ast })_{f}$. In
general,\textit{\ }we modify $StrReg_{G^{\ast }}(H_{1})_{f}$ slightly as in
Section 5.4 of \cite{KS99}. Namely, we replace $H_{1}(\mathbb{R})$ by a
suitable coset $H_{1}(\mathbb{R})^{\dag }$ of $H_{1}(\mathbb{R})$ in $H_{1}(%
\mathbb{C}).$ Then we define a subset $StrReg_{G^{\ast }}(H_{1})_{f}^{\dag }$
of this coset $H_{1}(\mathbb{R})^{\dag }\ $which may be empty. For $(\theta
,\eta )$ as in (ii) of the next lemma we take, as we may, $H_{1}(\mathbb{R}%
)^{\dag }=H_{1}(\mathbb{R}).$

\begin{lemma}
Assume that $(G^{\ast },\theta ^{\ast },\mathfrak{e}_{z}\mathfrak{)}$ is
fundamental. Then the following are equivalent for an inner form $(G,\theta
,\eta )$ of $(G^{\ast },\theta ^{\ast })$:

(i) there exists a $\theta $-fundamental element in $G(\mathbb{R}),$

(ii) there is $(\theta _{f},\eta _{f})$ in the inner class of $(\theta ,\eta
)$ such that $\theta _{f}$ preserves a fundamental splitting for $G,$

(iii) there exists a related pair in $StrReg_{G^{\ast }}(H_{1})_{f}^{\dag
}\times StrReg(G,\theta )_{f}.$
\end{lemma}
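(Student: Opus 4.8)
The plan is to establish the chain of equivalences $(i)\Leftrightarrow(ii)$ and then $(ii)\Leftrightarrow(iii)$, using that $(G^{\ast},\theta^{\ast},\mathfrak{e}_z)$ is fundamental. The equivalence $(i)\Leftrightarrow(ii)$ is already Lemma 2.5(i), so nothing new is needed there; I would simply cite it. The substance is in linking these conditions to the existence of a related pair with both members fundamental, which is condition $(iii)$.

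For $(ii)\Rightarrow(iii)$, I would start from $(\theta_f,\eta_f)$ as in Lemma 2.5(ii), so that $\theta_f$ preserves a fundamental splitting $spl_G=(B_G,T_G,\{X_\alpha\})$ for $G$ and $\eta_f$ carries $spl_G$ to $spl_{Wh}$ and $\theta_f$ to $\theta^{\ast}$. Since $(G^{\ast},\theta^{\ast},\mathfrak{e}_z)$ is fundamental, Lemma 3.3(i) provides a related pair $(\gamma_1,\delta^{\ast})$ in $StrReg_{G^{\ast}}(H_1)_f\times StrReg(G^{\ast},\theta^{\ast})_f$; moreover by Lemma 3.3(ii) we may take the attached $\theta^{\ast}$-admissible torus $T^{\ast}$ to be fundamental and, after twisted conjugacy in $G^{\ast}(\mathbb{R})$, equal to $\eta_f(T_G)$. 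The point is then to transport $\delta^{\ast}$ back through $\eta_f$: because $\eta_f$ intertwines $\theta_f$ and $\theta^{\ast}$ and restricts to a map defined over $\mathbb{R}$ on the fundamental torus $T_G$ (here I invoke the normalization $u_f(\sigma)\in(T_{sc})^{\theta_{sc}^{\ast}}$ from (2.4), which makes $\eta_f$ genuinely $\mathbb{R}$-rational on $T_G$), an element $\delta\in T_G(\mathbb{R})$ mapping to a $G^{\ast}(\mathbb{R})$-twisted conjugate of $\delta^{\ast}$ exists and lies in $StrReg(G,\theta_f)_f$. Since $(\theta_f,\eta_f)$ is in the inner class of $(\theta,\eta)$, the norm correspondence matches (this is exactly setting I in Section 3.2, with $g_\theta=1$), so $(\gamma_1,\delta)$ is the desired related pair, and with the choice $H_1(\mathbb{R})^\dagger=H_1(\mathbb{R})$ legitimate for this $(\theta_f,\eta_f)$, we land in $StrReg_{G^{\ast}}(H_1)_f^\dagger\times StrReg(G,\theta)_f$.

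For $(iii)\Rightarrow(i)$, suppose $(\gamma_1,\delta)$ is a related pair with $\gamma_1\in StrReg_{G^{\ast}}(H_1)_f^\dagger$ and $\delta\in StrReg(G,\theta)_f$. The second condition says $\delta$ is $\theta$-fundamental by definition of $StrReg(G,\theta)_f$ (following Section 2.4), so $(i)$ holds immediately; the hypothesis on $\gamma_1$ is not even needed for this direction. Thus the only real content beyond bookkeeping is the forward construction in the preceding paragraph.

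The main obstacle is the transport-of-structure step in $(ii)\Rightarrow(iii)$: one must be careful that $\eta_f$ really carries an $\mathbb{R}$-rational strongly $\theta_f$-regular $\theta_f$-fundamental element of $G(\mathbb{R})$ to something in the norm correspondence attached to $(\theta_f,\eta_f)$ as set up in Section 3.2(I), and that the twisted-conjugacy adjustment used to arrange $T^{\ast}=\eta_f(T_G)$ does not move us outside $StrReg(G^{\ast},\theta^{\ast})_f$ — but this is guaranteed since $StrReg(G^{\ast},\theta^{\ast})_f$ is a union of stable twisted conjugacy classes by Lemma 3.2. The compatibility of the $H_1(\mathbb{R})^\dagger$ coset choice with $(\theta_f,\eta_f)$ is the remaining technical point, and it is handled exactly as in Section 5.4 of \cite{KS99}, as already noted in the paragraph preceding the lemma. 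All other steps are routine once Lemmas 2.5, 3.2 and 3.3 and Remark 3.1 are in hand.
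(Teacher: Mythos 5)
Your proposal is correct and follows the same route as the paper: cite Lemma 2.5 for (i)$\Leftrightarrow$(ii), observe (iii)$\Rightarrow$(i) is immediate from the definition of $StrReg(G,\theta)_f$, and for (ii)$\Rightarrow$(iii) normalize to $\theta=\theta_f$ with $\eta$ transporting $\theta$ to $\theta^{\ast}$, then transport a fundamental related pair for $(G^{\ast},\theta^{\ast})$ back to $G$. The paper compresses (ii)$\Rightarrow$(iii) into a single sentence ("Then the assertion follows easily"), whereas you spell out the transport step using the normalization (2.4) of $u_f(\sigma)$ and Lemma 3.2; both rest on the same mechanism (the $\mathbb{R}$-rationality of $\eta_f$ on the fundamental torus, via Lemma 4.4.A of \cite{KS99}).
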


\begin{proof}
We have proved (i) $\Rightarrow $ (ii) in Lemma 2.5. For (ii) $\Rightarrow $
(iii) we may further assume that $\theta =\theta _{f}$ and $\eta $
transports $\theta $ to $\theta ^{\ast }.$ Then the assertion follows
easily. (iii) $\Rightarrow $ (i) is immediate.
\end{proof}

\begin{lemma}
Assume any one of the equivalent conditions from Lemma 3.5 is satisfied.
Then:

(i) each $\delta $ in $StrReg(G,\theta )_{f}$ has a norm $\gamma _{1}$ in $%
H_{1}(\mathbb{R})$ and $\gamma _{1}$ lies in\newline
$StrReg_{G^{\ast }}(H_{1})_{f},$

(ii) if $\gamma _{1}\in $ $StrReg_{G^{\ast }}(H_{1})_{f}$ is a norm of
strongly $\theta $-regular $\delta $ in $G(\mathbb{R})$\newline
then $\delta $ lies in $StrReg(G,\theta )_{f}.$
\end{lemma}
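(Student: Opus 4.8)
The plan is to reduce everything to the quasi-split case already handled in Lemma 3.2.1, using the inner twist $\eta_f$ furnished by Lemma 3.5(ii). First I would invoke Lemma 3.5: by hypothesis the three equivalent conditions hold, so there is $(\theta_f,\eta_f)$ in the inner class of $(\theta,\eta)$ with $\theta_f$ preserving a fundamental splitting $spl_G=(B_G,T_G,\{X_\alpha\})$ of $G$, and by Lemma 2.5(ii) we may arrange $\eta_f$ to transport $spl_G$ to $spl_{Wh}$ and $\theta_f$ to $\theta^\ast$. Since the norm correspondence and the notion of $\theta$-fundamentality are insensitive to replacing $(\theta,\eta)$ by a member of its inner class (via the bijection $\delta\mapsto\delta g_{\mathbb R}$ of setting (II) in Section 3.2, which preserves stable classes, $\theta$-regularity, and carries $StrReg(G,\theta)_f$ onto $StrReg(G,\theta_f)_f$), it suffices to prove (i) and (ii) for the pair $(\theta_f,\eta_f)$. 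So from now on assume $\theta=\theta_f$ preserves $spl_G$ and $\eta=\eta_f$ transports $spl_G$ to $spl_{Wh}$, $\theta_f$ to $\theta^\ast$, with $u_f(\sigma)\in(T_{sc})^{\theta^\ast_{sc}}$ as in (2.4).

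For part (i): take $\delta\in StrReg(G,\theta)_f$. By definition $Int(\delta)\circ\theta$ preserves a fundamental pair $(B_\delta,T_\delta)$ of $G$. Using Remark 3.1 (applied with the present $\eta$), arrange that $Int(g)\circ\eta$ carries $(B_\delta,T_\delta)$ to a $\theta^\ast$-stable pair $(B,T)$ in $G^\ast$ with $Int(g)\circ\eta|_{T_\delta}$ defined over $\mathbb R$ and $g\in G^\ast_{sc}$; then $\delta^\ast=g.\eta(\delta).\theta^\ast(g)^{-1}\in T$. Since $T_\delta$ is fundamental in $G$ and $Int(g)\circ\eta$ restricts to an $\mathbb R$-isomorphism $T_\delta\to T$, the torus $T$ is fundamental in $G^\ast$. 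Now apply Lemma 3.2.1(ii) to the quasi-split datum: $\delta^\ast$, being strongly $\theta^\ast$-regular and lying in a fundamental $\theta^\ast$-admissible torus, has a norm $\gamma_1\in H_1(\mathbb R)$ with $\gamma_1\in StrReg_{G^\ast}(H_1)_f$. Finally, because $\eta$ (and the chosen $g$) realize the first step of the norm correspondence for $(G,\theta)$ compatibly with that for $(G^\ast,\theta^\ast)$ — this is precisely the content of setting (I) in Section 3.2, where equation (2.3)/(2.4) makes the norm correspondence for $(\theta_f,\eta_f)$ canonical and computed through $\eta$ — the element $\gamma_1$ is also a norm of $\delta$ for $(G,\theta)$. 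This gives (i).

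For part (ii): suppose $\gamma_1\in StrReg_{G^\ast}(H_1)_f$ is a norm of strongly $\theta$-regular $\delta\in G(\mathbb R)$. Unravelling the definition of the norm correspondence for $(\theta_f,\eta_f)$ via the diagram after (3.2.2), there is a $\theta^\ast$-stable pair $(B,T)$ in $G^\ast$ with $T$ over $\mathbb R$, elements $g\in G^\ast_{sc}$ and $\delta^\ast\in T$ with $\delta^\ast=g.\eta(\delta).\theta^\ast(g)^{-1}$, and an admissible $T\to T_{\theta^\ast}\to T_H$ matching $\delta^\ast$ to $\gamma_1$. Since $Cent(\gamma_1,H_1)$ is fundamental and is a $\theta^\ast$-norm group for $T$, Remark 3.3 (equivalently the argument for Lemma 3.2.1(iii)) forces $T$ to be fundamental in $G^\ast$. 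Pulling back through $Int(g)\circ\eta$, which restricts to an $\mathbb R$-isomorphism onto $T$ of the torus $T_\delta$ in $G$ with $(B_\delta,T_\delta)=((Int(g)\circ\eta)^{-1}(B),(Int(g)\circ\eta)^{-1}(T))$ preserved by $Int(\delta)\circ\theta$, we conclude $T_\delta$ is fundamental in $G$, i.e.\ $(B_\delta,T_\delta)$ is a fundamental pair preserved by $Int(\delta)\circ\theta$, so $\delta\in StrReg(G,\theta)_f$.

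The main obstacle is bookkeeping rather than conceptual: one must be careful that the reduction to $(\theta_f,\eta_f)$ genuinely preserves the \emph{marked} subsets $StrReg(\cdot)_f$ and $StrReg_{G^\ast}(H_1)_f^\dagger$ and the norm correspondence simultaneously — in particular checking that for the chosen $(\theta_f,\eta_f)$ one legitimately has $H_1(\mathbb R)^\dagger=H_1(\mathbb R)$, as asserted in the paragraph preceding Lemma 3.5, and that the canonical norm of setting (I) is the one relative to which (i) and (ii) are being claimed. Once that compatibility is pinned down, the substance is just transporting fundamentality of tori across the $\mathbb R$-isomorphism $Int(g)\circ\eta|_{T_\delta}$, together with the already-established quasi-split statement Lemma 3.2.1.
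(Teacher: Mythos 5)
Your proof of part (i) contains a genuine gap at the step where you ``apply Lemma 3.2.1(ii)'' (the paper's Lemma 3.3(ii)) to $\delta^{\ast}=g.\eta(\delta).\theta^{\ast}(g)^{-1}$. That lemma concerns elements of $StrReg(G^{\ast},\theta^{\ast})_{f}\subseteq G^{\ast}(\mathbb{R})$, but your $\delta^{\ast}$ lies only in the complex torus $T$: since $\eta$ is an inner twist, $\eta(\delta)$ is typically not real, and no choice of $g\in G_{sc}^{\ast}$ has been shown to fix this. What actually has to be verified in part (i) is that the stable $\theta$-twisted conjugacy class of $\delta$ maps, under the correspondence of diagram (3.2.1), to a \emph{nonempty} stable class in $G^{\ast}(\mathbb{R})$ — the $\Gamma$-equivariance of the map on $\mathbb{C}$-classes guarantees only that the image class is $\Gamma$-stable, not that it has a real point. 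Asserting that ``$\eta$ and the chosen $g$ realize the first step of the norm correspondence compatibly'' is precisely the thing to be proved, and it does not follow formally from the setup in Section 3.2.

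The paper's proof supplies the missing computation. After reducing to $(\theta_f,\eta_f)$ as you do, it extends $(B_{\delta},T_{\delta})$ to a fundamental splitting $spl_{\delta}$, uses Lemma 2.2 to produce $t_{\delta}\in(T_{\delta})_{sc}(\mathbb{R})$ with $Int(t_{\delta}\delta)\circ\theta$ preserving $spl_{\delta}$, conjugates $spl_{\delta}$ to $spl_{G}$ by some $g\in G_{sc}$, and writes $\delta_{G}=g\delta\theta(g)^{-1}=zt_{G}$ with $z$ central and $t_{G}\in(T_{G})_{sc}(\mathbb{R})$. The key cochain identity $\sigma(z)^{-1}z=(1-\theta)g_{\sigma}$ then yields $N_{\theta}(z)\in(T_{G})_{\theta}(\mathbb{R})$, and only at that point — after transporting by $\eta$ to $spl_{Wh}$ — is Lemma 4.4.A of \cite{KS99} applicable to produce the norm. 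Your argument for part (ii) is closer to the mark since there the existence of a norm is hypothesized, so the question reduces to transporting fundamentality of tori, and Remark 3.1 can be invoked to arrange that $Int(g)\circ\eta|_{T_{\delta}}$ is over $\mathbb{R}$. But part (i) as written does not close, and the needed computation is not ``bookkeeping'': it is the substance of the lemma.
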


\begin{proof}
We may assume that $\theta $ preserves fundamental splitting $spl_{G},$ that 
$\theta ^{\ast }$ preserves fundamental splitting $spl_{Wh}$ of Whittaker
type, and that $\eta $ transports $\theta $ to $\theta ^{\ast }$. Recall
that $Int(\delta )\circ \theta $ preserves the fundamental pair $(B_{\delta
},T_{\delta }).$ Extend the pair to a fundamental splitting $spl_{\delta }.$
Then there is $t_{\delta }$ in $(T_{\delta })_{sc}(\mathbb{R})$ such that $%
Int(t_{\delta }\delta )\circ \theta $ preserves $spl_{\delta }.$ Here, as
usual, we have used the same notation $t_{\delta }$ for the image of $%
t_{\delta }$ in $(T_{\delta })(\mathbb{R})$ under $G_{sc}\rightarrow G$. We
now choose $g$ in $G_{sc}$ such that $Int(g)$ carries $spl_{\delta }$ to $%
spl_{G}.$ Let $T_{G}$ be the elliptic maximal torus specified by $spl_{G}.$
Then $g_{\sigma }=g\sigma (g)^{-1}$ lies in $(T_{G})_{sc},$ $%
t_{G}=gt_{\delta }^{-1}g^{-1}$ lies in $(T_{G})_{sc}(\mathbb{R})$ and $%
\delta _{G}=g\delta \theta (g)^{-1}$ is of the form $zt_{G},$ where $z$ is
central. Also 
\begin{equation}
\sigma (z)^{-1}z=\sigma (\delta _{G})^{-1}\delta _{G}=(1-\theta )g_{\sigma },
\end{equation}%
so that $N_{\theta }(z)$ lies in $(T_{G})_{\theta }(\mathbb{R}).$ Now apply
the twist $\eta $ which carries $spl_{G}$ to $spl_{Wh}.$ Then (i), (ii)
follow; see Lemma 4.4.A of \cite{KS99}.
\end{proof}

\begin{example}
For general $(G^{\ast },\theta ^{\ast })$, consider a basic SED $\mathfrak{e}%
_{z},$ i.e., assume that $s=1.$ Then an argument along the same lines as
that for Lemma 3.3 shows that $(G^{\ast },\theta ^{\ast },\mathfrak{e}_{z}%
\mathfrak{)}$ is fundamental.
\end{example}

\subsection{Cuspidal-elliptic setting}

By the\textit{\ }cuspidal-elliptic\textit{\ }setting we mean that $G^{\ast }$%
, or equivalently an inner form of $G^{\ast }$, is cuspidal and that the
endoscopic datum $\mathfrak{e}$ is \textit{elliptic} in the sense that the
identity component of the $\Gamma $-invariants in the center of $H^{\vee }$
lies in the center of $G^{\vee }$ \cite{KS99}. We then call $H_{1}$ an 
\textit{elliptic endoscopic group}.

\begin{lemma}
(i) \textit{Assume }$G^{\ast }$\textit{\ is cuspidal. Then} $(G^{\ast
})^{\theta ^{\ast }}$\textit{\ is cuspidal and there exists an elliptic }$%
\theta ^{\ast }$-admissible\textit{\ maximal torus }$T$\textit{\ in }$%
G^{\ast }$.

(ii) Assume also \textit{that }$\mathfrak{e}$\textit{\ is elliptic. }Then $%
H_{1}$ is cuspidal and \textit{each elliptic }$T_{1}$\textit{\ in }$H_{1}$%
\textit{\ is a }$\theta ^{\ast }$-\textit{norm group for each elliptic }$%
\theta ^{\ast }$-admissible\textit{\ }$T$\textit{\ in }$G^{\ast }$.
\end{lemma}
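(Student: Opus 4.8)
The plan is to establish (i) first and then use it, together with the ellipticity hypothesis on $\mathfrak{e}$, to deduce (ii). For (i), the point is that cuspidality is inherited by $\theta^{\ast}$-fixed points and that a fundamental pair for $(G^{\ast})^{\theta^{\ast}}$ produces a $\theta^{\ast}$-admissible torus in $G^{\ast}$ which turns out to be elliptic. Concretely, I would start from an $\mathbb{R}$-splitting of $G^{\ast}$ preserved by $\theta^{\ast}$; following the Steinberg structure theory cited in the proof of Lemma 2.4, pass to the associated quasi-split group $(G_{sc}^{\ast})^{\theta_{sc}^{\ast}}$, and choose a fundamental pair there. This pair determines a unique $\theta^{\ast}$-stable pair $(B,T)$ in $G^{\ast}$, and the key claim is that $T$ has no real roots. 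The argument (promised in the proof of Lemma 3.4.1) is: a real root $\alpha$ of $T$ in $G^{\ast}$ would, since $(B,T)$ is $\theta^{\ast}$-stable, give a $\theta^{\ast}$-orbit of real roots, and averaging/restricting to $(G_{sc}^{\ast})^{\theta_{sc}^{\ast}}$ produces a real root of the fundamental torus there, contradicting fundamentality. Hence $T$ is fundamental in $G^{\ast}$, and since $G^{\ast}$ is cuspidal, $T$ is elliptic. The same reasoning inside $(G^{\ast})^{\theta^{\ast}}$ — whose fundamental torus is $T^{\theta^{\ast}}$, which is anisotropic mod center because $T$ is — shows $(G^{\ast})^{\theta^{\ast}}$ is cuspidal.

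For (ii), once we know $H_{1}$ is cuspidal we must show that every elliptic maximal torus $T_{1}$ in $H_{1}$ is a $\theta^{\ast}$-norm group for every elliptic $\theta^{\ast}$-admissible $T$ in $G^{\ast}$. That $H_{1}$ is cuspidal follows from the ellipticity of $\mathfrak{e}$: the identity component of $Z(H^{\vee})^{\Gamma}$ lies in $Z(G^{\vee})$, which dually forces the maximal $\mathbb{R}$-split torus in the center of $H_{1}$ to be no larger than that of $G^{\ast}$, so $H_{1}$ inherits a fundamental torus that is anisotropic mod center. For the norm-group assertion, recall from Section 3.2 (via \cite[Lemma 3.3.B]{KS99}) that $T_{1}$ is a $\theta^{\ast}$-norm group for \emph{some} $\theta^{\ast}$-admissible $T$ in $G^{\ast}$; the content is that when $T_{1}$ is elliptic we can take \emph{any} elliptic $\theta^{\ast}$-admissible $T$. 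The mechanism is that an admissible embedding $T \to T_{\theta^{\ast}} \to T_{H}$ is an isomorphism onto $T_{H}$ over $\mathbb{R}$ (after passing to the coinvariants $T_{\theta^{\ast}}$), and the ellipticity condition on $\mathfrak{e}$ guarantees the dual side matches: $Z(H^{\vee})^{\Gamma,\circ} \subseteq Z(G^{\vee})$ is exactly the condition needed so that the twisted cocharacter lattice $X_{\ast}(T_{\theta^{\ast}})$ and $X_{\ast}(T_{1})$ carry the same $\Gamma$-action for elliptic tori. Then Remark 3.1, which pins down the stable class of $T$ from $T_H$, combined with the fact that all elliptic fundamental tori in $G^{\ast}$ are stably conjugate (Lemma 2.1 plus cuspidality), shows the choice of elliptic $\theta^{\ast}$-admissible $T$ is immaterial.

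I expect the main obstacle to be the ``no real roots'' claim in part (i) — making precise how a real root of $T$ in $G^{\ast}$ descends to a real root in $(G_{sc}^{\ast})^{\theta_{sc}^{\ast}}$, since one has to track the interaction between $\sigma_{T}$ and $\theta^{\ast}$ on the root system and handle the restriction of roots to the $\theta^{\ast}$-fixed subtorus correctly (short roots, the $A_{2n}$ phenomenon, etc.). This is essentially the technical heart of the ``proof of Lemma 3.4.1 below'' that the paper repeatedly defers to, so I would isolate it as the one genuine computation; everything else is bookkeeping with admissible embeddings, Tate–Nakayama, and the ellipticity hypothesis on the endoscopic datum.
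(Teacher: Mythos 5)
Your proof of (i) matches the paper's: form $T=\mathrm{Cent}(T^1,G^\ast)$ from a fundamental maximal torus $T^1$ of $I=(G^\ast)^{\theta^\ast}$, and observe that a real root of $T$ would restrict to (a multiple of) a real root of $T^1$, which cannot exist. The paper reduces to $G^\ast$ semisimple simply connected so that $I$ is connected, rather than passing to $(G_{sc}^\ast)^{\theta_{sc}^\ast}$, but this is cosmetic. One small correction of emphasis: the "no real root" step is genuinely short; the phrase "a multiple of the restriction" already absorbs the $A_{2n}$-type subtleties you anticipate, so this is not the technical heart of the lemma.

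For (ii) there is a real gap. What has to be shown is that the $\theta^\ast$-admissible $T$ furnished by \cite[Lemma 3.3.B]{KS99} for a fundamental $T_H$ is itself elliptic — equivalently, that $H_1$ is cuspidal. Your "mechanism," that ellipticity of $\mathfrak{e}$ forces $X_\ast(T_{\theta^\ast})$ and $X_\ast(T_1)$ to carry "the same $\Gamma$-action," is the right intuition but is not an argument: the map $T\to T_{\theta^\ast}$ is a surjection with nontrivial kernel, so passing between ellipticity of $T$ mod $Z_{G^\ast}$ and ellipticity of $T_{\theta^\ast}\cong T_H$ mod $Z_H$ requires controlling the twisted coinvariants and the centers on both sides, and the sentence you offer for cuspidality of $H_1$ is circular as written. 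The paper avoids this bookkeeping entirely by attaching to $H$ the \emph{standard} endoscopic group $J$ for $I=(G^\ast)^{\theta^\ast}$ as in \cite[Section 4.2]{KS99}. Then $T^1$ is (isomorphic to) a fundamental maximal torus of $J$, and $J$ is elliptic for $I$ precisely because $\mathfrak{e}$ is elliptic; the standard-endoscopy instance of (ii), applied to $(I,J)$, gives that $T^1$ is anisotropic modulo $Z_I$, and unwinding $T=\mathrm{Cent}(T^1,G^\ast)$ as in (i) then shows $T_H$ is anisotropic modulo $Z_H$. This yields both cuspidality of $H_1$ and, by stable conjugacy of fundamental tori (which you do correctly invoke at the end), the full norm-group assertion. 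The reduction of the twisted statement to the untwisted one via $J$ is the idea missing from your sketch.
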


\begin{proof}
There is no harm, for both (i) and (ii), in assuming that $G^{\ast }$ is
semisimple and simply-connected, so that $I=(G^{\ast })^{\theta ^{\ast }}$
is connected (as well as reductive) as algebraic group. Consider a pair $%
(B^{1},T^{1}),$ where $T^{1}$ is a fundamental maximal torus defined over $%
\mathbb{R}$ in $I$ and $B^{1}$ is any Borel subgroup of $I$ containing $%
T^{1}.$ Set $T=Cent(T^{1},G^{\ast })$ and $B=Norm(B^{1},G^{\ast }),$ so that 
$(B,T)$ is a $\theta ^{\ast }$-stable pair for $G^{\ast }$. Then $T$ must be
fundamental, for otherwise $T$ would have a real root and then a multiple of
the restriction of this root to $T^{1}=T^{\theta ^{\ast }}$ would provide us
with a real root for $T^{1}$ in $I$; no such root exists since $T^{1}$ is
fundamental. (i) then follows. For (ii), let $T_{H}$ be a fundamental
maximal torus in $H.$ Then there is some admissible isomorphism $%
T_{H}\rightarrow T_{\theta ^{\ast }}$ associated to a $\theta ^{\ast }$%
-admissible $T$.\textit{\ }Attach to $H$ the standard endoscopic group $J$
for $I$ as in Section 4.2 of \cite{KS99}. Then $T^{1}$ is (isomorphic to) a
fundamental maximal torus in $J,$ and moreover $J$ is elliptic because $H$
is. Thus, by (ii) in the case of standard endoscopy, $T^{1}$ is anisotropic
modulo $Z_{I}.$ Since $T$ is then anisotropic modulo $Z_{G^{\ast }}$ as in
(i), $T_{H}$ is anisotropic modulo $Z_{H},$ and (ii) follows.
\end{proof}

\begin{corollary}
$(G^{\ast },\theta ^{\ast },\mathfrak{e}_{z}\mathfrak{)}$ is fundamental in
the sense of Section 3.3.
\end{corollary}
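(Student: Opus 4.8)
The plan is to read the statement off Lemma 3.8 together with the conventions of Sections 2.4 and 3.3. Recall that the \emph{ad hoc} definition in Section 3.3 requires us to exhibit a $\theta^{\ast}$-admissible maximal torus $T$ in $G^{\ast}$ that is fundamental and that admits a fundamental maximal torus $T_{1}$ in $H_{1}$ as a $\theta^{\ast}$-norm group. In the cuspidal-elliptic setting $G^{\ast}$ is cuspidal and $\mathfrak{e}$ is elliptic, so both hypotheses of Lemma 3.8 hold; moreover the convention of Section 2.4 identifies ``fundamental'' with ``elliptic'' for the cuspidal groups at hand, in particular for $G^{\ast}$ and for $H_{1}$.

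First I would invoke Lemma 3.8(i): since $G^{\ast}$ is cuspidal there is an elliptic, hence fundamental, $\theta^{\ast}$-admissible maximal torus $T$ in $G^{\ast}$. Next, Lemma 3.8(ii) gives that $H_{1}$ is cuspidal, so $H_{1}$ has an elliptic --- equivalently fundamental --- maximal torus $T_{1}$, with image $T_{H}$ in $H$. The second assertion of Lemma 3.8(ii) says precisely that this $T_{1}$ is a $\theta^{\ast}$-norm group for our elliptic $\theta^{\ast}$-admissible $T$, i.e., that there is an admissible chain $T\rightarrow T_{\theta^{\ast}}\rightarrow T_{H}$ (recall from Section 3.2 that $T_{H}$ and its inverse image $T_{1}$ are declared norm groups together, so the two phrasings carry the same content).

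Assembling these, we have a fundamental $\theta^{\ast}$-admissible torus $T$ in $G^{\ast}$ together with a fundamental maximal torus $T_{1}$ in $H_{1}$ that is a $\theta^{\ast}$-norm group for $T$; by the definition of Section 3.3 this is exactly the assertion that $(G^{\ast},\theta^{\ast},\mathfrak{e}_{z})$ is fundamental. One can note slightly more: by Remark 3.1 the stable class of the $\theta^{\ast}$-admissible $T$ attached to a fundamental $T_{1}$ is uniquely determined, so in fact every $\theta^{\ast}$-admissible $T$ in $G^{\ast}$ with a fundamental norm group in $H_{1}$ is fundamental, in line with the form of Remark 3.4.

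There is no genuine obstacle here: all the substance is in Lemma 3.8, and the corollary is a matter of matching that lemma against the definition. The only points needing care are the identification of ``fundamental'' with ``elliptic'' in the cuspidal case and the bookkeeping, via Section 3.2, that turns ``$T_{1}$ is a norm group'' into the existence of the admissible map out of $T$ required by Section 3.3.
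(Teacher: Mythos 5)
Your proof is correct and follows exactly the route the paper intends: the corollary is stated without proof immediately after Lemma 3.8, and the implicit argument is precisely your combination of Lemma 3.8(i) (existence of an elliptic $\theta^{\ast}$-admissible $T$ in cuspidal $G^{\ast}$) and Lemma 3.8(ii) ($H_{1}$ cuspidal and each elliptic $T_{1}$ a norm group for $T$), together with the identification of ``fundamental'' with ``elliptic'' in cuspidal groups from Section 2.4.
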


Consider an inner form $(G,\theta ,\eta )$. We write $sr$-$ell(G,\theta )$
for the set of all $\theta $\textit{-}elliptic strongly $\theta $-regular
elements in $G(\mathbb{R})$ and $sGr$-$ell(H_{1})^{\dag }$ for the set of
all strongly $G$-regular elliptic elements in $H_{1}(\mathbb{R})^{\dag }.$

\begin{corollary}
The following are equivalent:

(i) there exists a $\theta $-elliptic element in $G(\mathbb{R}),$

(ii) there is $(\theta _{f},\eta _{f})$ in the inner class of $(\theta ,\eta
)$ such that $\theta _{f}$ preserves a fundamental splitting for $G,$

(iii) there exists a related pair in $sGr$-$ell(H_{1})^{\dag }\times sr$-$%
ell(G,\theta ).$
\end{corollary}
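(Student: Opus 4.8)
The plan is to recognize Corollary 3.9 as a direct specialization of Lemma 3.5 to the cuspidal-elliptic setting, with the terminology merely swapped: $\theta$-fundamental becomes $\theta$-elliptic and $StrReg$ becomes $sr$-$ell$. First I would invoke Corollary 3.7, which already tells us that in the cuspidal-elliptic setting $(G^\ast,\theta^\ast,\mathfrak{e}_z)$ is fundamental, so that Lemma 3.5 applies verbatim to the inner form $(G,\theta,\eta)$. This immediately gives the equivalence of (i$'$) ``there exists a $\theta$-fundamental element in $G(\mathbb{R})$'' with (ii) and with (iii$'$) ``there exists a related pair in $StrReg_{G^\ast}(H_1)_f^\dag \times StrReg(G,\theta)_f$.''

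The remaining work is to match the fundamental-language statements of Lemma 3.5 to the elliptic-language statements of Corollary 3.9. For (i), I would recall the convention fixed in Section 2.4: when $G$ is cuspidal, the terms $\theta$-fundamental and $\theta$-elliptic are used interchangeably, so (i) of the corollary is literally (i) of Lemma 3.5 applied to the cuspidal inner form $G$. (Note that by Lemma 3.8(i) cuspidality of $G^\ast$ passes to any inner form, and in any case the existence of a $\theta$-fundamental element forces, via Lemma 2.5(ii) and the fact that the fundamental torus of a group preserving a fundamental splitting is elliptic in the cuspidal case, that the relevant maximal torus is elliptic.) Statement (ii) is identical in both. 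For (iii), I would observe that $sr$-$ell(G,\theta)$ is by definition $StrReg(G,\theta)_f$ (again using the cuspidal convention that fundamental $=$ elliptic), and that $sGr$-$ell(H_1)^\dag$ is $StrReg_{G^\ast}(H_1)_f^\dag$: indeed in the cuspidal-elliptic setting $H_1$ is cuspidal by Lemma 3.8(ii), so ``$Cent(\gamma_1,H_1)$ fundamental'' is the same as ``$Cent(\gamma_1,H_1)$ elliptic'', i.e.\ ``$\gamma_1$ is strongly $G$-regular elliptic.''

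I do not expect a genuine obstacle here; the content is entirely in the earlier lemmas. The only point requiring a word of care is the identification of the $\dag$-cosets and the fact that ``fundamental torus in $H_1$'' really does coincide with ``elliptic torus in $H_1$'' — this relies on $H_1$ being cuspidal, which is exactly what Lemma 3.8(ii) supplies from the ellipticity of the endoscopic datum $\mathfrak{e}$. Once that identification is in place, the three conditions of Corollary 3.9 are literally the three conditions of Lemma 3.5 rewritten, and the proof is complete by citing Corollary 3.7 and Lemma 3.5.

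\begin{proof}
By Corollary 3.7, $(G^\ast,\theta^\ast,\mathfrak{e}_z)$ is fundamental, so Lemma 3.5 applies to the inner form $(G,\theta,\eta)$. Since $G^\ast$ is cuspidal, so is $G$ (Lemma 3.8(i) and its proof, applied to an inner twist), whence $\theta$-fundamental means $\theta$-elliptic by the convention of Section 2.4; thus condition (i) here is condition (i) of Lemma 3.5, and condition (ii) is unchanged. For (iii), since $\mathfrak{e}$ is elliptic, $H_1$ is cuspidal by Lemma 3.8(ii), so a strongly $G$-regular $\gamma_1$ has $Cent(\gamma_1,H_1)$ fundamental if and only if it is elliptic; hence $sGr$-$ell(H_1)^\dag = StrReg_{G^\ast}(H_1)_f^\dag$, and likewise $sr$-$ell(G,\theta) = StrReg(G,\theta)_f$. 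Therefore condition (iii) here is condition (iii) of Lemma 3.5. The equivalences now follow from Lemma 3.5.
\end{proof}
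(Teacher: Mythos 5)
Your proof is correct and matches the paper's own argument, which is the one-line statement that by Lemma 3.8 (supplying cuspidality of $G$, $H_1$ and, via Corollary 3.9, the fundamental-ness of $(G^\ast,\theta^\ast,\mathfrak{e}_z)$) the corollary is a special case of Lemma 3.5. Your unpacking of the terminological translation from fundamental to elliptic is precisely what the paper's brief citation is invoking.
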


\begin{proof}
By Lemma 3.8, this is a special case of Lemma 3.5.
\end{proof}

\begin{corollary}
Assume any one of the conditions of Corollary 3.10 is satisfied. Then:

(i) each $\delta $ in $sr$-$ell(G,\theta )$ has a norm $\gamma _{1}$ in $%
H_{1}(\mathbb{R})$ and $\gamma _{1}\in $ $sGr$-$ell(H_{1}),$

(ii) if $\gamma _{1}\in sGr$-$ell(H_{1})$ is a norm of strongly $\theta $%
-regular $\delta $ in $G(\mathbb{R})$ then $\delta \in sr$-$ell(G,\theta ).$
\end{corollary}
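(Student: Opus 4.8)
The plan is to deduce this corollary directly from Lemma 3.6, once we know its hypotheses are in force and once we translate the language of $\theta$-fundamental elements and fundamental norm groups into the elliptic language appropriate to the cuspidal-elliptic setting. First I would invoke Corollary 3.9, which asserts that $(G^{\ast },\theta ^{\ast },\mathfrak{e}_{z})$ is fundamental in the sense of Section 3.3; consequently Lemma 3.5, and hence Lemma 3.6, are available for the inner form $(G,\theta ,\eta )$ as soon as one of the equivalent conditions of Lemma 3.5 holds. But Corollary 3.10 identifies those conditions with precisely the three conditions listed there in the cuspidal-elliptic setting, so under the standing hypothesis of the present corollary Lemma 3.6 applies without change.

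It then remains to match the sets occurring in Lemma 3.6 with those in the statement here. On the $G$-side this is immediate: since $G$ is cuspidal (equivalently, $G^{\ast }$ is cuspidal), a fundamental maximal torus of $G$ is elliptic, so a $\theta $-fundamental element of $G(\mathbb{R})$ is the same as a $\theta $-elliptic one by the convention of Section 2.4, whence $StrReg(G,\theta )_{f}=sr$-$ell(G,\theta )$; also $H_{1}(\mathbb{R})^{\dag }=H_{1}(\mathbb{R})$ may be taken here, as remarked before Lemma 3.5. On the $H_{1}$-side I would appeal to Lemma 3.8(ii): ellipticity of $\mathfrak{e}$ forces $H_{1}$ to be cuspidal, so a fundamental maximal torus of $H_{1}$ is elliptic and $StrReg_{G^{\ast }}(H_{1})_{f}=sGr$-$ell(H_{1})$. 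With these identifications, assertions (i) and (ii) are literally assertions (i) and (ii) of Lemma 3.6.

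There is essentially no obstacle; the only point that needs a moment's care is the bookkeeping that cuspidality of $G$ and of $H_{1}$ really does convert ``fundamental torus'' into ``elliptic torus'' on both sides at once, which is exactly the content that Lemma 3.8 was designed to supply. Everything else is a direct citation of the already-established Lemma 3.6 together with Corollaries 3.9 and 3.10, so the proof reduces to the single sentence: by Lemma 3.8 this is a special case of Lemma 3.6, whose hypotheses hold by Corollaries 3.9 and 3.10.
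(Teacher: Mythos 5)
Your proof is correct and takes essentially the same route as the paper: invoke Lemma 3.8 to convert ``fundamental'' into ``elliptic'' on both the $G$-side and $H_1$-side, note that Corollaries 3.9 and 3.10 furnish the hypotheses needed, and then apply the inner-form norm lemma. The paper's own one-line proof reads ``By Lemma 3.8, this is a special case of Lemma 3.3,'' but Lemma 3.3 concerns the quasi-split pair rather than the inner form $(G,\theta,\eta)$; you correctly cite Lemma 3.6 instead, which is the inner-form analogue and is what the argument actually requires (the paper's reference appears to be a typo, presumably intended as 3.6).
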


\begin{proof}
By Lemma 3.8, this is a special case of Lemma 3.3.
\end{proof}

\subsection{Consequences for geometric transfer factors}

We conclude by summarizing some of the results of Sections 3.3, 3.4 in terms
of the transfer factor $\Delta $ of \cite{KS99} (see also \cite{KS12, Sh14})
for the matching of orbital integrals, \textit{i.e.,} for geometric twisted
transfer \cite{Sh12}. The factor $\Delta $ is defined on the very regular
set of Section 3.2. By construction, $\Delta (\gamma _{1},\delta )\neq 0$ if
and only if $(\gamma _{1},\delta )$ is a related pair, \textit{i.e.}, $%
\gamma _{1}$ is a norm of $\delta .$ We consider (i) transfer for
quasi-split data $(G^{\ast },\theta ^{\ast })$ with SED $\mathfrak{e}_{z}$
and (ii) transfer for an inner form of the quasi-split data in (i) when $%
\mathfrak{e}_{z}$ is fundamental. Then Lemmas 3.3, 3.6 imply that:

\begin{theorem}
(i) There exists fundamental $\gamma _{1}$ and $\theta ^{\ast }$-fundamental 
$\delta $ such that $\Delta (\gamma _{1},\delta )\neq 0$ if and only if $%
(G^{\ast },\theta ^{\ast },\mathfrak{e}_{z}\mathfrak{)}$ is fundamental.

(ii) Assume $(G^{\ast },\theta ^{\ast },\mathfrak{e}_{z}\mathfrak{)}$ is
fundamental and that $(G,\theta ,\eta )$ is an inner form. Then there exist
fundamental $\gamma _{1}$ and $\theta $-fundamental $\delta $ such that $%
\Delta (\gamma _{1},\delta )\neq 0$ if and only if there exists a $\theta $%
-fundamental element in $G(\mathbb{R}).$
\end{theorem}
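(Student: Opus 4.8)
The plan is to deduce both parts directly from the characterization $\Delta(\gamma_1,\delta)\neq 0 \iff (\gamma_1,\delta)$ is a related pair, combined with the equivalences already established in Lemmas 3.3, 3.5 and 3.6. The key observation is that ``fundamental $\gamma_1$'' here should be read as $\gamma_1 \in StrReg_{G^\ast}(H_1)_f$ and ``$\theta^\ast$-fundamental (resp.\ $\theta$-fundamental) $\delta$'' as $\delta \in StrReg(G^\ast,\theta^\ast)_f$ (resp.\ $\delta \in StrReg(G,\theta)_f$), so that the existence of such a pair with $\Delta \neq 0$ is literally the statement that $StrReg_{G^\ast}(H_1)_f \times StrReg(G^\ast,\theta^\ast)_f$ (resp.\ $StrReg_{G^\ast}(H_1)_f^\dag \times StrReg(G,\theta)_f$) contains a related pair.

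For (i), I would argue as follows. If $(G^\ast,\theta^\ast,\mathfrak{e}_z)$ is fundamental, then by Lemma 3.3(i) there is a related pair in $StrReg_{G^\ast}(H_1)_f \times StrReg(G^\ast,\theta^\ast)_f$; since a related pair is precisely a pair on which $\Delta$ is nonzero, this gives the desired $\gamma_1,\delta$. Conversely, if there exist fundamental $\gamma_1$ and $\theta^\ast$-fundamental $\delta$ with $\Delta(\gamma_1,\delta)\neq 0$, then $(\gamma_1,\delta)$ is a related pair lying in $StrReg_{G^\ast}(H_1)_f \times StrReg(G^\ast,\theta^\ast)_f$, so again by Lemma 3.3(i) the data $(G^\ast,\theta^\ast,\mathfrak{e}_z)$ is fundamental.

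For (ii), assume $(G^\ast,\theta^\ast,\mathfrak{e}_z)$ is fundamental and $(G,\theta,\eta)$ is an inner form. The nonvanishing $\Delta(\gamma_1,\delta)\neq 0$ for fundamental $\gamma_1$ and $\theta$-fundamental $\delta$ is equivalent to the existence of a related pair in $StrReg_{G^\ast}(H_1)_f^\dag \times StrReg(G,\theta)_f$, which is condition (iii) of Lemma 3.5; by that lemma this is equivalent to condition (i) of Lemma 3.5, namely the existence of a $\theta$-fundamental element in $G(\mathbb{R})$. (Here one uses that, as remarked before Lemma 3.5, for the relevant $(\theta,\eta)$ one may take $H_1(\mathbb{R})^\dag = H_1(\mathbb{R})$, so ``fundamental $\gamma_1$'' in the theorem and membership in $StrReg_{G^\ast}(H_1)_f^\dag$ agree.) Lemma 3.6 is then invoked only to confirm that in the nonvanishing direction the norm $\gamma_1$ attached to a $\theta$-fundamental $\delta$ does indeed land in $StrReg_{G^\ast}(H_1)_f$, closing the loop.

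The only real subtlety — and the step I expect to need the most care — is bookkeeping around the coset $H_1(\mathbb{R})^\dag$ and the possibly-empty set $StrReg_{G^\ast}(H_1)_f^\dag$ in the inner-form case: one must be sure that the normalization making $H_1(\mathbb{R})^\dag = H_1(\mathbb{R})$ is available exactly when condition (ii) of Lemma 3.5 holds (i.e.\ when $\theta_f$ preserves a fundamental splitting), and that the transfer factor $\Delta$ of \cite{KS99} is the one defined relative to that coset, so that its nonvanishing set coincides with the related pairs appearing in Lemma 3.5(iii). Everything else is a direct translation through the identity ``$\Delta \neq 0 \iff$ related pair'' and the already-proved lemmas.
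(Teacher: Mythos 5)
Your proof is correct and follows essentially the same route as the paper: interpret $\Delta(\gamma_1,\delta)\neq 0$ as the existence of a related pair and then apply the equivalences already established in Section 3.3 (the paper cites Lemmas 3.3 and 3.6, but your invocation of Lemma 3.5(iii)$\Leftrightarrow$(i) for part (ii) is the same content, since Lemma 3.6 supplies the nontrivial direction of that equivalence). Your remark on the coset $H_1(\mathbb{R})^\dag$ correctly identifies the one place requiring care and is resolved exactly as the paper indicates just before Lemma 3.5.
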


From this and Lemma 3.4.1 we conclude:

\begin{corollary}
In the cuspidal-elliptic setting:

(i) there exist elliptic $\gamma _{1}$ and $\theta ^{\ast }$-elliptic $%
\delta $ such that $\Delta (\gamma _{1},\delta )\neq 0$ and

(ii) for an inner form $(G,\theta ,\eta ),$ there exist elliptic $\gamma
_{1} $ and $\theta $-elliptic $\delta $ such that $\Delta (\gamma
_{1},\delta )\neq 0$ if and only if there exists a $\theta $-elliptic
element in $G(\mathbb{R}).$
\end{corollary}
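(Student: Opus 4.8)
The plan is to obtain Corollary~3.13 as a direct translation of Theorem~3.12 into the language of the cuspidal-elliptic setting, the passage between the two being simply the interchange of the words ``fundamental'' and ``elliptic''. The first observation is that the cuspidal-elliptic setting is a special case of the fundamental one: by Lemma~3.8, if $G^{\ast}$ is cuspidal and $\mathfrak{e}$ is elliptic, then $(G^{\ast},\theta^{\ast},\mathfrak{e}_{z})$ is fundamental (this is Corollary~3.9), so Theorem~3.12 is available here with no extra hypothesis.

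Next I would fix the dictionary on both sides. On the group side, $G^{\ast}$ is cuspidal, hence so is every inner form $G$ (as recorded in Section~3.4), and therefore a fundamental maximal torus of $G^{\ast}$, resp. of $G$, is elliptic; by the convention of Section~2.4 this means that ``$\theta^{\ast}$-fundamental'' and ``$\theta$-fundamental'' are synonymous with ``$\theta^{\ast}$-elliptic'' and ``$\theta$-elliptic'', so that $StrReg(G,\theta)_{f}$ is exactly $sr$-$ell(G,\theta)$, and similarly over $G^{\ast}$. On the endoscopic side, Lemma~3.8(ii) gives that $H_{1}$ is cuspidal, so for strongly $G$-regular $\gamma_{1}$ the maximal torus $Cent(\gamma_{1},H_{1})$ is fundamental exactly when it is elliptic; hence ``$\gamma_{1}$ fundamental'' coincides with ``$\gamma_{1}\in sGr$-$ell(H_{1})$'', and in the cases at issue the auxiliary coset of Section~3.3 may be taken to be $H_{1}(\mathbb{R})$ itself.

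With these identifications, part~(i) is the $\Leftarrow$ direction of Theorem~3.12(i): since $(G^{\ast},\theta^{\ast},\mathfrak{e}_{z})$ is fundamental by Corollary~3.9, there exist fundamental $\gamma_{1}$ and $\theta^{\ast}$-fundamental $\delta$ with $\Delta(\gamma_{1},\delta)\neq 0$, and relabelling gives elliptic $\gamma_{1}$ and $\theta^{\ast}$-elliptic $\delta$. Part~(ii) is Theorem~3.12(ii) applied to the inner form $(G,\theta,\eta)$: because $\mathfrak{e}_{z}$ is fundamental, the existence of fundamental $\gamma_{1}$ and $\theta$-fundamental $\delta$ with $\Delta(\gamma_{1},\delta)\neq 0$ is equivalent to the existence of a $\theta$-fundamental element of $G(\mathbb{R})$; replacing ``fundamental'' by ``elliptic'' throughout yields the stated equivalence. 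If one prefers, the last clause can instead be routed through Corollary~3.10, whose condition~(i) is precisely the existence of a $\theta$-elliptic element of $G(\mathbb{R})$.

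I do not expect any real obstacle: all of the content sits in Theorem~3.12 and Lemma~3.8, and the proof is essentially bookkeeping. The one point that needs care is that the ``fundamental $\leftrightarrow$ elliptic'' dictionary has to be invoked for the endoscopic group as well as for $G$ and $G^{\ast}$ --- that is, one uses both that cuspidality descends to the inner form $G$ and that ellipticity of $\mathfrak{e}$ forces $H_{1}$ to be cuspidal --- and both of these facts are already in hand.
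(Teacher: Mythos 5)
Your proof is correct and matches the paper's approach exactly: the paper derives Corollary 3.13 from Theorem 3.12 together with Lemma 3.8 (referenced there as Lemma 3.4.1), which is precisely the dictionary you invoke to replace ``fundamental'' by ``elliptic'' on the $G$, $G^{\ast}$, and $H_{1}$ sides. The only substantive ingredient, Corollary 3.9 ensuring the cuspidal-elliptic setting is fundamental, is also an immediate consequence of Lemma 3.8, so your bookkeeping reconstruction is faithful to the paper's (unstated) argument.
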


We will return to the results of Sections 3.3 and 3.4 in \cite{ShII}.

\section{\textbf{Formulating spectral factors}}

We turn now to some remarks on transfer statements in the setting from Lemma
2.5. We have checked that this setting captures all nontrivial geometric
transfer on the fundamental very regular set. There is an analogous
statement for the spectral side which we will introduce now but make precise
and verify later; see Part 9. We will limit our discussion in the present
section to the cuspidal-elliptic setting, as the general fundamental case
follows quickly.

\subsection{Transfer statements}

For the main case I, we consider an inner form $(G,\theta ,\eta )$ of $%
(G^{\ast },\theta ^{\ast })$ for which (i) the transport of $spl_{Wh}$ to $G$
by $\eta $ is fundamental and (ii) $\theta $ is the transport of $\theta
^{\ast }$ to $G$ by $\eta $. We have assumed for convenience that $G$ is
cuspidal and the endoscopic datum $\mathfrak{e}$ is elliptic. Also for
convenience, we will discuss transfer for the tempered rather than the
essentially tempered spectrum.

First recall geometric transfer. Test functions are Harish-Chandra Schwartz
functions; we consider functions $f\in \mathcal{C}(G(\mathbb{R}),\theta )$
and $f_{1}\in \mathcal{C}(H_{1}(\mathbb{R}),\varpi _{1})$ \cite[Section 1]%
{Sh12}. We may also use $C_{c}^{\infty }(G(\mathbb{R}),\theta )$ and $%
C_{c}^{\infty }(H_{1}(\mathbb{R}),\varpi _{1})$ by Bouaziz's Theorem (see 
\cite[Section 2]{Sh12}), as we will need in (8.2) for the generally
nontempered transfer of Adams-Johnson. Measures and integrals will be
defined and normalized as in \cite{Sh12}. To be more careful, we should use
test measures in place of test functions throughout, in order to have the
transfer depend only on the normalization of transfer factors. However, this
will be ignored here; see instead the note \cite{Sh}.

Theorem 2.1 of \cite{Sh12} shows that for all $f\in \mathcal{C}(G(\mathbb{R}%
),\theta )$ there exists $f_{1}\in \mathcal{C}(H_{1}(\mathbb{R}),\varpi
_{1}) $ such that%
\begin{equation}
SO(\gamma _{1},f_{1})=\sum_{\delta ,\text{ }\theta \text{-}conj}\Delta
(\gamma _{1},\delta )\text{ }O^{\theta ,\varpi }(\delta ,f)
\end{equation}%
for all strongly $G$-regular $\gamma _{1}$ in $H_{1}(\mathbb{R}).$ Here $%
O^{\theta ,\varpi }$ denotes a $(\theta ,\varpi )$-twisted orbital integral
and $SO$ denotes a standard (untwisted) stable orbital integral. We write $%
f_{1}\in Trans_{\theta ,\varpi }(f).$

Suppose $\pi _{1}$ is a tempered irreducible admissible representation of $%
H_{1}(\mathbb{R})$ and $\Pi _{1}$ is its packet. We will assume, usually
without further mention, that $\pi _{1}(Z_{1}(\mathbb{R}))$ acts by the
character $\varpi _{1};$ recall $Z_{1}$ is the central torus $%
Ker(H_{1}\rightarrow H).$ Let $St$-$Tr$ $\pi _{1}$ be the stable tempered
distribution%
\begin{equation*}
f_{1}\rightarrow \sum_{\pi _{1}^{\prime }\in \Pi _{1}}Trace\pi _{1}^{\prime
}(f_{1}).
\end{equation*}%
Because $f_{1}\in \mathcal{C}(H_{1}(\mathbb{R}),\varpi _{1})$ we have taken $%
\pi _{1}(f_{1})$ as the operator%
\begin{equation*}
\int_{H_{1}(\mathbb{R)}/Z_{1}(\mathbb{R)}}f_{1}(h_{1})\pi _{1}(h_{1})\frac{%
dh_{1}}{dz_{1}}.
\end{equation*}%
Following the case of standard endoscopic transfer we may consider the
linear form $f\rightarrow St$-$Tr$ $\pi _{1}(f_{1})$ on $\mathcal{C}(G(%
\mathbb{R}),\theta )$, where $f_{1}$ is attached to $f$ as in (4.1). For the
present discussion we restrict the form to $C_{c}^{\infty }(G(\mathbb{R}%
),\theta ).$ It is well-defined by Lemma 5.3 of \cite{Sh79a}, and results of
Waldspurger \cite{Wa14} (see also \cite{Me13}) show that it is a linear
combination of twisted traces of representations of $G(\mathbb{R}).$ Our
purpose is different. We want to describe certain coefficients closely
related to the geometric factors and then later establish that they are
correct for such a spectral transfer. Our interest in the spectral transfer
statement (4.3)\ below is in certain constraints it places on our factors.
With these constraints in mind we will verify various lemmas before making
our definitions. For example, Lemma 9.2 will be the spectral analogue of
Corollary 3.10, namely that our present assumption on $(G,\theta ,\eta )$
captures all nonempty twistpackets of discrete series representations for
inner forms of $(G^{\ast },\theta ^{\ast })$. Other results require more
effort and for these we will introduce further tools.

Let $\pi $ be a tempered irreducible admissible representation of $G(\mathbb{%
R})$ and $\Pi $ denote its packet. We use the same notation for a
representation and its isomorphism class; we may also work with unitary
representations and unitary isomorphisms. For a related pair of Langlands
parameters (see Part 9) we consider the corresponding packets $\Pi _{1}$ for 
$H_{1}(\mathbb{R})$ and $\Pi $ for $G(\mathbb{R})$. The construction of
endoscopic data ensures that the packet $\Pi $ is preserved under the map%
\begin{equation*}
\pi \rightarrow \varpi ^{-1}\otimes (\pi \circ \theta ).
\end{equation*}%
This last property is a simple condition on the Langlands parameter of $\Pi $%
; whenever it is satisfied we call the attached packet $(\theta ,\varpi )$%
\textit{-stable}. Thus we may define a twisted trace on $\oplus _{\pi
^{\prime }\in \Pi }$ $\pi ^{\prime }.$ Only those $\pi ^{\prime }$ fixed by
the map will contribute nontrivially. We then define $\Pi ^{\theta ,\varpi }$
to be the subset of $\Pi $ consisting of such $\pi ^{\prime }$ and call $\Pi
^{\theta ,\varpi }$ a \textit{twistpacket} for $(\theta ,\varpi ).$

Suppose $\pi $ belongs to the twistpacket $\Pi ^{\theta ,\varpi }$ and that
the unitary operator $\pi (\theta ,\varpi )$ on the space of $\pi $
interwines $\pi \circ \theta $ and $\varpi \otimes \pi $ or, more precisely,
that%
\begin{equation}
\pi (\theta (g))\circ \pi (\theta ,\varpi )=\varpi (g).(\pi (\theta ,\varpi
)\circ \pi (g)),
\end{equation}%
for $g\in G(\mathbb{R})$. Then by the twisted trace of $\pi $ we mean the
linear form%
\begin{equation*}
f\rightarrow Trace\text{ }\pi (f)\circ \pi (\theta ,\varpi ).
\end{equation*}%
Note that we have not fixed a normalization of the operator $\pi (\theta
,\varpi ).$ Also, if $f$ is replaced by $g\rightarrow f(xg\theta (x)^{-1})$
then $Trace$ $\pi (f)$ $\pi (\theta ,\varpi )$ is multiplied by $\varpi (x),$
for $x\in G(\mathbb{R})$.

Spectral transfer factors will be nonzero complex coefficients $\Delta (\pi
_{1},\pi )$ such that%
\begin{equation}
St\text{-}Trace\text{ }\pi _{1}(f_{1})=\tsum\nolimits_{\pi \in \Pi ^{\theta
,\varpi }}\Delta (\pi _{1},\pi )\text{ }Trace\text{ }\pi (f)\text{ }\pi
(\theta ,\varpi ).
\end{equation}

The factors $\Delta (\pi _{1},\pi )$ depend on how we normalize the
geometric factors $\Delta (\gamma _{1},\delta )$ that prescribe the
correspondence $(f,f_{1}).$ Following the method for standard transfer we
will introduce a geometric-spectral compatibility factor. For standard
transfer this factor was canonical. In the twisted case there is a new
dependence: the choice of normalization for the operators $\pi (\theta
,\varpi )$, $\pi \in \Pi ^{\theta ,\varpi }.$ We may multiply $\pi (\theta
,\varpi )$ by a nonzero complex number $\lambda $ (of absolute value one
since we have required unitarity). In standard endoscopy the term $\Delta
_{II}$ in $\Delta (\pi _{1},\pi )$ comes from the explicit local
representation of $f\rightarrow Trace$ $\pi (f)$ around the identity. In the
twisted case, we consider a similar twisted term for $f\rightarrow Trace$ $%
\pi (f)\pi (\theta ,\varpi )$ around a certain point, in general not the
identity element. We will then see that multiplying $\pi (\theta ,\varpi )$
by $\lambda $ has the effect of dividing $\Delta _{II}$ by $\lambda .$ No
other term in $\Delta (\pi _{1},\pi )$ will depend on $\pi (\theta ,\varpi )$
and so%
\begin{equation*}
\Delta (\pi _{1},\pi )\text{ }Trace\text{ }\pi (f)\text{ }\pi (\theta
,\varpi )
\end{equation*}%
will be independent of the choice for $\pi (\theta ,\varpi ).$ Then the
(geometric-spectral) compatibility factor $\Delta (\pi _{1},\pi ;\gamma
_{1},\delta )$ will depend on $\pi (\theta ,\varpi )$ but the quotient%
\begin{equation*}
\Delta (\pi _{1},\pi )\diagup \Delta (\pi _{1},\pi ;\gamma _{1},\delta )
\end{equation*}%
will not. We conclude that we may define geometric-spectral compatibility as
in the standard case \cite[Section 12]{Sh10}.

\subsection{Additional twist by an element of $G(\mathbb{R})$}

We now consider the setting II where we twist an automorphism $\theta $ as
in I by an element $g_{\mathbb{R}}$ of $G(\mathbb{R})$. This yields no new
twistpackets but it will be useful to have a precise formulation for
transfer with the twisted automorphism.

Denote by $\Delta _{g_{\mathbb{R}}}$ the geometric transfer factors defined
using $g_{\mathbb{R}}$-norms. Suppose we replace $g_{\mathbb{R}}$ by $z_{%
\mathbb{R}}g_{\mathbb{R}},$ where $z_{\mathbb{R}}$ lies in the center of $G(%
\mathbb{R}).$ Then the relative factors%
\begin{equation*}
\Delta _{g_{\mathbb{R}}}(\gamma _{1},\delta ;\gamma _{1}^{\prime },\delta
^{\prime })
\end{equation*}%
and%
\begin{equation*}
\Delta _{z_{\mathbb{R}}g_{\mathbb{R}}}(\gamma _{1},\delta z_{\mathbb{R}%
};\gamma _{1}^{\prime },\delta ^{\prime }z_{\mathbb{R}})
\end{equation*}%
coincide. Indeed we see quickly from the definitions that the only
difference between the two is that the element $z_{\mathbb{R}}$ is inserted
in the element $D$ constructed for $\Delta _{III}$ (see p. 33 of \cite{KS99}%
) where it clearly has no effect. This property of the relative factors
allows us to normalize absolute factors so that%
\begin{equation*}
\Delta _{z_{\mathbb{R}}g_{\mathbb{R}}}(\gamma _{1},\delta z_{\mathbb{R}%
})=\Delta _{g_{\mathbb{R}}}(\gamma _{1},\delta )
\end{equation*}%
for all very regular related pairs $(\gamma _{1},\delta )$ for the $g_{%
\mathbb{R}}$-norm.

The choice of $z_{\mathbb{R}}$ affects the correspondence on test functions.
If $f_{1}\in Trans(f)$ for $g_{\mathbb{R}}$-norms then clearly $f_{1}\in
Trans(f_{z_{\mathbb{R}}})$ for $z_{\mathbb{R}}g_{\mathbb{R}}$-norms, where $%
f_{z_{\mathbb{R}}}$ denotes the translate of $f$ by $(z_{\mathbb{R}})^{-1}.$
The extended version of Lemma 5.1.C at the bottom of p.53 of \cite{KS99}
applies also to $g_{\mathbb{R}}$-norms since it is easily rewritten as a
statement about relative factors. Thus if $z_{1}\in Z_{H_{1}}(\mathbb{R})$
has image in $Z_{H}(\mathbb{R})$ equal to the image of $z_{\mathbb{R}}$
under $N$, \textit{i.e., }if $(z_{1},z_{\mathbb{R}})$ belongs to the group $%
C(\mathbb{R})$ from (5.1) of \cite{KS99}, then there is quasicharacter $%
\varpi _{C}$ on $C(\mathbb{R})$ such that 
\begin{equation*}
\Delta _{g_{\mathbb{R}}}(z_{1}\gamma _{1},\delta z_{\mathbb{R}})=\varpi
_{C}(z_{1},z_{\mathbb{R}})^{-1}\Delta _{g_{\mathbb{R}}}(\gamma _{1},\delta ).
\end{equation*}%
A calculation with (4.1) now shows that%
\begin{equation*}
\varpi _{C}(z_{1},z_{\mathbb{R}}).(f_{1})_{z_{1}}\in Trans(f)
\end{equation*}%
for $z_{\mathbb{R}}g_{\mathbb{R}}$-norms.

In Lemma 9.5 we will prove that the central characters $\varpi _{\pi
_{1}},\varpi _{\pi }$ for a related pair $(\pi _{1},\pi )$ have the property
that%
\begin{equation}
\varpi _{\pi _{1}}(z_{1}).\varpi _{\pi }(z)^{-1}=\varpi _{C}(z_{1},z_{%
\mathbb{R}})
\end{equation}%
for all $(z_{1},z_{\mathbb{R}})$ in $C(\mathbb{R}).$ This and (4.2) imply
that if the spectral factors $\Delta _{g_{\mathbb{R}}}(\pi _{1},\pi )$ and $%
\Delta _{z_{\mathbb{R}}g_{\mathbb{R}}}(\pi _{1},\pi )$ are compatible with
geometric $\Delta _{g_{\mathbb{R}}}\ $and $\Delta _{z_{\mathbb{R}}g_{\mathbb{%
R}}}$ respectively, then%
\begin{equation*}
\Delta _{g_{\mathbb{R}}}(\pi _{1},\pi )=\Delta _{z_{\mathbb{R}}g_{\mathbb{R}%
}}(\pi _{1},\pi )
\end{equation*}%
for all pairs $(\pi _{1},\pi )$ as in Section 4.1. Here if $\pi (\theta
,\varpi )$ is used in the definition on the left then $\varpi _{\pi }(z_{%
\mathbb{R}}).\pi (\theta ,\varpi )$ is to be used on the right. Our
conclusion is then that the spectral factors will be independent of the
choice for $g_{\mathbb{R}}.$

\section{\textbf{Packets and parameters I}}

Next we review briefly Langlands parameters and Arthur parameters for real
groups \cite{La89, Ar89}. We make a construction in Section 5.2 that
attaches a $c$\textit{-Levi group} to a parameter. We will show in
subsequent sections how this group provides useful additional information
about the parameters we are concerned with. Twisting will be ignored until
Part 9.

\subsection{Langlands parameters, Arthur parameters}

Consider a homomorphism of the form%
\begin{equation*}
\psi =(\varphi ,\rho ):W_{\mathbb{R}}\times SL(2,\mathbb{C})\rightarrow
^{L}G,
\end{equation*}%
where $\varphi :W_{\mathbb{R}}\rightarrow $ $^{L}G$ is an essentially
bounded admissible homomorphism and $\rho $ is a continuous homomorphism of $%
SL(2,\mathbb{C})$ into $G^{\vee }$. The conditions on $\varphi $ mean that $%
\varphi (w)=\varphi _{0}(w)\times w,$ $w\in W_{\mathbb{R}},$ where $\varphi
_{0}$ is a continuous $1$-cocycle of $W_{\mathbb{R}}$ in $G^{\vee }$ and $%
\varphi _{0}(W_{\mathbb{R}})$ is a group of semisimple elements in $G^{\vee
} $ that is bounded mod center in the sense that the image of $\varphi
_{0}(W_{\mathbb{R}})$ in the adjoint form $G_{ad}^{\vee }$ under the natural
projection $G^{\vee }\rightarrow G_{ad}^{\vee }\ $is bounded.

An element $g$ of $G^{\vee }$ acts on the set of such $\varphi $ by
conjugation: $\varphi \rightarrow Int(g)\circ \varphi $. The $G^{\vee }$%
-orbits are the essentially bounded Langlands parameters for $G^{\ast };$
see \cite{La89}. Similarly, $G^{\vee }$ acts on the set of such $\psi $ and
the orbits are the Arthur parameters for $G^{\ast };$ see \cite{Ar89}.

When we replace $G^{\ast }$ by an inner twist $(G,\eta )$ in our
considerations, we will often limit our attention to Langlands parameters
which are \textit{relevant to }$(G,\eta )$ in the usual sense that the image
of a representative is contained only in parabolic subgroups of $^{L}G$
relevant to $(G,\eta )$ \cite{La89}. The essentially bounded Langlands
parameters relevant to $(G,\eta )$ parametrize the essentially tempered
packets of irreducible admissible representations of $\ G(\mathbb{R})\ $\cite%
{La89}.

Notation: occasionally we distinguish between a homomorphism $\varphi ,\psi $
and its $G^{\vee }$-orbit $\boldsymbol{\varphi }\mathbf{,\psi }$
respectively, but much of the time we use the symbols $\varphi $ or $\psi $
for both.

Let $\psi =(\varphi ,\rho )$ be a Arthur parameter and let $S=S_{\psi }$
denote the centralizer in $G^{\vee }$ of the image of $\psi .$ Recall that
Arthur calls $\psi $ \textit{elliptic} if the identity component of $S$ is
central in $G^{\vee }$ and that this is equivalent to requiring that the
image of $\psi $ be contained in no proper parabolic subgroup of $^{L}G$ 
\cite{Ar89}.

For calculations with the Weil group $W_{\mathbb{R}}$ we fix an element $%
w_{\sigma }$ of $W_{\mathbb{R}}$ such that $w_{\sigma }$ maps to $\sigma $
under $W_{\mathbb{R}}\rightarrow \Gamma $ and $(w_{\sigma })^{2}=-1.$

\subsection{$c$-Levi group attached to a parameter}

Let $\psi =(\varphi ,\rho )$ be an Arthur parameter. Set%
\begin{equation*}
M_{\psi }^{\vee }=Cent(\varphi (\mathbb{C}^{\times }),G^{\vee }).
\end{equation*}%
Then $M_{\psi }^{\vee }=M^{\vee }$ is a connected reductive subgroup of $%
G^{\vee }$. Because $\varphi (\mathbb{C}^{\times })$ is a torus, $M^{\vee }$
is \textit{Levi} in the sense that there is a parabolic subgroup of $G^{\vee
}$ with $M^{\vee }$ as Levi subgroup. Notice that $\varphi (W_{\mathbb{R}})$
normalizes $M^{\vee }.$ We define $\mathcal{M}$ to be the subgroup of $^{L}G$
generated by $M^{\vee }$ and $\varphi (W_{\mathbb{R}}).$ Then $\mathcal{M}$
is a split extension of $W_{\mathbb{R}}$ by $M^{\vee }.$ Notice that $%
\mathcal{M}$ contains $S_{\psi }.$

While $\mathcal{M}$ is typically not endoscopic, \textit{i.e.,} it is not
the group $\mathcal{H}$ in some set $(H,\mathcal{H},s)$ of standard
endoscopic data for\ $G$, we may extract an $L$-action on $M^{\vee }$ in the
same way as for the endoscopic case. For this, recall the fixed splitting $%
spl^{\vee }=(\mathcal{B},\mathcal{T},\{X_{\alpha ^{\vee }}\})$ for $G^{\vee
} $. There is no harm in assuming that $\varphi _{0}(\mathbb{C}^{\times })$
lies in $\mathcal{T}$ and that $\varphi _{0}(w_{\sigma })$ normalizes $%
\mathcal{T}$ \cite{La89}. Then $\mathcal{T}\subset M^{\vee }$ and a simple
root $\alpha ^{\vee }$ for $M^{\vee }\cap \mathcal{B}$ is also simple for $%
\mathcal{B}$. We then use the same root vector $X_{\alpha ^{\vee }}$. Write $%
spl_{M}^{\vee }$ for this splitting $(M^{\vee }\cap \mathcal{B},\mathcal{T}%
,\{X_{\alpha ^{\vee }}\})$ for $M^{\vee }$. To define an $L$-action on $%
M^{\vee }$ we need only to specify an automorphism $\sigma _{M}$ of $M^{\vee
}$ that preserves $spl_{M}^{\vee }$ and has order at most two. Since $Int$ $%
\varphi (w_{\sigma })$ preserves $M^{\vee }$ and has order at most two as
automorphism of $\mathcal{T},$ it is clear that there is a unique such $%
\sigma _{M}$ of the form $Int[m_{\sigma }.\varphi (w_{\sigma })],$ with $%
m_{\sigma }\in M^{\vee }.$ For the $L$-action itself, $W_{\mathbb{R}}$ acts
through $W_{\mathbb{R}}\rightarrow \Gamma ;$ in particular, $w_{\sigma }$
acts as $\sigma _{M}$ and $\mathbb{C}^{\times }$ acts trivially.

Write $^{L}M_{\psi }=$ $^{L}M$ for the corresponding $L$-group $M_{\psi
}^{\vee }\rtimes W_{\mathbb{R}}$ and $M_{\psi }$ for a group defined and
quasi-split over $\mathbb{R}$ that is dual to $^{L}M_{\psi }$. In Section
5.4 we will describe explicitly the $L$-isomophisms $\xi _{M}:$ $^{L}M_{\psi
}\rightarrow \mathcal{M}$ in the critical case, that where we have the
property $\bullet $ of the next section. In Section 6.2 we will define an
embedding over $\mathbb{R}$ of the quasi-split group $M_{\psi }$ in the
quasi-split form $G^{\ast }$ in that case (and the general case follows
quickly). For now, however, the following observation will be sufficient: in
the same sense and by the same arguments as for an endoscopic group (see 
\cite[Lemma 3.3.B]{KS99}), the group $M_{\psi }$ shares various maximal tori
over $\mathbb{R}$ with an inner form $G$ of $G^{\ast }$ and all maximal tori
over $\mathbb{R}$ in $M_{\psi }$ are shared with $G^{\ast }$.

The groups $\mathcal{M}$ and $M_{\psi }$ attached to Arthur parameter $\psi
=(\varphi ,\rho )$ depend only on $\varphi $. We may make the same
definitions for any Langlands parameter $\varphi $ and then we use the
notation $M_{\varphi }$. We call the group $M_{\varphi }$ a $c$\textit{-Levi
group }of $G^{\ast }$. We will define $c$-Levi groups in an inner form via
inner twists; see Section 6.2. In Section 7.4 we will see $M_{\psi }$ in a
more familiar setting, namely as a Levi subgroup defined over $\mathbb{R}$
of a parabolic subgroup preserved by a Cartan involution.

The group $M_{\varphi }$ also appears indirectly in the dual version of the
Knapp-Zuckerman decomposition of unitary principal series (see \cite[%
Sections 4, 5]{Sh82}), as we will recall briefly in Part 6. For certain
Arthur parameters $\psi ,$ a family of inner forms of $M_{\psi }$ is
introduced in \cite[Section 9]{Ko90}.

\subsection{A property for Arthur parameters}

Let $\psi =(\varphi ,\rho )$ be an Arthur parameter. As above, we choose a
representative $\psi $ such that $\varphi _{0}(\mathbb{C}^{\times })$ lies
in $\mathcal{T}$ and $\varphi _{0}(w_{\sigma })$ normalizes $\mathcal{T}$.
Consider the property:

\begin{itemize}
\item there is an element of $\mathcal{M}$ $\mathcal{\cap }$ $(G^{\vee
}\times w_{\sigma })$ that normalizes $\mathcal{T}$ and acts as $-1$ on all
roots of $G^{\vee }$.
\end{itemize}

Notice that $\bullet $ is true if and only if both $G^{\ast },$ $M_{\psi }$
are cuspidal and share an elliptic maximal torus $T,$ \textit{i.e.}, a
maximal torus that is anisotropic modulo the center of $G^{\ast }.$ Here we
may replace $G^{\ast }$ by an inner form $(G,\eta )$ if we wish; we then
write $T_{G}$ in place of $T\ $and assume harmlessly that $\eta $ maps $%
T_{G} $ to $T$ over $\mathbb{R}$.

\subsection{$L$-isomorphisms for attached $c$-Levi group}

We describe next the $L$-isomorphisms $\xi _{M}:$ $^{L}M\rightarrow \mathcal{%
M}$ for the case that $\bullet $ is true. There will be no harm in working
with standard $\chi $-data and we do so as it returns us to a familiar
setting. In particular, Lemma 5.4.1 below is well known; much of it is
stated in \cite{AJ87}, \cite{Ar89} without details of proof. We give a quick
proof based on some explicit calculations we will need. These calculations
also pinpoint dependence on the critical Lemma 3.2 in \cite{La89}.

The element described in $\bullet $ may be written as $n\times w_{\sigma },$
where $n\in G^{\vee }$ normalizes $\mathcal{T}$ and represents the longest
element of the Weyl group of $\mathcal{T}$ in $G^{\vee }.$ Since $n\times
w_{\sigma }\in \mathcal{M}$, our construction of $^{L}M=M^{\vee }\rtimes W_{%
\mathbb{R}}$ yields $n_{M}\times w_{\sigma }$ in the group $^{L}M$
normalizing $\mathcal{T}$ and acting as $-1$ on the roots of $M^{\vee }.$
Then $n_{M}\in M^{\vee }$ normalizes $\mathcal{T}$ and represents the
longest element of the Weyl group of $\mathcal{T}$ in $M^{\vee }.$ Because $%
M^{\vee }$ is Levi, we may multiply $n$ by an element of $\mathcal{T}$ $\cap 
$ $(M^{\vee })_{der}\subseteq \mathcal{T}$ $\cap $ $(G^{\vee })_{der}$ to
obtain $n^{\prime }$ such that the action of $n^{\prime }\times w_{\sigma
}\in $ $^{L}G$ on the entire group $M^{\vee }$ coincides with that of $%
n_{M}\times w_{\sigma }\in $ $^{L}M.$ Notice that 
\begin{equation}
n^{\prime }\sigma (n^{\prime })=n\sigma (n),
\end{equation}%
where $\sigma $ denotes the action of $1\times w_{\sigma }\in $ $^{L}G$ on $%
G^{\vee },$ and that the action of $1\times w_{\sigma }\in $ $^{L}M$ on $%
M^{\vee }$ is given by conjugation by%
\begin{equation}
(n_{M})^{-1}n^{\prime }\times w_{\sigma }\in \mathcal{M}.
\end{equation}

Returning to the construction of an $L$-isomorphism $\xi _{M}:$ $%
^{L}M\rightarrow \mathcal{M}$, we require $\xi _{M}$ to act as the identity
on $M^{\vee }$. It remains to define $\xi _{M}$ on $W_{\mathbb{R}}.$ There
is no harm in assuming that the element $n$ above, and thus also $n^{\prime
} $, belongs to $G_{der}^{\vee },$ and that $n_{M}$ belongs to $%
M_{der}^{\vee }.$ Then set%
\begin{equation*}
\xi _{M}(w_{\sigma })=(n_{M})^{-1}n^{\prime }\times w_{\sigma },
\end{equation*}%
Let $\iota $ denote one-half the sum of the coroots for $\mathcal{T}$ in $%
\mathcal{B}$, and let $\iota _{M}$ be the corresponding term for the coroots
in $M^{\vee }\cap \mathcal{B}$. Notice that because $M^{\vee }$ is Levi, we
have%
\begin{equation}
\left\langle \iota -\iota _{M},\alpha ^{\vee }\right\rangle =0
\end{equation}%
for all roots $\alpha ^{\vee }$ of $\mathcal{T}$ in $M^{\vee }.$ This,
together with $\bullet $, implies that $\sigma _{M}$ acts on $\iota -\iota
_{M}$ as $-1.$ For $z\in \mathbb{C}^{\times },$ define the element $\xi
_{M}(z)$ of $\mathcal{T}\times z$ by%
\begin{equation*}
\xi _{M}(z)=(z/\overline{z})^{\iota -\iota _{M}}\times z.
\end{equation*}

\begin{lemma}
(i) The map $\xi _{M}$ extends to a well-defined homomorphism $\xi _{M}:W_{%
\mathbb{R}}\rightarrow \mathcal{M}$ and thence to an $L$-isomophism $\xi
_{M}:$ $^{L}M\rightarrow \mathcal{M}.$

(ii) An $L$-isomorphism $\xi _{M}^{\prime }:$ $^{L}M\rightarrow \mathcal{M}$
extending the identity on $M^{\vee }$ is of the form $\xi _{M}^{\prime
}=a\otimes \xi _{M},$ where $a$ is a $1$-cocycle of $W_{\mathbb{R}}$ in the
center $Z_{M^{\vee }}$ of $M^{\vee }.$
\end{lemma}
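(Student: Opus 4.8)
The plan is to verify (i) by checking the relations defining a homomorphism $W_{\mathbb{R}}\to\mathcal{M}$, namely $\xi_M(z)$ is multiplicative in $z\in\mathbb{C}^{\times}$, $\xi_M(w_\sigma)$ conjugates $\xi_M(z)$ to $\xi_M(\bar z)$, and $\xi_M(w_\sigma)^2=\xi_M(-1)$; then the extension to an $L$-isomorphism $^{L}M\to\mathcal{M}$ is automatic since we have declared $\xi_M$ to be the identity on $M^{\vee}$ and the only remaining check is that conjugation by $\xi_M(w)$ on $M^{\vee}$ agrees with the action of $w$ in $^{L}M$, which is exactly how $\sigma_M$ was defined via $(n_M)^{-1}n'\times w_\sigma$ in (5.4.2).

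First I would record that the map $z\mapsto (z/\bar z)^{\iota-\iota_M}$ is a homomorphism $\mathbb{C}^{\times}\to\mathcal{T}$, so $\xi_M$ is multiplicative on $\mathbb{C}^{\times}\subset W_{\mathbb{R}}$. Next I would compute $\xi_M(w_\sigma)\,\xi_M(z)\,\xi_M(w_\sigma)^{-1}$: conjugation by $(n_M)^{-1}n'\times w_\sigma$ sends the $\mathcal{T}$-part $(z/\bar z)^{\iota-\iota_M}$ to $\sigma_M\big((z/\bar z)^{\iota-\iota_M}\big)=(z/\bar z)^{\sigma_M(\iota-\iota_M)}$, and by the remark following (5.4.3) — that (5.4.3) together with property $\bullet$ forces $\sigma_M$ to act as $-1$ on $\iota-\iota_M$ — this equals $(\bar z/z)^{\iota-\iota_M}=(\bar z/\overline{\bar z})^{\iota-\iota_M}$, which is the $\mathcal{T}$-part of $\xi_M(\bar z)$; the $W_{\mathbb{R}}$-parts match since $w_\sigma z w_\sigma^{-1}=\bar z$ in $W_{\mathbb{R}}$. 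The one computation requiring care is $\xi_M(w_\sigma)^2=\xi_M(-1)$. Here $\xi_M(w_\sigma)^2=\big((n_M)^{-1}n'\big)\cdot\sigma_M\big((n_M)^{-1}n'\big)\times w_\sigma^2$, and since $w_\sigma^2=-1\in\mathbb{C}^{\times}$ we need the $\mathcal{T}$-part to be $\xi_M(-1)=(-1)^{\iota-\iota_M}$ — this is where Lemma 3.2 of \cite{La89} enters, as flagged in the text: that lemma computes $n\sigma(n)$ (equivalently $n'\sigma(n')$ by (5.4.1), and $n_M\sigma_M(n_M)$ in $M^{\vee}$) in terms of $(-1)^{2\iota}$, and the quotient of the $G^{\vee}$-computation by the $M^{\vee}$-computation gives exactly $(-1)^{2(\iota-\iota_M)}$ up to the discrepancy absorbed by passing from $n$ to $n'$; I expect this bookkeeping — tracking how $n'$ differs from $n$ by an element of $\mathcal{T}\cap(M^{\vee})_{der}$ and how that element contributes — to be the main obstacle, and it is precisely the point the authors say they want to pinpoint.

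For (ii), given another $L$-isomorphism $\xi_M'$ restricting to the identity on $M^{\vee}$, the map $a:=\xi_M'\cdot\xi_M^{-1}$ (defined on $W_{\mathbb{R}}$ by $a(w)=\xi_M'(w)\xi_M(w)^{-1}$, landing in $M^{\vee}\times 1\cong M^{\vee}$) is a continuous map $W_{\mathbb{R}}\to M^{\vee}$; since both $\xi_M'$ and $\xi_M$ induce on $M^{\vee}$ the same (by construction) $L$-action, $a(w)$ must centralize $M^{\vee}$, hence lies in $Z_{M^{\vee}}$, and the homomorphism property of $\xi_M'$ and $\xi_M$ forces $a$ to be a $1$-cocycle of $W_{\mathbb{R}}$ in $Z_{M^{\vee}}$; conversely $a\otimes\xi_M$ is again an $L$-isomorphism for any such cocycle. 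This part is formal and I would present it in a sentence or two.
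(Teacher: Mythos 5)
Your proposal is correct and follows the paper's own route: the only substantive check is $\xi_M(w_\sigma)^2=\xi_M(-1)$, which the paper reduces to the exact identity $[(n_M)^{-1}n'\times w_\sigma]^2=(n_M\sigma_M(n_M))^{-1}\cdot n\sigma(n)\times(-1)$ using (5.1) and (5.2) and then applies \cite[Lemma 3.2]{La89} in both $G^\vee$ and $M^\vee$ to obtain $(-1)^{-2\iota_M}(-1)^{2\iota}\times(-1)=\xi_M(-1)$; the remaining verifications (multiplicativity on $\mathbb{C}^\times$, the $\sigma_M$-conjugation relation, part (ii)) are formal, exactly as you outline. The ``bookkeeping'' you flag about passing from $n$ to $n'$ is just the identity $n'\sigma(n')=n\sigma(n)$ of (5.1), so the rewriting is an equality rather than ``up to a discrepancy.''
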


\begin{proof}
Following (5.1) and (5.2) in our construction above, we see that%
\begin{equation*}
\lbrack (n_{M})^{-1}n^{\prime }\times w_{\sigma }]^{2}
\end{equation*}%
may be rewritten as%
\begin{equation*}
(n_{M}\sigma _{M}(n_{M}))^{-1}.n\sigma (n)\times (-1).
\end{equation*}%
By \cite[Lemma 3.2]{La89}, this is%
\begin{equation*}
(-1)^{-2\iota _{M}}(-1)^{2\iota }\times (-1),
\end{equation*}%
and (i) then follows. Also, (ii) is immediate.
\end{proof}

\subsection{$u$-regular Arthur parameters}

We continue with an Arthur parameter $\psi =(\varphi ,\rho )$. Notice that
the image of $\rho $ lies in $M^{\vee }.$ From now on we will limit our
attention to $u$-\textit{regular }Arthur parameters. By this we mean a
parameter $\psi $ for which the image of $\rho $ contains a regular
unipotent element of $M^{\vee }.$ Then\textit{\ }$\rho $ maps regular
unipotent elements of $SL(2,\mathbb{C})$ to regular unipotent elements of $%
M^{\vee }.$ We include the case that $M^{\vee }$ is abelian. Then $\rho $ is
trivial so that $\psi =(\varphi ,triv),$ where $\varphi $ is a Langlands
parameter that is regular in the sense of \cite[Section 2]{Sh10}.\textit{\ }%
Representations in the attached essentially tempered packet have regular
infinitesimal character. On the other hand, if $M^{\vee }$ is nonabelian
then there are $u$-\textit{regular }Arthur parameters where representations
in the attached Arthur packet have singular infinitesimal character; see
Lemma 7.4.

Observe that for a $u$-regular Arthur parameter $\psi ,$ the centralizer $%
S_{\psi }$ of the image of $\psi $ in $G^{\vee }$ consists exactly of the $%
\sigma _{M}$-invariants in $Z_{M^{\vee }}$.

\begin{lemma}
A $u$-regular Arthur parameter $\psi $ is elliptic if and only if $\bullet $
is true.
\end{lemma}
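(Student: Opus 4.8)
The plan is to translate the ellipticity of $\psi$, which by definition says that the identity component $S_\psi^\circ$ is central in $G^\vee$, into the statement $\bullet$ about the existence of a suitable element of $\mathcal{M}\cap(G^\vee\times w_\sigma)$ acting as $-1$ on all roots. The bridge between the two is the observation recorded at the end of Section 5.5: for a $u$-regular parameter, $S_\psi$ consists exactly of the $\sigma_M$-invariants in $Z_{M^\vee}$. So I would first reduce the problem to a question about the torus $Z_{M^\vee}$ together with the involution $\sigma_M$, and about how $Z_{M^\vee}$ sits inside $G^\vee$.

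First I would prove the ``if'' direction, which is the easier one. Assume $\bullet$ holds, so there is $n\times w_\sigma\in\mathcal{M}\cap(G^\vee\times w_\sigma)$ normalizing $\mathcal{T}$ and acting as $-1$ on all roots of $G^\vee$. Since $\mathcal{M}$ is generated by $M^\vee$ and $\varphi(W_{\mathbb{R}})$, and since $\sigma_M=\mathrm{Int}[m_\sigma.\varphi(w_\sigma)]$ already acts as an involution preserving $spl_M^\vee$, the element $n\times w_\sigma$ induces on $Z_{M^\vee}$ the same action as $\sigma_M$ up to conjugation — in fact since it acts as $-1$ on all roots of $G^\vee$ it acts as $-1$ on $X^*(\mathcal{T})$ tensored appropriately, hence its action on the subtorus $Z_{M^\vee}$ is by inversion composed with the relevant rational structure; the key point is that the coinvariants of $\sigma_M$ on $X_*(Z_{M^\vee})$ then inject into a lattice on which the longest Weyl element acts as $-1$, forcing $(Z_{M^\vee})^{\sigma_M,\circ}$ to be central in $G^\vee$. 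Concretely: an element $z\in Z_{M^\vee}$ fixed by $\sigma_M$ is fixed, up to the central ambiguity, by $\mathrm{Int}(n)$, and since $n$ represents $w_0$ which sends every root $\alpha^\vee$ of $G^\vee$ to $-\alpha^\vee$, any $\sigma_M$-fixed one-parameter subgroup of $Z_{M^\vee}$ must be killed by every root of $G^\vee$, i.e. lands in $Z_{G^\vee}$. Hence $S_\psi^\circ=((Z_{M^\vee})^{\sigma_M})^\circ\subseteq Z_{G^\vee}$, so $\psi$ is elliptic.

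For the ``only if'' direction I would argue contrapositively in the geometric language supplied in Section 5.3: by the remark there, $\bullet$ is equivalent to saying $G^\vee$ and $M^\vee$ — equivalently, via the $L$-isomorphism and fundamental splittings, $G^\ast$ and $M_\psi$ — are both cuspidal and share an elliptic maximal torus. Suppose $\psi$ is elliptic, i.e. $S_\psi^\circ=((Z_{M^\vee})^{\sigma_M})^\circ$ is central in $G^\vee$. I would first note that this forces $Z_{M^\vee}/Z_{G^\vee}$ to be anisotropic under $\sigma_M$, i.e. $\sigma_M$ acts on $X_*(Z_{M^\vee}/Z_{G^\vee})\otimes\mathbb{R}$ with no nonzero fixed vectors, hence acts as $-1$ on that space (an order-$\le 2$ automorphism of a real vector space with no fixed vectors is $-\mathrm{id}$). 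Since $M^\vee$ is Levi in $G^\vee$, a maximal torus $\mathcal{T}$ of $M^\vee$ is also maximal in $G^\vee$, and $X_*(\mathcal{T})$ decomposes up to isogeny as $X_*(Z_{M^\vee})\oplus X_*(\mathcal{T}\cap M^\vee_{der})$. On the second summand, the involution $\sigma_M$ together with the action of $n_M$ from Section 5.4 already realizes $-1$ (this is exactly the content of Lemma 5.4.1: the longest Weyl element $n_M$ of $M^\vee$ combines with $\sigma_M$ to act as $-1$ on the roots of $M^\vee$, using that $M_\psi$ is cuspidal — which I should check is forced here, since $u$-regularity plus ellipticity makes $M_\psi$ anisotropic modulo center up to the $\rho$-direction). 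Combining the two summands: the element $n_M\times w_\sigma\in{}^LM$ acts as $-1$ on all of $X^*(\mathcal{T})$, and transporting through $\mathcal{M}\cong{}^LM$ (or directly: multiplying $n_M$ by an element of $\mathcal{T}$ as in the construction of $n'$ in Section 5.4) produces an element of $\mathcal{M}\cap(G^\vee\times w_\sigma)$ normalizing $\mathcal{T}$ and acting as $-1$ on all roots of $G^\vee$. That is exactly $\bullet$.

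The main obstacle I anticipate is the bookkeeping in the ``only if'' direction: one must be careful that the element acting as $-1$ on $X^*(\mathcal{T})$ can actually be found \emph{inside} $\mathcal{M}$ and not merely inside $N_{G^\vee}(\mathcal{T})\times w_\sigma$. This is where the Levi structure of $M^\vee$ is essential — it lets one adjust $n_M$ by an element of $\mathcal{T}\cap(M^\vee)_{der}$ (exactly the move producing $n'$ from $n$ in Section 5.4, run in reverse) without leaving $\mathcal{M}$, because $\varphi(W_{\mathbb{R}})\subset\mathcal{M}$ controls the $w_\sigma$-component and $M^\vee\subset\mathcal{M}$ absorbs the torus correction. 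I would also need the preliminary fact that $u$-regularity forces $M_\psi$ to be cuspidal whenever $\psi$ is elliptic (so that Lemma 5.4.1 applies and $\sigma_M$ genuinely inverts the roots of $M^\vee$); this should follow from the description of $S_\psi$ as the $\sigma_M$-fixed points of $Z_{M^\vee}$ together with ellipticity, but it is the one spot where I would want to write the argument out carefully rather than wave at it.
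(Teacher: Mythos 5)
Your strategy — reduce everything to the identification $S_{\psi }=(Z_{M^{\vee }})^{\sigma _{M}}$ recorded just before the statement of the lemma, and then compare that with $\bullet $ — is the same as the paper's, and your "if" direction is correct and makes explicit the duality mechanism that the paper's one-line proof only gestures at.

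The gap you flag in the "only if" direction is genuine, and as written that step fails: ellipticity of $\psi $ does not by itself force $-\sigma _{M}$ to lie in the Weyl group of $M^{\vee }$, equivalently does not force the $c$-Levi group $M_{\psi }$ to be cuspidal. A test case: take $G^{\ast }=SL(3)$ split, $\varphi _{0}$ trivial, and $\rho $ the principal map of $SL(2)$ into $M^{\vee }=G^{\vee }=PGL(3)$. Then $\sigma _{M}$ is the identity and $Z_{M^{\vee }}$ is trivial, so $\psi $ is $u$-regular and elliptic, yet $-1\notin W(A_{2})$ and $\bullet $ fails. What is missing is the paper's standing cuspidal-elliptic assumption, i.e. that $G^{\ast }$ is cuspidal, which the lemma is tacitly invoking. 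Granting it, the gap closes in one more line: cuspidality of $G^{\ast }$ supplies some $w_{0}\in W(G^{\vee })$ for which $w_{0}\times w_{\sigma }$ acts as $-1$ on $X_{\ast }(\mathcal{T})$, so the involution $-\sigma _{M}$ of $X_{\ast }(\mathcal{T})$ is realized by an element of $W(G^{\vee })$; ellipticity of $\psi $ gives $\sigma _{M}=-1$ on $X_{\ast }(Z_{M^{\vee }})$, so that same element of $W(G^{\vee })$ fixes $X_{\ast }(Z_{M^{\vee }})$ pointwise; and for a Levi subgroup $M^{\vee }$ the elements of $W(G^{\vee })$ fixing $Z_{M^{\vee }}$ pointwise are exactly those in $W(M^{\vee })$. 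Hence $-\sigma _{M}\in W(M^{\vee })$, giving the $n_{M}$ you need, and the summand bookkeeping and the passage back into $\mathcal{M}\cap (G^{\vee }\times w_{\sigma })$ in your final paragraph are then in order.
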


\begin{proof}
There is no harm in assuming $G$ is simply-connected and semisimple, so that 
$Z_{G^{\vee }}$ is trivial. Then a nontrivial torus in the $\sigma _{M}$%
-invariants of $Z_{M^{\vee }}$ determines a nontrivial $\mathbb{R}$-split
torus in a fundamental maximal torus of $M$, and conversely.
\end{proof}

In the next lemma we assume $\psi $ is elliptic since we have yet to
describe $\xi _{M}$ in general (see \cite{ShII}). By construction, we have $%
\varphi (W_{\mathbb{R}})$ contained in $\mathcal{M}$, and so we may factor $%
\varphi $ through $\xi _{M}:$ $^{L}M\rightarrow \mathcal{M}.$ Define the
Langlands parameter $\varphi _{M}$ by $\varphi =\xi _{M}\circ \varphi _{M}.$
Set $^{L}Z_{M}=Z_{M^{\vee }}\rtimes W_{\mathbb{R}}$ $\subseteq $\ $^{L}M.$

\begin{lemma}
The Langlands parameter $\varphi _{M}$ factors through $^{L}Z_{M}.$
\end{lemma}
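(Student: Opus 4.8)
The plan is to unwind the definition of $\varphi_M$ and show that $\varphi_{M,0}(W_{\mathbb{R}})$ lands in $Z_{M^{\vee}}$, i.e. that $\varphi_M$ is central. Since $\varphi = \xi_M \circ \varphi_M$ and $\xi_M$ is the identity on $M^{\vee}$, the components satisfy $\varphi_0(w) = \xi_M(w)_{M^{\vee}} \cdot \varphi_{M,0}(w)$ where $\xi_M(w)_{M^{\vee}}$ denotes the $M^{\vee}$-part of $\xi_M(w)$. So it suffices to compare $\varphi_0$ with the explicit cocycle coming from $\xi_M$ on the two topological generators of $W_{\mathbb{R}}$, namely $z \in \mathbb{C}^\times$ and $w_\sigma$, and check that the quotient is central in $M^{\vee}$ and is a cocycle (the cocycle property being automatic once we know $\varphi$ and $\xi_M$ are homomorphisms, so the real content is centrality).

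First I would handle $\mathbb{C}^\times$. By the normalization in Section 5.2 we have $\varphi_0(\mathbb{C}^\times) \subseteq \mathcal{T} \subseteq M^{\vee}$, and $\mathbb{C}^\times$ acts trivially in the $L$-action on $M^{\vee}$; thus $\varphi_{M,0}(z) = \xi_M(z)_{\mathcal{T}}^{-1}\,\varphi_0(z) = (z/\bar z)^{-(\iota - \iota_M)}\varphi_0(z)$, which certainly lies in $\mathcal{T}$. To see it lies in $Z_{M^{\vee}}$ I would pair against each root $\alpha^\vee$ of $\mathcal{T}$ in $M^{\vee}$: by $u$-regularity the image of $\varphi|_{\mathbb{C}^\times}$ (which is $\varphi_0|_{\mathbb{C}^\times}$) centralizes the regular unipotent in $M^{\vee}$ supplied by $\rho$, which forces $\alpha^\vee \circ \varphi_0|_{\mathbb{C}^\times}$ to be a fixed character independent of $\alpha^\vee$ among the simple roots; combining this with $\langle \iota - \iota_M, \alpha^\vee\rangle = 0$ for all roots $\alpha^\vee$ of $M^{\vee}$ (equation (5.3)) gives that $\varphi_{M,0}(z)$ is killed by every root of $M^{\vee}$, hence central. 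Actually the cleanest route is: $M^{\vee}_\psi = Cent(\varphi(\mathbb{C}^\times), G^{\vee})$ by definition, so $\varphi_0(\mathbb{C}^\times)$ is central in $M^{\vee}$ outright, and then $\xi_M(z)_{\mathcal{T}} = (z/\bar z)^{\iota-\iota_M}$ is central in $M^{\vee}$ by (5.3); so the quotient $\varphi_{M,0}(z)$ is central in $M^{\vee}$ with no further work. That disposes of $\mathbb{C}^\times$.

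Next I would treat $w_\sigma$. Here $\xi_M(w_\sigma) = (n_M)^{-1} n' \times w_\sigma$, while $\varphi(w_\sigma) = \varphi_0(w_\sigma) \times w_\sigma$. Hence $\varphi_{M,0}(w_\sigma)$ is (up to the factored-out identity action of $\xi_M$ on $M^{\vee}$) the element $m_0 := [(n_M)^{-1} n']^{-1} \varphi_0(w_\sigma) \in M^{\vee}$, where I am using that both $\xi_M(w_\sigma)$ and $\varphi(w_\sigma)$ induce $\sigma_M$-conjugation on $M^{\vee}$ composed with the ambient $w_\sigma$-part. Indeed $\operatorname{Int}[\varphi(w_\sigma)]$ restricted to $M^{\vee}$ is exactly $\sigma_M$ by the construction of $\sigma_M$ in Section 5.2 (it was defined as the unique automorphism of the form $\operatorname{Int}[m_\sigma \varphi(w_\sigma)]$ preserving $spl_M^\vee$, but in fact $\operatorname{Int}\varphi(w_\sigma)$ already preserves $M^\vee$ and one checks $m_\sigma$ can be taken so that the two agree — more carefully I would use that $\xi_M$ was built precisely so that $\operatorname{Int}[\xi_M(w_\sigma)]$ on $M^{\vee}$ equals $\operatorname{Int}[\varphi(w_\sigma)]$ on $M^{\vee}$, both being the chosen $\sigma_M$). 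Therefore $m_0$ centralizes $M^{\vee}$, i.e. $m_0 \in Z_{M^{\vee}}$. Finally I would note that $\{w_\sigma^2 = -1\} \in \mathbb{C}^\times$ reduces the $w_\sigma^2$ relation to the already-handled $\mathbb{C}^\times$ case, so $\varphi_M$ genuinely defines a homomorphism into $^LZ_M = Z_{M^{\vee}} \rtimes W_{\mathbb{R}}$, and admissibility/essential-boundedness is inherited from $\varphi$.

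The main obstacle is the bookkeeping in the $w_\sigma$ step: one must pin down that $\operatorname{Int}[\xi_M(w_\sigma)]$ and $\operatorname{Int}[\varphi(w_\sigma)]$ induce literally the same automorphism of $M^{\vee}$ — this is where the careful choice of $n'$ (via equation (5.1) and the multiplication by an element of $\mathcal{T} \cap (M^{\vee})_{der}$) and of $n_M$ enters, and is exactly the point the paper flags as "pinpointing dependence on Lemma 3.2 of \cite{La89}." Once that identification is in hand, centrality of $\varphi_{M,0}(w_\sigma)$ is forced, and everything else is the routine generator-and-relations check for $W_{\mathbb{R}}$. I would present the $\mathbb{C}^\times$ case first (two lines, using $M^\vee = Cent(\varphi(\mathbb{C}^\times),G^\vee)$), then the $w_\sigma$ case, then the $w_\sigma^2$ relation, and conclude.
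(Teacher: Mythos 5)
Your treatment of the $\mathbb{C}^\times$ part is fine and matches the paper's: $\varphi_0(\mathbb{C}^\times)$ is central in $M^\vee=\mathrm{Cent}(\varphi(\mathbb{C}^\times),G^\vee)$ by definition, and $(z/\overline{z})^{\iota-\iota_M}$ is central in $M^\vee$ by (5.3).

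The $w_\sigma$ step, however, has a genuine gap. You want to conclude that $m_0=[(n_M)^{-1}n']^{-1}\varphi_0(w_\sigma)$ is central in $M^\vee$ by arguing that $\mathrm{Int}[\varphi(w_\sigma)]$ and $\mathrm{Int}[\xi_M(w_\sigma)]$ induce the same automorphism ($\sigma_M$) of $M^\vee$. But $\mathrm{Int}[\varphi(w_\sigma)]$ restricted to $M^\vee$ equals $\sigma_M$ \emph{if and only if} $m_0$ is central in $M^\vee$ --- this is exactly the statement you are trying to prove. The construction in Section 5.2 only gives you that $\sigma_M=\mathrm{Int}[m_\sigma\cdot\varphi(w_\sigma)]$ for \emph{some} (a priori nontrivial) $m_\sigma\in M^\vee$; your parenthetical ``one checks $m_\sigma$ can be taken so that the two agree'' is asserting, without proof, that $m_\sigma$ is central. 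The careful choice of $n'$ in Section 5.4 guarantees that $\xi_M(w_\sigma)$ acts as $\sigma_M$ on $M^\vee$; it says nothing about $\varphi(w_\sigma)$. So the $w_\sigma$ step of your argument is circular.

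What you are missing is that the proof must use the $SL(2)$-component $\rho$ of $\psi$ and the $u$-regularity hypothesis --- nowhere in your proposal does $\rho$ appear. The paper's argument is: since $1\times w_\sigma\in{}^LM$ preserves $spl_M^\vee$, one can conjugate $\psi$ by an element of $M^\vee$ so that $\rho(SL(2,\mathbb{C}))$ contains a regular unipotent $u$ of $M^\vee$ fixed by $1\times w_\sigma$ (i.e.\ by $\sigma_M$). Because the images of $W_\mathbb{R}$ and $SL(2,\mathbb{C})$ under $\psi$ commute, $\varphi(w_\sigma)=m(w_\sigma)\cdot\xi_M(w_\sigma)$ commutes with $u$; since $\xi_M(w_\sigma)$ already fixes $u$, the (semisimple) element $m(w_\sigma)$ commutes with the regular unipotent $u$, and a semisimple element of a connected reductive group commuting with a regular unipotent is central. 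That is the actual content, and it is not recoverable from splitting bookkeeping alone.
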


\begin{proof}
By (5.3), $\varphi _{M}(\mathbb{C}^{\times })$ lies in $^{L}Z_{M}.$ Because $%
1\times w_{\sigma }\in $ $^{L}M$ preserves the splitting $spl_{M}^{\vee }$
of $M^{\vee }$ we may adjust $\psi $ by an element of $M^{\vee }$ to arrange
also that $\rho (SL(2,\mathbb{C}))$ contains a regular unipotent element of $%
M^{\vee }$ that is fixed by $1\times w_{\sigma }.$ Writing $\varphi
_{M}(w_{\sigma })$ as $m(w_{\sigma })\times w_{\sigma },$ we have then that
the semisimple element $m(w_{\sigma })$ commutes with a regular unipotent
element and hence is central in $M^{\vee }.$
\end{proof}

\begin{remark}
By (ii) of Lemma 5.1 we may replace $\xi _{M}$ by $\varphi $ itself. Then $%
\varphi $ factors through the trivial parameter $w\rightarrow 1\times w$;
this factoring is used for the parameters in \cite[Section 9]{Ko90}.
\end{remark}

\subsection{Langlands parameters for discrete series}

Discrete series parameters are defined in \cite[Section 3]{La89}. They are
precisely the Langlands parameters $\boldsymbol{\varphi }$ that are elliptic
as Arthur parameters, \textit{i.e.,} such that $\psi =(\varphi ,triv)$ is
elliptic, and then $M_{\varphi }$ is just the elliptic torus $T$ of Section
5.3.

We recall that there is a representative $\varphi $ for $\boldsymbol{\varphi 
}$ such that%
\begin{equation}
\varphi _{0}(z)=z^{\mu }\overline{z}^{\sigma _{T}\mu },\text{ }z\in \mathbb{C%
}^{\times },
\end{equation}%
and%
\begin{equation}
\varphi _{0}(w_{\sigma })=e^{2\pi i\lambda }n.
\end{equation}%
Here $n$ is the element of the derived group of $G^{\vee }$ constructed from 
$spl^{\vee }$ to represent the longest element of the Weyl group of $%
\mathcal{T}$ (see \cite[ Section 2.6]{LS87}). Then $\varphi (w_{\sigma })$
acts on $\mathcal{T}$ as $\sigma _{T}.$ Also $\mu ,\lambda \in X_{\ast }(%
\mathcal{T})\otimes \mathbb{C}$ and 
\begin{equation}
\frac{1}{2}(\mu -\sigma _{T}\mu )-\iota \equiv \lambda +\sigma _{T}\lambda 
\text{ }\func{mod}X_{\ast }(\mathcal{T}),
\end{equation}%
where $\iota $ is one-half the sum of the coroots for $\mathcal{T}$ in the
Borel subgroup $\mathcal{B}$ provided by $spl^{\vee }$. Notice that the
congruence implies that%
\begin{equation*}
\left\langle \mu ,\alpha ^{\vee }\right\rangle \in \mathbb{Z}
\end{equation*}%
for all roots $\alpha ^{\vee }$ of $\mathcal{T}$ in $G^{\vee }.$ We require $%
\mu $ to be strictly dominant for $spl^{\vee }$. Thus $\mu $ is determined
uniquely by $\boldsymbol{\varphi }$, while $\lambda $ is determined uniquely
modulo $\mathcal{K}$, where 
\begin{equation*}
\mathcal{K}=X_{\ast }(\mathcal{T})+\{\nu \in X_{\ast }(\mathcal{T})\otimes 
\mathbb{C}:\sigma _{T}\nu =-\nu \}.
\end{equation*}%
Finally, the representative $\varphi (\mu ,\lambda )$ for $\boldsymbol{%
\varphi }$ is determined uniquely up to the action of $\mathcal{T}$.

\subsection{Langlands parameters for limits of discrete series}

As in \cite{Sh82}, we may use (5.4) and (5.5) above to construct a Langlands
parameter $\boldsymbol{\varphi }$ with representative $\varphi =\varphi (\mu
,\lambda ),$ where $(\mu ,\lambda )$ satisfy (5.6) but the strict dominance
condition on $\mu $ is relaxed to dominance.

Notice that if $\mu _{0}\in X_{\ast }(\mathcal{T})$ is dominant then we
obtain another such parameter $\varphi (\mu +\mu _{0},\lambda +\frac{1}{2}%
\mu _{0})$. This will provide a translation by the rational character $\mu
_{0}$ in the character data of Parts 6 and 7.

Also, because $\left\langle \mu +\sigma _{T}\mu ,\alpha ^{\vee
}\right\rangle =0$ for all roots $\alpha ^{\vee }$ of $\mathcal{T}$ in $%
G^{\vee },$ the image of $\varphi _{0}=\varphi _{0}(\mu ,\lambda )$ is
bounded mod center. Thus $\boldsymbol{\varphi }$ is essentially bounded.

\begin{remark}
We will now use the term \textbf{s-elliptic} for any Langlands parameter $%
\boldsymbol{\varphi }$ with representative of the form $\varphi =\varphi
(\mu ,\lambda )$\textit{.} By construction, $\bullet $ is true, so that the
attached $c$-Levi group $M_{\varphi }$ is cuspidal and shares an elliptic
maximal torus $T$ with $G$.
\end{remark}

\begin{remark}
Langlands' definition of the packet attached to relevant $\boldsymbol{%
\varphi }$ involves components of principal series representations \cite%
{La89}. Theorem 4.3.2 of \cite{Sh82} shows that $\varphi (\mu ,\lambda )$
may be used directly to identify these components as limits of discrete
series characters, nondegenerate or not.
\end{remark}

\section{\textbf{Packets of limits of discrete series}}

We pause for a more detailed analysis of the packet of tempered irreducible
representations attached to an $s$-elliptic parameter $\boldsymbol{\varphi }$
with representative $\varphi =\varphi (\mu ,\lambda )$. Namely, we expand on
Remark 5.6 using the attached $c$-Levi group $M_{\varphi }$ and the strong
generic base-point property from \cite[Section 11]{Sh08b} which is based on
Vogan's classification of generic representations.

\subsection{Character data, Whittaker data}

Let $\mathcal{C}$ denote the closed Weyl chamber in $X_{\ast }(\mathcal{T}%
)\otimes \mathbb{C}$ dominant for $spl^{\vee }$. Recall that $\mu \in 
\mathcal{C}.$ We choose an inner automorphism of $G^{\ast }$ carrying $%
spl^{\ast }=(spl^{\vee })^{\vee }$ to a fundamental splitting $%
spl_{Wh}=(B,T,\{X_{\alpha }\})$ for $G^{\ast }$ of Whittaker type. We thus
have a transport to $T$ of $(\mu ,\lambda ,\mathcal{C}).$ Then $(\mu
,\lambda ,\mathcal{C})$ becomes character data for a generic discrete series
or limit of discrete series representation $\pi ^{\ast }$ of $G^{\ast }(%
\mathbb{R})$.

To fix Whittaker data for $G^{\ast }$, by which we mean a $G^{\ast }(\mathbb{%
R})$-conjugacy class of the pairs $(B,\lambda )$ of \cite[Section 5.3]{KS99}%
, we choose an additive character $\psi _{\mathbb{R}}$ for $\mathbb{R}$ and
use the conjugacy class of the pair determined by $\psi _{\mathbb{R}}$ and $%
spl^{\ast }$. We may adjust the fundamental splitting $spl_{Wh}$ to arrange
that $\pi ^{\ast }$ is generic for the chosen Whittaker data. We then say
that the splitting is \textit{aligned} with the data. This determines $%
spl_{Wh}$ uniquely up to $G^{\ast }(\mathbb{R})$-conjugacy.

Let $(G,\eta )$ be an inner twist and let $spl_{f}$ be a fundamental
splitting for $G.$ We use a twist $\eta ^{\prime }$ in the inner class of $%
\eta $ to transport $spl_{f}$ to $spl_{Wh}.$ This provides us with a further
transport of $(\mu ,\lambda ,\mathcal{C})$ to the maximal torus specified by 
$spl_{f}.$ The transported triple serves as character data $(\mu _{\pi
},\lambda _{\pi },\mathcal{C}_{\pi })$ for a discrete series or limit of
discrete series representation $\pi $ of $G(\mathbb{R})$ which is determined
uniquely by the $G(\mathbb{R})$-conjugacy class of $spl_{f}.$ In the case of
limits of discrete series \textit{we must now allow} $\pi =0,$ \textit{i.e.,}
that the distribution attached to the character data is zero. We write $%
spl_{f}=spl_{\pi }=(B_{\pi },T_{\pi },\{X_{\alpha }\})$, $\eta ^{\prime
}=\eta _{\pi }$ and the character data for $\pi $ as $(\mu _{\pi },\lambda
_{\pi },\mathcal{C}_{\pi }).$

By Lemma 2.1 and the theorem cited in Remark 5.6, as $spl_{f}$ varies we
generate the packet of essentially tempered representations attached to $%
\varphi =\varphi (\mu ,\lambda )$ and possibly some zeros. By the same
theorem we obtain all zeros if and only if $\varphi $ is irrelevant to $G$.

\subsection{Characterizing nonzero limits}

Our first concern will be to detect when $\pi =0$. There is a well-known
characterization, in terms of roots, for a limit of discrete series to be
nonzero; see \cite[Theorem 1.1 (b)]{KZ82}. The precise statement is noted in
the next proof. We want a characterization in terms of the $c$-Levi group $%
M_{\varphi }.$

Consider the subgroup $M^{\ast }$ generated by $T$ and the root vectors $%
X_{\alpha }$ from $spl_{Wh}$ for which $\alpha $ is the transport to $T$ of
the coroot of a simple root of $\mathcal{T}$ in $M^{\vee }\cap \mathcal{B}.$
Because $spl_{Wh}$ is of Whittaker type, \textit{i.e.,} the simple roots are
all noncompact, the group $M^{\ast }$ is quasi-split over $\mathbb{R}$.
Moreover, we may identify the $L$-group of $M^{\ast }$ with the $L$-group $%
^{L}M_{\varphi }$ constructed in Section 5.2. For this, we reverse the
construction of Section 6.1 to determine an $\mathbb{R}$-splitting $%
spl_{M}^{\ast }$ for $M^{\ast }\ $from the fundamental splitting $spl_{Wh,M}$
attached to $spl_{Wh}$ and the additive character $\psi _{\mathbb{R}}.$ Then 
$spl_{M}^{\ast }$ is unique up to $M^{\ast }(\mathbb{R})$-conjugacy. Each
such $spl_{M}^{\ast }$ determines a unique isomorphism from $^{L}(M^{\ast })$
to $^{L}M_{\varphi }$. In summary, $M^{\ast }$ provides a concrete
realization of the quasi-split group $M_{\varphi }$.

By definition, $\eta _{\pi }$ carries $spl_{\pi }$\ to $spl_{Wh}$. Let $%
M_{\pi }=\eta _{\pi }^{-1}(M^{\ast }).$ Then $\eta _{\pi }:M_{\pi
}\rightarrow M$ is an inner twist in the inner class of $\eta $ that carries 
$T_{\pi }$ to $T^{\ast }$ over $\mathbb{R}$. We say that $M_{\pi }$ is an 
\textit{elliptic }$c$\textit{-Levi group for} $(G,\eta )$.

\begin{lemma}
$\pi $ is nonzero if and only if the inner twist $\eta _{\pi }:M_{\pi
}\rightarrow M^{\ast }$ is an $\mathbb{R}$-isomorphism.
\end{lemma}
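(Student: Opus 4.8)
The plan is to reduce the statement to the Knapp--Zuckerman nonvanishing criterion for a limit of discrete series, and then to recognize that criterion as the assertion that $(B_{\pi }\cap M_{\pi },T_{\pi })$ is a fundamental pair of $M_{\pi }$ of Whittaker type. By the characterization recalled in Section 2.3 the latter holds exactly when $M_{\pi }$ is quasi-split over $\mathbb{R}$, and since $M^{\ast }$ is the quasi-split group in the inner class of $M_{\pi }$ (it realizes the quasi-split group $M_{\varphi }$), this is in turn equivalent to $\eta _{\pi }:M_{\pi }\rightarrow M^{\ast }$ being an $\mathbb{R}$-isomorphism.

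First I would recall from \cite[Theorem 1.1(b)]{KZ82} that the (limit of) discrete series $\pi $ with character data $(\mu _{\pi },\lambda _{\pi },\mathcal{C}_{\pi })$ on $T_{\pi }$ is nonzero if and only if every simple root $\alpha $ for the positive system attached to $\mathcal{C}_{\pi }$ with $\langle \mu _{\pi },\alpha ^{\vee }\rangle =0$ is noncompact. The next step is to match these roots with the structure of $M_{\pi }$. Since $M^{\vee }=Cent(\varphi _{0}(\mathbb{C}^{\times }),G^{\vee })$ with $\varphi _{0}(z)=z^{\mu }\overline{z}^{\sigma _{T}\mu }$ and $\mu +\sigma _{T}\mu $ orthogonal to every coroot, the roots $\alpha ^{\vee }$ of $\mathcal{T}$ in $M^{\vee }$ are precisely those with $\langle \mu ,\alpha ^{\vee }\rangle =0$; transporting through Section 6.1, the roots of $M_{\pi }$ relative to $T_{\pi }$ are exactly the roots $\alpha $ with $\langle \mu _{\pi },\alpha ^{\vee }\rangle =0$, and since a simple root for $M^{\vee }\cap \mathcal{B}$ is already simple for $\mathcal{B}$, the simple roots of $M_{\pi }$ for the Borel $B_{\pi }\cap M_{\pi }$ are precisely the simple roots of $B_{\pi }$ with $\langle \mu _{\pi },\alpha ^{\vee }\rangle =0$. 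Thus the Knapp--Zuckerman condition becomes: every imaginary simple root of $(B_{\pi }\cap M_{\pi },T_{\pi })$ is noncompact. Here I would note that compactness of an imaginary root is intrinsic (Section 2.2), so it is read off the same way in $M_{\pi }$ as in $G$.

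It remains to see that $(B_{\pi }\cap M_{\pi },T_{\pi })$ is a fundamental pair of $M_{\pi }$: $T_{\pi }$ has no $\sigma $-fixed root in $M_{\pi }$ because it has none in $G$, and the $M_{\pi }$-simple roots form a $(-\sigma _{T_{\pi }})$-stable set because the $B_{\pi }$-simple roots do and $\langle \mu _{\pi }+\sigma _{T_{\pi }}\mu _{\pi },\alpha ^{\vee }\rangle =0$ shows the condition $\langle \mu _{\pi },\alpha ^{\vee }\rangle =0$ is preserved under $\alpha \mapsto -\sigma _{T_{\pi }}\alpha $. Hence $\pi \neq 0$ if and only if $(B_{\pi }\cap M_{\pi },T_{\pi })$ is a fundamental pair of $M_{\pi }$ of Whittaker type, hence if and only if $M_{\pi }$ is quasi-split over $\mathbb{R}$. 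For the equivalence with $\eta _{\pi }$ being an $\mathbb{R}$-isomorphism: one direction follows from uniqueness of the quasi-split form in an inner class; for the other, note that $\eta _{\pi }$ carries $spl_{\pi }$ to $spl_{Wh}$ and hence $(B_{\pi }\cap M_{\pi },T_{\pi })$ to $(B\cap M^{\ast },T)$, $\mathbb{R}$-rationally on $T_{\pi }\rightarrow T$. When $\pi \neq 0$ the restriction of $spl_{\pi }$ to $M_{\pi }$ and the restriction of $spl_{Wh}$ to $M^{\ast }$ are both fundamental of Whittaker type, so the Galois actions on them are pinned down identically ($\sigma X_{\alpha }=X_{-\alpha }$ on imaginary simple root vectors, $\sigma X_{\alpha }=X_{\sigma _{T}\alpha }$ otherwise); combined with $\mathbb{R}$-rationality on the tori this forces $\eta _{\pi }$ to intertwine the two Galois actions, i.e.\ to be defined over $\mathbb{R}$. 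Conversely, an $\mathbb{R}$-isomorphism $\eta _{\pi }$ transports the Whittaker-type property of $(B\cap M^{\ast },T)$ back to $(B_{\pi }\cap M_{\pi },T_{\pi })$, whence $\pi \neq 0$.

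The step I expect to be the main obstacle is the root-combinatorial bookkeeping: tracking the positive systems and chambers through the chain of transports in Section 6.1 so that ``$\langle \mu _{\pi },\alpha ^{\vee }\rangle =0$ for a $B_{\pi }$-simple $\alpha $'' corresponds exactly to ``$\alpha ^{\vee }$ simple for $M^{\vee }\cap \mathcal{B}$'', and confirming that the notion of ``compact'' used in \cite{KZ82} is the $\varepsilon _{\alpha }=-1$ of Section 2.2, read off consistently in $M_{\pi }$ and $G$. Once the combinatorics of roots is pinned down, the passage to quasi-splitness and then to the isomorphism statement is essentially formal, resting on the characterization of groups admitting a fundamental pair of Whittaker type.
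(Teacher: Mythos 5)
Your argument follows the paper's route: both invoke the Knapp--Zuckerman criterion, recast it as the Whittaker-type property of the splitting of $M_{\pi }$ induced by $spl_{\pi }$, and conclude that this is equivalent to $\eta _{\pi }$ being $\mathbb{R}$-rational on $M_{\pi }$. The only difference is in how the final equivalence is packaged: the paper expresses it directly as the cocycle condition $\alpha (u_{\pi }(\sigma ))=1$ for all $B^{\ast }$-simple roots $\alpha $ of $T$ in $M^{\ast }$, i.e.\ $u_{\pi }(\sigma )$ central in $M_{(sc)}^{\ast }$, whereas you detour through the abstract characterization ``$M_{\pi }$ has a fundamental pair of Whittaker type iff $M_{\pi }$ is quasi-split'' and then check separately that the particular twist $\eta _{\pi }$ intertwines the Galois actions on root vectors; this is the same ``calculation with root vectors'' the paper alludes to, just organized via the intrinsic quasi-splitness characterization rather than the explicit cocycle.
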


\begin{proof}
The characterization cited (in sufficient generality for our setting) is
that $\pi \neq 0$ if and only if all $\mathcal{C}_{\pi }$-simple roots $%
\alpha $ such that $\left\langle \mu _{\pi },\alpha ^{\vee }\right\rangle =0$
are noncompact. In other words, \ $\pi \neq 0$ if and only if the splitting
of $M_{\pi }$ determined by $spl_{\pi }$ is of Whittaker type. Let $\eta
_{\pi }\sigma (\eta _{\pi })^{-1}=Int(u_{\pi }(\sigma )),$ where $u_{\pi
}(\sigma )\in T_{sc}.$ Then because $spl_{Wh}$ is of Whittaker type and $%
spl_{\pi }$ is fundamental, a calculation with root vectors shows that this
is the same as requiring that $\alpha (u_{\pi }(\sigma ))=1$ for all $%
B^{\ast }$-simple roots $\alpha $ of $T$ in $M^{\ast },$ \textit{i.e.,} that 
$u_{\pi }(\sigma )$ lies in the center of $M_{(sc)}^{\ast }.$ Here $%
M_{(sc)}^{\ast }$ denotes the inverse image of $M^{\ast }$ in $G_{sc}^{\ast
} $ under the natural projection $G_{sc}^{\ast }\rightarrow G^{\ast }.$ The
lemma then follows.
\end{proof}

\subsection{Generating packets}

Fix an inner form $(G,\eta )$ and assume s-elliptic $\varphi =\varphi (\mu
,\lambda )$ is relevant to $(G,\eta ).$ We consider the packet $\Pi $ of
representations of $G(\mathbb{R})$ attached to $\varphi $.

Let $\pi \in \Pi $ and assume $\pi \neq 0$. Then:

\begin{lemma}
(i) $spl_{\pi ^{\prime }}$ yields character data for nonzero $\pi ^{\prime }$
in $\Pi $ if and only if%
\begin{equation*}
\eta _{\pi ^{\prime }}=Int(g_{\ast })\circ \eta _{\pi }\circ Int(g),
\end{equation*}%
where $g\in G(\mathbb{R})\ $and where $g_{\ast }\in G_{sc}^{\ast }$
normalizes $T$ and is such that the restriction of $Int(g_{\ast })$ to $%
M^{\ast }$ is defined over $\mathbb{R}$.

(ii) Further, $\pi ^{\prime }=\pi $ if and only if $\eta _{\pi ^{\prime }}$
is of the form $\eta _{\pi }\circ Int(g),$ where $g\in G(\mathbb{R}).$
\end{lemma}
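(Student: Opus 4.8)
The plan is to compare the two fundamental splittings $spl_{\pi}$ and $spl_{\pi'}$ of $G$, translate the comparison into a relation between the twists $\eta_{\pi}$ and $\eta_{\pi'}$, and then read off nonvanishing from Lemma 6.1. Recall from Section 6.1 that every fundamental splitting of $G$ produces a member of $\Pi$ or zero, so in (i) the substantive point is the nonvanishing. I record three remarks. (a) By Corollary 2.3 there is $h$ in $G$, which we may take in $G_{sc}$ since the action on splittings factors through $G_{ad}$, with $Int(h)(spl_{\pi})=spl_{\pi'}$ and $Int(h):T_{\pi}\to T_{\pi'}$ defined over $\mathbb{R}$; hence $\tau:=h^{-1}\sigma(h)$ lies in $T_{\pi,sc}$ and $\tau\,\sigma(\tau)=1$. (b) If $\eta',\eta''$ are inner twists in the inner class of $\eta$ both carrying $spl_{\pi'}$ to $spl_{Wh}$, then $\eta'\circ(\eta'')^{-1}$ is an inner automorphism of $G^{\ast}$ fixing the splitting $spl_{Wh}$, so it is trivial; thus the twist $\eta_{\pi'}$ attached to $spl_{\pi'}$ is unique, and by (a) it equals $\eta_{\pi}\circ Int(h^{-1})$. (c) Applying Lemma 6.1 to $\pi'$, and using that $\pi\neq 0$ already makes $\eta_{\pi}|_{M_{\pi}}$ an $\mathbb{R}$-isomorphism, one gets $\pi'\neq 0$ if and only if $Int(h):M_{\pi}\to M_{\pi'}$ is defined over $\mathbb{R}$, equivalently $\sigma(h)^{-1}h=\tau^{-1}$ centralizes $M_{\pi}$, equivalently $\tau$ lies in the center of $M_{\pi,sc}$ (the preimage of $M_{\pi}$ in $G_{sc}$).

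Part (ii) then follows at once. The representation attached to a fundamental splitting depends only on its $G(\mathbb{R})$-conjugacy class (Section 6.1), and on the nonzero members of $\Pi$ this correspondence is a bijection, by the classical description of packets of (limits of) discrete series; see \cite{KZ82,Sh82}. Since $\pi\neq 0$, we therefore have $\pi'=\pi$ if and only if $spl_{\pi'}$ and $spl_{\pi}$ are $G(\mathbb{R})$-conjugate; because the stabilizer of a splitting in $G$ is $Z_{G}$, this says $h\in G(\mathbb{R})\cdot Z_{G}$, which by (b) is precisely the statement that $\eta_{\pi'}=\eta_{\pi}\circ Int(g)$ for some $g\in G(\mathbb{R})$.

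In part (i), the implication from the factorization is a verification. Given $\eta_{\pi'}=Int(g_{\ast})\circ\eta_{\pi}\circ Int(g)$ as stated, we have $spl_{\pi'}=\eta_{\pi'}^{-1}(spl_{Wh})=Int(g^{-1})\bigl(Int(\eta_{\pi}^{-1}(g_{\ast})^{-1})(spl_{\pi})\bigr)$. Since $g_{\ast}$ normalizes $T$ and $Int(g_{\ast})|_{M^{\ast}}$ is defined over $\mathbb{R}$, its Weyl element commutes with $\sigma_{T}$; transporting this through the relation $\eta_{\pi}\circ\sigma(\eta_{\pi})^{-1}=Int(u_{\pi}(\sigma))$ with $u_{\pi}(\sigma)\in T_{sc}$ — a routine computation using that $u_{\pi}(\sigma)$, $g_{\ast}$ and $\sigma(g_{\ast})$ all normalize $T$ — one finds that $\eta_{\pi}^{-1}(g_{\ast})$ normalizes $T_{\pi}$ with Weyl element commuting with $\sigma_{T_{\pi}}$. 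As $T_{\pi}$ is elliptic and $spl_{\pi}$ fundamental, $Int(\eta_{\pi}^{-1}(g_{\ast})^{-1})(spl_{\pi})$, hence its image under the real $Int(g^{-1})$, is a fundamental splitting of $G$; moreover the displayed $\eta_{\pi'}$ is in the inner class of $\eta$ and carries $spl_{\pi'}$ to $spl_{Wh}$, so by (b) it is the twist attached to $spl_{\pi'}$. Finally $M_{\pi'}=\eta_{\pi'}^{-1}(M^{\ast})=Int(g^{-1})(M_{\pi})$, and $\eta_{\pi'}|_{M_{\pi'}}$ is the composite of the three $\mathbb{R}$-isomorphisms $Int(g):Int(g^{-1})(M_{\pi})\to M_{\pi}$, $\eta_{\pi}:M_{\pi}\to M^{\ast}$ (here $\pi\neq 0$) and $Int(g_{\ast}):M^{\ast}\to M^{\ast}$; hence it is an $\mathbb{R}$-isomorphism and $\pi'\neq 0$ by Lemma 6.1.

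Conversely, suppose $\pi'\neq 0$, so $\tau$ lies in the center of $M_{\pi,sc}$ by (c). Replacing $spl_{\pi'}$ by a $G(\mathbb{R})$-conjugate changes neither $\pi'$ nor the form of the asserted factorization (only the factor $g$), and $T_{\pi'}$, $T_{\pi}$ are $G(\mathbb{R})$-conjugate as elliptic maximal tori of $G$; so we may assume $T_{\pi'}=T_{\pi}$, whence $h\in Norm(T_{\pi},G_{sc})$ and the Weyl element of $h$ commutes with $\sigma_{T_{\pi}}$. The crux is to produce $g_{0}\in Norm(T_{\pi},M_{\pi,sc})$ with $g_{0}^{-1}\sigma(g_{0})=\tau$; equivalently, to show the class of $\tau$ in $H^{1}(\Gamma,Norm(T_{\pi},M_{\pi,sc}))$ vanishes, knowing it is central in $M_{\pi,sc}$ and that its image in $H^{1}(\Gamma,Norm(T_{\pi},G_{sc}))$ vanishes (witnessed by $h$). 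I expect this to be the only real obstacle; it is a Tate--Nakayama argument of the kind in the proof of Lemma 2.4, reducing by the Tate--Nakayama isomorphisms to triviality of the kernel of the corresponding map on (co)character groups, which holds because $-\sigma_{T}$ stabilizes a base for the coroots of $M^{\ast}$ inside a base for those of $G^{\ast}$ — the simple roots defining $M^{\ast}$ being a $(-\sigma_{T})$-stable subset of the simple roots of $spl_{Wh}$. Granting $g_{0}$, put $g:=h\,g_{0}^{-1}$; then $g^{-1}\sigma(g)=g_{0}\tau\sigma(g_{0})^{-1}=1$, so $g\in G(\mathbb{R})$. Set $g_{\ast}:=\eta_{\pi}(g_{0})$, which normalizes $T=\eta_{\pi}(T_{\pi})$; and $Int(g_{\ast})|_{M^{\ast}}=\eta_{\pi}\circ Int(g_{0})\circ\eta_{\pi}^{-1}|_{M^{\ast}}$ is defined over $\mathbb{R}$, since $\sigma(g_{0})=g_{0}\tau$ with $\tau$ central in $M_{\pi,sc}$ forces $Int(g_{0})|_{M_{\pi}}$ to be defined over $\mathbb{R}$. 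Then $\eta_{\pi'}=\eta_{\pi}\circ Int(h^{-1})=\eta_{\pi}\circ Int(g_{0}^{-1})\circ Int(g^{-1})=Int(g_{\ast}^{-1})\circ\eta_{\pi}\circ Int(g^{-1})$, which is the required form, with $g_{\ast}^{-1}$ in the role of $g_{\ast}$ and $g^{-1}$ in the role of $g$. The remaining checks — the equivalences between $-\sigma$-stability and the Galois properties of Weyl elements, and the several conjugation identities for $\eta_{\pi}$ — are routine.
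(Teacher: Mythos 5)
Your proof of part (ii) is fine and, like the paper, reduces it to the well-known description of limits of discrete series. Your "easy" direction of (i) — factorization implies $\pi'\neq 0$ — is also correct, and it is a reasonable unwinding of Lemma~6.1 (you rightly read the statement as requiring $g_{*}$ to normalize $M^{*}$, for otherwise the factorization does not even preserve the centralizing property needed on the cocycle side).

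The gap is in the "hard" direction. Having reduced to $T_{\pi'}=T_{\pi}$, you need $g_{0}\in Norm(T_{\pi},M_{\pi,(sc)})$ with $g_{0}^{-1}\sigma(g_{0})=\tau$; equivalently, as you could also have phrased it, you need $g\in Norm(T_{\pi},G_{sc}(\mathbb{R}))$ with $Int(g)(M_{\pi})=M_{\pi'}$. You flag this yourself as "the only real obstacle" and appeal to a Tate--Nakayama argument "of the kind in the proof of Lemma 2.4", but Tate--Nakayama computes $H^{1}$ of a torus via (co)character lattices, whereas $Norm(T_{\pi},M_{\pi,(sc)})$ is an extension of a Weyl group by a torus, so the isomorphism does not apply to it; nor does a Weyl-group/torus devissage appear in your sketch. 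Knowing the class of $\tau$ dies in $H^{1}(\Gamma,Norm(T_{\pi},G_{sc}))$ (the witness $h$) and that $\tau$ is central in $M_{\pi,(sc)}$ does not by itself trivialize the class in $H^{1}(\Gamma,Norm(T_{\pi},M_{\pi,(sc)}))$, since $M_{\pi}$ and $M_{\pi'}=Int(h)(M_{\pi})$ need not coincide once only $T_{\pi'}=T_{\pi}$ has been arranged. So the step you identify as the crux is genuinely unproved. The paper dispatches (i) as a direct consequence of Lemma 6.1, so either a cleaner bookkeeping of the twists is intended that avoids your cohomological reduction altogether, or the conjugacy of the two $c$-Levi subgroups under $Norm(T_{\pi},G(\mathbb{R}))$ is being used implicitly; in either case your argument, as written, does not close this gap.
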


\begin{proof}
The second assertion is just a restatement of a well-known property of
limits of discrete series; see \cite[Section 4]{Sh82} or \cite[Theorem 1.1(c)%
]{KZ82}. The first assertion follows from Lemma 6.1.
\end{proof}

\subsection{An elliptic invariant}

We return now to using the notation $\pi $ \textit{only for nonzero}
representations.\textit{\ }Again fix an inner form $(G,\eta )$ for which
s-elliptic $\varphi =\varphi (\mu ,\lambda )$ is relevant and consider $\pi $
in the attached packet $\Pi \ $of representations of $G(\mathbb{R}).$ Recall 
$u_{\eta }(\sigma ),$ $u_{\pi }(\sigma )\in G_{sc}^{\ast }$; we have $\eta
\sigma (\eta )^{-1}=Int(u_{\eta }(\sigma ))$ and $\eta _{\pi }\sigma (\eta
_{\pi })^{-1}=Int(u_{\pi }(\sigma )).$ Since $\eta _{\pi }$ is in the inner
class of $\eta ,$ we write $\eta _{\pi }=Int(x_{\pi })\circ \eta $ and 
\begin{equation*}
u_{\pi }(\sigma )=x_{\pi }[u_{\eta }(\sigma )]\sigma (x_{\pi })^{-1},
\end{equation*}%
where $x_{\pi }\in G_{sc}^{\ast }$.

If we may choose $u_{\eta }(\sigma ),$ and thence $u_{\pi }(\sigma ),$ to be
a cocycle then we will say that $(G,\eta )$ is of quasi-split type because $%
(G,\eta )$ then occurs as a component of an extended group of quasi-split
type. An extended group (introduced by Kottwitz) consists of several pairs $%
(G,\eta ),$ $(G^{\prime },\eta ^{\prime })$, ... and conditions on the
twists $\eta ,\eta ^{\prime }$ ensure the property (6.3) below; see \cite%
{Sh08b} for a review and examples. There is a quasi-split component if and
only the coboundaries in (6.3) are trivial. Then we say the extended group
is of quasi-split type. The quasi-split component, if it exists, is unique 
\cite{Sh08b}.

For pairs $(G,\eta ),$ $(G^{\prime },\eta ^{\prime })$ in the same extended
group, relative factors for tempered spectral transfer are defined in \cite%
{Sh08b} (the relative geometric factors were introduced by Kottwitz). When
the extended group is of quasi-split type our already chosen Whittaker data
provides a unique normalization $\Delta _{Wh}$ of the absolute transfer
factors for each component $(G,\eta )$; see \cite{KS99}. The spectral
factors $\Delta _{Wh}$ possess the strong base-point property \cite{Sh08b}.
In particular, we have the formula (6.4) for discrete series
representations. For a general extended group, the results of Kaletha \cite%
{Ka13} provide a natural normalization for the absolute factors. The
setting, and in particular the definition of extended group, is modified
with additional structure. For our purposes it is convenient to work with
the minimal extended groups of the present setting, and we will allow any
normalization of the absolute factors that possesses geometric-spectral
compatibility in the sense of \cite{Sh10, Sh08b}. The extended groups will
play a more central role when we come to finer structure on packets in \cite%
{ShII}.

We begin our definition of elliptic invariants with the case that $(G,\eta )$
is of quasi-split type. In this setting we define an absolute invariant $%
inv(\pi )$ in $H^{1}(\Gamma ,T).$ Recall that $T$ is the elliptic maximal
torus in $G^{\ast }$ specified by $spl_{Wh}$. First we have by Lemma 6.3.1
that $u_{\pi }(\sigma )$ lies in the center $Z_{M_{(sc)}^{\ast }}$ of $%
M_{(sc)}^{\ast }$ and so defines an element of $H^{1}(\Gamma
,Z_{M_{(sc)}^{\ast }}).$ It depends only on $\pi $, \textit{i.e., }only on
the\textit{\ }$G(\mathbb{R})$-conjugacy class of $spl_{\pi }$. Now $inv(\pi
) $ is defined to be the image of this class under 
\begin{equation}
H^{1}(\Gamma ,Z_{M_{(sc)}^{\ast }})\rightarrow H^{1}(\Gamma ,Z_{M^{\ast
}})\rightarrow H^{1}(\Gamma ,T)
\end{equation}%
given by the composition of the obvious map $Z_{M_{(sc)}^{\ast }}\rightarrow
Z_{M^{\ast }}$ and inclusion $Z_{M^{\ast }}\rightarrow T.$ From the diagram%
\begin{equation}
\begin{array}{ccc}
Z_{M_{(sc)}^{\ast }} & \longrightarrow & T_{sc} \\ 
\downarrow &  & \downarrow \\ 
Z_{M^{\ast }} & \longrightarrow & T%
\end{array}%
\end{equation}%
we conclude that $inv(\pi )$ lies in the image $\mathcal{E}(T)$ of $%
H^{1}(\Gamma ,T_{sc})$ in $H^{1}(\Gamma ,T)$.

We will also make use of the following.

\begin{lemma}
Suppose that $T$ is a fundamental maximal torus in a connected reductive
group $G$ over $\mathbb{R}$. Then $H^{1}(\Gamma ,Z_{G})\rightarrow
H^{1}(\Gamma ,T)$ is injective.
\end{lemma}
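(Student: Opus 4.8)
The plan is to exploit the fact that for a fundamental torus $T$, the cohomology $H^1(\Gamma, T)$ is computed by the Tate–Nakayama isomorphism in terms of the coinvariants (or fixed points) of $\sigma_T$ on the cocharacter lattice, and that $\sigma_T$ acts on $T$ without any real roots. First I would reduce to the relevant pieces: write $T_{\mathrm{ad}}$ for the image of $T$ in $G_{\mathrm{ad}}$ and use the exact sequence $1 \to Z_G \to T \to T_{\mathrm{ad}} \to 1$, which yields an exact sequence in Galois cohomology
\begin{equation*}
H^0(\Gamma, T_{\mathrm{ad}}) \to H^1(\Gamma, Z_G) \to H^1(\Gamma, T).
\end{equation*}
So injectivity of $H^1(\Gamma, Z_G) \to H^1(\Gamma, T)$ is equivalent to surjectivity of $T(\mathbb{R}) \to T_{\mathrm{ad}}(\mathbb{R})$, i.e.\ to the connectedness (or at least the surjectivity onto) the real points of the adjoint torus. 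The key input, already used repeatedly in the excerpt (see the proof of Lemma 2.2.2 and the references to \cite[Section 10]{Ko86}, \cite[Section 6]{Sh12}), is that when $T$ is fundamental the torus $T_{\mathrm{ad}}$ is again fundamental and hence $T_{\mathrm{ad}}(\mathbb{R})$ is connected; more precisely the natural map $T(\mathbb{R}) \to T_{\mathrm{ad}}(\mathbb{R})$ is surjective. That gives the result directly.

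Alternatively — and this is probably the cleaner route to write down — I would argue purely on the level of Tate–Nakayama data. By Tate–Nakayama, $H^1(\Gamma, T) \cong \widehat{H}^{-1}(\Gamma, X_*(T)) = \ker(1+\sigma_T)/\mathrm{im}(1-\sigma_T)$ on $X_*(T)$, and similarly for $Z_G$ with $X_*(Z_G) = X_*(T)^{W}$-type sublattice; concretely $X_*(Z_G)$ is the subgroup of $X_*(T)$ pairing trivially with all roots, with its induced $\sigma_T$-action. The map in question is induced by the inclusion $X_*(Z_G) \hookrightarrow X_*(T)$. So I must show: if $\nu \in X_*(Z_G)$ satisfies $(1+\sigma_T)\nu = 0$ and $\nu = (1-\sigma_T)\mu$ for some $\mu \in X_*(T)$, then already $\nu = (1-\sigma_T)\mu'$ for some $\mu' \in X_*(Z_G)$. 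The point is that $X_*(T)/X_*(Z_G)$ injects into the coroot lattice's span (rationally $X_*(T) \otimes \mathbb{Q} = (X_*(Z_G)\otimes\mathbb{Q}) \oplus (\text{coroot space})$), and $\sigma_T$ preserves this decomposition. Because $T$ is fundamental there are no roots fixed by $\sigma_T$, so $\sigma_T$ acts on the coroot lattice with no fixed coroots; one then checks that $(1-\sigma_T)$ is "invertible enough" on the coroot part — more precisely, the image of $\mu$ in the coroot lattice, call it $\bar\mu$, satisfies $(1-\sigma_T)\bar\mu = 0$, and I claim $(1-\sigma_T)$ restricted to the coroot lattice has the property that its kernel meets the lattice only in the part where one can lift. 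Here I would use that $-\sigma_T$ permutes a base of simple coroots (a fundamental pair gives a $(-\sigma_T)$-stable base), so the $\sigma_T$-action on the coroot lattice is by $-w_0^{\mathrm{action}}$ composed with a permutation, and on such a lattice $\ker(1-\sigma_T) \subseteq \ker(1+\sigma_T) \cap (\cdots)$ decomposes cleanly along orbits; since no coroot is $\sigma_T$-fixed, the relevant $\widehat{H}^0$ vanishes and we get the needed lifting.

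Concretely I would organize the write-up as: (1) state the Tate–Nakayama reduction; (2) invoke a $(-\sigma_T)$-stable base $\{\alpha^\vee\}$ of simple coroots, which exists because $(B,T)$ can be taken to be a fundamental pair (Lemma 2.1 and the definition of fundamental pair); (3) observe $X_*(T) = X_*(Z_G) + (\text{coroot lattice})$ up to finite index and that $\sigma_T$ respects both summands after tensoring with $\mathbb{Q}$; (4) do the short lattice computation showing the connecting-map obstruction vanishes, equivalently that $\widehat{H}^{-1}$ of the quotient lattice injects — or, fastest, just cite connectedness of $T_{\mathrm{ad}}(\mathbb{R})$ for fundamental $T$ from \cite[Section 10]{Ko86} and \cite[Section 6]{Sh12} and run the long exact sequence argument of the first paragraph. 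The main obstacle is purely bookkeeping: making sure the decomposition $X_*(T)\otimes\mathbb{Q} = X_*(Z_G)\otimes\mathbb{Q} \oplus (\text{coroots})\otimes\mathbb{Q}$ is genuinely $\sigma_T$-stable and that passing from the rational statement to the integral lattice statement does not introduce spurious classes — this is exactly where "no real roots" (fundamentality) is used and must be invoked carefully, and it is essentially the same computation already carried out in the proof of Lemma 2.4 when showing the kernel of $H^{-1}(\Gamma, [X_*(T_{sc})]^{\theta^*_{sc}}) \to H^{-1}(\Gamma, X_*(T_{sc}))$ is trivial. So in practice I would present the short long-exact-sequence proof, with the connectedness of $T_{\mathrm{ad}}(\mathbb{R})$ as the cited fundamental fact, and remark that it can equally be seen via Tate–Nakayama and a $(-\sigma_T)$-stable base of coroots.
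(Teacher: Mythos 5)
Your first route --- run the long exact cohomology sequence for $1\to Z_G\to T\to T_{\mathrm{ad}}\to 1$ and reduce injectivity to surjectivity of $T(\mathbb{R})\to T_{\mathrm{ad}}(\mathbb{R})$, which holds because $T_{\mathrm{ad}}(\mathbb{R})$ (and $T_{sc}(\mathbb{R})$) is connected when $T$ is fundamental --- is exactly the paper's proof, and you correctly identify it as the one to write down. One caution on the Tate--Nakayama alternative you sketch: $Z_G$ need not be connected, so $H^1(\Gamma,Z_G)$ is not computed from $X_*(Z_G)$ alone, and the ``one then checks'' lattice step is left unresolved; but since you settle on the long-exact-sequence argument in the end, the proposal is correct and matches the paper.
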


\begin{proof}
Because $T$ is fundamental, both $T_{sc}(\mathbb{R})$, $T_{ad}(\mathbb{R})$
are connected and hence $T_{sc}(\mathbb{R})\rightarrow T_{ad}(\mathbb{R})$
is surjective; see the proof of Lemma 2.2 for references. A calculation then
shows that the kernel of $H^{1}(\Gamma ,Z_{G})\rightarrow H^{1}(\Gamma ,T)$
is trivial.
\end{proof}

In general, we define a relative invariant $inv(\pi ,\pi ^{\prime })$ when $%
(G,\eta ),$ $(G^{\prime },\eta ^{\prime })$ are components of the same
extended group and $\pi ,\pi ^{\prime }$ belong to packets $\Pi ,\Pi
^{\prime }$ for $G(\mathbb{R}),G^{\prime }(\mathbb{R})$ attached to relevant 
$s$-elliptic parameters $\varphi =\varphi (\mu ,\lambda ),\varphi ^{\prime
}=\varphi (\mu ^{\prime },\lambda ^{\prime })$ respectively. We follow the
method introduced in \cite{LS87, LS90}; see also \cite[Section 4.4]{KS99}.
First, recall that%
\begin{equation}
\partial u_{\pi }=\partial u_{\eta }=\partial u_{\eta ^{\prime }}=\partial
u_{\pi ^{\prime }}
\end{equation}%
takes values in $Z_{G_{sc}^{\ast }}$ as subgroup of $T$, the elliptic
maximal torus in $G^{\ast }$ specified by $spl_{Wh}$. As in the references,
set%
\begin{equation*}
U_{sc}=U(T_{sc},T_{sc})=T_{sc}\times T_{sc}\diagup \{(z^{-1},z):z\in
Z_{G_{sc}^{\ast }}\}
\end{equation*}%
and%
\begin{equation*}
U=U(T,T)=T\times T\diagup \{(z^{-1},z):z\in Z_{G^{\ast }}\}.
\end{equation*}%
Also consider%
\begin{equation*}
U(Z_{M_{(sc)}^{\ast }})=Z_{M_{(sc)}^{\ast }}\times Z_{M_{(sc)}^{\ast
}}\diagup \{(z^{-1},z):z\in Z_{G_{sc}^{\ast }}\}
\end{equation*}%
and%
\begin{equation*}
U(Z_{M^{\ast }})=Z_{M^{\ast }}\times Z_{M^{\ast }}\diagup \{(z^{-1},z):z\in
Z_{G^{\ast }}\}.
\end{equation*}%
Then we replace (6.1) above by%
\begin{equation*}
H^{1}(\Gamma ,U(Z_{M_{(sc)}^{\ast }}))\rightarrow H^{1}(\Gamma ,U(Z_{M^{\ast
}}))\rightarrow H^{1}(\Gamma ,U),
\end{equation*}%
and use the cocycle that is the image in $U(Z_{M_{(sc)}^{\ast }})$ of the
pair $(u_{\pi }(\sigma )^{-1},u_{\pi ^{\prime }}(\sigma ))$ in $%
Z_{M_{(sc)}^{\ast }}\times Z_{M_{(sc)}^{\ast }}$. Then we obtain $inv(\pi
,\pi ^{\prime })$ in the image of $H^{1}(\Gamma ,U_{sc})$ in $H^{1}(\Gamma
,U)$. If the extended group is of quasi-split type then $inv(\pi ,\pi
^{\prime })$ is the image of $(inv(\pi )^{-1},inv(\pi ^{\prime }))$ under
the evident homomorphism $H^{1}(\Gamma ,T)\times H^{1}(\Gamma ,T)\rightarrow
H^{1}(\Gamma ,U).$

\subsection{Application to endoscopic transfer}

We consider standard endoscopic transfer in the cuspidal elliptic setting.
If $\varphi _{1}$ is an $s$-elliptic Langlands parameter for $H_{1}$ then
its transport $\varphi $ to $G$ is also $s$-elliptic; see \cite[Section 11]%
{Sh08a} for how to transport attached character data from the $z$-extension $%
H_{1}$. If $\varphi _{1}$ is elliptic then $\varphi $ may, of course, fail
to be elliptic, but $\varphi $ is at least $s$-elliptic and moreover the
associated triples of nonzero character data are nondegenerate. It is then
straightforward to define spectral transfer factors via the Zuckerman
translation principle; this is recalled in Section 14 of \cite{Sh10}.

For a general $s$-elliptic related pair $(\varphi _{1},\varphi )$, however,
neither side of the spectral transfer statement has support on the regular
elliptic set. Then we have defined the associated transfer factors via an $L$%
-group version of the Knapp-Zuckerman (nondegenerate) decomposition of
unitary principal series; see \cite{Sh10, Sh08b}. In that form, the factors
display desired structure on the packet; see \cite[Section 11]{Sh08b}.

Our purpose now is to note a simpler description, based on the elliptic
invariant of Section 6.4, of the transfer factors for a general $s$-elliptic
related pair. Whittaker data for $G^{\ast }$ has been fixed. First we
transport a $\Gamma _{T}$-invariant $s_{T}$ in the complex dual $T^{\vee }$
of $T$ to an element $s$ of the maximal torus $\mathcal{T}$ in $G^{\vee }\ $%
via the method of Section 6.1. To the pair $(s,\varphi )$ we attach the
endoscopic data $\mathfrak{e}(s)$ of Section 7 of \cite{Sh08b}, now writing $%
\mathfrak{e}_{z}(s)$ since it is already supplemented, as well as the
related pair of parameters $(\varphi ^{s},\varphi )$. The attached
endoscopic group will be denoted $H^{(s)}.$ When $\varphi $ is singular we
have used a different representative, say $\varphi ^{\prime }$, to display
the structure on the packet via Knapp-Zuckerman theory. The conjugacy of $%
\varphi \ $and $\varphi ^{\prime }$ under $G^{\vee }$ determines a canonical
isomorphism of the attached abelian groups $\mathbb{S}_{\varphi }$ and $%
\mathbb{S}_{\varphi ^{\prime }}$ (see Section 6.6 or 6.7). To examine the
effect of this isomorphism on transfer, see \cite[Section 2]{Sh08b} for
passage to isomorphic endoscopic data and \cite[Section 11]{Sh08b} for
related results. For present needs, the results of Sections 6.6 - 6.8 will
be sufficient.

Recall from Section 6.4 that our Whittaker data also determine absolute
transfer factors $\Delta _{Wh}$ for any inner form $(G,\eta )$ of
quasi-split type. We use $\pi ^{s}$ to denote a representation in the packet
for $H^{(s)}(\mathbb{R})$ attached to $\varphi ^{s};$ the choice within the
packet will not matter. Finally, $\left\langle -,-\right\rangle _{tn}$ will
be the Tate-Nakayama pairing between $H^{1}(\Gamma ,T)$ and $\pi
_{0}((T^{\vee })^{\Gamma }),$ and the image of $s_{T}$ in $\pi _{0}((T^{\vee
})^{\Gamma })$ will again be written $s_{T}.$

\begin{lemma}
Suppose $(G,\eta )$ is of quasi-split type and $\varphi (\mu ,\lambda )$ is
an s-elliptic parameter relevant to $(G,\eta ).$ Then%
\begin{equation}
\Delta _{Wh}(\pi _{s},\pi )=\left\langle inv(\pi ),s_{T}\right\rangle _{tn}
\end{equation}%
for each limit of discrete series representation $\pi $ of $G(\mathbb{R})$
attached to $\varphi (\mu ,\lambda ).$
\end{lemma}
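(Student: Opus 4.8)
The plan is to reduce to the case that $\pi $ is a genuine discrete series representation, where the identity (6.4) is already available from the strong base-point property of $\Delta _{Wh}$, and then to transport it back along a Zuckerman translation.

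First I would fix a strictly dominant $\mu _{0}\in X_{\ast }(\mathcal{T})$ and replace $\varphi =\varphi (\mu ,\lambda )$ by the parameter $\varphi ^{+}=\varphi (\mu +\mu _{0},\lambda +\tfrac{1}{2}\mu _{0})$ of Section 5.8. Since $\mu +\mu _{0}$ is strictly dominant, $\varphi ^{+}$ is a discrete series parameter, so it is regular (no change of representative in the sense of Section 6.5 is needed) and its attached $c$-Levi group is the elliptic torus $T$ of Section 5.3. It remains relevant to $(G,\eta )$: relevance of $\varphi $ produces a nonzero $\pi \in \Pi $, hence $G(\mathbb{R})$ has limits of discrete series and $G$ is cuspidal, and a discrete series parameter is relevant to every cuspidal inner form. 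Applying the same translation to the related pair $(\varphi ^{s},\varphi )$ and to $H^{(s)}$ --- which is cuspidal since $\mathfrak{e}(s)$ is elliptic and, being determined by $s$ alone, is left unchanged --- yields a discrete series related pair $(\varphi ^{s,+},\varphi ^{+})$ with the same supplemented datum $\mathfrak{e}_{z}(s)$ and the same $\Gamma _{T}$-invariant $s_{T}$.

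Zuckerman translation by $\mu _{0}$ sends the nonzero $\pi \in \Pi $ to a discrete series representation $\pi ^{+}$ in the packet of $\varphi ^{+}$, and likewise $\pi _{s}\mapsto \pi _{s}^{+}$. The crucial point is that $\pi ^{+}$ is realized with the \emph{same} fundamental splitting: the character data $(\mu _{\pi },\lambda _{\pi },\mathcal{C}_{\pi })$ attached to $spl_{\pi }$ is replaced by $(\mu _{\pi }+\mu _{0},\lambda _{\pi }+\tfrac{1}{2}\mu _{0},\mathcal{C}_{\pi })$ attached to the \emph{same} $spl_{\pi }$, so that $spl_{\pi ^{+}}=spl_{\pi }$, $\eta _{\pi ^{+}}=\eta _{\pi }$, and $u_{\pi ^{+}}(\sigma )=u_{\pi }(\sigma )$ as elements of $T_{sc}$ (see \cite[Section 4]{Sh82}). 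Hence $inv(\pi ^{+})=inv(\pi )$ in $H^{1}(\Gamma ,T)$: for $\varphi ^{+}$ the map (6.1) is $H^{1}(\Gamma ,T_{sc})\rightarrow H^{1}(\Gamma ,T)$ while for $\varphi $ it is $H^{1}(\Gamma ,Z_{M_{(sc)}^{\ast }})\rightarrow H^{1}(\Gamma ,T)$, and the two agree on the class of $u_{\pi }(\sigma )\in Z_{M_{(sc)}^{\ast }}\subseteq T_{sc}$ by the commutativity of the square (6.2). I would likewise use the translation-compatibility of the Whittaker-normalized spectral factors, $\Delta _{Wh}(\pi _{s}^{+},\pi ^{+})=\Delta _{Wh}(\pi _{s},\pi )$; for the singular parameter $\varphi $ this is exactly how the factors are pinned down relative to the discrete series case, via the translation principle recalled in \cite[Section 14]{Sh10} together with the geometric-spectral compatibility of Section 4.1 (the geometric factors carry no dependence on infinitesimal character).

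Granting these, the lemma for $\pi $ reduces to (6.4) for the discrete series representation $\pi ^{+}$. For discrete series, (6.4) is the strong generic base-point property of $\Delta _{Wh}$ from \cite[Section 11]{Sh08b}: on $(G^{\ast },\mathrm{id})$ one takes $u_{\eta }=1$, so $inv(\pi ^{\ast ,+})$ is trivial for the generic member $\pi ^{\ast ,+}$ and $\Delta _{Wh}(\pi _{s}^{\ast ,+},\pi ^{\ast ,+})=1=\left\langle inv(\pi ^{\ast ,+}),s_{T}\right\rangle _{tn}$ by the normalization of $\Delta _{Wh}$; every other member, and every member over a general $(G,\eta )$ of quasi-split type, is reached by relative spectral factors, and these are by construction Tate-Nakayama pairings with the datum built from $s$ (following Kottwitz for the relative geometric factors), which since the extended group is of quasi-split type collapse to $\left\langle inv(\pi ^{+}),s_{T}\right\rangle _{tn}$. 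I expect the main obstacle to be precisely the translation-invariance bookkeeping of the previous paragraph: confirming that $\pi ^{+}$ really is realized with the same fundamental splitting (so the two cocycles, and hence the two invariants, coincide on the nose) and that the chosen normalization of the spectral factor carries no infinitesimal-character-dependent term, so that $\Delta _{Wh}$ is genuinely translation-invariant rather than changing by a ratio of $\mu $-dependent quantities. An alternative that avoids translation is to match, term by term, the Knapp--Zuckerman ($L$-group) form of the factors from \cite{Sh08b} with $\left\langle inv(\pi ),s_{T}\right\rangle _{tn}$, using the identifications of the relevant $\mathbb{S}$-groups from Sections 6.6--6.8; this trades the translation bookkeeping for unwinding both definitions at a singular representative.
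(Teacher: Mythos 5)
Your proposal follows essentially the same route as the paper's proof: the paper's ``coherent continuation argument'' (applied to the relevant $s$-elliptic $\varphi$, reducing to the truth of (6.4) in the elliptic case) is precisely the Zuckerman translation you carry out explicitly, and your observations that $spl_{\pi^{+}}=spl_{\pi}$ (hence $inv(\pi^{+})=inv(\pi)$) and that $\Delta_{Wh}$ is translation-compatible are the content of the transfer identity the paper draws from \cite[Theorem 11.5]{Sh08b}. The only difference is cosmetic: the paper first reduces, ``although not necessary,'' to $G_{der}$ simply-connected to cite \cite{Sh08b} directly, whereas you argue without that reduction; both are fine.
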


\begin{proof}
Although not necessary, we reduce easily to the case that the derived group
of $G$ is simply-connected as this allows us to refer directly to the first
half of the argument for the proof of Theorem 11.5 in \cite{Sh08b}. There a
totally degenerate parameter as in Section 6.6 was needed; now we apply the
coherent continuation argument to any relevant $s$-elliptic $\varphi ,$ so
obtaining the transfer identity in the middle of p. 400. The formula (6.4)
then follows from its truth in the case $\varphi $ is elliptic.
\end{proof}

Returning to the notation of Section 6.4, recall from \cite{LS90} that we
identify $(U_{sc})^{\vee }$ with $\mathcal{T}_{sc}\times \mathcal{T}%
_{sc}\diagup \{(z,z):z\in Z_{G_{sc}^{\vee }}\}\ $and define $s_{U}$ as
there. Now $\left\langle \_,\_\right\rangle _{tn}$ will denote the
Tate-Nakayama pairing for $U$. The following requires a minor variant of the
last proof but it will be convenient to have a separate statement.

\begin{lemma}
Suppose $(G,\eta ),(G^{\prime },\eta ^{\prime })$ are components of an
extended group and that $\Delta $ is an absolute transfer factor for the
extended group. Then%
\begin{equation*}
\Delta (\pi _{s},\pi )\diagup \Delta (\pi _{s},\pi ^{\prime })=\left\langle
inv(\pi ,\pi ^{\prime }),s_{U}\right\rangle _{tn}
\end{equation*}%
for all limits of discrete series representations $\pi ,\pi ^{\prime }$ of $%
G(\mathbb{R}),G^{\prime }(\mathbb{R}).$
\end{lemma}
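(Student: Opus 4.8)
The plan is to reduce Lemma 6.5 to Lemma 6.4 using the relative-to-absolute formalism of \cite{LS87, LS90, Sh08b}, exactly as in standard endoscopy. First I would recall that for an extended group of quasi-split type, the relative spectral factor satisfies $\Delta(\pi_s,\pi)\diagup\Delta(\pi_s,\pi') = \Delta_{Wh}(\pi_s,\pi)\diagup\Delta_{Wh}(\pi_s,\pi')$, since any admissible absolute factor $\Delta$ differs from $\Delta_{Wh}$ only by a constant depending on the component, and that constant cancels in the ratio; this is the content of geometric-spectral compatibility as used in Section 6.4. Hence, in the quasi-split-type case, Lemma 6.4 gives
\[
\Delta(\pi_s,\pi)\diagup\Delta(\pi_s,\pi') = \langle inv(\pi),s_T\rangle_{tn}\diagup\langle inv(\pi'),s_T\rangle_{tn} = \langle (inv(\pi)^{-1},inv(\pi')),\,s_U\rangle_{tn},
\]
where the last equality is the standard compatibility of the Tate-Nakayama pairing on $U=U(T,T)$ with the pairing on $T$ under the diagonal-type maps, together with the definition of $s_U$ from \cite{LS90}. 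Since $inv(\pi,\pi')$ is by construction (end of Section 6.4) the image of $(inv(\pi)^{-1},inv(\pi'))$ under $H^1(\Gamma,T)\times H^1(\Gamma,T)\to H^1(\Gamma,U)$, this is exactly $\langle inv(\pi,\pi'),s_U\rangle_{tn}$, proving the lemma when the extended group is of quasi-split type.

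For a general extended group there may be no quasi-split component, so $inv(\pi)$ and $inv(\pi')$ individually are not defined, only the relative invariant $inv(\pi,\pi')$ living in the image of $H^1(\Gamma,U_{sc})$ in $H^1(\Gamma,U)$. Here I would argue directly with the $U$-valued cochain. Recall $inv(\pi,\pi')$ is built from the pair $(u_\pi(\sigma)^{-1},u_{\pi'}(\sigma))$ pushed into $U(Z_{M^{\ast}_{(sc)}})$ and then into $U$; by Lemma 6.3.1 both $u_\pi(\sigma)$ and $u_{\pi'}(\sigma)$ lie in the relevant centers. The coherent continuation / Zuckerman translation argument from the proof of Theorem 11.5 in \cite{Sh08b} (cited already in the proof of Lemma 6.4) reduces the identity to the case of discrete series parameters, where the strong base-point property of the relative factors \cite{Sh08b} gives the relative spectral factor as the Tate-Nakayama pairing of the relative invariant against $s_U$. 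The point is that every step in the proof of Lemma 6.4 that used a quasi-split base point can be rephrased relatively: the translation argument produces the transfer identity for any relevant $s$-elliptic $\varphi$ once it is known for elliptic $\varphi$, and for elliptic $\varphi$ the discrete series relative spectral transfer factor is already identified with $\langle inv(\pi,\pi'),s_U\rangle_{tn}$ in \cite{Sh08b, LS90}.

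The main obstacle I expect is bookkeeping the singular case: when $\varphi=\varphi(\mu,\lambda)$ has $\mu$ only dominant (not strictly dominant), the representative $\varphi'$ used to display Knapp-Zuckerman structure differs from $\varphi$ by a $G^\vee$-conjugacy, inducing a canonical isomorphism $\mathbb{S}_\varphi\cong\mathbb{S}_{\varphi'}$ and a passage to isomorphic endoscopic data; one must check this passage does not disturb the pairing $\langle inv(\pi,\pi'),s_U\rangle_{tn}$, i.e. that $s_U$ transforms correctly. This is handled by the results of Sections 6.6--6.8 referenced in Section 6.4, so I would invoke those: the isomorphism of endoscopic data sends $s$ to a conjugate and the invariants transform compatibly, leaving the pairing unchanged. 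The remaining verifications — that $\partial u_\pi=\partial u_{\pi'}$ so the $U$-valued object is a genuine cocycle, and that the maps $H^1(\Gamma,U(Z_{M^\ast_{(sc)}}))\to H^1(\Gamma,U(Z_{M^\ast}))\to H^1(\Gamma,U)$ land in the image of $H^1(\Gamma,U_{sc})$ — are routine and already recorded in Section 6.4.
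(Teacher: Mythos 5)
Your proposal is correct and matches the paper's (sketched) intent: the paper only remarks that Lemma 6.5 "requires a minor variant of the last proof," and your second paragraph — running the coherent continuation argument from the proof of Theorem 11.5 in \cite{Sh08b} with the $U$-valued relative invariant in place of the $T$-valued absolute one, then reducing to the elliptic case where the relative discrete series formula is known — is exactly that variant. Your first paragraph (reducing the quasi-split-type case to Lemma 6.4 via the ratio and the compatibility of $inv(\pi,\pi')$ with $(inv(\pi)^{-1},inv(\pi'))$) is a useful sanity check but is subsumed by the general argument; your handling of the singular representative and the cocycle bookkeeping is consistent with what Section 6.4 already records.
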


\begin{remark}
We use transfer factors $\Delta $ for the classic version of the Langlands
correspondence for real groups. See \cite{Sh14} for (simple) transition to
the alternate factors $\Delta _{D}$.
\end{remark}

\subsection{Example: totally degenerate parameters}

First, the notion of \textit{totally degenerate} character data of Carayol
and Knapp \cite{CK07} extends to reductive groups, and since our data are
generated by a Langlands parameter we consider the parameter instead. We
call an $s$-elliptic parameter $\varphi =\varphi (\mu ,\lambda )$ \textit{%
totally degenerate} if $\left\langle \mu ,\alpha ^{\vee }\right\rangle =0$
for all roots $\alpha ^{\vee }$ of $\mathcal{T}$ in $G^{\vee }$; see \cite[%
Section 12]{Sh08b}.

This definition implies that a totally degenerate parameter is relevant to $%
(G,\eta ),$ \textit{i.e.}, there is a packet for $G(\mathbb{R})$ attached to
the parameter, if and only if $G$ is quasi-split. Thus we may as well assume
that $G=G^{\ast }\ $and $\eta =id$.

Further, an examination of the congruences for $\mu ,\lambda $ shows that
totally degenerate parameters exist only for certain cuspidal quasi-split
groups. For example, if $G_{der}$ is simply-connected then such $(\mu
,\lambda )$ do exist:\ they are the data for an extension of the rational
character $\iota $ on $T_{der},$ regarded as character on $T_{der}(\mathbb{R}%
)$, to a continuous quasicharacter on $T(\mathbb{R})$; see \cite{Sh08b}.
Then an elliptic endoscopic group for $G$ also has totally degenerate
parameters \cite{Sh08b}. So also does each cuspidal standard or $c$-Levi
group $X$ for $G$ because $X_{der}$ is also simply-connected. A $z$%
-extension $G_{z}$ of any cuspidal quasi-split group $G$ has totally
degenerate characters for the same reason.

Suppose now that $\varphi =\varphi (\mu ,\lambda )$ is a totally degenerate
parameter for $G=G^{\ast }.$ The congruences for $\mu ,\lambda $ further
show that the parameter $\boldsymbol{\varphi }$ is uniquely determined by $G$
up to multiplication by element of $H^{1}(W_{\mathbb{R}},Z_{G^{\vee }}),$
and hence that the attached packet is uniquely determined up to twisting by
a quasi-character on $G(\mathbb{R}).$

To describe the packet $\Pi $ attached to totally degenerate $\varphi $ in
terms of the elliptic character data provided by $(\mu ,\lambda )$ and the
Whittaker data, we may proceed as follows. Recall the fixed $\mathbb{R}$%
-splitting $spl^{\ast }=(B^{\ast },T^{\ast },\{X_{\alpha }\})$ for $G.$
There is another representative $\overline{\varphi }$ for $\boldsymbol{%
\varphi }$ attached to the maximally split maximal torus $T^{\ast }.$ We
obtain it by applying a sequence of dual Cartan transforms to $\varphi
=\varphi (\mu ,\lambda );$ the sequence is prescribed by a suitable set of
strongly orthogonal roots and the transforms are defined as in the proof of
Lemma 4.3.5 in \cite{Sh82}. Write $\overline{\varphi }=\varphi (\mu ,%
\overline{\lambda })$ relative to $T^{\ast }.$ These data determine an
essentially unitary minimal principal series representation for $G(\mathbb{R}%
).$ By definition of the Langlands correspondence, $\Pi $ consists of the
components of this representation. By Vogan's classification of generic
representations \cite{Vo78}, these components include generic $\pi ^{\ast }$
with attached fundamental splitting $spl_{Wh}.$ Then Lemma 6.2 shows that we
obtain the other components by applying $Int(g_{\ast })$ to $spl_{Wh},$
where $g_{\ast }\in G_{sc}$ and the automorphism $Int(g_{\ast })$ of $G$ is
defined over $\mathbb{R}$. Each such element $g_{\ast }$ determines an
element of $H^{1}(\Gamma ,Z_{sc}),$ where $Z_{sc}$ denotes the center of $%
G_{sc}$. Conversely, each element of $H^{1}(\Gamma ,Z_{sc})$ has trivial
image in $H^{1}(\Gamma ,G_{sc})$ \cite[Lemma 12.3]{Sh08b} and so determines $%
g_{\ast }$ such that $Int(g_{\ast })$ is defined over $\mathbb{R}$. Finally,
two elements of $H^{1}(\Gamma ,Z_{sc})$ determine the same component if and
only if they differ by an element of $Ker[H^{1}(\Gamma ,Z_{sc})\rightarrow
H^{1}(\Gamma ,Z)]$, where $Z$ denotes the center of $G,$ so that we have
bijections%
\begin{equation}
\Pi \leftrightarrow G_{ad}(\mathbb{R})\diagup Int(G(\mathbb{R}%
))\leftrightarrow Image[H^{1}(\Gamma ,Z_{sc})\rightarrow H^{1}(\Gamma ,Z)].
\end{equation}%
If we map the image of $\pi $ in $H^{1}(\Gamma ,Z)$ to $H^{1}(\Gamma ,T)$
under the injective $H^{1}(\Gamma ,Z)\rightarrow H^{1}(\Gamma ,T)\ $then we
recover the elliptic invariant $inv(\pi )$ defined in Section 6.4.

The group $S_{\overline{\varphi }}=Cent(\overline{\varphi }(W_{\mathbb{R}%
}),G^{\vee })$ consists of the fixed points in $G^{\vee }$ for the action of 
$\sigma \in \Gamma $ by $\overline{\sigma }=Int(\overline{\varphi }%
(w_{\sigma })).$ Thus%
\begin{equation*}
\mathbb{S}_{\overline{\varphi }}:=S_{\overline{\varphi }}\diagup \lbrack
(Z_{G^{\vee }})^{\Gamma }.S_{\overline{\varphi }}^{0}]=(G^{\vee })^{%
\overline{\Gamma }}\diagup \lbrack (Z_{G^{\vee }})^{\Gamma }.((G^{\vee })^{%
\overline{\Gamma }})^{0}].
\end{equation*}%
Notice that $\mathbb{S}_{\overline{\varphi }}$ is isomorphic to Langlands' $R
$-group $R_{\overline{\varphi }}$ for $\boldsymbol{\varphi }$ in this
setting; see \cite[Section 5.3]{Sh82}. Combining this with the pairing
obtained via nondegenerate Knapp-Zuckerman theory (see \cite{Sh08b, Sh10}),
we have that $\Pi $ determines a perfect pairing of%
\begin{equation}
Image[H^{1}(\Gamma ,Z_{sc})\rightarrow H^{1}(\Gamma ,Z)]
\end{equation}%
with%
\begin{equation*}
(G^{\vee })^{\overline{\Gamma }}\diagup (Z_{G^{\vee }})^{\Gamma }.((G^{\vee
})^{\overline{\Gamma }})^{0}.
\end{equation*}%
In particular, if $G$ is semisimple and simply-connected then our pairing
for the unique totally degenerate packet for $G(\mathbb{R})$ exhibits a
perfect pairing of $H^{1}(\Gamma ,Z)$ with $\pi _{0}[(G^{\vee })^{\overline{%
\Gamma }}]$.

\subsection{General limits: factoring parameters}

We return to general $s$-elliptic $\varphi =\varphi (\mu ,\lambda ):W_{%
\mathbb{R}}\rightarrow $ $^{L}G.$ Since the image of $\varphi $ lies in $%
\mathcal{M},$ we factor $\varphi $ through $^{L}M,$ and write $\varphi =\xi
_{M}\circ \varphi _{M},$ where $\varphi _{M}$ is the s-elliptic parameter $%
\varphi (\mu _{M},\lambda _{M})$ for $M^{\ast },$ with 
\begin{equation*}
\mu _{M}=\mu -(\iota -\iota _{M}),\lambda _{M}=\lambda .
\end{equation*}%
Clearly $\varphi _{M}$ is totally degenerate. In summary:

\begin{lemma}
An s-elliptic parameter $\varphi $ determines a well-defined totally
degenerate parameter for the $c$-Levi group attached to $\varphi .$
\end{lemma}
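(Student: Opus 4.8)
The plan is to assemble the lemma directly from the constructions already in place, treating it as a bookkeeping statement rather than a new result. By Remark 5.6, an $s$-elliptic parameter $\varphi=\varphi(\mu,\lambda)$ satisfies the property $\bullet$ of Section 5.3, so the attached $c$-Levi group $M_{\varphi}$ is cuspidal and shares an elliptic maximal torus $T$ with $G$; in particular the $L$-isomorphism $\xi_{M}:{}^{L}M\rightarrow\mathcal{M}$ of Lemma 5.4.1 is available. Since the image of $\varphi$ lies in $\mathcal{M}$ (this is immediate from the definition of $\mathcal{M}$ as the group generated by $M^{\vee}$ and $\varphi(W_{\mathbb{R}})$), we may factor $\varphi=\xi_{M}\circ\varphi_{M}$ for a uniquely determined homomorphism $\varphi_{M}:W_{\mathbb{R}}\rightarrow{}^{L}M$. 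The first task is to check that $\varphi_{M}$ is again of the standard form $\varphi(\mu_{M},\lambda_{M})$, i.e. that it is $s$-elliptic for $M^{\ast}$, with $\mu_{M}=\mu-(\iota-\iota_{M})$ and $\lambda_{M}=\lambda$. This is a direct computation: on $\mathbb{C}^{\times}$ one has $\xi_{M}(z)=(z/\overline{z})^{\iota-\iota_{M}}\times z$ from Section 5.4, so comparing $\varphi_{0}(z)=z^{\mu}\overline{z}^{\sigma_{T}\mu}$ with $\xi_{M}(\varphi_{M,0}(z))$ forces $\varphi_{M,0}(z)=z^{\mu-(\iota-\iota_{M})}\overline{z}^{\sigma_{T}(\mu-(\iota-\iota_{M}))}$, using that $\sigma_{M}$ acts as $-1$ on $\iota-\iota_{M}$ (established in the proof of Lemma 5.4.1 via $\bullet$ and the orthogonality relation $\langle\iota-\iota_{M},\alpha^{\vee}\rangle=0$). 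The contribution of $\xi_{M}(w_{\sigma})=(n_{M})^{-1}n'\times w_{\sigma}$ at $w_{\sigma}$ similarly transfers the $n$-part of $\varphi_{0}(w_{\sigma})=e^{2\pi i\lambda}n$ to the $n_{M}$-part, leaving $\lambda_{M}=\lambda$ up to the appropriate lattice; one then checks the congruence (5.6) for $(\mu_{M},\lambda_{M})$ relative to $M^{\vee}$ follows from (5.6) for $(\mu,\lambda)$ together with $\langle\iota-\iota_{M},\alpha^{\vee}\rangle=0$ for roots of $M^{\vee}$. Hence $\varphi_{M}=\varphi(\mu_{M},\lambda_{M})$ is $s$-elliptic for $M^{\ast}$.

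Next I would verify total degeneracy of $\varphi_{M}$: by the definition in Section 6.6 this means $\langle\mu_{M},\alpha^{\vee}\rangle=0$ for all roots $\alpha^{\vee}$ of $\mathcal{T}$ in $M^{\vee}$. But $\langle\mu-(\iota-\iota_{M}),\alpha^{\vee}\rangle=\langle\mu,\alpha^{\vee}\rangle-\langle\iota,\alpha^{\vee}\rangle+\langle\iota_{M},\alpha^{\vee}\rangle$, and for a root $\alpha^{\vee}$ of $M^{\vee}$ we have $\langle\iota,\alpha^{\vee}\rangle=\langle\iota_{M},\alpha^{\vee}\rangle$ precisely by the Levi relation $\langle\iota-\iota_{M},\alpha^{\vee}\rangle=0$ recorded before Lemma 5.4.1; moreover $\langle\mu,\alpha^{\vee}\rangle=0$ because $\mu$ is central for $M^{\vee}=\mathrm{Cent}(\varphi(\mathbb{C}^{\times}),G^{\vee})$ — indeed $\mu$ lies in $X_{\ast}(\mathcal{T})\otimes\mathbb{C}$ and $\varphi_{0}(z)=z^{\mu}\overline{z}^{\sigma_{T}\mu}$ must commute with all of $M^{\vee}$, forcing $\langle\mu+\sigma_{T}\mu,\alpha^{\vee}\rangle=0$ and $\langle\mu-\sigma_{T}\mu,\alpha^{\vee}\rangle=0$, hence $\langle\mu,\alpha^{\vee}\rangle=0$ for roots $\alpha^{\vee}$ of $M^{\vee}$. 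So $\langle\mu_{M},\alpha^{\vee}\rangle=0$ for those roots and $\varphi_{M}$ is totally degenerate.

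Finally I would address well-definedness. The $c$-Levi group $M_{\varphi}$ depends only on $\boldsymbol{\varphi}$ (Section 5.2 notes it depends only on $\varphi$, and conjugating $\varphi$ by $G^{\vee}$ conjugates $M^{\vee}$ and $\mathcal{M}$ compatibly), and the isomorphism $\xi_{M}$ is canonical up to a $1$-cocycle of $W_{\mathbb{R}}$ in $Z_{M^{\vee}}$ by part (ii) of Lemma 5.4.1; twisting $\xi_{M}$ by such a cocycle $a$ replaces $\varphi_{M}$ by $a^{-1}\otimes\varphi_{M}$, which changes $\lambda_{M}$ within its ambiguity class and does not disturb total degeneracy (the condition $\langle\mu_{M},\alpha^{\vee}\rangle=0$ is untouched). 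Likewise the choice of representative within the $G^{\vee}$-orbit $\boldsymbol{\varphi}$, and the choice of representative $\varphi(\mu,\lambda)$ up to $\mathcal{T}$-conjugacy, only move $\varphi_{M}$ within its own $M^{\vee}$-orbit and lattice ambiguities. So the resulting totally degenerate parameter for $M_{\varphi}$ is well-defined, which is the assertion of the lemma.

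The step I expect to be the main obstacle is the careful tracking of the lattice ambiguities in $\lambda$ versus $\lambda_{M}$ when passing through $\xi_{M}$ — specifically confirming that the congruence (5.6) holds for $(\mu_{M},\lambda_{M})$ relative to $M^{\vee}$, since this requires pinning down how the normalization elements $n$, $n'$, $n_{M}$ interact (via (5.1), (5.2), and Lemma 3.2 of La89, exactly as used in the proof of Lemma 5.4.1) and how the half-sum-of-coroots terms $\iota$ and $\iota_{M}$ redistribute. Once that compatibility is in hand, everything else is formal.
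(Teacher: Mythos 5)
Your proposal is correct and follows the same route as the paper's one-paragraph argument preceding the lemma: factor $\varphi = \xi_{M}\circ\varphi_{M}$ through ${}^{L}M$, read off $(\mu_{M},\lambda_{M}) = (\mu-(\iota-\iota_{M}),\lambda)$, and check that the result is totally degenerate for $M^{\ast}$. The paper dispatches the last step with ``Clearly $\varphi_{M}$ is totally degenerate''; you supply the computation — $\langle\mu,\alpha^{\vee}\rangle=0$ for roots $\alpha^{\vee}$ of $M^{\vee}$ because $M^{\vee}=\mathrm{Cent}(\varphi(\mathbb{C}^{\times}),G^{\vee})$, and $\langle\iota-\iota_{M},\alpha^{\vee}\rangle=0$ by (5.3) — together with the well-definedness discussion via Lemma 5.4.1(ii), and both are correct.
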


Turning to packets, we start with the quasi-split form $G^{\ast }$ and
generic $\pi ^{\ast }$ whose character data is the transport of $(\mu
,\lambda ,\mathcal{C})$ to $T$ provided by $spl_{Wh}.$ Our realization of $%
M_{\varphi }$ as $M^{\ast }$ in Section 6.2 determines a fundamental
splitting $spl_{Wh,M}$ and chamber $\mathcal{C}_{M}$ for $T.$ We use the
same notation for the inverse transport of this chamber to $M^{\vee }.$ The
transport by $spl_{Wh,M}$ of dual data$\mathcal{\ }(\mu _{M},\lambda _{M},%
\mathcal{C}_{M})$ attached to $\varphi _{M}$ determines a totally degenerate
limit of discrete series representation $\pi _{M}^{\ast }$ of $M^{\ast }(%
\mathbb{R}).$ By construction, $\pi _{M}^{\ast }$ is generic relative to the
Whittaker data attached to $\psi _{\mathbb{R}}$ and the $\mathbb{R}$%
-splitting $spl_{M}^{\ast }=(\overline{B}_{M},\overline{T}_{M},\{X_{\alpha
}\})$ for $M^{\ast }$ from Section 6.1.

Consider now general $(G,\eta )$ for which $\varphi (\mu ,\lambda )$ is
relevant. Let $\Pi $ be the attached packet and consider $\pi \in \Pi .$
Recall that $\eta _{\pi }:M_{\pi }\rightarrow M^{\ast }$ is an $\mathbb{R}$%
-isomorphism. Define the representation $\pi _{M}$ of $M_{\pi }(\mathbb{R})$
by transport: $\pi _{M}=$ $\pi _{M}^{\ast }\circ \eta _{\pi }.$ Then $\pi
_{M}$ lies in the totally degenerate packet $\Pi _{M_{\pi }}$ of
representations of $M_{\pi }(\mathbb{R})$ attached to $\varphi _{M}.$

We return to the elliptic invariants of (6.4) and consider the subgroup%
\begin{equation*}
Image(H^{1}(\Gamma ,Z_{M_{sc}^{\ast }})\rightarrow H^{1}(\Gamma ,T))
\end{equation*}%
of%
\begin{equation*}
Image(H^{1}(\Gamma ,Z_{M_{(sc)}^{\ast }})\rightarrow H^{1}(\Gamma ,T)).
\end{equation*}%
From (6.5) and Lemma 6.3, we have an isomorphism of this subgroup with $%
M_{ad}(\mathbb{R})\diagup Int(M(\mathbb{R})).$

On the other hand, notice that $S_{\varphi }=Cent(\varphi (W_{\mathbb{R}%
}),G^{\vee })$ is contained in $M^{\vee }$ and hence 
\begin{equation*}
S_{\varphi }=S_{\varphi _{M}}=Cent(\varphi _{M}(W_{\mathbb{R}}),M^{\vee })
\end{equation*}%
which is the group of fixed points of $M^{\vee }$ under either of the
automorphisms $Int(\varphi (w_{\sigma })),$ $Int(\varphi _{M}(w_{\sigma }))$%
; we arranged in Section 5.4 that these automorphisms act the same way on $%
M^{\vee }.$ Again write $\mathbb{S}_{\varphi }$ for the quotient $S_{\varphi
}\diagup (Z_{G^{\vee }})^{\Gamma }S_{\varphi }^{0}$. Then 
\begin{equation*}
\mathbb{S}_{\varphi _{M}}=S_{\varphi }\diagup (Z_{M^{\vee }})^{\Gamma
}S_{\varphi }^{0}
\end{equation*}%
and since $(Z_{M^{\vee }})^{\Gamma }\cap ((M^{\vee })^{\Gamma })^{0}$ is
contained in $(Z_{G^{\vee }})^{\Gamma }$ we have an exact sequence%
\begin{equation*}
1\rightarrow (Z_{M^{\vee }})^{\Gamma }\diagup (Z_{G^{\vee }})^{\Gamma
}\rightarrow \mathbb{S}_{\varphi }\rightarrow \mathbb{S}_{\varphi
_{M}}\rightarrow 1.
\end{equation*}

\subsection{General limits: companion standard Levi group}

We continue with the packet $\Pi $\ of the last section. It consists of the
components of several essentially tempered principal series representations
of $G(\mathbb{R}).$ To describe them we return to the representative $%
\overline{\varphi }_{M}=\varphi (\mu _{M},\overline{\lambda }_{M})$ for $%
\boldsymbol{\varphi }_{M}$ in Section 6.6 and set $\overline{\varphi }=\xi
_{M}\circ \overline{\varphi }_{M}.$ Then $\overline{\varphi }$ also
represents $\boldsymbol{\varphi }\mathbf{.}$

We may replace $spl_{Wh}$ by a $G^{\ast }(\mathbb{R})$-conjugate and then $%
M^{\ast }$ by its conjugate relative to the same element to arrange that the
maximal torus $\overline{T}_{M}$ in $M^{\ast }$ provided by $spl_{M}^{\ast }$
is a standard maximal torus in $G^{\ast }$. We then drop the subscript $M$
in notation for this torus. Here by \textit{standard} we mean that the
maximal split torus $\overline{S}$ in $\overline{T}$ is contained in $%
T^{\ast }$ provided by $spl^{\ast }=(spl^{\vee })^{\vee }$. Let $\overline{M}
$ be the standard Levi group $Cent(\overline{S},G^{\ast })$. Then $^{L}%
\overline{M}$ will denote the dual standard Levi group in $^{L}G,$ naturally
embedded by inclusion.

\begin{lemma}
The image of $\overline{\varphi }$ lies in $^{L}\overline{M}$ and defines an
elliptic parameter for $\overline{M}.$
\end{lemma}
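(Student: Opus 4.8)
The plan is to show that $\overline{\varphi}$, the representative $\xi_M \circ \overline{\varphi}_M$ of $\boldsymbol{\varphi}$ built in Section~6.8, has image contained in the dual standard Levi group $^L\overline{M}$, and that the induced parameter for $\overline{M}$ is elliptic, i.e.\ is a discrete series parameter in the sense of Section~5.7. First I would unwind what $\overline{\varphi}_M = \varphi(\mu_M,\overline{\lambda}_M)$ actually is: by Lemma~5.4 and the discussion of Section~5.6, $\varphi_M$ factors through $^L Z_M = Z_{M^\vee}\rtimes W_{\mathbb{R}}$, and $\overline{\varphi}_M$ is obtained from it by the sequence of dual Cartan transforms used in Section~6.6 to pass from the elliptic torus to the maximally split one $\overline{T}$. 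Since the transported data $(\mu_M,\overline{\lambda}_M)$ are relative to $\overline{T}$ and since $\overline{S}$ is the maximal split subtorus of $\overline{T}$, the key point is that the image of $\overline{\varphi}_M$ commutes with (the dual of) $\overline{S}$: this is precisely because $\overline{M}^\vee = Cent(\overline{S}^\vee, G^\vee)$ and a parameter through $^L Z_M$ with values governed by a torus containing $\overline{S}$ lands in the centralizer. Composing with $\xi_M$, which acts as the identity on $M^\vee$ and sends $W_{\mathbb{R}}$ into $\mathcal{M}\subseteq {}^L G$ via an explicit formula in $\mathcal{T}$ and $n_M, n'$ (all normalizing $\mathcal{T}\subseteq M^\vee \cap \overline{M}^\vee$), I would check that $\xi_M(W_{\mathbb{R}})$ also normalizes $\overline{T}^\vee$ and centralizes $\overline{S}^\vee$, so that the whole image of $\overline{\varphi}$ lies in $^L\overline{M}$.

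For ellipticity, I would use the characterization from Section~5.7 that a Langlands parameter is elliptic iff the corresponding property $\bullet$ holds, equivalently iff its $c$-Levi group is just a torus, equivalently iff the centralizer of the image in the dual group is central modulo the relevant center. Here the natural route is: $S_{\overline{\varphi}} = Cent(\overline{\varphi}(W_{\mathbb{R}}), G^\vee)$, and since $\overline{\varphi}$ and $\varphi$ are $G^\vee$-conjugate, $S_{\overline{\varphi}} \cong S_\varphi$, which by Section~5.6 sits inside $M^\vee$ and equals the $\sigma_M$-fixed points of $Z_{M^\vee}$ (in the $u$-regular setting) or more generally is computed from $\mathbb{S}_\varphi$ as in Section~6.7. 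The point is that $S_{\overline{\varphi}}$, now viewed inside $\overline{M}^\vee$, meets $(\overline{M}^\vee)_{\mathrm{der}}$ only in a finite group — because $\overline{\varphi}$, restricted to the Levi $\overline{M}^\vee$, is built from totally degenerate data $(\mu_M,\overline{\lambda}_M)$ that after the Cartan transforms define a discrete series (hence relative-elliptic) parameter for $\overline{M}$. Concretely, $\mu_M = \mu - (\iota - \iota_M)$ pairs to zero with every coroot of $M^\vee$ (since $\varphi_M$ is totally degenerate), so inside $\overline{M}$ all of $\overline{M}$'s coroots are "seen" only through the $\overline{\lambda}$-part, which via (5.6)-type congruences and the construction in the proof of Lemma~4.3.5 of \cite{Sh82} forces the parameter to be elliptic for $\overline{M}$.

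I would therefore organize the write-up in two steps: (1) containment of $\mathrm{Im}(\overline{\varphi})$ in $^L\overline{M}$, by checking that both $\xi_M(W_{\mathbb{R}})$ and $\overline{\varphi}_M(W_{\mathbb{R}}\times \mathbb{C}^\times)$ centralize $\overline{S}^\vee$ — the first from the explicit formula for $\xi_M$ together with $\mathcal{T}\subseteq \overline{M}^\vee$, the second from $\overline{\varphi}_M$ factoring through $^L Z_M$ and the torus defining $\overline{M}$ containing $\overline{S}$; and (2) ellipticity for $\overline{M}$, by identifying the restriction of $\overline{\varphi}$ to $^L\overline{M}$ with the discrete-series parameter for $\overline{M}$ coming from the totally degenerate data $(\mu_M,\overline{\lambda}_M)$ under the dual Cartan transforms, and invoking the equivalence "$\bullet$ $\Leftrightarrow$ elliptic" of Section~5.7 applied with $G^\ast$ replaced by $\overline{M}$.

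The main obstacle I expect is bookkeeping step (1): one has to be careful that the Cartan transforms of Section~6.6, which move $\overline{\varphi}_M$ from the elliptic torus to $\overline{T}$, are compatible with the embedding $\xi_M$, i.e.\ that conjugating $\varphi$ and conjugating $\varphi_M$ by matching Weyl elements produce $\overline{\varphi} = \xi_M\circ\overline{\varphi}_M$ with image genuinely inside the \emph{standard} Levi $^L\overline{M}$ and not merely a conjugate of it. This is essentially the content of the sentence in Section~6.8 asserting $\overline{T}_M$ can be arranged to be standard in $G^\ast$, but it needs to be spelled out that the same inner automorphism of $G^\ast$ that standardizes $\overline{T}_M$ also standardizes the dual picture so that $\overline{\varphi}(W_{\mathbb{R}})$ normalizes $\overline{T}^\vee$ on the nose; once that alignment is fixed, the centralizer computation giving ellipticity is routine and parallels the totally degenerate case of Section~6.6.
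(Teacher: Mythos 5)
Your containment argument tracks the same underlying idea as the paper's: both reduce to showing that $\xi _{M}(w_{\sigma })$ lands in $^{L}\overline{M}$, using the arrangement $\sigma _{\overline{T}}=\sigma _{M}$ on $\mathcal{T}$. The paper does this more directly, by comparing $\xi _{M}(w_{\sigma })=(n_{M})^{-1}n^{\prime }\times w_{\sigma }$ with $n_{\overline{M}}\times w_{\sigma }$ and noting that they act identically on $\mathcal{T}$ (both represent $\sigma _{M}$) so differ only by an element of $\mathcal{T}\cap G_{der}^{\vee }\subset {}^{L}\overline{M}$; your formulation via centralizing $\overline{S}^{\vee }$ is equivalent once one sorts out the dualities, but is more circuitous and leaves open the Cartan-transform compatibility worry you yourself flag. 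That worry is in fact dissolved by the arrangement in Section 6.8 that $\overline{T}$ is standard, so I would not count it against you; the paper leans on exactly the same arrangement.

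The ellipticity half of your proposal has a genuine gap. Ellipticity of $\overline{\varphi }$ as a parameter for $\overline{M}$ means $\mu $ is $\overline{M}$-regular, that is, $\left\langle \mu ,\alpha ^{\vee }\right\rangle \neq 0$ for all roots $\alpha ^{\vee }$ of $\mathcal{T}$ in $\overline{M}^{\vee }$; this is a condition on $\mu $, not $\lambda $, so the appeal to "(5.6)-type congruences" in the $\overline{\lambda }$-component does not bear on it. The actual reason $\mu $ is $\overline{M}$-regular is the one the paper gives: the roots $\alpha ^{\vee }$ with $\left\langle \mu ,\alpha ^{\vee }\right\rangle =0$ are precisely the roots of $M^{\vee }$, so a $\mu $-singular root of $\overline{M}^{\vee }$ would give an imaginary root of $\overline{T}$ inside $M^{\ast }$ — impossible, since $\overline{T}$ is maximally split in $M^{\ast }$. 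That disjointness-of-root-systems step is absent from your argument. Separately, your identification of $S_{\varphi }$ with "the $\sigma _{M}$-fixed points of $Z_{M^{\vee }}$" conflates the centralizer $S_{\varphi }$ of the Langlands parameter with the centralizer $S_{\psi }$ of the full Arthur parameter (Section 5.5); for a totally degenerate $\varphi _{M}$ the group $S_{\varphi }=S_{\varphi _{M}}$ is a full reductive group, not a piece of the center, so the centralizer route as you have set it up would not close. Replacing the centralizer computation with the direct regularity-of-$\mu $ observation fixes the proof.
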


\begin{proof}
We return to the notation of Section 5.4. We have arranged that $\sigma _{%
\overline{T}}=\sigma _{M}$ on $\mathcal{T}$. Then the element $n_{\overline{M%
}}\times w_{\sigma }$ in $^{L}G$ coincides with $\xi _{M}(w_{\sigma })$ up
to an element of $\mathcal{T}$ $\cap $ $G_{der}^{\vee }.$ It follows that $%
\overline{\varphi }(w_{\sigma })$ $\in $ $^{L}\overline{M}$ and then that $%
\overline{\varphi }(W_{\mathbb{R}})$ $\subseteq $ $^{L}\overline{M}$. Since $%
\sigma _{M}\alpha ^{\vee }=\sigma _{\overline{T}}\alpha ^{\vee }=-\alpha
^{\vee }$ for each root $\alpha ^{\vee }$ of $\mathcal{T}$ in $\overline{M}%
^{\vee }$, it is clear that $\overline{\varphi }$ is $s$-elliptic as
Langlands parameter for $\overline{M}$. If we write $\overline{\varphi }%
=\varphi (\mu ,\overline{\lambda })$ relative to $\overline{M},$ then $\mu $
is $\overline{M}$-regular for otherwise $\overline{T}$ would have an
imaginary root in $M^{\ast }.$ Thus the lemma is proved.
\end{proof}

We continue with $\overline{\varphi }=\varphi (\mu ,\overline{\lambda })$
and consider the quasi-split form $G^{\ast }$. The Whittaker data $Wh$ for $%
G^{\ast }$ determines, by restriction, Whittaker data $Wh_{\overline{M}}$
for $\overline{M}.$ We choose a corresponding fundamental splitting $%
spl_{Wh_{\overline{M}}}=(B_{\overline{M}},\overline{T},\{X_{\alpha }\})$ of
Whittaker type for $\overline{M}$, and then transport $(\mu ,\overline{%
\lambda })$ to discrete series character data on $\overline{T}$. Via unitary
parabolic induction, each discrete series representation in the packet for $%
\overline{M}(\mathbb{R})$ attached to $\overline{\varphi }$ determines an
essentially tempered principal series representation of $G^{\ast }(\mathbb{R}%
).$ Then $\Pi $ consists of the irreducible components of all principal
series representations so obtained. Consider next an inner form $(G,\eta )$
for which $\varphi $ is relevant. Recall that for $\pi \in \Pi ,$ $\eta
_{\pi }$ transports elliptic character data for $\pi $ to that for $\pi
^{\ast }.$ By (6.5) and Lemma 6.2 we may choose $\pi $ so that $\pi _{M}=$ $%
\pi _{M}^{\ast }\circ \eta _{\pi }$ is isomorphic to $\pi _{M}^{\ast }$. We
then adjust our discussion for $G^{\ast }$ to describe the packet for $G(%
\mathbb{R})$; we will not need details here.

From definitions (recalled in \cite[Section 5.3]{Sh82}) it is clear that
Langlands' version of the $R$-group is unchanged by passage from $^{L}M$ to $%
^{L}G$:%
\begin{equation*}
R_{\overline{\varphi }}=R_{\overline{\varphi }_{M}}.
\end{equation*}%
Also there is a surjective homomorphism $\mathbb{S}_{\overline{\varphi }%
}\rightarrow R_{\overline{\varphi }}\ $with kernel that may be identified
with the dual of $\mathcal{E}(\overline{T})$ (see \cite[Sections 5.3, 5.4]%
{Sh82}). Because $\varphi _{M}$ is totally degenerate we have that $\mathbb{S%
}_{\overline{\varphi }_{M}}\rightarrow R_{\overline{\varphi }_{M}}=R_{%
\overline{\varphi }}$ is an isomorphism. Then by the discussion around
(6.6.3) we have a perfect pairing of $R_{\overline{\varphi }}$ with%
\begin{equation*}
Image(H^{1}(\Gamma ,Z_{M_{sc}^{\ast }})\rightarrow H^{1}(\Gamma ,T))\simeq
M_{ad}(\mathbb{R})\diagup Int(M(\mathbb{R})).
\end{equation*}

\section{\textbf{Packets and parameters II}}

\subsection{Data for elliptic $u$-regular parameters}

Suppose that $\psi =(\varphi ,\rho )$ is an elliptic $u$-regular Arthur
parameter. Continuing from (5.5), we may assume that $\varphi $ takes the
following form:%
\begin{equation*}
\varphi (z)=z^{\mu }\overline{z}^{\sigma _{M}\mu }\times z
\end{equation*}%
for $z\in \mathbb{C}^{\times },$ and$\varphi (w_{\sigma })=e^{2\pi i\lambda
}.\xi _{M}(w_{\sigma }).$Here 
\begin{equation*}
\mu ,\lambda \in X_{\ast }(\mathcal{T})\otimes \mathbb{C}
\end{equation*}
and%
\begin{equation}
\left\langle \mu ,\alpha ^{\vee }\right\rangle =0,\text{ }\left\langle
\lambda ,\alpha ^{\vee }\right\rangle \in \mathbb{Z}
\end{equation}%
for all roots $\alpha ^{\vee }$ of $\mathcal{T}$ in $M^{\vee }.$ The element 
$\mu $ is uniquely determined by the $\mathcal{T}$-conjugacy class of the
representative $\varphi $, and $\lambda $ is determined uniquely modulo 
\begin{equation*}
\mathcal{K}_{M}=X_{\ast }(\mathcal{T})+\{\nu \in X_{\ast }(\mathcal{T}%
)\otimes \mathbb{C}:\sigma _{M}\nu =-\nu \}.
\end{equation*}%
We will use the notation $\varphi =\varphi \lbrack \mu ,\lambda ].$ Notice
that in the case $M_{\psi }=T$, where $\varphi $ is elliptic, we return to
the pair $(\mu ,\lambda )$ from Section 5.6.

From our construction of $\xi _{M}$ and the equation $\varphi (w_{\sigma
})^{2}=\varphi (-1)$ we have immediately the following congruence:%
\begin{equation}
\frac{1}{2}(\mu -\sigma _{M}\mu )-(\iota -\iota _{M})\equiv \lambda +\sigma
_{M}\lambda \text{ }\func{mod}X_{\ast }(\mathcal{T}).
\end{equation}%
The properties (7.1) allow us to replace $\sigma _{M}$ by $\sigma _{T}$ in
(7.2) and then to rewrite the congruence as%
\begin{equation}
\frac{1}{2}[\mu +\iota _{M}-\sigma _{T}(\mu +\iota _{M})]-\iota \equiv
\lambda +\sigma _{T}\lambda \text{ }\func{mod}X_{\ast }(\mathcal{T}).
\end{equation}

For the second component $\rho $ of $\psi $ we turn to Section 5.5 and the $u
$-regularity property. With first component $\varphi $ prescribed as above
we may assume that $\rho :$ $SL(2,\mathbb{C})\rightarrow M^{\vee }$ is in
standard form with cocharacter $2\iota _{M}$. Then%
\begin{equation*}
\rho (diag(\left\vert w\right\vert ^{1/2},\left\vert w\right\vert
^{-1/2}))=(z\overline{z})^{\iota _{M}},w\in W_{\mathbb{R}},
\end{equation*}%
where $w=z$ or $zw_{\sigma },$ $z\in \mathbb{C}^{\times }$, as in \cite{Ar89}%
. We write $\rho =\rho (\iota _{M}).$

We observe that (7.3) implies that $\mu +\iota _{M}\in X_{\ast }(\mathcal{T}%
)\otimes \mathbb{C}$ is integral, \textit{i.e.,} 
\begin{equation}
\left\langle \mu +\iota _{M},\alpha ^{\vee }\right\rangle \in \mathbb{Z},
\end{equation}%
for all roots of $\mathcal{T}$ in $G^{\vee }$.

\begin{remark}
$\mu $ is at least half-integral; $\mu $ is integral if the derived group of 
$G$ is simply-connected since $\iota _{M}$ is integral in that case.
\end{remark}

Recall that $\mathcal{B}$ denotes the Borel subgroup that is part of $%
spl^{\vee }.$

\begin{lemma}
Let $\mathbf{\psi }$ be an elliptic $u$-regular Arthur parameter. Then there
exists a representative $\psi =(\varphi ,\rho )$ for $\mathbf{\psi }$, where 
$\varphi =\varphi \lbrack \mu ,\lambda ]$ and $\rho =\rho (\iota _{M}),$
with both $\mu $ and $\mu +\iota _{M}$ $\mathcal{B}$-dominant$.$
\end{lemma}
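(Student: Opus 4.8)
The plan is to use the freedom in the choice of representative: within the $G^{\vee }$-orbit $\boldsymbol{\psi }$ we may still conjugate a representative by $N_{G^{\vee }}(\mathcal{T})$, which replaces the triple $(\mu ,M^{\vee },\iota _{M})$ attached to $\psi =(\varphi \lbrack \mu ,\lambda ],\rho (\iota _{M}))$ by $(w\mu ,wM^{\vee }w^{-1},\iota _{wM^{\vee }w^{-1}})$ for $w$ in the Weyl group of $\mathcal{T}$ in $G^{\vee }$, where $\iota _{wM^{\vee }w^{-1}}$ is the half-sum of the coroots of $wM^{\vee }w^{-1}$ that are positive for $\mathcal{B}$. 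As usual there is no harm in reducing to the case that $G$ is semisimple and simply connected, so that $Z_{G^{\vee }}$ is trivial and, by Remark 7.1, $\mu $ and $\iota _{M}$ are integral.

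First I would start from the normal form of (7.1)--(7.4), so that $M^{\vee }=M_{\psi }^{\vee }=\mathrm{Cent}(\varphi _{0}(\mathbb{C}^{\times }),G^{\vee })$ and $\rho =\rho (\iota _{M})$, and then conjugate so that $\mu $ becomes $\mathcal{B}$-dominant. At this point I would bring in the ellipticity of $\boldsymbol{\psi }$: by Lemma 5.2 it is equivalent to property $\bullet $, which forces $\sigma _{M}$ to act on $\mathcal{T}$ as $-1$ composed with an element of the Weyl group of $M^{\vee }$; since $\mu $ is fixed by that Weyl group we get $\sigma _{M}\mu =-\mu $, and hence $M_{\psi }^{\vee }$ is exactly the centralizer in $G^{\vee }$ of $\mu $. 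Because $\mu $ is $\mathcal{B}$-dominant this centralizer is a \emph{standard} Levi subgroup: $M^{\vee }\cap \mathcal{B}$ is its standard Borel, $\iota _{M}=\rho _{M}^{\vee }$ is the half-sum of its $\mathcal{B}$-positive coroots, and consequently $\langle \beta ,\iota _{M}\rangle =1$ for every simple root $\beta $ of $M^{\vee }$ and $\langle \beta ,\iota _{M}\rangle \leq 0$ for every simple root $\beta $ of $G^{\vee }$ not lying in $M^{\vee }$.

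Next I would check that $\mu +\iota _{M}$ is $\mathcal{B}$-dominant. For a simple root $\beta $ of $M^{\vee }$ this is immediate, since $\langle \beta ,\mu \rangle =0$ and $\langle \beta ,\iota _{M}\rangle =1$, so $\langle \beta ,\mu +\iota _{M}\rangle =1>0$. For a simple root $\beta $ of $G^{\vee }$ outside $M^{\vee }$ we have $\langle \beta ,\mu \rangle >0$ but $\langle \beta ,\iota _{M}\rangle \leq 0$, and the content of the lemma is that the positive term dominates the negative one. Here I would use the integrality relation (7.4) (equivalently the congruence (7.3)) together with $\rho =\rho (\iota _{M})$, so that the correction added to $\mu $ is precisely the cocharacter $2\iota _{M}$ of the principal $SL(2)$ in $M^{\vee }$: (7.4) pins $\langle \beta ,\mu \rangle $ to the residue class that makes $\langle \beta ,\mu +\iota _{M}\rangle $ integral, and combined with $\langle \beta ,\mu \rangle >0$ this gives $\langle \beta ,\mu +\iota _{M}\rangle \geq 0$. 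Finally, the only freedom remaining after all this is conjugation by the part of the Weyl group that normalizes the Weyl group of $M^{\vee }$; since $\mu $ is orthogonal to the roots of $M^{\vee }$ this fixes $\mu $ (and recomputes $\iota _{M}$ to the same value, $M^{\vee }$ staying standard), so it does not disturb either dominance, and the lemma follows.

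The step I expect to be the main obstacle is this last verification of $\mathcal{B}$-dominance of $\mu +\iota _{M}$ on the simple roots of $G^{\vee }$ lying outside $M^{\vee }$: a priori the half-sum of coroots of a proper Levi can pair negatively, and by a large amount, with such a root, so one genuinely needs the arithmetic constraint (7.4)/(7.3) on $\mu +\iota _{M}$ --- together with the facts that $\boldsymbol{\psi }$ is elliptic and $u$-regular, which is exactly what pins $M^{\vee }$ down as the standard centralizer of $\mu $ and the $SL(2)$ as the principal one --- to see that the contribution of $\mu $ always compensates it.
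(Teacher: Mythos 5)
Your approach runs in the opposite direction from the paper's, and the step you flagged as ``the main obstacle'' is exactly where it breaks: there is no argument there, only a hope, and the hope is not justified.

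Concretely, you first make $\mu$ $\mathcal{B}$-dominant and then try to deduce $\mathcal{B}$-dominance of $\mu +\iota _{M}$ on a simple root $\beta $ outside $M^{\vee }$ from the two facts $\left\langle \mu ,\beta \right\rangle >0$ and $\left\langle \mu +\iota _{M},\beta \right\rangle \in \mathbb{Z}$. That implication simply does not hold. If $\left\langle \iota _{M},\beta \right\rangle =-k-r$ with $k\in \mathbb{Z}_{\geq 0}$ and $r\in \lbrack 0,1)$, then the congruence forces $\left\langle \mu ,\beta \right\rangle \equiv r\ (\mathrm{mod}\ \mathbb{Z})$, and positivity gives at best $\left\langle \mu ,\beta \right\rangle \geq r$ (or $\geq 1$ if $r=0$), hence $\left\langle \mu +\iota _{M},\beta \right\rangle \geq -k$ (or $\geq 1-k$). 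Nothing bounds $k$: the half-sum of positive coroots of a proper Levi can pair arbitrarily negatively with a simple root of $G^{\vee }$ outside that Levi (for example $\left\langle \iota _{M},\beta \right\rangle =-(n-1)/2$ when $M^{\vee }$ is a $GL_{n}$-type Levi and $\beta $ is the adjacent simple root). Neither ellipticity nor $u$-regularity repairs this; they pin down $M^{\vee }$ and $\rho $, but they do not lower-bound $\left\langle \mu ,\beta \right\rangle $ by $-\left\langle \iota _{M},\beta \right\rangle $.

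The paper argues in the reverse direction, and that direction works. One first chooses $\omega $ with $\omega (\mu +\iota _{M})$ $\mathcal{B}$-dominant, verifies $\iota _{M_{\omega }}=\omega \iota _{M}$ (this is the real content of the second paragraph of the proof: $\omega $ sends $\mathcal{B}$-positive roots of $M^{\vee }$ to $\mathcal{B}$-positive roots of $M_{\omega }^{\vee }$, which is checked by pairing against $\mu +\iota _{M}$), and then deduces dominance of $\omega \mu $ from the sign inequality $\left\langle \iota _{M},\beta \right\rangle \leq 0$ for simple $\beta $ not in $M^{\vee }$. That inequality has the useful sign in this direction: $\left\langle \mu ,\beta \right\rangle =\left\langle \mu +\iota _{M},\beta \right\rangle -\left\langle \iota _{M},\beta \right\rangle \geq 0$ once the first term is $\geq 0$. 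Your version needs the inequality with the opposite sign, which is false. So this is not a variant proof of the same lemma; it is an attempt that fails at the step you yourself singled out, and the fix is to exchange the roles of $\mu $ and $\mu +\iota _{M}$ as the paper does.

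A secondary, smaller issue: in your last paragraph you assert that the remaining freedom ``recomputes $\iota _{M}$ to the same value, $M^{\vee }$ staying standard.'' This is not automatic. Conjugation by a Weyl element in general moves $M^{\vee }$ off the standard Levi and replaces $\iota _{M}$ by the half-sum of $\mathcal{B}$-positive coroots of the new Levi, which need not equal the conjugate of the old $\iota _{M}$; the paper's proof has to check precisely this transformation law for the particular $\omega $ it uses.
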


\begin{proof}
First, we observe that it is sufficient to arrange that $\mu +\iota _{M}$ is
dominant. Since $M^{\vee }$ is Levi we have that $\left\langle \iota
_{M},\alpha ^{\vee }\right\rangle \leq 0$ for each $\mathcal{B}$-simple $%
\alpha ^{\vee }$ that is not a root of $\mathcal{T}$ in $M^{\vee }.$ Then
dominance of $\mu +\iota _{M}$ implies $\left\langle \mu ,\alpha ^{\vee
}\right\rangle \geq 0$ for all such $\alpha ^{\vee }$ and so by (7.1.3), $%
\mu $ is dominant.

Second, suppose we pick $\varphi =\varphi \lbrack \mu ,\lambda ],$ $\rho
=\rho (\iota _{M})$ as in the paragraphs above. There is $\omega $ in the
Weyl group of $\mathcal{T}$ in $G^{\vee }$ such that $\omega (\mu +\iota
_{M})$ is $\mathcal{B}$-dominant. Let $x\in G^{\vee }$ normalize $\mathcal{T}
$ and act on $\mathcal{T}$ as $\omega $. Set $M_{\omega }^{\vee }=xM^{\vee
}x^{-1}$ and $\psi _{x}=Int(x)\circ \psi .$ If $\alpha ^{\vee }$ is a $%
\mathcal{B}$-positive root of in $\mathcal{T}$ in $M_{\omega }^{\vee }$ then 
$\omega ^{-1}\alpha ^{\vee }$ is a root in $M^{\vee }$ and so 
\begin{equation*}
\left\langle \iota _{M},\omega ^{-1}\alpha ^{\vee }\right\rangle
=\left\langle \mu +\iota _{M},\omega ^{-1}\alpha ^{\vee }\right\rangle
=\left\langle \omega (\mu +\iota _{M}),\alpha ^{\vee }\right\rangle \geq 0.
\end{equation*}%
Then $\omega ^{-1}\alpha ^{\vee }$ must be $\mathcal{B}$-positive. It now
follows that $\iota _{M_{\omega }}=\omega \iota _{M}$. Then after
multiplying $x$ by an element of $\mathcal{T}$ we may replace $\psi $ by $%
\psi _{x}$ in our constructions to complete the proof.
\end{proof}

\begin{flushleft}
From now on we choose representative $\psi =(\varphi ,\rho )$ as in Lemma
7.3.
\end{flushleft}

From (7.3) we conclude that:

\begin{lemma}
$\mu +\iota _{M},\lambda $ are data for an s-elliptic Langlands parameter%
\begin{equation*}
\widehat{\varphi }=\varphi (\mu +\iota _{M},\lambda ).
\end{equation*}
\end{lemma}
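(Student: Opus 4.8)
The plan is to verify directly that $(\mu+\iota_{M},\lambda)$ satisfies the conditions, recalled in Sections 5.6--5.7, under which a pair $(\nu,\kappa)$ serves as data for an $s$-elliptic Langlands parameter $\varphi(\nu,\kappa)$ of $G$: namely (i) $\nu$ is $\mathcal{B}$-dominant; (ii) $\langle\nu,\alpha^{\vee}\rangle\in\mathbb{Z}$ for every root $\alpha^{\vee}$ of $\mathcal{T}$ in $G^{\vee}$; (iii) the congruence (5.6) holds, i.e. $\tfrac12(\nu-\sigma_{T}\nu)-\iota\equiv\kappa+\sigma_{T}\kappa\bmod X_{\ast}(\mathcal{T})$; and (iv) $\varphi_{0}(\nu,\kappa)$ has image bounded modulo the center, so that $\varphi(\nu,\kappa)$ is essentially bounded. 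Once (i)--(iv) are in hand for $(\nu,\kappa)=(\mu+\iota_{M},\lambda)$, the construction of Section 5.7 yields $\widehat{\varphi}=\varphi(\mu+\iota_{M},\lambda)$.

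Most of this is already recorded. Condition (i) is built into the choice of representative $\psi$ fixed in Lemma 7.3, where $\mu+\iota_{M}$ was arranged to be $\mathcal{B}$-dominant. Condition (ii) is exactly the integrality statement (7.4), itself deduced there from (7.3). Condition (iii) is precisely (7.3), read as (5.6) with $\mu$ replaced by $\mu+\iota_{M}$; equivalently it is the identity $\widehat{\varphi}(w_{\sigma})^{2}=\widehat{\varphi}(-1)$, which can also be checked directly from the shape $\widehat{\varphi}(w_{\sigma})=e^{2\pi i\lambda}n$ in (5.5) by the computation with \cite[Lemma 3.2]{La89} used in the proof of Lemma 5.1. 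So (i)--(iii) need no new argument.

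For (iv), the point is that $\psi$ is assumed elliptic, so by Lemma 7.2 the property $\bullet$ holds, and, as in Section 5.4, the element realizing $\bullet$ is $n\times w_{\sigma}$ with $n$ the standard representative of the longest Weyl element of $\mathcal{T}$ in $G^{\vee}$ constructed from $spl^{\vee}$ --- the very element appearing in (5.5). It acts on $\mathcal{T}$ as $-1$ on every root $\alpha^{\vee}$ of $\mathcal{T}$ in $G^{\vee}$, so $\sigma_{T}\alpha^{\vee}=-\alpha^{\vee}$ and hence $\langle\nu+\sigma_{T}\nu,\alpha^{\vee}\rangle=0$ for \emph{every} $\nu\in X_{\ast}(\mathcal{T})\otimes\mathbb{C}$, in particular for $\nu=\mu+\iota_{M}$. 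This gives the boundedness-modulo-center in (iv). Assembling (i)--(iv) proves the lemma.

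I expect the only delicate point to be the bookkeeping in (iii)--(iv): one must be sure that the involution $\sigma_{T}$ implicit in the target notion ``$\varphi(\mu+\iota_{M},\lambda)$ is $s$-elliptic'' is the same involution of $\mathcal{T}$ that occurs in (7.3), so that (7.3) genuinely is (5.6) for the shifted weight. Both are the action of $n\times w_{\sigma}$ on $\mathcal{T}$ and so coincide, but tracing this through depends on the normalizations set up in Section 5.4 --- the choices of $n'$ and $n_{M}$ and the relation (5.1). There is no analytic or structural obstacle; the lemma repackages (7.3) and (7.4) together with property $\bullet$.
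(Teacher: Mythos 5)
Your proof is correct and follows the same approach as the paper: the paper states Lemma 7.4 as an immediate consequence of the congruence (7.3) (together with the dominance fixed in Lemma 7.3 and the integrality noted in (7.4)), and you simply unwind explicitly that (7.3) is the congruence (5.6) for the shifted weight $\mu+\iota_{M}$ while dominance, integrality, and boundedness-mod-center are already in place. The one remark worth keeping is your observation that essential boundedness is automatic here because $\sigma_{T}$ acts by $-1$ on all roots in the elliptic setting, which the paper leaves implicit by referring back to Section 5.7.
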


Finally, we set 
\begin{equation}
\mu _{M}=\mu -(\iota -\iota _{M}),\text{ }\lambda _{M}=\lambda .
\end{equation}%
As one of the ingredients \cite{La89} of the Langlands correspondence for $%
M^{\ast }$, the parameter $\varphi _{M}:W_{\mathbb{R}}\rightarrow $ $%
^{L}Z_{M}$ from Section 5.5 defines a quasicharacter $\chi _{M^{\ast }}$ on $%
M^{\ast }(\mathbb{R})$. Because of (5.3) and (7.1) the restriction of $\chi
_{M^{\ast }}$ to each Cartan subgroup in $M^{\ast }(\mathbb{R})$ takes the
form 
\begin{equation}
\Lambda (\mu _{M},\lambda _{M})
\end{equation}%
in the Langlands correspondence for real tori \cite{La89}; see \cite[Section
9]{Sh81} for a discussion and \cite[Section 7]{Sh10} for notation. Further,
for an inner twist $\eta :M_{\eta }\rightarrow M^{\ast }$ we may replace $%
M^{\ast }$ by $M_{\eta }=\eta ^{-1}(M^{\ast })$. Then the new quasicharacter 
$\chi _{\eta }$ on $M_{\eta }(\mathbb{R})$ depends only on the inner class
of $\eta $.

On the other hand, $\widehat{\varphi }$ factors through the discrete series
parameter%
\begin{equation*}
\widehat{\varphi }_{M}=\varphi (\mu _{M}+\iota _{M},\lambda _{M})
\end{equation*}%
for $M^{\ast }.$

From (7.5) and (7.4) we see that $\mu _{M}$ is integral for $G^{\vee }.$
Clearly:

\begin{lemma}
(i) $\mu +\iota _{M}$ is regular, i.e., $\widehat{\varphi }$ is elliptic, if
and only if $\mu _{M}$ is $\mathcal{B}$-dominant.

(ii) $\mu +\iota _{M}$ is singular if and only if $\left\langle \mu
_{M},\alpha ^{\vee }\right\rangle =-1$ for some $\mathcal{B}$-simple root $%
\alpha ^{\vee }$ of $\mathcal{T}$.
\end{lemma}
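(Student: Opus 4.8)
The plan is to reduce both parts to the single identity $\mu +\iota _{M}=\mu _{M}+\iota $, which is immediate from the definition $\mu _{M}=\mu -(\iota -\iota _{M})$ in (7.5), together with the standard fact that $\langle \iota ,\alpha ^{\vee }\rangle =1$ for every $\mathcal{B}$-simple root $\alpha ^{\vee }$ of $\mathcal{T}$ in $G^{\vee }$ (here $\iota $ is half the sum of the coroots, so this is the usual property of the half-sum of positive coroots). I would first record the normalizations already in force: by the choice of representative $\psi $ following Lemma 7.3, $\mu +\iota _{M}$ is $\mathcal{B}$-dominant; by (7.4) it is integral; and by the remark following (7.5), $\mu _{M}$ is integral for $G^{\vee }$. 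Recall finally that, since $\widehat{\varphi }=\varphi (\mu +\iota _{M},\lambda )$ is $s$-elliptic with $\mu +\iota _{M}$ dominant, it is a discrete series (i.e. elliptic) parameter in the sense of Sections 5.6--5.7 exactly when $\mu +\iota _{M}$ is strictly dominant, i.e. regular; so the "i.e." in (i) is itself just a matter of definitions.

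For (i): $\mu +\iota _{M}$ is regular if and only if $\langle \mu +\iota _{M},\alpha ^{\vee }\rangle >0$ for every $\mathcal{B}$-simple $\alpha ^{\vee }$. Using $\mu +\iota _{M}=\mu _{M}+\iota $ and $\langle \iota ,\alpha ^{\vee }\rangle =1$, this reads $\langle \mu _{M},\alpha ^{\vee }\rangle >-1$ for all such $\alpha ^{\vee }$, which by integrality of $\mu _{M}$ is the same as $\langle \mu _{M},\alpha ^{\vee }\rangle \geq 0$ for all $\mathcal{B}$-simple $\alpha ^{\vee }$, i.e. $\mu _{M}$ is $\mathcal{B}$-dominant. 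Combined with the observation of the previous paragraph, this is (i).

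For (ii): $\mu +\iota _{M}$ is singular precisely when it is dominant but not regular. Since $\mu +\iota _{M}$ is dominant and integral, writing an arbitrary positive coroot as a nonnegative integral combination of $\mathcal{B}$-simple coroots shows that $\langle \mu +\iota _{M},\beta ^{\vee }\rangle =0$ for some positive $\beta ^{\vee }$ forces $\langle \mu +\iota _{M},\alpha ^{\vee }\rangle =0$ for some $\mathcal{B}$-simple $\alpha ^{\vee }$; hence singularity is equivalent to the vanishing of $\langle \mu +\iota _{M},\alpha ^{\vee }\rangle $ on some $\mathcal{B}$-simple $\alpha ^{\vee }$. Translating through the identity once more, $\langle \mu +\iota _{M},\alpha ^{\vee }\rangle =\langle \mu _{M},\alpha ^{\vee }\rangle +1=0$, so this happens if and only if $\langle \mu _{M},\alpha ^{\vee }\rangle =-1$ for some $\mathcal{B}$-simple root $\alpha ^{\vee }$ of $\mathcal{T}$, which is (ii).

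There is essentially no obstacle here, consistent with the word "Clearly" preceding the statement; the only points needing a line of care are the elementary root-system reduction used in (ii) (a dominant integral weight that is not regular already fails regularity on a simple coroot) and the bookkeeping that the three pairing facts --- $\langle \iota ,\alpha ^{\vee }\rangle =1$, and integrality of both $\mu +\iota _{M}$ and $\mu _{M}$ --- are all in place for the chosen representative $\psi $.
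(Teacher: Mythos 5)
Your proof is correct and fills in exactly the details the paper elides with its "Clearly": it rests on the identity $\mu +\iota _{M}=\mu _{M}+\iota $ from (7.5), the dominance of $\mu +\iota _{M}$ arranged in Lemma 7.3, the integrality of $\mu _{M}$ noted just before the statement, and the standard pairing $\langle \iota ,\alpha ^{\vee }\rangle =1$ for $\mathcal{B}$-simple $\alpha ^{\vee }$. This is the intended argument, and your observation in (ii) that a dominant integral element failing regularity must already do so on a simple coroot is the one small step worth writing out.
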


Assume that $\mu +\iota _{M}$ is regular and $G$ has anisotropic center;
this is the setting of \cite[Section 5]{Ar89}, \cite[Section 9]{Ko90}. Here
we recover the same parameters, but now with data for use in canonical
transfer factors; see, for example, Section 8.2. Our parameter $\widehat{%
\varphi }$ coincides with the discrete series parameter constructed slightly
differently in \cite[Section 9]{Ko90}.

\subsection{Character data and elliptic $u$-regular parameters}

We combine the setting of Section 7.1 with that of Section 6.1. Thus $%
G^{\ast }$ is cuspidal, and we have fixed Whittaker data for $G^{\ast }$
together with an aligned fundamental splitting $spl_{Wh}=(B_{Wh},T,\{X_{%
\alpha }\})$ for $G^{\ast }$. We now transport the data $(\mu +\iota
_{M},\lambda ,\mathcal{C})$ for $\mathcal{T}\subseteq G^{\vee }$ of Section
7.1 to data for $T\subseteq G^{\ast },$ by the means described in Section
6.1. Recall that $M^{\ast }$ is the subgroup of $G^{\ast }$ generated by $T$
and the root vectors $\{X_{\alpha }\},$ for $\alpha ^{\vee }$ a simple root
of $\mathcal{T}$ in $M^{\vee }\cap \mathcal{B}$. We use the same notation
for the transported data, except that now we write $\iota _{M^{\ast }}$ for
the transport of $\iota _{M},$ \textit{i.e.}, for one-half the sum of the
roots of $T$ in $B_{Wh}\cap M^{\ast }$.

\subsection{Elliptic $u$-regular data: attached $s$-elliptic packet}

We start with the case that $\mu +\iota _{M}$ is regular. Consider an inner
form $(G,\eta ).$ Replacing $\eta $ by a member of its inner class if
necessary, we assume that the transport $spl_{\eta }=\eta ^{-1}(spl_{Wh})$
of $spl_{Wh}$ to $G$ is a fundamental splitting. As in Section 6.2, to each $%
G(\mathbb{R})$-conjugacy class of fundamental splittings for $G$ is attached
a discrete series representation $\widehat{\pi }$ of $G(\mathbb{R})$ in the
packet $\widehat{\Pi }_{G}$ for $\widehat{\varphi },$ and conversely. Again
write $spl_{\widehat{\pi }}$ for a representative of this conjugacy class
and $\eta _{\widehat{\pi }}=Int(x_{\widehat{\pi }})\circ \eta $ for the
inner twist carrying $spl_{\widehat{\pi }}$ to $spl_{Wh}.$ Set $M_{\widehat{%
\pi }}=\eta _{\widehat{\pi }}^{-1}(M^{\ast }).$ By definition, $\eta _{%
\widehat{\pi }}$ transports character data $(\mu _{\widehat{\pi }}+\iota
_{M_{\widehat{\pi }}},\lambda _{\widehat{\pi }},\mathcal{C}_{\widehat{\pi }})
$ for $\widehat{\pi }$ to the data $(\mu +\iota _{M^{\ast }},\lambda ,%
\mathcal{C})$ for the elliptic torus $T$ in $G^{\ast }$ that is part of $%
spl_{Wh}$. Recall that the latter triple serves as character data for the $Wh
$-generic discrete series representation of $G^{\ast }(\mathbb{R})$ in the
packet $\widehat{\Pi }_{G^{\ast }}$ attached to $\widehat{\varphi }.$

Now allow $\mu +\iota _{M}$ to be singular. Then we assume that $\widehat{%
\varphi }=\varphi (\mu +\iota _{M},\lambda )$ is relevant to $G$ so that $%
\widehat{\Pi }_{G}$ is nonempty. As in Section 6.2, there is attached to $%
\widehat{\varphi }$ a $c$-Levi group which we will call $E^{\ast }.$ Notice
that $E^{\ast }\cap M^{\ast }=T.$ Each $G(\mathbb{R})$-conjugacy class of
fundamental splittings of $G$ again has a representative $spl_{\widehat{\pi }%
}$, but now $\widehat{\pi }$ (or, more precisely, the attached distribution
character) may be zero. We obtain precisely the members $\widehat{\pi }$ of $%
\widehat{\Pi }_{G}$ by requiring that $\eta _{\widehat{\pi }}:E_{\widehat{%
\pi }}\rightarrow E^{\ast }$ be defined over $\mathbb{R}$ (Lemma 6.1).

\subsection{Elliptic $u$-regular data: attached Arthur packet}

For the rest of Parts 7 and 8 we will limit our attention to the case that $%
\mu +\iota _{M}$ is regular as we will need it to structure our arguments
for the singular case. For convenience we could also require the center of $G
$ to be anisotropic, but the general case requires no extra notation and so
we will at least write it here. Finally there is the matter of how we treat $%
z$-extensions. We will continue to use the construction needed for the
twisted case (see Section 3.1) but defer checking that the Adams-Johnson
results may be extended in this manner until we come to the general twisted
case.

Consider an inner form $(G,\eta ),\ $where $spl_{\eta }=(B_{\eta },T_{\eta
},\{X_{\alpha }\})$ is fundamental and $\eta $ carries $spl_{\eta }$ to $%
spl_{Wh}.$ We may fix a Cartan involution $c$ on $G$ of the form $%
Int(t_{\eta }),$ where $t_{\eta }\in T_{\eta }(\mathbb{R})$ and $(t_{\eta
})^{2}$ is central in $G$. Then $B_{\eta },$ $M_{\eta }$ together generate a 
$c$-stable parabolic subgroup $P_{\eta }$ of $G$ with $M_{\eta }$ as Levi
subgroup defined over $\mathbb{R}$. We have the quasi-character $\chi =\chi
_{\eta }$ on $M_{\eta }(\mathbb{R})$ described in Section 7.1. Because of
(7.6), it is clear that $\chi _{\eta }$ is unitary modulo the center of $G(%
\mathbb{R})$. As usual, we will identify a representation with its
(appropriate) isomorphism class. Define $\pi (\eta )$ to be the irreducible
essentially unitary representation of $G(\mathbb{R})$ attached to $\chi
_{\eta }$ by the method of \cite[Theorems 1.2, 1.3]{Vo84}; see Lemma 2.10 of 
\cite{AJ87}.

Suppose we replace $\eta $ by $\eta ^{\dag }$ within its inner class and
that $\eta ^{\dag }$ also carries a fundamental splitting of $G$ to $%
spl_{Wh}.$ It is convenient to write $\eta ^{\dag }$ in the form%
\begin{equation*}
\eta ^{\dag }=Int(m^{\ast })\circ \eta \circ Int(g),
\end{equation*}%
where $m^{\ast }\in M_{sc}^{\ast }$ and $g\in G_{sc}.$ Let $m=\eta
_{sc}^{-1}(m^{\ast })$. Then we insist also that $Int(m)$ transports $%
spl_{\eta }$ to another fundamental splitting of $G$. Since we are concerned
with splittings only up to $G(\mathbb{R})$-conjugacy there is no harm in
considering only those $\eta ^{\dag }$ for which $T_{\eta ^{\dag }}=T_{\eta }
$, and requiring that both $Int(m)$ and $Int(g)$ preserve $T_{\eta }.$

We define $\pi (\eta ^{\dag })$ by replacing $B_{\eta },M_{\eta },\chi
_{\eta }$ from the definition of $\pi (\eta )$ with $B_{\eta ^{\dag
}},M_{\eta ^{\dag }},\chi _{\eta ^{\dag }}$. Then:

\begin{lemma}
(i) $\pi (\eta ^{\dag })$ lies in same Arthur packet $\Pi _{G\text{ }}$
prescribed by Adams-Johnson (\textit{enlarged packet} in their terminology)
as $\pi (\eta )$, and all members of the packet are so obtained.

(ii) $\pi (\eta ^{\dag })=\pi (\eta )$ if and only if $\eta ^{\dag }$ is of
the form $Int(m^{\ast })\circ \eta \circ Int(g),$ where $m^{\ast }\in
M_{sc}^{\ast }$ and $g\in G(\mathbb{R}).$
\end{lemma}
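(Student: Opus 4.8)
The plan is to reduce the statement to the corresponding well-known facts about the construction of Adams–Johnson packets, in the same spirit as the reduction of Lemma 6.2 to the classical properties of limits of discrete series. First I would recall the precise way Adams and Johnson parametrize the members of the enlarged packet $\Pi_G$: after fixing the base point $\pi(\eta)$ attached to the $c$-stable parabolic $P_\eta$ with Levi $M_\eta$ defined over $\mathbb{R}$ and the quasicharacter $\chi_\eta$, the remaining members are obtained by conjugating the defining data $(B_\eta, M_\eta, \chi_\eta)$ by elements of $G_{sc}$ whose associated inner automorphism, while possibly not defined over $\mathbb{R}$ on all of $G$, does carry $spl_\eta$ to another fundamental splitting and is ``admissible'' in the sense that it respects the relevant real structures on $M_\eta$. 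Writing $\eta^\dag = Int(m^\ast)\circ\eta\circ Int(g)$ with $m^\ast\in M_{sc}^\ast$, $g\in G_{sc}$, and recalling the normalization from the paragraph preceding the lemma ($T_{\eta^\dag}=T_\eta$, both $Int(m)$ and $Int(g)$ preserving $T_\eta$, and $Int(m)$ carrying $spl_\eta$ to a fundamental splitting), I would show that $\pi(\eta^\dag)$ is exactly the representation produced by conjugating the base-point data by this admissible element, hence lies in $\Pi_G$; conversely every member arises this way precisely because every $G(\mathbb{R})$-conjugacy class of fundamental splittings has such a representative. This gives part (i).

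For part (ii) the plan is to track when the conjugated data $(B_{\eta^\dag}, M_{\eta^\dag}, \chi_{\eta^\dag})$ produce a representation isomorphic to $\pi(\eta)$. The cohomological input is that $\eta^\dag$ and $\eta$ differ by $Int(m^\ast)$ on the $M^\ast$-side and by $Int(g)$ on the $G$-side; the resulting Vogan–Zuckerman module depends only on the $G(\mathbb{R})$-conjugacy class of the pair $(P_\eta, \chi_\eta)$, so $\pi(\eta^\dag)\cong\pi(\eta)$ forces the $G$-part $Int(g)$ to already be realizable by an element of $G(\mathbb{R})$, i.e. $g$ may be taken in $G(\mathbb{R})$. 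The $m^\ast\in M_{sc}^\ast$ part contributes no obstruction because conjugating the ambient data of the $c$-stable parabolic by an element normalizing $M^\ast$ and adjusting within $M_{sc}^\ast$ does not change the isomorphism class of the cohomologically induced module — this is parallel to the role of $g_\ast$ normalizing $T$ in Lemma 6.2(i). Conversely, if $\eta^\dag = Int(m^\ast)\circ\eta\circ Int(g)$ with $m^\ast\in M_{sc}^\ast$ and $g\in G(\mathbb{R})$, then the defining data for $\pi(\eta^\dag)$ and $\pi(\eta)$ are $G(\mathbb{R})$-conjugate (absorbing $Int(g)$ and noting $Int(m^\ast)$ acts trivially on isomorphism class), so the two representations coincide.

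The main obstacle I expect is the careful bookkeeping in part (ii): making precise that the $M_{sc}^\ast$-ambiguity genuinely drops out of the isomorphism class of the Adams–Johnson module, and that the relevant equivalence relation on twists $\eta^\dag$ matches the one recorded in the normalization paragraph (where $Int(m)$ is required to send $spl_\eta$ to a fundamental splitting). Concretely, one must verify that the construction of $\pi(\eta^\dag)$ via $\cite[\text{Theorems 1.2, 1.3}]{Vo84}$ and $\cite[\text{Lemma 2.10}]{AJ87}$ is insensitive to replacing $m^\ast$ by $m^\ast z$ with $z$ central in $M_{sc}^\ast$, and to the choice of representative within the $M_{\eta^\dag}(\mathbb{R})$-conjugacy class of the relevant data on $M_{\eta^\dag}$; this is where one invokes that $\chi_{\eta^\dag}$ depends only on the inner class of the twist $\eta^\dag|_{M_{\eta^\dag}}$, as noted at the end of Section 7.1. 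Once this insensitivity is established, both directions of (ii) follow formally, and (i) is then a matter of matching the parametrization of fundamental splittings up to $G(\mathbb{R})$-conjugacy with the Adams–Johnson parametrization of $\Pi_G$, exactly as in the limit of discrete series case.
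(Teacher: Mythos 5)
Your plan for part (i) is essentially the paper's: both reduce to Adams--Johnson's parametrization of the enlarged packet (their Lemma 2.10), and the paper mainly supplies the bookkeeping you gesture at --- namely the dictionary ``$\lambda,\rho$'' of \cite{AJ87} $\leftrightarrow$ $\mu_M,\iota$, the identity $\mu_M+\iota=\mu+\iota_M$, and the verification that the quasicharacter $\chi_\eta$ agrees with the one AJ use once one restricts to anisotropic center. So (i) is in order.

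For part (ii) your route is genuinely different from the paper's, and it has a gap. You argue structurally: the Vogan--Zuckerman module ``depends only on the $G(\mathbb{R})$-conjugacy class of the pair $(P_\eta,\chi_\eta)$,'' and from this you conclude that $\pi(\eta^\dag)\cong\pi(\eta)$ forces $Int(g)$ to be realizable in $G(\mathbb{R})$. But the phrase ``depends only on'' gives you precisely the wrong direction: it says conjugate data yield isomorphic modules (the easy ``if'' direction), not that isomorphic modules must come from conjugate data. That injectivity statement is the crux of (ii), and it is a nontrivial classification theorem for cohomologically induced modules which fails without a regularity hypothesis. You never justify it, never cite it, and never mention the regularity of $\mu+\iota_M$, which is exactly the hypothesis the paper flags as the reason (ii) holds. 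The paper instead deduces (ii) from the character identity (8.10): on the strongly regular elliptic set the character of $\pi(\eta^\dag)$ is, up to sign, the sum of the discrete series characters $\widehat\pi$ attached to $\eta^\dag$, and since $\mu+\iota_M$ is regular these discrete series are pinned down by the Harish-Chandra parametrization; unwinding which $\eta^\dag$ give the same set of $\widehat\pi$ yields exactly the condition $\eta^\dag=Int(m^\ast)\circ\eta\circ Int(g)$ with $m^\ast\in M^\ast_{sc}$ and $g\in G(\mathbb{R})$. You also misidentify the main obstacle: the $M_{sc}^\ast$-invariance you focus on is the easy direction (conjugation by $m$ fixes $P_\eta$ and $\chi_\eta$, as you correctly observe); what needs real input is that nothing beyond that drops out, and that is where regularity must enter. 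To repair the proposal you would need to either invoke the Vogan--Zuckerman/Adams--Johnson classification of $A_{\mathfrak q}(\lambda)$ modules in the good (regular) range, or follow the paper's elliptic-character-formula route; in either case $\mu+\iota_M$ regular has to appear explicitly.
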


Let $\omega _{M},$ $\omega _{G}$ be the elements of the complex Weyl group $%
\Omega (G,T_{\eta })$ of $T_{\eta }$ in $G$ defined by the restrictions of $%
Int(m),$ $Int(g)$ to $T_{\eta }$. Then (ii) says $\pi (\eta ^{\dag })=\pi
(\eta )$ if and only if we may arrange that $\omega _{G}$ lies in the
subgroup $\Omega _{\mathbb{R}}(G,T_{\eta })$ of $\Omega (G,T_{\eta })$
consisting of those elements that are realized in $G(\mathbb{R}).$

\begin{proof}
To compare explicitly with Lemma 2.10 of \cite{AJ87}, first note that since
we do not assume that the center of $G$ is anisotropic our elliptic data
have an extra component, namely $\lambda $ as above. The "$\lambda ,\rho $"
of \cite{AJ87} are our $\mu _{M},\iota $. Note that (7.5) says that%
\begin{equation*}
\mu _{M}+\iota =\mu +\iota _{M}.
\end{equation*}%
We further have the alternative short definition after Remark 7.4 for the
quasicharacter $\chi _{\eta },$ but it is clear from calculations of Section
2 of \cite{AJ87} (or see \cite[Lemma 9.2]{Sh79b}, \cite[Section 9]{Sh81})
that we obtain the same character when we require the center of $G$ to be
anisotropic. The claim (i) now follows. More accurately, we have adapted the
definitions of Adams-Johnson to the case where there is no restriction on
the center for $G$ while retaining (i) of their Lemma 2.10. Because $\mu
+\iota _{M}$ is regular, the claim (ii) follows easily from the character
formulas we will recall in Section 8.3.
\end{proof}

\section{\textbf{Standard factors for elliptic }$u$\textbf{-regular packets}}

Here by standard factors we mean the spectral transfer factors for standard
endoscopy. We introduce these, with a two-fold purpose, for the elliptic $u$%
-regular Arthur packets $\Pi _{G}$ of the last section. First, we will check
that the Adams-Johnson transfer can be recast in terms of these factors and
thereby made compatible with the transfer of orbital integrals using the
canonical factors of \cite{LS87}. Second, we will write the factors in a way
that allows quick generalization to the twisted setting \cite{ShII}.

\subsection{Canonical relative factor: setting}

We continue with the setting at the end of Part 7. In summary, $\psi
=(\varphi ,\rho )$ is an elliptic $u$-regular Arthur parameter with $\varphi
=\varphi \lbrack \mu ,\lambda ]$ and $\rho =\rho (\iota _{M})$ as in Lemma
7.2. We assume $\mu +\iota _{M^{\ast }}$ is regular as well as dominant.
Then $\widehat{\varphi }\ $is the attached elliptic parameter $\varphi (\mu
+\iota _{M^{\ast }},\lambda ).$

To introduce elliptic endoscopic groups as in Section 6.5, we turn to the $%
\Gamma $-invariants in the maximal torus $\mathcal{T}$ from $spl^{\vee }.$
We use the elliptic action of $\Gamma $,\textit{\ }so that $\sigma $ acts by 
$Int$ $\widehat{\varphi }(w_{\sigma }).$ Consider the elliptic SED $%
\mathfrak{e}_{z}(s)$ as in Section 6.5, using the notation $(H,\mathcal{H},s)
$ for the endoscopic data and $H^{(s)}$ for the endoscopic group. It will be
sufficient for our purposes in \cite{ShII} to consider the case that the $%
\Gamma $-invariant $s$ lies in the center $Z_{M^{\vee }}$ of $M^{\vee }$.
This is the same as requiring that $H^{\vee }:=Cent(s,G^{\vee })^{0}$
contain $M^{\vee }$. Thus we place ourselves in the setting of
Adams-Johnson; see \cite[2.16]{AJ87}.

Because $\psi $ is elliptic, the subgroup $\mathcal{M=M}_{\psi }$ of $^{L}G$
may be generated by $M^{\vee }$ and either $\varphi (W_{\mathbb{R}})$ or $%
\widehat{\varphi }(W_{\mathbb{R}})$. Thus $\mathcal{H},$ generated by $%
H^{\vee }$ and $\widehat{\varphi }(W_{\mathbb{R}}),$ contains $\mathcal{M}$.
Since the endoscopic group $H^{(s)}$ is a $z$-extension of the endoscopic
datum $H,$ we will need to thicken $\mathcal{M}$.

Recall that $(Z_{M^{\vee }})^{\Gamma }=S_{\psi }:=Cent(Image(\psi ),G^{\vee
}).$ Thus the image of $\psi $ lies in $\mathcal{M\subseteq H}$. As a
component of the SED $\mathfrak{e}_{z}(s),$ we have $\xi ^{(s)}:\mathcal{H}$ 
$\mathcal{\rightarrow }$ $^{L}H^{(s)}$ and thus an elliptic $u$-regular
Arthur parameter for $H^{(s)}$ represented by $\psi ^{(s)}=\xi ^{(s)}\circ
\psi $. Now we attach $\mathcal{M}^{(s)}$ to $\psi ^{(s)}$ in the same way
we attached $\mathcal{M}$ to $\psi .$ Then $\mathcal{M}^{(s)}$ is what we
mean by the thickened version of $\mathcal{M}$. We will thicken various
other subgroups when needed, again using the super- or subscript $(s)$ to
indicate this.

We now describe transfer factors attached to the pair $(\psi ^{(s)},\psi );$
see \cite[Section 9]{Sh10}, \cite[Sections 7, 11]{Sh08b} for the tempered
analogue. Recall $\psi =(\varphi ,\rho ).$ Then we write $\psi ^{(s)}$ as $%
(\varphi ^{(s)},\rho ).$

First, $\varphi ^{(s)}=\varphi \lbrack \mu ^{(s)},\lambda ^{(s)}]\ $and%
\begin{equation}
\mu ^{(s)}=\mu -\mu ^{\ast },\text{ }\lambda ^{(s)}=\lambda -\lambda ^{\ast
}.
\end{equation}%
The pair $(\mu ^{\ast },\lambda ^{\ast })$ is from \cite{Sh81}; it is
typically nontrivial and is critical for a well-defined transfer of orbital
integrals. Here we need its construction for general standard transfer with $%
z$-extensions; see Section 11 of \cite{Sh08a}. Also see Section 9.3 below
for a detailed contruction in the general twisted case. The formula (8.1)
follows from combining the construction with that in Lemma 7.2.

Second, the component $\rho $ of $\psi ^{(s)}$ may be written again as $\rho
(\iota _{M})$. For this we recall the splittings involved in our
constructions: we have $spl^{\vee }=(\mathcal{B},\mathcal{T},\{X_{\alpha
^{\vee }}\})$ for $G^{\vee }$ with attached $spl_{M}^{\vee }=(\mathcal{B}$ $%
\mathcal{\cap }$ $M^{\vee },\mathcal{T},\{X_{\alpha ^{\vee }}\})$ for $%
M^{\vee },$ along with $spl_{H}^{\vee }=(\mathcal{B}$ $\mathcal{\cap }$ $%
H^{\vee },\mathcal{T},\{X_{\alpha ^{\vee }}\})$ for $H^{\vee }$ and
thickened $spl_{(s)}^{\vee }=(\mathcal{B}^{(s)}$ $,\mathcal{T}%
^{(s)},\{X_{\alpha ^{\vee }}\})$ for $(H^{(s)})^{\vee }.$ Then $\iota _{M}$
is one-half the sum of the coroots of $\mathcal{T}$ in $\mathcal{B}$ $%
\mathcal{\cap }$ $M^{\vee }=\mathcal{B}$ $\mathcal{\cap }$ $\mathcal{M}$.
Each such coroot is naturally identified as a coroot of $\mathcal{T}^{(s)}$
in $\mathcal{B}^{(s)}$ $\mathcal{\cap }$ $\mathcal{M}^{(s)}$ and conversely,
which justifies our use of $\iota _{M}$ for $\rho $ as component of $\psi
^{(s)}$.

The $c$-Levi group $M^{(s)}$ in $H^{(s)}$ is the analogue for $\psi ^{(s)}$
of the $c$-Levi group $M^{\ast }$ in $G^{\ast }$ attached to $\psi $. There
is a $c$-Levi group $M_{H}$ in $H$ such that $M^{(s)}\rightarrow M_{H}$ is a 
$z$-extension with kernel $Z_{1}$, \textit{i.e.}, with same kernel as the $z$%
-extension $H^{(s)}\rightarrow H$ provided by the SED $\mathfrak{e}_{z}(s).$

Our next step is to define an $\mathbb{R}$-isomorphism $M_{H}\rightarrow
M^{\ast }$ uniquely up to composition with an element of $Int[M^{\ast }(%
\mathbb{R})]$, and thence a surjective homomorphism $M_{H}^{(s)}\rightarrow
M^{\ast }$ with kernel $Z_{1}.$ For this, recall that in the construction of 
$M^{\ast }$ at the beginning of (6.2) we also determined an $\mathbb{R}$%
-splitting for $M^{\ast }$ uniquely up to $M^{\ast }(\mathbb{R})$-conjugacy.
The same is then true for $M_{H}.$ There is a unique $\mathbb{R}$%
-isomorphism $M_{H}\rightarrow M^{\ast }\ $transporting the latter splitting
to the former. We may further assume the isomorphism carries chosen elliptic
maximal torus $T_{H}$ in $H$ to chosen $T$ in $G^{\ast };$ recall that each
torus is part of an appropriate fundamental splitting of Whittaker type. If $%
T^{(s)}$ is the inverse image of $T_{H}$ in $M_{H}^{(s)}$ then we have now
have a well-defined transport to $T$ of our various data attached to $%
T^{(s)} $.

\subsection{Canonical relative factor: definition}

We now define a relative transfer factor in preparation for a nontempered
supplement to Section 6.5. Thus $(G,\eta )$ is an inner form of $G^{\ast }$
such that $\eta $ transports fundamental splitting $spl_{\eta },$ of $G$ to $%
spl_{Wh}.$ Let $\pi \in \Pi _{G}\ $(Arthur packet for $G(\mathbb{R})$
attached to $\psi )\ $and let $\widehat{\pi }\in \widehat{\Pi }_{G}$
(discrete series packet for $G(\mathbb{R})$ attached to $\psi ).$ Also let $%
\pi _{s}\in \Pi _{H^{(s)}}$ and $\widehat{\pi }_{s}\in \widehat{\Pi }%
_{H^{(s)}}$ (packets for $H^{(s)}(\mathbb{R})$ attached to $\psi ^{(s)}).$
Then our first concern will be a relative factor $\Delta (\pi _{s},\pi ;$ $%
\widehat{\pi }_{s},$ $\widehat{\pi }).$

Attach the cochain $x_{\widehat{\pi }}(\sigma )\in T_{sc}$ to the discrete
series representation $\widehat{\pi }$ as in Section 6.4; recall that $\eta
_{\widehat{\pi }}=Int(x_{\widehat{\pi }})\circ \eta $ and $x_{\widehat{\pi }%
}(\sigma )=x_{\widehat{\pi }}.u_{\eta }(\sigma ).\sigma (x_{\widehat{\pi }%
})^{-1}.$ Again we write $\mathcal{E}(T)$ for the image of $H^{1}(\Gamma
,T_{sc})$ in $H^{1}(\Gamma ,T)$ under the homomorphism induced by $%
T_{sc}\rightarrow T.$ Then if $(G,\eta )$ is a component of an extended
group of quasi-split type, so that $u_{\eta }(\sigma )$ is a cocycle, we map
the class of $x_{\widehat{\pi }}(\sigma )$ in $H^{1}(\Gamma ,T_{sc})$ to $%
H^{1}(\Gamma ,T)$ to obtain the element $inv(\widehat{\pi })$ of $\mathcal{E}%
(T).$

Turning to $\pi $ in the Arthur packet for $G(\mathbb{R}),$ we pick a twist $%
\eta ^{\dag }$ such that $\pi =\pi (\eta ^{\dag })\ $as in Section 7.4. We
write $\eta ^{\dag }$ as $Int(x^{\dag })\circ \eta $ and form the cochain $%
x^{\dag }(\sigma )=x^{\dag }.u_{\eta }(\sigma ).\sigma (x^{\dag })^{-1}.$
Recall the torus $U_{sc}$ from Section 6.4. The image in $U_{sc}$ of the
cochain $(x^{\dag }(\sigma )^{-1},x_{\widehat{\pi }}(\sigma ))$ in $%
T_{sc}\times T_{sc}$ is a cocycle whose class in $H^{1}(\Gamma ,U_{sc})$ we
denote by $\mathbf{x}_{sc}(\eta ^{\dag },\widehat{\pi }).$ Then $\mathbf{x}%
(\eta ^{\dag },\widehat{\pi })$ is the image of this class in $H^{1}(\Gamma
,U).$ Recall $s_{U}$ from Section 6.4 and that in the present setting we
assume that the $\Gamma $-invariant $s$ lies in the center of $M^{\vee }.$

\begin{lemma}
$\left\langle \mathbf{x}(\eta ^{\dag },\widehat{\pi }),\text{ }%
s_{U}\right\rangle _{tn}$ depends only on $\pi ,\widehat{\pi }.$
\end{lemma}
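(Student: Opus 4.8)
The plan is to show that, although $\mathbf{x}(\eta ^{\dag },\widehat{\pi })$ itself can shift under the choices hidden in its construction, its Tate--Nakayama pairing with $s_{U}$ does not. First I would note that $\mathbf{x}_{sc}(\eta ^{\dag },\widehat{\pi })$ really is a class in $H^{1}(\Gamma ,U_{sc})$: as in Section 6.4 both cochains $x^{\dag }(\sigma )$ and $x_{\widehat{\pi }}(\sigma )$ take values in $T_{sc}$ (each of $\eta ^{\dag },\eta _{\widehat{\pi }}$ carries to $spl_{Wh}$ a fundamental splitting whose torus maps to $T$), and each has coboundary $\partial u_{\eta }(\sigma )\in Z_{G_{sc}^{\ast }}$, so the coboundary of $(x^{\dag }(\sigma )^{-1},x_{\widehat{\pi }}(\sigma ))$ has the form $(z^{-1},z)$, $z\in Z_{G_{sc}^{\ast }}$, which is trivial in $U_{sc}$. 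There are then three sources of ambiguity: (a) the choice of lift of $u_{\eta }(\sigma )$ to $G_{sc}^{\ast }$; (b) the representative $\eta _{\widehat{\pi }}$ of the conjugacy class attached to $\widehat{\pi }$, which by Lemma 6.2(ii) may be replaced by $\eta _{\widehat{\pi }}\circ Int(g)$ with $g\in G(\mathbb{R})$; and (c) the twist $\eta ^{\dag }$ with $\pi (\eta ^{\dag })=\pi $, which by Lemma 7.5(ii) may be replaced by $Int(m^{\ast })\circ \eta ^{\dag }\circ Int(g)$ with $m^{\ast }\in M_{sc}^{\ast }$ (the preimage of $M^{\ast }$ in $G_{sc}^{\ast }$) and $g\in G(\mathbb{R})$.

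I would then check that (a) and the $Int(g)$-parts of (b) and (c) do not change $\mathbf{x}_{sc}$ at all. For (a): a change of lift multiplies both $x^{\dag }(\sigma )$ and $x_{\widehat{\pi }}(\sigma )$ by the same $z(\sigma )\in Z_{G_{sc}^{\ast }}$, so the pair changes by $(z(\sigma )^{-1},z(\sigma ))$, trivial in $U_{sc}$. For the $Int(g)$-parts: since $g$ is real, $\sigma _{G^{\ast }}(\eta (g))=\sigma (\eta )(g)=u_{\eta }(\sigma )^{-1}\,\eta (g)\,u_{\eta }(\sigma )$, hence $\eta (g)\,u_{\eta }(\sigma )\,\sigma _{G^{\ast }}(\eta (g))^{-1}=u_{\eta }(\sigma )$; it follows that replacing $\eta ^{\dag }$ by $\eta ^{\dag }\circ Int(g)$ leaves $x^{\dag }(\sigma )=x^{\dag }\,u_{\eta }(\sigma )\,\sigma _{G^{\ast }}(x^{\dag })^{-1}$ unchanged, and likewise for $x_{\widehat{\pi }}(\sigma )$. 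So only the $M_{sc}^{\ast }$-part of (c) remains, and here $\mathbf{x}_{sc}$ genuinely changes; the point is that its pairing with $s_{U}$ does not.

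For this I would reduce, using Corollary 2.3 to conjugate the two fundamental tori over $\mathbb{R}$, to the case that $m^{\ast }$ normalizes $T$, acting there as a Weyl element $\omega _{M}\in \Omega (M^{\ast },T)$; because both $x^{\dag }(\sigma )$ and the new cochain $x^{\dag \dag }(\sigma )=m^{\ast }\,x^{\dag }(\sigma )\,\sigma _{G^{\ast }}(m^{\ast })^{-1}$ must lie in $T_{sc}$, the constraint on $m$ in Section 7.4 forces $\omega _{M}$ to be $\Gamma $-stable, and then $x^{\dag \dag }(\sigma )=\omega _{M}(x^{\dag }(\sigma ))\cdot c(\sigma )$ with $c(\sigma )=m^{\ast }\,\sigma _{G^{\ast }}(m^{\ast })^{-1}\in T_{sc}$ a cocycle. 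Choosing $m^{\ast }$ in the derived group of $M_{sc}^{\ast }$ (which is simply connected, being the simply connected cover of $M_{der}^{\ast }$), the class $[c]$ lies in the image of $H^{1}(\Gamma ,T_{M,sc})\rightarrow H^{1}(\Gamma ,T_{sc})$, where $T_{M,sc}$ is the maximal torus of $(M_{sc}^{\ast })_{der}$, with complex dual torus $\mathcal{T}/Z_{M^{\vee }}$. Using the convention $\langle (a,b),s_{U}\rangle _{tn}=\langle a,s_{T}\rangle _{tn}\langle b,s_{T}\rangle _{tn}$ of Sections 6.4--6.5, the ratio of the two values of $\langle \mathbf{x}(\eta ^{\dag },\widehat{\pi }),s_{U}\rangle _{tn}$ equals
\[
\langle x^{\dag }(\sigma )\,\omega _{M}(x^{\dag }(\sigma ))^{-1},\,s_{T}\rangle _{tn}\cdot \langle c(\sigma ),\,s_{T}\rangle _{tn}^{-1},
\]
the $x_{\widehat{\pi }}(\sigma )$-factors cancelling. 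The first factor is $1$ by Weyl-equivariance of the Tate--Nakayama pairing, since $\Omega (M^{\ast },T)$ fixes $s\in Z_{M^{\vee }}$; the second is $1$ because $[c]$ comes from $T_{M,sc}$, whose dual is $\mathcal{T}/Z_{M^{\vee }}$, in which $s$ becomes trivial. Hence $\langle \mathbf{x}(\eta ^{\dag },\widehat{\pi }),s_{U}\rangle _{tn}$ depends only on $\pi $ and $\widehat{\pi }$.

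The main obstacle is exactly this last paragraph: reducing the a priori $M_{sc}^{\ast }$-ambiguity of $\eta ^{\dag }$ to an honest $\Gamma $-stable Weyl-element ambiguity normalizing $T$ (which needs Corollary 2.3 together with the constraint on $m$ in Section 7.4), and then the two-tier Tate--Nakayama bookkeeping — Weyl-equivariance for the $\langle x^{\dag }(\sigma )\,\omega _{M}(x^{\dag }(\sigma ))^{-1},s_{T}\rangle$ factor, and functoriality along $T_{M,sc}\hookrightarrow T_{sc}$ (dual to $\mathcal{T}/Z_{G^{\vee }}\twoheadrightarrow \mathcal{T}/Z_{M^{\vee }}$) for the $\langle c(\sigma ),s_{T}\rangle $ factor. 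The precise description of $s_{U}$ and of which coordinate of $U_{sc}$ carries which sign I would import unchanged from Sections 6.4--6.5 rather than redevelop here.
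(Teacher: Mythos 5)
Your proof is correct and takes essentially the same approach as the paper: the paper's own proof of Lemma 8.1 points to the method of Lemma 8.2 transposed to $U_{sc}$, which is precisely your factoring $x^{\dag\dag}(\sigma)^{-1}=\sigma(m^{\ast})(m^{\ast})^{-1}\cdot m^{\ast}x^{\dag}(\sigma)^{-1}(m^{\ast})^{-1}$ together with the Tate--Nakayama observation (identifying $Int(m^{\ast})_{\ast}$ with $\omega_{M}$ acting on cocharacters) that both resulting pieces have class in $\mathcal{E}_{M^{\ast}}(T)$, which is killed by pairing with $s\in Z_{M^{\vee}}$. The only small caution is that your ``Weyl-equivariance'' step for the first factor should be phrased as showing the cocycle $x^{\dag}(\sigma)\omega_{M}(x^{\dag}(\sigma))^{-1}$ itself has class in $\mathcal{E}_{M^{\ast}}(T)$ --- since $1-\omega_{M}$ carries coweights into the coroot lattice of $M^{\ast}$ --- rather than cancelling two pairings $\langle x^{\dag}(\sigma),s_{T}\rangle$ and $\langle\omega_{M}(x^{\dag}(\sigma)),s_{T}\rangle$, which are not individually defined because $x^{\dag}(\sigma)$ is only a cochain with coboundary in $Z_{G_{sc}^{\ast}}$, not a cocycle in $T_{sc}$.
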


Then we define%
\begin{equation*}
pair_{(s)}(\pi ,\widehat{\pi }):=\left\langle \mathbf{x}(\eta ^{\dag },%
\widehat{\pi }),\text{ }s_{U}\right\rangle _{tn}.
\end{equation*}

Before proving Lemma 8.1 we examine $x^{\dag }(\sigma )$ in the case that $%
(G,\eta )$ is a component of an extended group of quasi-split type. Then $%
x^{\dag }(\sigma )$ is a cocycle and so defines an element $\mathbf{x}(\eta
^{\dag })$ of $\mathcal{E}(T).$ We have $T\subseteq M^{\ast }\subseteq
G^{\ast }.$ Then $\mathcal{E}_{M^{\ast }}(T)$ is the image of $H^{1}(\Gamma
,T_{M_{sc}^{\ast }})\rightarrow H^{1}(\Gamma ,T).$ It is a subgroup of $%
\mathcal{E}(T).$

\begin{lemma}
The image of $\mathbf{x}(\eta ^{\dag })$ in $\mathcal{E}(T)\diagup \mathcal{E%
}_{M^{\ast }}(T)$ depends only on $\pi .$
\end{lemma}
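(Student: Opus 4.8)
The plan is as follows. By Lemma 7.5, the twists $\eta^{\dag}$ of the type used in Section 7.4 that represent a given $\pi\in\Pi_{G}$ form a single orbit under $\eta^{\dag}\mapsto Int(m^{*})\circ\eta^{\dag}\circ Int(g)$ with $m^{*}\in M_{sc}^{*}$ and $g\in G(\mathbb{R})$, and the normalizations of Section 7.4 let us take $m^{*}$ to normalize $T$ and $g$ to normalize $T_{\eta}$. So it is enough to show that $\mathbf{x}(\eta^{\dag})$ changes only within $\mathcal{E}_{M^{*}}(T)$ under such a replacement $\eta^{\dag}\mapsto\widetilde{\eta}^{\dag}=Int(m^{*})\circ\eta^{\dag}\circ Int(g)$. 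Writing $\eta^{\dag}=Int(x^{\dag})\circ\eta$, so that $x^{\dag}(\sigma)=x^{\dag}\,u_{\eta}(\sigma)\,\sigma(x^{\dag})^{-1}$, a direct substitution shows that the factor $Int(g)$ with $g\in G(\mathbb{R})$ drops out completely, leaving
\[
\widetilde{x}^{\dag}(\sigma)=m^{*}\,x^{\dag}(\sigma)\,\sigma(m^{*})^{-1}
\]
for the cochain attached to $\widetilde{\eta}^{\dag}$.

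Next I would extract structural consequences. Both $x^{\dag}(\sigma)$ and $\widetilde{x}^{\dag}(\sigma)$ lie in $T_{sc}$ (each of $\eta^{\dag},\widetilde{\eta}^{\dag}$ carries a fundamental splitting of $G$ to $spl_{Wh}$), and, since $(G,\eta)$ is of quasi-split type, both are $1$-cocycles of $\Gamma$ in $T_{sc}$. Let $w$ be the image of $m^{*}$ in the Weyl group $\Omega(M^{*},T)$. From $m^{*}x^{\dag}(\sigma)(m^{*})^{-1}=w\cdot x^{\dag}(\sigma)\in T_{sc}$ and the displayed identity one reads off that $m^{*}\sigma(m^{*})^{-1}=(w\cdot x^{\dag}(\sigma))^{-1}\,\widetilde{x}^{\dag}(\sigma)$ lies in $T_{sc}$; in particular $w$ is $\sigma_{T}$-invariant, and $\sigma\mapsto m^{*}\sigma(m^{*})^{-1}$ is a $1$-cocycle of $\Gamma$ valued in the maximal torus $T_{M_{sc}^{*}}$ of $M_{sc}^{*}$, so its class maps into $\mathcal{E}_{M^{*}}(T)$. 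Since $T_{sc}$ is abelian, the difference cocycle then factors as
\[
\widetilde{x}^{\dag}(\sigma)\,x^{\dag}(\sigma)^{-1}=\bigl[(w\cdot x^{\dag}(\sigma))\,x^{\dag}(\sigma)^{-1}\bigr]\cdot\bigl[m^{*}\sigma(m^{*})^{-1}\bigr].
\]

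It remains to handle the first factor, and this is the step carrying the real content. Its class in $H^{1}(\Gamma,T_{sc})$ equals $(w-1)$ applied to the class of $x^{\dag}(\sigma)$. Because $w\in\Omega(M^{*},T)$ is a product of reflections in roots of $M^{*}$, the operator $w-1$ on $X_{*}(T)$ has image inside the coroot lattice of $M^{*}$, that is, inside the image of $X_{*}(T_{M_{sc}^{*}})$ in $X_{*}(T)$; hence, via the Tate-Nakayama isomorphisms, $w-1$ on $H^{1}(\Gamma,T)$ factors through $H^{1}(\Gamma,T_{M_{sc}^{*}})\rightarrow H^{1}(\Gamma,T)$, whose image is exactly $\mathcal{E}_{M^{*}}(T)$. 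Combining the two factors, the image of $\widetilde{x}^{\dag}(\sigma)\,x^{\dag}(\sigma)^{-1}$ in $H^{1}(\Gamma,T)$ lies in $\mathcal{E}_{M^{*}}(T)$, so $\mathbf{x}(\widetilde{\eta}^{\dag})$ and $\mathbf{x}(\eta^{\dag})$ have the same image in $\mathcal{E}(T)/\mathcal{E}_{M^{*}}(T)$, as asserted. I expect the only real friction to be the bookkeeping needed to ensure that $m^{*}$ may be taken to normalize $T$ with $\sigma_{T}$-invariant Weyl image; once that is in hand, the coroot-lattice and Tate-Nakayama reduction for the remaining factor is routine.
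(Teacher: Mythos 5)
Your argument is correct and is essentially the paper's: you reduce to the replacement $\eta^{\dag}\mapsto Int(m^{*})\circ\eta^{\dag}\circ Int(g)$ with $m^{*}\in M_{sc}^{*}$ normalizing $T$ and $g\in G(\mathbb{R})$, observe that $g$ drops out of the cochain and that $m^{*}$ contributes both the coboundary $m^{*}\sigma(m^{*})^{-1}$ (valued in $T_{M_{sc}^{*}}$, hence in $\mathcal{E}_{M^{*}}(T)$) and the Weyl twist $w\cdot x^{\dag}(\sigma)$, and then dispose of $(w-1)$ via Tate-Nakayama and the fact that $w\in\Omega(M^{*},T)$ forces its image into the coroot lattice of $M^{*}$. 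This is exactly the decomposition the paper uses (with a harmless inversion convention difference), and your more explicit handling of the $w-1$ step is a reasonable expansion of the paper's terser appeal to the Tate-Nakayama isomorphism.
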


\begin{proof}
There is no harm in replacing $x^{\dag }(\sigma )$ by its inverse. The twist 
$\eta ^{\dag }$ may be replaced only by $Int(m^{\ast })\circ \eta \circ
Int(g),$ where $m^{\ast },$ $g$ are as specified in Section 7.4. Then $%
x^{\dag }(\sigma )^{-1}$ is replaced by $\sigma (m^{\ast })(m^{\ast
})^{-1}.m^{\ast }x^{\dag }(\sigma )^{-1}.(m^{\ast })^{-1}.$ Our assumptions
on $m^{\ast }$ ensure that $\sigma (m^{\ast })(m^{\ast })^{-1}$ is a cocycle
in $T_{sc};$ its class then has image in $\mathcal{E}_{M^{\ast }}(T).$
Finally, the $\mathbb{R}$-automorphism $Int(m^{\ast }):T_{sc}\rightarrow
T_{sc}$ induces a homomorphism $H^{1}(\Gamma ,T_{sc})\rightarrow
H^{1}(\Gamma ,T_{sc})$. Passing to $T,$ we may then define a homomorphism $%
\mathcal{E}(T)\rightarrow \mathcal{E}(T)\diagup \mathcal{E}_{M^{\ast }}(T)$.
From the Tate-Nakayama isomorphism of $H^{1}(\Gamma ,T_{sc})$ with $%
H^{-1}(\Gamma ,X_{\ast }(T_{sc}))$, we see that the homomorphism coincides
with the natural projection, and the lemma follows.
\end{proof}

Now define%
\begin{equation*}
inv(\pi ):=\mathbf{x}(\eta ^{\dag }).\mathcal{E}_{M^{\ast }}(T).
\end{equation*}

Because $s$ is a $\Gamma $-invariant in the center of $M^{\vee },$ we have
that%
\begin{equation*}
\left\langle \mathcal{E}_{M^{\ast }}(T),\text{ }s_{T}\right\rangle _{tn}=1,
\end{equation*}%
and so the Tate-Nakayama pairing for $T$ determines a well-defined sign we
will write as%
\begin{equation*}
\left\langle inv(\pi ),\text{ }s_{T}\right\rangle .
\end{equation*}%
We may view $\left\langle \_,\_\right\rangle $ as a pairing between $%
\mathcal{E}(T)\diagup \mathcal{E}_{M^{\ast }}(T)$ and $(Z_{M^{\vee
}})^{\Gamma }$ or, better, between $\mathcal{E}(T)\diagup \mathcal{E}%
_{M^{\ast }}(T)$ and $(Z_{M^{\vee }})^{\Gamma }\diagup (Z_{G^{\vee
}})^{\Gamma }$. In the latter case we identify $s_{T}$ with its image in $%
(Z_{M^{\vee }})^{\Gamma }\diagup (Z_{G^{\vee }})^{\Gamma }$ without change
in notation. We will say more about the pairing in \cite{ShII}.

Notice that Lemma 8.1 is now proved in this setting, \textit{i.e.}, for an
extended group of quasi-split type, because 
\begin{equation}
\left\langle \mathbf{x}(\eta ^{\dag },\widehat{\pi }),\text{ }%
s_{U}\right\rangle _{tn}=pair_{(s)}(\pi ,\widehat{\pi })=\left\langle
inv(\pi ),s_{T}\right\rangle ^{-1}.\left\langle inv(\widehat{\pi }%
),s_{T}\right\rangle _{tn}.
\end{equation}

\begin{proof}
\lbrack of Lemma 8.1] A factoring via the method for the proof of Lemma 8.2,
but now in $U_{sc}$ instead of $T_{sc},$ may be applied to the cocycle
defining $\mathbf{x}_{sc}(\eta ^{\dag },\widehat{\pi }).$ Then we follow
closely the rest of the argument to complete the proof.
\end{proof}

Next, we recall the sign%
\begin{equation*}
\varepsilon (G):=(-1)^{q(G)\text{ }-\text{ }q(G^{\ast })},
\end{equation*}%
where $2q(G)$ is the rank of the symmetric space attached to $G_{sc}$. It is
well-defined in general and appears in the tempered character identities for
transfer from the inner form $(G,\eta )$ to $G^{\ast }$; see \cite[Theorem
6.3]{Sh79a}. This sign is recast by Kottwitz in \cite[p.295]{Ko83} in terms
of Galois cohomology. Notice that the choice of inner twist does not matter;
see \cite[p.292]{Ko83}. In our present setting we have $\pi =\pi (\eta
^{\dag }).$ Let $M_{\eta ^{\dag }}=(\eta ^{\dag })^{-1}(M^{\ast }).$ Then it
is clear from either definition that $\varepsilon (M_{\eta ^{\dag }})$ is
independent of the various choices for $\eta ^{\dag }$ and so we write it as 
$\varepsilon _{M}(\pi )$.

We conclude then that the relative factor%
\begin{equation}
\Delta (\pi _{s},\pi ;\widehat{\pi }_{s},\widehat{\pi }):=\varepsilon
_{M}(\pi ).pair_{(s)}(\pi ,\widehat{\pi })
\end{equation}%
is well-defined, \textit{i.e.}, depends only on $s,\pi \ $and $\widehat{\pi }%
.$ This factor and others similarly defined have useful transitivity
properties (see \cite[Section 4.1]{LS87}, \cite[Section 4]{Sh10}). We will
ignore them for now except to remark that if the discrete series
representation $\widehat{\pi }$ has the property that $\eta _{\widehat{\pi }%
} $ serves as $\eta ^{\dag }$ then%
\begin{equation}
\Delta (\pi _{s},\pi ;\widehat{\pi }_{s},\widehat{\pi })=\varepsilon
_{M}(\pi ).
\end{equation}

To define an absolute factor $\Delta (\pi _{s},\pi )$, assume that we have
absolute geometric factors and absolute spectral factors for the essentially
tempered spectrum that are compatible in the sense of \cite[Section 12]{Sh10}%
. This notion of compatibility is defined via another canonical relative
factor, and compatible factors are easily shown to exist for all inner forms 
$(G,\eta )$; see \cite[Section 4]{Sh10}. We then set%
\begin{equation}
\Delta (\pi _{s},\pi ):=\Delta (\pi _{s},\pi ;\widehat{\pi }_{s},\widehat{%
\pi }).\Delta (\widehat{\pi }_{s},\widehat{\pi }).
\end{equation}%
In particular if $M^{\ast }$ is a torus, so that $(\pi _{s},\pi )$ is a
related pair of discrete series representations, we return the original
constructions for the (essentially) tempered spectrum; see \cite[Section 9]%
{Sh10}.

Consider an extended group of quasi-split type and use the Whittaker
normalization $\Delta _{Wh}$ of absolute factors attached to our choice of
Whittaker data \cite[Section 5.3]{KS99}. Then (8.5), (8.3), (8.2) and the
strong base-point property of Whittaker normalization \cite[Theorem 11.5]%
{Sh08b} (recall Section 6.5) imply:

\begin{lemma}
\begin{equation*}
\Delta _{Wh}(\pi _{s},\pi )=\varepsilon _{M}(\pi ).\left\langle inv(\pi ),%
\text{ }s_{T}\right\rangle .
\end{equation*}
\end{lemma}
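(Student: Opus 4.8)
The plan is to reduce the statement to a chain of identities that have essentially already been assembled in Sections 6.4–6.5 and 8.1–8.2, and simply to verify that the pieces fit together with the correct normalization. Since we are in the setting of an extended group of quasi-split type, the cochain $x^{\dag}(\sigma)$ is a cocycle, so $inv(\pi)$ is defined as $\mathbf{x}(\eta^{\dag}).\mathcal{E}_{M^{\ast}}(T)$ and the pairing $\left\langle inv(\pi),s_{T}\right\rangle$ makes sense; likewise $inv(\widehat{\pi})$ is defined in $\mathcal{E}(T)$ and the Tate-Nakayama pairing $\left\langle inv(\widehat{\pi}),s_{T}\right\rangle_{tn}$ is available. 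First I would invoke (8.5), the definition $\Delta_{Wh}(\pi_{s},\pi):=\Delta(\pi_{s},\pi;\widehat{\pi}_{s},\widehat{\pi}).\Delta_{Wh}(\widehat{\pi}_{s},\widehat{\pi})$, so that the problem splits into the relative factor and the absolute discrete series factor.

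Next I would substitute for each of the two factors on the right. For the relative factor, (8.3) gives $\Delta(\pi_{s},\pi;\widehat{\pi}_{s},\widehat{\pi}) = \varepsilon_{M}(\pi).pair_{(s)}(\pi,\widehat{\pi})$, and the key identity (8.2) rewrites $pair_{(s)}(\pi,\widehat{\pi}) = \left\langle inv(\pi),s_{T}\right\rangle^{-1}.\left\langle inv(\widehat{\pi}),s_{T}\right\rangle_{tn}$. For the absolute discrete series factor I would appeal to the strong base-point property of the Whittaker normalization, i.e. Lemma 6.4 (formula (6.4)), which gives $\Delta_{Wh}(\widehat{\pi}_{s},\widehat{\pi}) = \left\langle inv(\widehat{\pi}),s_{T}\right\rangle_{tn}$; here one uses that $\widehat{\varphi}$ is an $s$-elliptic parameter and $\widehat{\pi}$ one of its limit-of-discrete-series constituents, and that the element $s$ (hence $s_{T}$) is the one transported in Section 8.1. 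Multiplying the two expressions, the two occurrences of $\left\langle inv(\widehat{\pi}),s_{T}\right\rangle_{tn}$ cancel — one appears to the first power from the relative factor, the other from the discrete series factor, but they enter as a product of an inverse relative contribution and the discrete series contribution, so care is needed with which is inverted — leaving precisely $\varepsilon_{M}(\pi).\left\langle inv(\pi),s_{T}\right\rangle$.

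I expect the main obstacle to be bookkeeping rather than any genuine difficulty: one must check that the sign $\left\langle inv(\widehat{\pi}),s_{T}\right\rangle_{tn}$ coming from (8.2) and the sign produced by (6.4) are literally the same element of $\{\pm1\}$ — that is, that the pairing $\left\langle\_,\_\right\rangle$ between $\mathcal{E}(T)/\mathcal{E}_{M^{\ast}}(T)$ and $(Z_{M^{\vee}})^{\Gamma}$ used in Section 8.2 is compatible with the full Tate-Nakayama pairing between $H^{1}(\Gamma,T)$ and $\pi_{0}((T^{\vee})^{\Gamma})$ appearing in Lemma 6.4, under the identification of $s_{T}$ with its image. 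Since $s\in Z_{M^{\vee}}$ annihilates $\mathcal{E}_{M^{\ast}}(T)$ (noted just before (8.2)), this compatibility holds by construction, so the cancellation is legitimate. A secondary point to verify is that one is free to use $\Delta_{Wh}$ on both sides of (8.5) — this is exactly the hypothesis that the extended group is of quasi-split type, so that $\Delta_{Wh}$ is defined (Section 6.4) and the relative factor (8.3) is the canonical one; the compatibility of absolute tempered and geometric factors needed for (8.5) is guaranteed by \cite[Section 4]{Sh10}. With these checks in place the identity follows by direct substitution, completing the proof.
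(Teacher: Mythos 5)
Your proof follows the paper's argument exactly: substitute (8.3) and (8.2) for the relative factor and (6.4) (the strong base-point property) for the discrete series factor into (8.5), then simplify. The only place your wording wobbles is in describing how the two $\left\langle inv(\widehat{\pi}),s_{T}\right\rangle_{tn}$ terms drop out — they both enter as first powers, not with one inverted, and they cancel simply because each pairing takes values in $\{\pm1\}$ (so the square is $1$, and likewise $\left\langle inv(\pi),s_T\right\rangle^{-1}=\left\langle inv(\pi),s_T\right\rangle$); but this is harmless, as the same fact makes the distinction immaterial.
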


\subsection{Application to the transfer of Adams-Johnson}

Continuing in the same setting, we write the correspondence of test
functions (more precisely, test measures) as $(f,f^{(s)}).$ Then%
\begin{equation}
SO(\gamma ,f^{(s)})=\sum_{\delta ,\text{ }conj}\Delta (\gamma ,\delta )\text{
}O(\delta ,f)
\end{equation}%
for all strongly $G$-regular $\gamma $ in $H^{(s)}(\mathbb{R})$ and%
\begin{equation}
St\text{-}Trace\text{ }\widehat{\pi }_{s}(f^{(s)})=\tsum\nolimits_{\widehat{%
\pi }}\Delta (\widehat{\pi }_{s},\widehat{\pi })\text{ }Trace\text{ }%
\widehat{\pi }(f).
\end{equation}

Now to consider the pair $(\pi _{s},\pi ),$ we observe that the
Adams-Johnson stable combination \cite[Theorem 2.13]{AJ87} agrees with

\begin{equation*}
St-Trace\pi _{s}(f^{(s)}):=\tsum\nolimits_{\pi _{s}^{\prime }\in \Pi
_{H^{(s)}}}\varepsilon _{M}(\pi _{s}^{\prime })Trace\pi _{s}^{\prime
}(f^{(s)}),
\end{equation*}%
up to the sign $(-1)^{\gamma (M^{\ast })}$ defined in \cite[2.12]{AJ87}.

Next we claim the following transfer for $(\pi _{s},\pi ):$

\begin{equation}
St\text{-}Trace\text{ }\pi _{s}(f^{(s)})=\tsum\nolimits_{\pi \in \Pi
_{G}}\Delta (\pi _{s},\pi )\text{ }Trace\text{ }\pi (f).
\end{equation}%
Here $(f,f^{(s)})$ is any pair of test functions related by the geometric
transfer (8.6) and $\Delta (\pi _{s},\pi )$ is given by (8.3), (8.5) (or by
(8.9) below).

Suppose $G$ has anisotropic center, so that we may apply the main transfer
theorem of Adams-Johnson directly. We recast the geometric transfer of \cite[%
Section 2]{AJ87} as the correspondence $(f,f^{(s)})$ above; see \cite[%
Theorem 2.6.A]{LS90}. Also, because we must work with $C_{c}^{\infty }$%
-functions, we have applied Bouaziz's Theorem as in \cite[Sections 1, 2]%
{Sh12}. From \cite[Theorem 2.21]{AJ87} we then have that the transfer (8.8)
is true for some choice of the coefficients, say $\Delta ^{\prime }(\pi
_{s},\pi ).$ With a little more effort we may show that our choice of $%
\Delta (\pi _{s},\pi )$ is correct up to a constant, but we will not need
that. Instead, we turn to the transfer (8.7) in the case of the discrete
series pairs $(\widehat{\pi }_{s},\widehat{\pi })$ from Sections 7.3 and 8.2.

For each pair $(\pi _{s},\pi ),$ where $\pi =\pi (\eta ^{\dag }),$ we
consider all pairs $(\widehat{\pi }_{s},\widehat{\pi })$ such that $\eta _{%
\widehat{\pi }}$ serves as $\eta ^{\dag }$. From (8.4) and (8.5) we have that%
\begin{equation}
\Delta (\pi _{s},\pi )=\varepsilon _{M}(\pi ).\Delta (\widehat{\pi }_{s},%
\widehat{\pi }).
\end{equation}%
Now we choose $(f,f^{(s)})$ with support within the very regular elliptic
set (see Section 3.4). We follow the comparison in \cite[Section 9]{Ko90} of
the Vogan-Zuckerman character formula for $\pi $ on the regular elliptic set
with the Harish-Chandra formulas for the discrete series characters $%
\widehat{\pi }$ attached to $\pi $. From this we deduce that%
\begin{equation}
Trace\text{ }\pi (f)=(-1)^{q(M_{\eta ^{\dag }})}\tsum\nolimits_{\widehat{\pi 
}}Trace\text{ }\widehat{\pi }(f)
\end{equation}%
for our particular pairs $(f,f^{(s)}).$ Multiply across (8.7) by $%
(-1)^{q(M^{\ast })}.$ From that identity, together with (8.9) and (8.10), we
then have that%
\begin{equation*}
\tsum\nolimits_{\pi \in \Pi _{G}}[\Delta (\pi _{s},\pi )-\Delta ^{\prime
}(\pi _{s},\pi )]Trace\pi (f)=0
\end{equation*}%
for all $f$ supported in the strongly regular elliptic set. It now follows
that the coefficients $\Delta (\pi _{s},\pi )-\Delta ^{\prime }(\pi _{s},\pi
)$ are all zero; we could also argue this directly with the transfer of
characters as functions. We conclude then that our choice of the constants $%
\Delta (\pi _{s},\pi )$ in (8.8) is correct.

\section{\textbf{Parameters and twistpackets}}

We now return to the general twisted setting of Section 3.1 and finish the
proof of various assertions made earlier.

\subsection{Twistpackets}

Attached to the triple $(G^{\ast },\theta ^{\ast },a)$ is the automorphism $%
^{L}\theta _{a}$ of $^{L}G$. We are interested in Langlands parameters $%
\boldsymbol{\varphi }$ preserved by $^{L}\theta _{a},$ \textit{i.e.,} those $%
\boldsymbol{\varphi }$ for which%
\begin{equation*}
S_{\varphi }^{tw}:=\{s\in G^{\vee }:^{L}\theta _{a}\circ \varphi
=Int(s)\circ \varphi \}
\end{equation*}%
is nonempty, for some, and hence any, representative $\varphi $. Then we may
construct supplemented endoscopic data for $(G^{\ast },\theta ^{\ast },a)$
following the last paragraphs of \cite[Chapter 2]{KS99}; see Section 3.1.

Let $(G,\theta ,\eta )$ be an inner form of $(G^{\ast },\theta ^{\ast })$.
It follows quickly from the Langlands classification, at least in the
essentially tempered case, that the $L$-packet $\Pi $ for $G(\mathbb{R})$
attached to $\varphi $ is stable under the operation $\pi \rightarrow \varpi
^{-1}\otimes (\pi \circ \theta ).$ As in Section 4.1, we then say $\Pi $ is $%
(\theta ,\varpi )$-stable. Conversely the parameter for a $(\theta ,\varpi )$%
-stable packet is preserved by $^{L}\theta _{a}$. In general, this operation
on a $(\theta ,\varpi )$-stable packet $\Pi $ need have no fixed points, 
\textit{i.e.,} the twistpacket $\Pi ^{\theta ,\varpi }$ introduced in
Section 4.1 may be empty. We examine this further for $\varphi $ elliptic.

Suppose $\varphi $ is elliptic and preserved by $^{L}\theta _{a}.$ We use
the standard representative $\varphi =\varphi (\mu ,\lambda )$ from (5.6)
and define data $(\mu _{a},\lambda _{a})$ for the cocycle $a$ in the usual
manner: $a(z)=z^{\mu _{a}}\overline{z}^{\sigma \mu _{a}}$ for $z\in \mathbb{C%
}^{\times },$ and $a(w_{\sigma })=e^{2\pi i\lambda _{a}}.$ First we observe
that because $\varphi $ is regular, each element $s$ of $S_{\varphi }^{tw}$
must normalize $\mathcal{T}$. Then because $\theta ^{\vee }$ preserves $%
spl^{\vee },$ $s$ lies in $\mathcal{T}$, so that $S_{\varphi }^{tw}\subseteq 
\mathcal{T}.$ We conclude then that%
\begin{equation}
\theta ^{\vee }\mu =\mu +\mu _{a}\text{ }and\text{ }\theta ^{\vee }\lambda
\equiv \lambda +\lambda _{a}\func{mod}\mathcal{K}_{f},
\end{equation}%
where%
\begin{equation*}
\mathcal{K}_{f}\text{ }\mathcal{=}\text{ }X_{\ast }(\mathcal{T})+[1-\varphi
(w_{\sigma })]X_{\ast }(\mathcal{T})\otimes \mathbb{C}.
\end{equation*}

Returning to Section 6.1, we now assume the chosen Whittaker data for $%
G^{\ast }$ is $\theta ^{\ast }$-stable (see \cite[Section 5.3]{KS99}). We
have a uniquely defined transport of $(\mu ,\lambda ,\mathcal{C})\ $to
character data for the generic discrete series representation $\pi ^{\ast }$
attached to $\varphi .$ Then (9.1) implies that $\pi ^{\ast }\circ \theta
^{\ast }\approx \varpi \otimes \pi ^{\ast },$ or we could argue this
directly from Whittaker properties.

\subsection{Nonempty fundamental twistpackets}

Since the general fundamental case requires only a trivial modification, we
continue with the elliptic setting of Section 9.1. We have attached a
fundamental splitting $spl_{\pi }$ to a discrete series representation $\pi $
of $G(\mathbb{R})$ in Section 6.1. It is unique up to $G(\mathbb{R})$%
-conjugacy.

\begin{lemma}
\textit{Suppose that }$\pi $\textit{\ is a discrete series representation of}
$G(\mathbb{R})$ \textit{such that} $\pi \circ \theta \approx \varpi \otimes
\pi .$ \textit{Then there exists} $\delta _{\pi }\in G(\mathbb{R})$ \textit{%
such that }$Int(\delta _{\pi })\circ \theta $ \textit{preserves} $spl_{\pi
}. $ \textit{If} $spl_{\pi }$ \textit{is replaced by another fundamental
splitting} $Int(x).spl_{\pi },$ \textit{where} $x\in G(\mathbb{R}),$ \textit{%
then} $\delta _{\pi }$ \textit{is replaced by an element }$\delta _{\pi
}^{\prime }$ \textit{of the form} $zx\delta _{\pi }\theta (x)^{-1},$ \textit{%
where} $z\in Z_{G}(\mathbb{R})$.
\end{lemma}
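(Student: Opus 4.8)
The plan is to translate the statement about representations into one about fundamental splittings and then argue at the level of pairs $(B,T)$. Throughout write $spl_{\pi}=(B_{\pi},T_{\pi},\{X_{\alpha}\})$, so that $T_{\pi}$ is the elliptic maximal torus carrying the Harish--Chandra parameter of $\pi$ and $B_{\pi}$ is the Borel subgroup for which that parameter is dominant. First I would record two elementary observations about the fundamental pair attached to a discrete series representation. (a) Since $\theta$ is defined over $\mathbb{R}$, the character of $\pi\circ\theta$ is $\Theta_{\pi}\circ\theta$, so the Harish--Chandra parameter of $\pi\circ\theta$ lives on $\theta^{-1}(T_{\pi})$ and is dominant for $\theta^{-1}(B_{\pi})$; thus $\theta^{-1}(B_{\pi},T_{\pi})$ is a fundamental pair attached to $\pi\circ\theta$. (b) The quasicharacter $\varpi$ arises from the cocycle $a$ valued in $Z_{G^{\vee}}$, so $d\varpi$ annihilates every coroot of $T_{\pi}$; hence tensoring by $\varpi$ shifts the Harish--Chandra parameter of $\pi$ by a vector orthogonal to all coroots, leaving the dominant Borel $B_{\pi}$ unchanged. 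Therefore $(B_{\pi},T_{\pi})$ is a fundamental pair attached to $\varpi\otimes\pi$ as well.

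Given $\pi\circ\theta\approx\varpi\otimes\pi$, the Harish--Chandra classification of discrete series then forces the two fundamental pairs of (a) and (b) to be conjugate under $G(\mathbb{R})$: there is $g\in G(\mathbb{R})$ with $Int(g)(\theta^{-1}(B_{\pi}),\theta^{-1}(T_{\pi}))=(B_{\pi},T_{\pi})$. Equivalently, setting $g'=\theta(g)^{-1}\in G(\mathbb{R})$ (using again that $\theta$ is $\mathbb{R}$-rational), the $\mathbb{R}$-automorphism $Int(g')\circ\theta$ of $G$ preserves the fundamental pair $(B_{\pi},T_{\pi})$. Consequently $(Int(g')\circ\theta)(spl_{\pi})$ is a fundamental splitting extending $(B_{\pi},T_{\pi})$, so by the second assertion of Lemma 2.2 there is $t\in (T_{\pi})_{sc}(\mathbb{R})$ with $Int(t)\bigl[(Int(g')\circ\theta)(spl_{\pi})\bigr]=spl_{\pi}$. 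Taking $\delta_{\pi}$ to be the product of the image of $t$ in $T_{\pi}(\mathbb{R})$ with $g'$, we obtain $\delta_{\pi}\in G(\mathbb{R})$ with $Int(\delta_{\pi})\circ\theta$ preserving $spl_{\pi}$, as required.

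For the dependence on $spl_{\pi}$, first note that if $\delta_{1},\delta_{2}\in G(\mathbb{R})$ both have $Int(\delta_{i})\circ\theta$ preserving $spl_{\pi}$, then $Int(\delta_{1}\delta_{2}^{-1})$ preserves $spl_{\pi}$, which forces $\delta_{1}\delta_{2}^{-1}$ to lie in $Z_{G}\cap G(\mathbb{R})=Z_{G}(\mathbb{R})$; thus $\delta_{\pi}$ is well defined up to $Z_{G}(\mathbb{R})$. Now replace $spl_{\pi}$ by $spl_{\pi}'=Int(x)(spl_{\pi})$ with $x\in G(\mathbb{R})$. Using $\theta\circ Int(x)=Int(\theta(x))\circ\theta$ one checks that $Int(\delta')\circ\theta$ preserves $spl_{\pi}'$ if and only if $(Int(x^{-1}\delta'\theta(x))\circ\theta)(spl_{\pi})=spl_{\pi}$, i.e.\ if and only if $x^{-1}\delta'\theta(x)\in\delta_{\pi}\,Z_{G}(\mathbb{R})$. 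Hence the admissible choices for the new element are exactly $\delta_{\pi}'=zx\delta_{\pi}\theta(x)^{-1}$ with $z\in Z_{G}(\mathbb{R})$, which is the asserted form.

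I expect the only real point requiring care is the passage from the isomorphism $\pi\circ\theta\approx\varpi\otimes\pi$ to the conjugacy of fundamental pairs --- in particular observation (b), that the twist by $\varpi$ does not move the attached fundamental pair because $d\varpi$ is orthogonal to the coroots. Once that is in hand, the remaining steps (adjusting a pair to a splitting via Lemma 2.2, and the cocycle-style computation of the ambiguity in $\delta_{\pi}$) are routine.
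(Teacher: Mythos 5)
Your proposal is correct and follows essentially the same route as the paper: both arguments rest on the fact that a discrete series determines its attached fundamental splitting uniquely up to $G(\mathbb{R})$-conjugacy, so that $\pi\circ\theta\approx\varpi\otimes\pi$ forces the splitting transported by $\theta$ and $spl_{\pi}$ to be $G(\mathbb{R})$-conjugate, and the ambiguity computation for $\delta_{\pi}$ is identical. You simply spell out in more detail (via fundamental pairs, Harish--Chandra parameters, the observation that $d\varpi$ kills coroots, and the Lemma 2.2 adjustment) the two facts the paper asserts in a single sentence.
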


\begin{proof}
Since $\theta $ transports $spl_{\pi }$ to a fundamental splitting for $\pi
\circ \theta $ and we may use $spl_{\pi }$ as splitting for $\varpi \otimes
\pi ,$ the existence of $\delta _{\pi }$ is clear. Now, with $spl_{\pi }$
fixed, $\delta _{\pi }$ may be replaced only by an element of $Z_{G}(\mathbb{%
R})\delta _{\pi }.$ Next replace $spl_{\pi }$ by $Int(x).spl_{\pi },$ where $%
x\in G(\mathbb{R}).$ Then%
\begin{equation*}
Int(x)\circ (Int(\delta _{\pi })\circ \theta )\circ Int(x)^{-1}=Int(x\delta
_{\pi }\theta (x)^{-1})\circ \theta
\end{equation*}%
preserves $Int(x).spl_{\pi },$ and the lemma follows.
\end{proof}

\begin{lemma}
If there exist nonempty twistpackets of discrete series (or fundamental
series) representations of $G(\mathbb{R})$ then there is $(\theta _{f},\eta
_{f})$ in the inner class of $(\theta ,\eta )$ such that $\theta _{f}$
preserves a fundamental splitting for $G$. The converse is also true in the
case that there exists an elliptic (fundamental) Langlands parameter
preserved by $^{L}\theta _{a}$.
\end{lemma}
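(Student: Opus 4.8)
The plan is to deduce the lemma from the geometric criterion of Lemma 2.5 together with the spectral construction of Section 9.1 and the elementary Lemma 9.2. For the forward implication, suppose there exists a nonempty twistpacket of discrete series (or fundamental series) representations of $G(\mathbb{R})$. Pick $\pi$ in such a twistpacket, so $\pi\circ\theta\approx\varpi\otimes\pi$ and $\pi$ is a discrete series representation. By Lemma 9.2 there is $\delta_{\pi}\in G(\mathbb{R})$ with $Int(\delta_{\pi})\circ\theta$ preserving the fundamental splitting $spl_{\pi}$; in particular $\delta_{\pi}$ is a $\theta$-fundamental element of $G(\mathbb{R})$ in the sense of Section 2.4 (it preserves the fundamental pair underlying $spl_{\pi}$). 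Now invoke Lemma 2.5(i): the existence of a $\theta$-fundamental element in $G(\mathbb{R})$ is equivalent to the existence of $(\theta_{f},\eta_{f})$ in the inner class of $(\theta,\eta)$ such that $\theta_{f}$ preserves a fundamental splitting for $G$. This gives the desired conclusion. (The ``fundamental series'' parenthetical is handled identically once one notes that any member of a fundamental-series twistpacket still carries a well-defined $G(\mathbb{R})$-conjugacy class of fundamental splittings, so Lemma 9.2 applies verbatim.)

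For the converse, assume there is an elliptic (or, in the general fundamental case, fundamental) Langlands parameter $\boldsymbol{\varphi}$ preserved by $^{L}\theta_{a}$, and assume there is $(\theta_{f},\eta_{f})$ in the inner class of $(\theta,\eta)$ with $\theta_{f}$ preserving a fundamental splitting $spl_{G}$ for $G$. By Lemma 2.5(ii) we may further normalize so that $\eta_{f}$ transports $spl_{G}$ to $spl_{Wh}$ and $\theta_{f}$ to $\theta^{\ast}$, and by replacing $(\theta,\eta)$ with $(\theta_{f},\eta_{f})$ — which by the discussion in Section 2.1 changes nothing in the final results — we are in exactly the situation of Section 9.1. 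There I would take the standard representative $\varphi=\varphi(\mu,\lambda)$ from (5.6), form the transported character data $(\mu,\lambda,\mathcal{C})$ on the elliptic torus of $spl_{G}$, and let $\pi$ be the discrete series representation of $G(\mathbb{R})$ attached to that data (nonzero because $\varphi$ is relevant — this is where one uses that $\boldsymbol{\varphi}$ is relevant to $G$, which one may arrange, or else the packet is empty and there is nothing to prove). The congruence (9.1), which records that $s\in S_{\varphi}^{tw}\subseteq\mathcal{T}$ forces $\theta^{\vee}\mu=\mu+\mu_{a}$ and $\theta^{\vee}\lambda\equiv\lambda+\lambda_{a}$, translates under the transport of Section 6.1 into $\pi\circ\theta\approx\varpi\otimes\pi$; alternatively this follows directly from the Whittaker normalization, since the chosen Whittaker data is $\theta^{\ast}$-stable. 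Hence $\pi$ lies in a nonempty twistpacket, as required.

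The main obstacle I expect is bookkeeping in the converse direction: matching the cocycle data $(\mu_{a},\lambda_{a})$ for $a$ against the action of $\theta^{\vee}$ on $(\mu,\lambda)$ modulo the lattice $\mathcal{K}_{f}$, and verifying that the resulting congruences on dual data are precisely equivalent, after transport by $spl_{Wh}$, to the relation $\pi^{\ast}\circ\theta^{\ast}\approx\varpi\otimes\pi^{\ast}$ on the quasi-split side — and then that this descends correctly to the inner form $(G,\eta)$ via $\eta$. This is the content sketched after (9.1), and I would lean on the $\theta^{\ast}$-stability of the Whittaker data to keep it short rather than unwinding the full Langlands classification. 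The forward direction, by contrast, is essentially immediate once Lemma 9.2 is in hand, since it just feeds a $\theta$-fundamental element into Lemma 2.5(i). One should also record explicitly that passing between $(\theta,\eta)$ and a member $(\theta_{f},\eta_{f})$ of its inner class is harmless here, both for the statement about twistpackets (the operation $\pi\mapsto\varpi^{-1}\otimes(\pi\circ\theta)$ and the notion of discrete series are unaffected up to the identifications already fixed) and for the conclusion.
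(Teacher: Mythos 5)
Your proposal is correct and follows essentially the same path as the paper: the forward direction feeds the $\theta$-fundamental element $\delta_{\pi}$ produced by the preceding lemma into Lemma 2.5(i), and the converse normalizes via Lemma 2.5(ii) and then transports the observation at the end of Section 9.1 (that $\theta^{\ast}$-stable Whittaker data and the congruence (9.1) force $\pi^{\ast}\circ\theta^{\ast}\approx\varpi\otimes\pi^{\ast}$) to $G$ via $\eta$. Your added remark on relevance is harmless but unnecessary in the elliptic setting, since an inner form of cuspidal $G^{\ast}$ is automatically cuspidal and hence every discrete series parameter is relevant to it.
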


\begin{proof}
A nonempty twistpacket provides us with a $\theta $-fundamental element $%
\delta _{\pi }$, and so Lemma 2.5 applies. For the converse, we may assume
that $\theta $ is as in (ii) of Lemma 2.5. We then apply the remarks of
Section 9.1 using the transport of data to $G$ provided by the inner twist $%
\eta .$
\end{proof}

We see then that, as on the geometric side, to capture the elliptic
(fundamental) contribution we may assume that, up to a twist by an element
of $G(\mathbb{R}),$ $\theta $ is the transport of $\theta ^{\ast }$ to $G$
by an inner twist $\eta $ which also carries $spl_{Wh}$ to a fundamental
splitting for $G$, \textit{i.e.}, that we are in the setting I of Section
3.2. We will need further information about the element $\delta _{\pi }$ of
Lemma 9.1.

\begin{lemma}
\textit{(i)} $\delta _{\pi }\in G(\mathbb{R})$ \textit{has a norm }$\gamma
_{\pi }$\textit{\ in} $H_{1}(\mathbb{R}).$ \textit{(ii) }$\gamma _{\pi }$%
\textit{\ lies in} $Z_{H_{1}}(\mathbb{R})$ \textit{and its image in} $Z_{H}(%
\mathbb{R})$ under the projection $H_{1}\rightarrow H$ \textit{is determined
uniquely by} $\delta _{\pi }.$ \textit{(iii) If} $\delta _{\pi }$ \textit{is
replaced by} $\delta _{\pi }^{\prime }=zx\delta _{\pi }\theta (x)^{-1},$ 
\textit{where} $z\in Z_{G}(\mathbb{R})$ and $x\in G(\mathbb{R}),$ \textit{%
then} $\gamma _{\pi }$ \textit{is replaced by an element} $\gamma _{\pi
}^{\prime }=z_{1}\gamma _{\pi },$ \textit{where }$(z_{1},z)\in C(\mathbb{R}%
). $
\end{lemma}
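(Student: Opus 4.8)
The plan is to work with the explicit structure of $\delta_\pi$ produced by Lemma 9.1. Recall that $\operatorname{Int}(\delta_\pi)\circ\theta$ preserves the fundamental splitting $spl_\pi=(B_\pi,T_\pi,\{X_\alpha\})$; in particular it preserves the fundamental pair $(B_\pi,T_\pi)$, so $T_\pi$ is a $\theta$-stable fundamental (hence elliptic, since $G$ is cuspidal) maximal torus and $\delta_\pi$ is a strongly $\theta$-regular $\theta$-elliptic element of $G(\mathbb{R})$ — or, if not strongly $\theta$-regular, we replace it harmlessly by a suitable nearby element of $T_\pi(\mathbb{R})\delta_\pi$ which is, without changing the conclusions. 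Then part (i) is immediate from Corollary 3.11(i) (equivalently Lemma 3.6): every $\delta$ in $sr$-$ell(G,\theta)$ has a norm $\gamma_\pi$ in $H_1(\mathbb{R})$, and $\gamma_\pi$ lies in $sGr$-$ell(H_1)$. The point to extract for (ii) is that the centralizer $Cent_\theta(\delta_\pi,G)=T_\pi^\theta$ is \emph{central} in the relevant sense: because $\operatorname{Int}(\delta_\pi)\circ\theta$ acts trivially on $T_\pi$ (it preserves $spl_\pi$, so acts as the identity on the torus, as in the proof of Lemma 2.1), $\delta_\pi$ normalizes $T_\pi$ and acts on it trivially by $\theta$-twisted conjugation, which forces the norm $\gamma_\pi$ to land in $Z_{H_1}$.

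For part (ii) I would run the norm construction of Section 3.2 concretely. Apply the inner twist $\eta$ (which we have arranged carries $spl_G$ to $spl_{Wh}$ and $\theta$ to $\theta^\ast$, i.e. we are in setting I) and use Remark 3.1: $\operatorname{Int}(g)\circ\eta$ carries $(B_\pi,T_\pi)$ to a $\theta^\ast$-stable pair $(B,T)$ in $G^\ast$ with $T$ elliptic, and carries $Cent_\theta(\delta_\pi,G)=T_\pi^\theta$ onto $T^{\theta^\ast}$, with the restriction to $T_\pi$ defined over $\mathbb{R}$. The element $\delta^\ast=g\,\eta(\delta_\pi)\,\theta^\ast(g)^{-1}$ then lies in $T$ and, since $\operatorname{Int}(\delta^\ast)\circ\theta^\ast$ preserves the $\mathbb{R}$-splitting obtained by transporting $spl_\pi$, it acts trivially on $T$; equivalently $N\delta^\ast=N_{\theta^\ast}(\delta^\ast)$ is a norm-one element whose image under the admissible map $T\to T_{\theta^\ast}\to T_H$ is forced into $T_H^{W}$ — and tracking through the defining diagram for the norm correspondence, this image is exactly the image of $\gamma_\pi$ under $H_1\to H$. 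That the image lies in $Z_H(\mathbb{R})$ follows because $T^{\theta^\ast}$ maps into $Z_{(G^\ast)^{\theta^\ast}}$ when $\delta^\ast$ preserves a splitting — more precisely, one computes that $\delta^\ast$ is of the form $z\,t$ with $z$ central and $t\in T_{sc}^{\theta^\ast}(\mathbb{R})$ (as in the proof of Lemma 3.6, equation (3.3)), so its image in $T_H$ is the image of a central element, hence lies in $Z_H(\mathbb{R})$. Uniqueness of that image given $\delta_\pi$ is then clear: the stable class of $\delta_\pi$ determines the stable class of $\delta^\ast$, which determines $N\delta^\ast$ up to stable conjugacy in $T_{\theta^\ast}$, and its image in $Z_H(\mathbb{R})$ is insensitive to the residual choices (different admissible maps differ by an element of the Weyl group fixing $T^{\theta^\ast}$, hence act trivially on the central image).

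For part (iii) I would simply feed the substitution $\delta_\pi\mapsto\delta_\pi'=z\,x\,\delta_\pi\,\theta(x)^{-1}$ through the same construction. The factor $x\,(\,\cdot\,)\,\theta(x)^{-1}$ is a $\theta$-twisted conjugation by an element of $G(\mathbb{R})$, so it does not change the stable $\theta$-twisted conjugacy class and hence does not change the norm; the factor $z\in Z_G(\mathbb{R})$ multiplies $\delta^\ast$ by $\eta(z)$, hence multiplies $N\delta^\ast$ by $N_{\theta^\ast}(\eta(z))$, whose image in $Z_H(\mathbb{R})$ is the image of $z$ under the norm map $Z_G\to Z_H$ appearing in the definition of the group $C$ in (5.1) of \cite{KS99}. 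Thus $\gamma_\pi$ is replaced by $z_1\gamma_\pi$ with $z_1\in Z_{H_1}(\mathbb{R})$ having image in $Z_H(\mathbb{R})$ equal to $N(z)$, i.e. $(z_1,z)\in C(\mathbb{R})$; the one genuine choice here is the lift $z_1$ of that image to $Z_{H_1}(\mathbb{R})$, which is exactly the ambiguity allowed in the statement.

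The main obstacle I anticipate is part (ii): pinning down precisely that the norm $\gamma_\pi$ is \emph{central} in $H_1$, and not merely that $\delta_\pi$ is $\theta$-elliptic. This requires being careful that $\operatorname{Int}(\delta_\pi)\circ\theta$ acting trivially on $T_\pi$ (because it preserves a splitting, not merely a pair) translates into $\delta^\ast$ being central modulo $T_{sc}^{\theta^\ast}(\mathbb{R})$, and then chasing that through the admissible chain $T\to T_{\theta^\ast}\to T_H$ to land in $Z_H$; the bookkeeping with $G_{sc}$ versus $G$ and with $z$-extension kernels $Z_1$ is where the argument could get delicate, but it is the same bookkeeping already carried out in the proof of Lemma 3.6 and Lemma 4.4.A of \cite{KS99}, so I would lean on those rather than redo it from scratch.
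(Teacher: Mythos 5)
There is a genuine gap, and it starts right at the opening sentence: $\delta_\pi$ is \emph{not} strongly $\theta$-regular. Its $\theta$-twisted centralizer is the fixed-point group of the automorphism $Int(\delta_\pi)\circ\theta$, which preserves $spl_\pi$ and is therefore a reductive subgroup of full rank, not an abelian group; in the extreme case $\theta=\mathrm{id}$ the element $\delta_\pi$ is central and its twisted centralizer is all of $G$. So the norm correspondence of Section 3.2, Lemma 3.6 and Corollary 3.11, which live on the very regular set, do not apply to $\delta_\pi$. Your suggested fix --- replacing $\delta_\pi$ by a nearby strongly regular element of $T_\pi(\mathbb{R})\delta_\pi$ --- does not rescue this: the norm of $\delta_\pi t$ is $\gamma_\pi\cdot(\text{something coming from }t)$, which is \emph{not} central in $H_1$, so you would lose exactly the content of part (ii). The paper avoids all of this by showing directly that $\delta_\pi^\ast := x_\pi\eta(\delta_\pi)\theta^\ast(x_\pi)^{-1}$ lies in $Z_{G^\ast}$ and then invoking the $T_1$-norm in the sense of Section 6 of \cite{Sh12}, a notion of norm that makes sense for elements that are merely $\theta$-semisimple. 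That is a different (and weaker) notion than the Section 3.2 norm, and the statement of the lemma implicitly refers to it; a proof built around Corollary 3.11 is aimed at the wrong target.

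There is also a concrete error in the centrality argument for part (ii). You assert that $\delta^\ast$ has the form $zt$ with $z$ central and $t\in T_{sc}^{\theta^\ast}(\mathbb{R})$, citing equation (3.3) in the proof of Lemma 3.6. But in that proof $t_G$ lies in $(T_G)_{sc}(\mathbb{R})$ with no $\theta^\ast$-fixedness, and its image $N(t_G)$ in $T_{\theta^\ast}$ is emphatically not central in general --- that is why the conclusion of Lemma 3.6 is only ellipticity, not centrality, of the norm. And even if $t$ were $\theta^\ast$-fixed, $N(t)$ would still not be central. The correct mechanism is simpler and stronger: choose $x_\pi$ so that $Int(x_\pi)\circ\eta$ carries $spl_\pi$ to $spl_{Wh}$ \emph{as a splitting}, not merely the pair. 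Then $Int(\delta_\pi^\ast)\circ\theta^\ast$ preserves $spl_{Wh}$, and since $\theta^\ast$ also preserves $spl_{Wh}$ and the two differ by the inner automorphism $Int(\delta_\pi^\ast)$, they coincide; hence $\delta_\pi^\ast$ is itself central, with no extraneous $t$-factor at all. Using a generic $g$ from Remark 3.1, as you do, gives a transported splitting that extends $(B,T)$ but need not equal $spl_{Wh}$, and the argument breaks precisely there. (One then still needs $\sigma(\delta_\pi^\ast)^{-1}\delta_\pi^\ast\in(1-\theta^\ast)T$, which comes from $x_\pi\in G_{sc}^\ast$, to conclude that $N(\delta_\pi^\ast)$ is fixed by $\sigma$ and hence lies in $(Z_{G^\ast})_{\theta^\ast}(\mathbb{R})\hookrightarrow Z_H(\mathbb{R})$.) Your part (iii) is essentially fine once (ii) is repaired, but it should also be phrased in terms of the $T_1$-norm rather than the stable class on the very regular set.
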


We will explain what we mean by (i) in the proof. The group $C(\mathbb{R})$
is from Section 5.1 of \cite{KS99}; it was recalled in Section 4.2.

\begin{proof}
We may as well assume that we are in the setting I of Section 3.2 since the
modifications for a further twist by an element of $G(\mathbb{R})$ are
immediate. Suppose $Int(x_{\pi })\circ \eta $ carries $spl_{\pi }$ to $%
spl_{Wh}.$ Then a calculation shows that\textit{\ }$\delta _{\pi }^{\ast
}=x_{\pi }\eta (\delta _{\pi })\theta ^{\ast }(x_{\pi })^{-1}$ has the
property that $Int(\delta _{\pi }^{\ast })\circ \theta ^{\ast }$\textit{\ }%
preserves\textit{\ }$spl_{Wh}.$ Because $\theta ^{\ast }$\textit{\ }also%
\textit{\ }preserves\textit{\ }$spl_{Wh},$ we conclude that $\delta _{\pi
}^{\ast }$ lies in\textit{\ }$Z_{G^{\ast }}.$ Further, we calculate that%
\textit{\ }$\sigma (\delta _{\pi }^{\ast })^{-1}.\delta _{\pi }^{\ast }\in
(1-\theta ^{\ast })T.$ As in (5.1) of \cite{KS99}, we regard the
coinvariants $(Z_{G^{\ast }})_{\theta ^{\ast }}$of $Z_{G^{\ast }}$ as a
subgroup of the coinvariants $T_{\theta ^{\ast }}$ of $T.$ Then under the
projection $N:T\rightarrow T_{\theta ^{\ast }},$ $\delta _{\pi }^{\ast }$
maps into $(Z_{G^{\ast }})_{\theta ^{\ast }}.$ Since $N(\sigma (\delta _{\pi
}^{\ast })^{-1}.\delta _{\pi }^{\ast })=1,$ we have that $N(\delta _{\pi
}^{\ast })\in (Z_{G^{\ast }})_{\theta ^{\ast }}(\mathbb{R}).$ We identify $%
(Z_{G^{\ast }})_{\theta ^{\ast }}(\mathbb{R})$ as a subgroup of $Z_{H}(%
\mathbb{R})$ and then as a subgroup of $T_{H}(\mathbb{R}).$ Let\textit{\ }$%
\gamma _{\pi }$\textit{\ }be an element of $Z_{H_{1}}(\mathbb{R})$ whose
image under $p:H_{1}\rightarrow H$ coincides with the image of $N(\delta
_{\pi }^{\ast })$ in $Z_{H}(\mathbb{R}).$ Then $\gamma _{\pi }$ is a $T_{1}$%
-norm of $\delta _{\pi }$ in the sense of Section 6 of \cite{Sh12}. In
general, $\gamma _{\pi }$ is determined up to stable conjugacy by $\delta
_{\pi }$ \cite{Sh12}. Since $\gamma _{\pi }\in Z_{H_{1}}(\mathbb{R}),$ it is
uniquely determined by $\delta _{\pi }.$ The rest is immediate.
\end{proof}

\subsection{Elliptic related pairs of parameters}

Let $\mathfrak{e}_{z}$ be a supplemented set of endoscopic data for $%
(G^{\ast },\theta ^{\ast },a)$ as in Section 3.1. We may define \textit{%
related pairs }of essentially tempered parameters $(\varphi _{1},\varphi )$
as in Section 2 of \cite{Sh10} for the standard case. The arguments there,
and accompanying definitions, apply word for word apart from the shift in
notation to $\varpi _{1}$ for the character on the central subgroup $Z_{1}(%
\mathbb{R})$ of $H_{1}(\mathbb{R}).$

We return to the cuspidal-elliptic setting of Section 3.4 since the general
fundamental case follows quickly from this. If an elliptic parameter $%
\varphi _{1}$ for the endoscopic group $H_{1}$ satisfies the stronger
requirement of $G$-regularity then there is an elliptic parameter $\varphi $
for $G^{\ast }$ providing us with a related pair $(\varphi _{1},\varphi ).$
Here it is assumed that $\varphi _{1}$ factors, in the sense of \cite[%
Section 2]{Sh10}, through the group $\mathcal{H}$ included in the chosen SED.

We now recall explicit data attached to such pairs $(\varphi _{1},\varphi )$%
. To the $\theta ^{\vee }$-stable $\Gamma $-splitting $spl_{G^{\vee }}=(%
\mathcal{B},\mathcal{T},\{X\})$ of $G^{\vee }$ we attach a $\Gamma $%
-splitting $spl_{G^{\vee }}^{\theta ^{\vee }}$ for the identity component of 
$(G^{\vee })^{\theta ^{\vee }}$ in the standard manner (see, for example,
p.61 of \cite{KS99}). We adjust the endoscopic datum $\mathfrak{e}=(H,%
\mathcal{H},s)$ within its isomorphism class so that $s\in \mathcal{T}$, and
then fix a $\Gamma $-splitting $spl_{H^{\vee }}=(\mathcal{B}_{H^{\vee }},%
\mathcal{T}_{H^{\vee }},\{Y\})$ for $H^{\vee },$ where $\mathcal{B}_{H^{\vee
}}=\mathcal{B}$ $\mathcal{\cap }$ $H^{\vee }$ and $\mathcal{T}_{H^{\vee }}=%
\mathcal{T}$ $\mathcal{\cap }$ $H^{\vee }=(\mathcal{T}^{\theta ^{\vee
}})^{0}.$ Embed $H^{\vee }$ in $H_{1}^{\vee }$ and extend $spl_{H^{\vee }}$
to $spl_{H_{1}^{\vee }}=(\mathcal{B}_{1},\mathcal{T}_{1},\{Y\})$ by taking $%
\mathcal{B}_{1}=Norm(\mathcal{B}_{H^{\vee }},H_{1}^{\vee })$ and $\mathcal{T}%
_{1}=Cent(\mathcal{T}_{H^{\vee }},H_{1}^{\vee }).$ None of these choices
will matter for transfer factors. Nor will the choice of $\chi $-data (this
choice does matter for the construction of geometric $\Delta _{II}$ and $%
\Delta _{III}).$ We will thus define all Langlands data $\mu _{1},\lambda
_{1},$ \textit{etc.} for packets in familiar terms \cite{La89}; this amounts
to the choice of $\chi $-data such that $\chi _{(\alpha ^{\vee })_{res}}=(z/%
\overline{z})^{1/2}$, where $(\alpha ^{\vee })_{res}$ denotes the
restriction to $(\mathcal{T}^{\theta ^{\vee }})^{0}$ of a root $\alpha
^{\vee }$ of $\mathcal{T}$ in $\mathcal{B}$.

We follow the approach of Section 11 of \cite{Sh08a} for standard endoscopy.
To $spl_{H^{\vee }}$ we attach the representative $\varphi _{1}=\varphi (\mu
_{1},\lambda _{1})$ as in Section 5.6.\textit{\ }Now consider an elliptic
parameter $\varphi =\varphi (\mu ,\lambda )$ for $G^{\ast }.$ We alter the
construction slightly. To fix an element of $G_{der}^{\vee }$ $\rtimes $ $W_{%
\mathbb{R}}$ acting on $\mathcal{T}$ $\cap $ $G_{der}^{\vee }$\ as $%
t\rightarrow t^{-1},$ we may use either $n_{G}\times w_{\sigma }$ defined
relative to $spl_{G^{\vee }}$ or $n_{G,\theta }\times w_{\sigma }$ defined
relative to $spl_{G^{\vee }}^{\theta ^{\vee }}.\ $It is more convenient to
choose the latter. Thus $\varphi =\varphi (\mu ,\lambda )$ will mean that%
\begin{equation*}
\varphi (w_{\sigma })=e^{2\pi i\lambda }.n_{G,\theta }\times w_{\sigma }.
\end{equation*}%
We may also drop the dominance requirement on $\mu $. We do require that $%
\mu _{1}$ is $\mathcal{B}_{1}$-dominant. While $G$-regularity of $\varphi
_{1}$ requires that $\mu $ be regular when $\varphi (\mu _{1},\lambda _{1})$
and $\varphi (\mu ,\lambda )$ are related, $\mathcal{B}_{1}$-dominance of $%
\mu _{1}$ does not ensure that $\mu $ is $\mathcal{B}$-dominant. That case,
however, is the only one that will matter to us (in general, an extra sign
is needed in transfer factors, see Sections 7, 9 of \cite{Sh10}). Thus we
call $\varphi _{1}$ \textit{well-positioned relative to} $\varphi $ if $\mu $
is $\mathcal{B}$-dominant, and make that our assumption throughout. Given $%
\varphi $ we can always find such $\varphi _{1}$ and it is unique up to $%
\mathcal{T}$-conjugacy. It is not difficult to check that this notion is
independent of the choices made in its formulation; again see \cite{Sh10}.

Finally, we determine the conditions on $(\mu _{1},\lambda _{1})$ and $(\mu
,\lambda )$ for $\varphi _{1}(\mu _{1},\lambda _{1})$ and $\varphi (\mu
,\lambda )$ to be related. First, for $w\in W_{\mathbb{R}}$ pick $u(w)\in 
\mathcal{H}$ projecting to $w,$ as follows. For $z\in \mathbb{C}^{\times },$ 
$u(z)$ is to act trivially and $u(zw_{\sigma })$ is to act on $\mathcal{T}%
_{H}$ and $\mathcal{T}_{1}$ as $n_{H}\times w_{\sigma }\in $ $^{L}H.$ Since $%
\xi _{1}$ (part of the chosen SED) embeds $\mathcal{H}$ in $^{L}H_{1}$ we
may define%
\begin{equation*}
\xi _{1}(u(zw_{\sigma }))=t_{\xi _{1}}(zw_{\sigma }).n_{H}\times zw_{\sigma }
\end{equation*}%
and%
\begin{equation*}
\xi _{1}(u(z))=t_{\xi _{1}}(z)\times z,
\end{equation*}%
where each $t_{\xi _{1}}(w)$ lies in $\mathcal{T}_{1}.$ On the other hand,
in $^{L}G$ we have that $u(w_{\sigma })$ acts as $n_{G,\theta }\times
zw_{\sigma }.$ Write%
\begin{equation*}
u(w)=t(w).u^{\prime }(w),
\end{equation*}%
where%
\begin{equation*}
u^{\prime }(z)=1\times z,u^{\prime }(zw_{\sigma })=n_{G,\theta }\times
w_{\sigma }
\end{equation*}%
for $z\in \mathbb{C}^{\times }.$ Then $t(w)\in \mathcal{T}$ . Let%
\begin{equation*}
\mathcal{T}_{2}=(\mathcal{T}_{1}\times \mathcal{T)}\diagup \mathcal{T}_{H},
\end{equation*}%
where $\mathcal{T}_{H}$ is embedded by $t\rightarrow (t^{-1},t).$ On $%
\mathcal{T}_{2}$ we use the elliptic action $\sigma _{2}$ of $\Gamma $
inflated to $W_{\mathbb{R}}$: $\sigma _{2}$ acts as $n_{H}\times w_{\sigma }$
on the first component and as $n_{G,\theta }\times w_{\sigma }$ on the
second. Let $t_{2}(w)$ denote the image in $\mathcal{T}_{2}$ of $(t_{\xi
_{1}}(w)^{-1},t(w))\in \mathcal{T}_{1}\times \mathcal{T}.$ Then we define%
\begin{equation*}
(\mu ^{\ast },\lambda ^{\ast })\in (X_{\ast }(\mathcal{T}_{2})\otimes 
\mathbb{C})^{2}
\end{equation*}%
by%
\begin{equation*}
t_{2}(z)=z^{\mu ^{\ast }}.\overline{z}^{\sigma _{2}\mu ^{\ast }}\times z
\end{equation*}%
for $z\in \mathbb{C}^{\times }$, and%
\begin{equation*}
t_{2}(w_{\sigma })=e^{2\pi i\lambda ^{\ast }}\times w_{\sigma }.
\end{equation*}

Notice that we have constructed $(\mu ^{\ast },\lambda ^{\ast })$
independently of $\varphi _{1},$ $\varphi .$ The cochain $t_{2}(w)$ is 
\textit{not} the cocycle $a_{T}(w)$ of (4.4) in \cite{KS99}; $a_{T}(w)$
requires a $\rho $-shift\textit{\ }($\iota $-shift in our notation) to be
applied to the datum $\mu ^{\ast }$. See Section 11 of \cite{Sh08a} for the
case of standard endoscopy, where the torus $\mathcal{T}_{2}$ collapses to $%
\mathcal{T}_{1}$.

Recall that $\varphi _{1}(W_{\mathbb{R}})$ is assumed to lie in $\xi _{1}(%
\mathcal{H)}$. Identify $\mu _{1},\mu $ with their images in $X_{\ast }(%
\mathcal{T}_{2})\otimes \mathbb{C}$ under the componentwise embeddings. We
may write%
\begin{equation*}
\varphi _{1}(w)=t_{H}(w).\xi _{1}(u(w))
\end{equation*}%
and%
\begin{equation*}
\varphi (w)=t_{H}(w).t(w).u^{\prime }(w),
\end{equation*}%
where $t_{H}(w)\in \mathcal{T}_{H}.$ We now conclude that:

\begin{lemma}
An elliptic pair $(\varphi _{1}(\mu _{1},\lambda _{1})$, $\varphi (\mu
,\lambda ))$ as above is related if and only if 
\begin{equation*}
\mu _{1}+\mu ^{\ast }=\mu \text{ }and\text{ }\lambda _{1}+\lambda ^{\ast
}\equiv \lambda \func{mod}\mathcal{K}_{f}.
\end{equation*}
\end{lemma}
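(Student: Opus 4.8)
The plan is to reduce the ``related'' condition to an explicit statement about the cocycle $a_{T_2}$ of Section 4.4 of \cite{KS99} and then read off the two congruences by comparing the $\mathbb{C}^{\times}$-part and the $w_\sigma$-part of the relevant homomorphisms. Recall that by definition $(\varphi_1,\varphi)$ is related (in the sense of Section 2 of \cite{Sh10}) precisely when, after identifying everything inside $^{L}H_1$ and $^{L}G$ via the chosen data, the two parameters agree up to the cocycle built from the $\chi$-data and the torus $\mathcal{T}_2$; concretely, with the notation already set up in the excerpt, this says that the homomorphism $w\mapsto \varphi_1(w)$ and the homomorphism $w\mapsto \varphi(w)$, both pushed into a common ambient object indexed by $\mathcal{T}_2$, differ exactly by the cochain $t_2(w)$ (equivalently by $a_{T_2}(w)$ after the $\iota$-shift). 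Using the two displayed factorizations
\begin{equation*}
\varphi_1(w)=t_H(w).\xi_1(u(w)),\qquad \varphi(w)=t_H(w).t(w).u'(w),
\end{equation*}
the common factor $t_H(w)\in\mathcal{T}_H$ cancels in the quotient $\mathcal{T}_2=(\mathcal{T}_1\times\mathcal{T})/\mathcal{T}_H$, so relatedness becomes the single requirement that the image in $\mathcal{T}_2$ of the pair $(\xi_1(u(w)),t(w)u'(w))$ — read against the parameters $\varphi_1,\varphi$ — matches $t_2(w)$.

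\textbf{Key steps.} First I would record the standard representatives: $\varphi_1=\varphi(\mu_1,\lambda_1)$ relative to $spl_{H^\vee}$ (so $\varphi_1(w_\sigma)=e^{2\pi i\lambda_1}.n_H\times w_\sigma$ and $\varphi_1(z)=z^{\mu_1}\overline z^{\sigma\mu_1}\times z$ with $\mu_1$ the $\mathcal{B}_1$-dominant datum), and $\varphi=\varphi(\mu,\lambda)$ in the altered normalization $\varphi(w_\sigma)=e^{2\pi i\lambda}.n_{G,\theta}\times w_\sigma$. Second, I would substitute the definitions of $\xi_1(u(w))=t_{\xi_1}(w).n_H\times w$ and $u'(w)$ into the two factorizations, so that in $\mathcal{T}_1\times\mathcal{T}$ the ``$n_H\times w_\sigma$'' on the $H_1$-side and the ``$n_{G,\theta}\times w_\sigma$'' on the $G$-side are precisely what is absorbed into $\sigma_2$ on $\mathcal{T}_2$; what remains in $\mathcal{T}_2$ on the parameter side is $e^{2\pi i(\lambda_1-\lambda)}$ in the $w_\sigma$-slot (using the componentwise embedding $\mathcal{T}_H\hookrightarrow\mathcal{T}_1\times\mathcal{T}$, $t\mapsto(t^{-1},t)$, to put $\varphi_1$ in the first factor and $\varphi$ in the second) and $z^{\mu_1-\mu}.\overline z^{\sigma_2(\mu_1-\mu)}\times z$ in the $\mathbb{C}^\times$-slot, after identifying $\mu_1,\mu$ with their images in $X_\ast(\mathcal{T}_2)\otimes\mathbb{C}$. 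Third, by the very definition of $(\mu^\ast,\lambda^\ast)$ through $t_2(z)=z^{\mu^\ast}\overline z^{\sigma_2\mu^\ast}\times z$ and $t_2(w_\sigma)=e^{2\pi i\lambda^\ast}\times w_\sigma$, relatedness (``parameter difference $=t_2$'') becomes the pair of equalities $\mu-\mu_1=\mu^\ast$ in $X_\ast(\mathcal{T}_2)\otimes\mathbb{C}$ and $e^{2\pi i(\lambda-\lambda_1)}=e^{2\pi i\lambda^\ast}$ in $\mathcal{T}_2$, the latter being exactly $\lambda-\lambda_1\equiv\lambda^\ast \bmod X_\ast(\mathcal{T}_2)$ once one also uses the freedom in choosing $\lambda,\lambda_1$ modulo the respective $\mathcal{K}$-lattices, which together account precisely for the ambiguity lattice $\mathcal{K}_f$. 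Fourth, I would note that because $\varphi_1$ is well-positioned relative to $\varphi$ (so $\mu$ is $\mathcal{B}$-dominant and no extra Weyl-element correction enters) the identification $X_\ast(\mathcal{T}_2)\otimes\mathbb{C}$-valued equality $\mu_1+\mu^\ast=\mu$ is literally the stated one, and similarly for the $\lambda$-congruence, with $\mathcal{K}_f=X_\ast(\mathcal{T})+[1-\varphi(w_\sigma)]X_\ast(\mathcal{T})\otimes\mathbb{C}$ appearing as the image in the $\mathcal{T}$-factor of the indeterminacy in the $\lambda$'s.

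\textbf{Main obstacle.} The routine part is the bookkeeping of embeddings $H^\vee\hookrightarrow H_1^\vee$, $\mathcal{T}_H\hookrightarrow\mathcal{T}_1\times\mathcal{T}$, and the identification of coroot data across these, together with the standard-endoscopy specialization in Section 11 of \cite{Sh08a} where $\mathcal{T}_2$ collapses to $\mathcal{T}_1$; all of this can be cited. The genuinely delicate step is making sure that the cochain $t_2(w)$ I extract from the factorizations is the one that governs relatedness, i.e. checking that the promised $\iota$-shift ($\rho$-shift) distinguishing $t_2$ from $a_{T_2}$ is consistent with having chosen the $n_{G,\theta}\times w_\sigma$ normalization for $\varphi$ (rather than $n_G\times w_\sigma$) and with the specific $\chi$-data $\chi_{(\alpha^\vee)_{res}}=(z/\overline z)^{1/2}$; getting the shift on the correct side (and on $\mu^\ast$ rather than on $\mu$ or $\mu_1$) is where a sign or lattice error would hide. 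I expect to handle this by invoking the parallel computation in Section 11 of \cite{Sh08a} and tracking only the new contribution coming from the second factor of $\mathcal{T}_2$, which is precisely the twisting datum; once that bookkeeping is pinned down, the two congruences fall out immediately.
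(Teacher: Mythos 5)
Your approach coincides with the paper's: the paper gives no separate proof and treats Lemma 9.4 as read off from the preceding construction, which you essentially reproduce. The substance is that the factor $t_H(w)$ appearing in both $\varphi_1(w)=t_H(w)\xi_1(u(w))$ and $\varphi(w)=t_H(w)t(w)u'(w)$ dies under the anti-diagonal embedding $\mathcal{T}_H\hookrightarrow\mathcal{T}_1\times\mathcal{T}$, $t\mapsto(t^{-1},t)$, so that $t_2(w)$, defined as the image of $(t_{\xi_1}(w)^{-1},t(w))$ in $\mathcal{T}_2$, coincides with the image of $(z^{-\mu_1}\overline z^{-\sigma\mu_1},z^\mu\overline z^{\sigma\mu})$ on $\mathbb{C}^\times$, giving $\mu^*=\mu-\mu_1$; the $w_\sigma$-slot gives the $\lambda$-congruence likewise.

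Two remarks on the write-up. You have a sign slip: you say that what remains on the parameter side is $z^{\mu_1-\mu}$, whereas computing the image of $(t_{\xi_1}^{-1},t)$ yields $z^{\mu-\mu_1}$; you nonetheless state the correct final equality $\mu-\mu_1=\mu^*$, so this is cosmetic. More importantly, the ``main obstacle'' you flag is a red herring. The $\iota$-shift ($\rho$-shift) distinguishing $t_2$ from the cocycle $a_T$ of \cite{KS99} has no bearing on this lemma: the data $(\mu^*,\lambda^*)$ in the statement are defined directly from the cochain $t_2$, which is built purely from the splittings, the $z$-pair embedding $\xi_1$, and the fixed elements $u(w)$, all prior to and independent of any parameter. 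The paper's remark about $a_T$ is an aside preparing for the later use of $(\mu^*,\lambda^*)$ in transfer factors, not a subtlety in the relatedness congruence. The lemma needs only the two factorizations through a common $t_H(w)$, the anti-diagonal cancellation in $\mathcal{T}_2$, and a direct reading of exponents; no $\chi$-data or $\iota$-shift bookkeeping enters.
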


\subsection{An application}

We finish with a proof of the formula (4.4) from our discussion on
properties required of spectral factors.

\begin{lemma}
\begin{equation*}
\varpi _{C}((z_{1},z))=\varpi _{\pi _{1}}(z_{1}).\varpi _{\pi }(z)^{-1},
\end{equation*}%
\textit{for all} $(z_{1},z)\in C(\mathbb{R}).$
\end{lemma}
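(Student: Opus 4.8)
The strategy is to make explicit the character $\varpi_C$ on $C(\mathbb{R})$ and compare it, term by term, with the central characters $\varpi_{\pi_1}$ and $\varpi_\pi$. Recall from Lemma 9.3 that to a discrete series $\pi$ with $\pi\circ\theta\approx\varpi\otimes\pi$ we attached $\delta_\pi\in G(\mathbb{R})$ with $Int(\delta_\pi)\circ\theta$ preserving $spl_\pi$, and a norm $\gamma_\pi\in Z_{H_1}(\mathbb{R})$; moreover replacing $\delta_\pi$ by $z x\delta_\pi\theta(x)^{-1}$ replaces $\gamma_\pi$ by $z_1\gamma_\pi$ with $(z_1,z)\in C(\mathbb{R})$. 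First I would observe that, since $(z_1,z)\in C(\mathbb{R})$ means the image of $z_1$ in $Z_H(\mathbb{R})$ equals $N(z)$, and since every element of $C(\mathbb{R})$ arises this way (working, as in Section 5.4 of \cite{KS99}, with the coset $H_1(\mathbb{R})^\dag$ if needed but here we are in setting I so $H_1(\mathbb{R})^\dag=H_1(\mathbb{R})$), it suffices to track how the transfer identity (4.1) and the spectral identity (4.3) transform under the substitution $(\gamma_1,\delta)\mapsto(z_1\gamma_1,\delta z)$.

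The key computation has three ingredients, all already essentially recorded in the excerpt. (i) On the geometric side: the extended Lemma 5.1.C of \cite{KS99}, rewritten for $g_{\mathbb{R}}$-norms as in Section 4.2, gives $\Delta(z_1\gamma_1,\delta z)=\varpi_C(z_1,z)^{-1}\Delta(\gamma_1,\delta)$ — this is the \emph{definition} of $\varpi_C$. (ii) On the twisted orbital integral side: translating $f$ by $z^{-1}$ and $f_1$ by $z_1^{-1}$, one reads off from the definitions of $O^{\theta,\varpi}$ and $SO$ that $O^{\theta,\varpi}(\delta z, f_{z})=\varpi(z)^{-1}O^{\theta,\varpi}(\delta,f)$ and $SO(z_1\gamma_1, (f_1)_{z_1})=\varpi_1(z_1)^{-1}SO(\gamma_1,f_1)$, where $\varpi_1$ is the character on $Z_1(\mathbb{R})$; combined with the compatibility $\varpi_{\pi_1}|_{Z_1(\mathbb{R})}=\varpi_1$, the calculation already carried out at the end of Section 4.2 shows $\varpi_C(z_1,z)\cdot(f_1)_{z_1}\in Trans(f)$ for the translated data. (iii) Feeding this into the spectral transfer statement (4.3) for $(\pi_1,\pi)$: $St\text{-}Trace\,\pi_1((f_1)_{z_1})=\varpi_{\pi_1}(z_1)^{-1}St\text{-}Trace\,\pi_1(f_1)$ by the very definition of the central character acting on the normalized operator $\pi_1(f_1)$, and likewise $Trace\,\pi(f_{z})\,\pi(\theta,\varpi)=\varpi_\pi(z)^{-1}Trace\,\pi(f)\,\pi(\theta,\varpi)$ since $\pi(\theta,\varpi)$ intertwines $\pi\circ\theta$ with $\varpi\otimes\pi$ and $z\in Z_G(\mathbb{R})$. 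Assembling (i)–(iii) and demanding that the spectral identity hold simultaneously for the original and the translated data forces $\varpi_C(z_1,z)=\varpi_{\pi_1}(z_1)\varpi_\pi(z)^{-1}$.

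The main obstacle I anticipate is bookkeeping with the $z$-extension $H_1\to H$ and the group $C(\mathbb{R})$: one must be careful that $\varpi_C$ depends only on the class of $(z_1,z)$ in $C(\mathbb{R})$ and not on a lift, and that the normalization of the intertwining operator $\pi(\theta,\varpi)$ (which, as stressed in Section 4.1, is \emph{not} fixed) genuinely drops out — it does, because the operator appears linearly and the translate $f_{z}$ changes the trace by the scalar $\varpi_\pi(z)^{-1}$ regardless of normalization. A secondary subtlety is that $\gamma_\pi$ must actually serve as a norm of $\delta_\pi$ in the sense needed for (4.1), which is exactly Lemma 9.3(i); one then invokes the fact that central characters of related pairs match up to $\varpi_C$ only on $C(\mathbb{R})$, so the argument is insensitive to the ambiguity $z\mapsto z z'$, $z_1\mapsto z_1 z_1'$ with $(z_1',z')$ in the kernel of $C(\mathbb{R})\to C(\mathbb{R})$. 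Once these points are dispatched the identity is immediate from comparing coefficients.
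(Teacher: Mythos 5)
Your proposal reverses the logical direction of the paper and is circular in the paper's own framework. Read Section 4.2 again: the paper first derives, from the geometric transfer (4.1) and the extended Lemma 5.1.C, that $\varpi_C(z_1,z_{\mathbb R})\cdot(f_1)_{z_1}\in Trans(f)$ for $z_{\mathbb R}g_{\mathbb R}$-norms; it then states that Lemma 9.5 \emph{will be} proved, and only after that does it conclude that the spectral factors $\Delta_{g_{\mathbb R}}(\pi_1,\pi)$ and $\Delta_{z_{\mathbb R}g_{\mathbb R}}(\pi_1,\pi)$ coincide. Your step (iii) demands that the spectral identity (4.3) hold with the \emph{same} coefficients for both the original and the translated data, but that equality of coefficients is exactly the consequence the paper extracts \emph{from} Lemma 9.5, not a hypothesis available for proving it. In addition, (4.3) with well-defined, geometry-compatible coefficients is precisely what Parts 8 and 9 are in the middle of constructing; you cannot assume it holds a priori for both normalizations to deduce a prerequisite fact about those same coefficients.

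The paper's actual proof is entirely parameter-theoretic and does not mention orbital integrals, the operator $\pi(\theta,\varpi)$, or the transfer statement (4.3) at all. It writes $z\in Z_{G^{\ast}}(\mathbb R)$ as $\exp Y\cdot\exp i\pi\lambda^{\vee}$, reads off $\varpi_{\pi}(z)=e^{\langle\mu,Y\rangle}e^{2i\pi\langle\lambda,\lambda^{\vee}\rangle}$ and the analogue for $\varpi_{\pi_1}(z_1)$ directly from the Langlands parametrization of the tori, and then uses the relation $\mu_1+\mu^{\ast}=\mu$, $\lambda_1+\lambda^{\ast}\equiv\lambda$ of Lemma 9.4 to express the quotient $\varpi_{\pi_1}(z_1)\varpi_{\pi}(z)^{-1}$ in terms of the data $(\mu^{\ast},\lambda^{\ast})$ constructed in Section 9.3. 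The final step identifies this with $\varpi_C(z_1,z)$ by unwinding the definition of $\varpi_C$ through the cochain $t_2(w)$ and its relation (via the $\iota$-shift) to the cocycle $a_T(w)$ of \cite[p.~45]{KS99}. You would need to build your argument on this direct computation with central characters of parameters, not on the transfer identities it is meant to underpin.

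A secondary issue: your claimed formula $O^{\theta,\varpi}(\delta z,f_z)=\varpi(z)^{-1}O^{\theta,\varpi}(\delta,f)$ appears to conflate the character $\varpi$ attached to the datum $a$ with the central character $\varpi_{\pi}$; the translate of a twisted orbital integral by a central element does not produce a factor of $\varpi(z)$, and the asymmetry with your $SO$ formula (which correctly uses $\varpi_1$) signals the confusion. But this is minor compared with the structural circularity above.
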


\begin{proof}
There is no harm in arguing in $G^{\ast }\ $since $\varpi _{\pi _{1}}$, $%
\varpi _{\pi }$ may be calculated there. Thus we embed $Z_{G^{\ast }}$ in
fundamental $T$ and write $z\in Z_{G^{\ast }}(\mathbb{R})$ in the form $%
z=\exp Y.\exp i\pi \lambda ^{\vee },$ where $Y$ lies in the Lie algebra $%
\mathfrak{z}_{G^{\ast }}(\mathbb{R})$ viewed as a subspace of $X_{\ast
}(T)\otimes \mathbb{C}$ and $\lambda ^{\vee }\in X_{\ast }(T)$ is $\sigma
_{T}$-invariant. Then it follows easily from the Langlands parametrization
that%
\begin{equation*}
\varpi _{\pi }(z)=e^{<\mu ,Y>}e^{2i\pi <\lambda ,\lambda ^{\vee }>}.
\end{equation*}%
Here we have, as usual, identified $X^{\ast }(T)\otimes \mathbb{C}$ with $%
X_{\ast }(\mathcal{T})\otimes \mathbb{C}$. Similarly, for $z_{1}=\exp
Y_{1}.\exp i\pi \lambda _{1}^{\vee },$ with $Y_{1}\in \mathfrak{z}_{H_{1}}(%
\mathbb{R})\subset X_{\ast }(T_{1})\otimes \mathbb{C}$ and $\lambda
_{1}^{\vee }\in X_{\ast }(T)^{\sigma _{T_{1}}},$ we have%
\begin{equation*}
\varpi _{\pi _{1}}(z_{1})=e^{<\mu _{1},Y_{1}>}e^{2i\pi <\lambda _{1},\lambda
_{1}^{\vee }>}.
\end{equation*}%
Now identify the torus $\mathcal{T}_{2}$ from Section 9.2 as the dual of the
torus $T_{2}.$ Then if $z,z_{1}$ have the same image in $Z_{H}(\mathbb{R}),$ 
\textit{i.e.,} if $(z_{1},z)\in C(\mathbb{R}),$ it follows from our remarks
in Section 9.2 that%
\begin{equation*}
\varpi _{\pi _{1}}(z_{1}).\varpi _{\pi }(z)^{-1}=e^{-\text{ }<\mu ^{\ast
},Y_{2}>}e^{-\text{ }2i\pi <\lambda ^{\ast },\lambda _{2}^{\vee }>},
\end{equation*}%
where $Y_{2}=(Y_{1},Y)$ and $\lambda _{2}^{\vee }=(\lambda _{1}^{\vee
},\lambda ^{\vee }).$ On the other hand, it is clear from the definitions of 
$\varpi _{C}$ and $(\mu ^{\ast },\lambda ^{\ast }),$ and from the relation
of the cochain $t_{2}(w)$ to the cocycle $a_{T}(w)$ of p. 45 of \cite{KS99}
(see Section 9.2), that this last expression is the same as $\varpi
_{C}((z_{1},z)).$
\end{proof}

\bigskip

Mathematics Department, Rutgers University-Newark

shelstad@rutgers.edu

\end{document}